\newcommand{\partialbar}{\bar{\partial}}
\newcommand{\zbar}{\bar{z}}
\newcommand{\Ric}{\mathrm{Ric}}
\newcommand{\Vol}{\mathrm{Vol}}
\newcommand{\V}{\mathrm{Vol}}
\newcommand{\inj}{\mathrm{inj}}
\newcommand{\sq}{\backslash}
\newcommand{\hl}{H^0_{L^2}}
\newcommand{\linebundle}{\mathcal{L}}
\newcommand{\dya}{\mathfrak{a}}
\newcommand{\dist}{\mathrm{dist}}
\newcommand{\re}{\mathrm{Re}}
\newcommand{\im}{\mathrm{Im}}
\newcommand{\model}{\mathrm{mod}}
\newcommand{\Aut}{\mathrm{Aut}}
\newtheorem{thm}{Theorem}[section]
\newtheorem{definition}{Definition}[section]
\newtheorem{prop}{Proposition}[section]
\newtheorem{coro}{Corollary}[section]
\newtheorem{lmm}{Lemma}[section]
\theoremstyle{remark} 
\newtheorem*{rmk}{Remark}
\title[Peak sections and Bergman kernels]{Peak sections and Bergman kernels on K\"ahler manifolds with complex hyperbolic cusps}
\author{Shengxuan Zhou}
\address{Beijing International Center for Mathematical Research\\
Peking University\\
Beijing\\ 100871\\ China}
\email{zhoushx19@pku.edu.cn}
\begin{document}

\begin{abstract}
By revisiting Tian's peak section method, we obtain a localization principle of the Bergman kernels on K\"ahler manifolds with complex hyperbolic cusps, which is a generalization of Auvray-Ma-Marinescu's localization result Bergman kernels on punctured Riemann surfaces \cite{aumamar1}. Then we give some further estimates when the metric on the complex hyperbolic cusp is a K\"ahler-Einstein metric or when the manifold is a quotient of the complex ball. By applying our method directly to Poincar\'e type cusps, we also get a partial localization result.
\end{abstract}

\maketitle

\tableofcontents

\section{Introduction}
\label{intro} 

Let $(M,\omega )$ be an $n$-dimensional K\"ahler manifold, and $\linebundle $ be a positive line bundle on $M$ equipped with a hermitian metric $h$. Then we can define the Bergman space for each $m\in\mathbb{N}$ as the following Hilbert space
$$ H^0_{L^2} \left( M ,\omega ,\linebundle^m ,h^m \right) = \left\lbrace S\in H^0 \left( M ,\linebundle^m \right) : \int_{M} \left\Vert S \right\Vert_{h^m}^2 d\V_\omega < \infty \right\rbrace ,$$
with the $L^2 $ inner product $\left\langle S_1 , S_2 \right\rangle_{L^2,M ;\omega ,h^m } = \int_{M} \left\langle S_1 , S_2 \right\rangle_{h^m} d\V_\omega $. Clearly, this space will also have a standard $L^2 $ norm $\left\Vert S \right\Vert_{L^2 ,M ;h^m ,\omega } = \left( \left\langle S,S \right\rangle_{L^2 ,M ; h^m , \omega } \right)^{\frac{1}{2}} $. Sometimes we denote the inner product and norm briefly by $\left\langle s_1 , s_2 \right\rangle_{L^2} $ and $\left\Vert s \right\Vert_{L^2} $, respectively.
Hence the Bergman kernel can be defined as
$$ B_{M, \omega , \linebundle ,h, m} (x,y) = \sum_{i\in I} S_{m,i} (x) \otimes \left( S_{m,i} (y) \right)^* ,\;\; \forall x,y\in M, $$
and the Bergman kernel function can be defined as
$$ \rho_{M,\linebundle ,h, \omega ,m} (x) = B_{M, \omega , \linebundle ,h, m} (x,x) = \sup_{S\in H^0_{L^2} \left( M,\linebundle^m \right)} \frac{ \left\Vert S(x) \right\Vert_{h^m}^2 }{\int_{M} \left\Vert S \right\Vert_{h^m}^2 d\V_{\omega} } =\sum_{i\in I} \left\Vert S_{m,i} (x) \right\Vert^2_{h^m} , $$
for any $ x\in M $, where $ \left\lbrace S_{m,i} \right\rbrace_{i\in I} $ is an $L^2 $ orthonormal basis associated with $h^m$ and $d\V_{\omega} =\frac{\omega^n }{n!}  $ in the Hilbert space $H^0_{L^2} \left(  M ,\omega ,\linebundle^m ,h^m \right) $. For abbreviation, we denote the Bergman space, Bergman kernel and Bergman kernel function briefly by $H^0_{L^2} \left(  M ,\linebundle^m  \right) $, $B_{\omega ,m} $ and $\rho_{\omega ,m}$, respectively. Note that $B_{\omega ,m} $ is a $ \left( \pi_1^* \linebundle^m \otimes \pi_2^* \linebundle^{-m} \right) $-valued function on $M\times M$, and $\rho_{\omega ,m}$ are functions on $M$, where $\pi_i $ is the projection of $M\times M $ onto the $i$-th component.

The Bergman kernel plays an important role in K\"ahler geometry. In the pioneering work \cite{tg1}, Tian used his peak section method to prove that Bergman metrics converge to the original polarized metric in the $C^2$-topology. By a similar method, Ruan \cite{wdr1} proved that this convergence is $C^\infty $. Later, Zelditch \cite{sz1}, also Catlin \cite{cat1} independently, used the Szegö kernel to obtain an alternative proof of the $C^\infty $-convergence of Bergman metrics, moreover, they gave an asymptotic expansion of Bergman kernel function, which is the K\"ahler potential of the Bergman metric, on polarized Kähler manifolds. Such an asymptotic expansion is called the Tian-Yau-Zelditch expansion. This expansion can be also obtained by Tian's peak section method, see Liu-Lu \cite{liuzql1}. By using the heat kernel, Dai-Liu-Ma \cite{dailiuma1} gave another proof of the Tian–Yau–Zelditch expansion and they further considered the asymptotic behavior of Bergman kernels on symplectic manifolds and Kähler orbifolds (see also Ma-Marinescu’s book \cite{mamari1}). There are many important applications of the Tian–Yau–Zelditch expansion, for example, \cite{don1} and \cite{kwz2}. 

More recently, Auvray-Ma-Marinescu \cite{aumamar1,aumamar2} and Sun-Sun \cite{sunsun1} independently considered the asymptotic behavior of the Bergman kernels on punctured Riemann surfaces. Sun-Sun also discusses the logarithmic K-semistability of punctured Riemann surfaces by considering the asymptotic behavior of the Bergman kernels in \cite{sunsun1}. Through revisiting Tian's peak section method, we generalize the results of Auvray-Ma-Marinescu to K\"ahler manifolds with complex hyperbolic cusps. The results and methods presented in this paper can be applied to more general cases and may be helpful in understanding log-K stability or the geometry of Shimura varieties.

In this paper, an $n$-dimensional $(n\geq 1)$ complex hyperbolic cusp is defined in terms of a collection of data $\left( V/\Gamma_V ,\omega_V ,\linebundle_V /\Gamma_V ,h_V \right)$ with the following property. Let $\left( D,\omega_D \right)$ be a flat Abelian variety of complex dimension $n-1$, and $\left( \linebundle_D ,h_D \right) $ be a negative Hermitian line bundle on $D$ whose curvature form is $-\omega_D $. If $n=1$, the space $D$ becomes a point, and $\linebundle_D \cong D\times \mathbb{C} $. Then $\partial \partialbar$-lemma implies that $h_D$ is unique up to scaling. It is clear that $v \mapsto h_D (v,v)$ gives a smooth function on the total space $\linebundle_D $. By a straightforward computation, we see that $\omega_V = -\sqrt{-1} \partial\overline{\partial} \log \left( -{\log h_D} \right) $ is a K\"ahler form on $V= \left\lbrace 0 < h_D < 1\right\rbrace $ with ${\rm Ric}(\omega_V) = -(n+1) \omega_V $, where $\pi_D :\linebundle_D \to D $ is the projection of line bundle. For $r>0 $, we will denote by $V_r$ the open subset $ \left\lbrace 0 < h_D < r\right\rbrace $ of the total space of $\linebundle_D $. Let $ \linebundle_V $ be a trivial line bundle on $V$ with a Hermitian metric $h_V $, such that there exists a section $e_{\linebundle_V} \in H^0 \left( V,\linebundle_V \right) $ satisfying that $\left\Vert e_{\linebundle_V} \right\Vert^2_{h_V} = \left| \log h_D \right| $. Let $\Gamma_{V} $ be a finite subgroup of $\Aut \left( V,\omega_V \right) $. Since $h_D $ is invariant under the action of $\Aut \left( V,\omega_V \right) $ on $V$, the action of $\Gamma_{V} $ on $V$ can be extended to the total space of $ \linebundle_D $ (see also Lemma \ref{lmminjectivityradiusoncusp}). Now we further extend the action of $ \Gamma_V $ on $V$ to the total space of the line bundle $\linebundle_V $ such that the biholomorphic maps corresponding to the action are automorphisms of the line bundle $\linebundle_V $, and $h_V $ is invariant under this action. Then there exists a homomorphism $\Theta_{\Gamma_V } : \Gamma_V \to U(1) \subset \mathbb{C}^* $ such that $g \left( e_{\linebundle_V } \right) = \Theta_{\Gamma_V } (g) e_{\linebundle_V }  $, $\forall g \in \Gamma_V $. If $\Gamma_V =0$, we can omit it in the notations. More details about complex hyperbolic cusps can be found in Mok's paper \cite{mok1}.

Now we define the asymptotic complex hyperbolic cusp, which is our main local model in this paper.

\begin{definition}[Asymptotic complex hyperbolic cusp]
\label{definitionasymptoticcomplexhyperbolioccusp}
Let $(M,\omega )$ be a K\"ahler manifold, let $(\linebundle ,h)$ be a Hermitian line bundle on $M$ and let $\alpha >0$ be a constant. We say that an open subset $U$ of $M$ is an asymptotic complex hyperbolic cusp of order $\alpha $ if it satisfies that:
\begin{enumerate}[1)]
\item $\Ric (h) = \omega $ on $U$ .
\item There exist a complex hyperbolic cusp $\left( V/\Gamma_V ,\omega_V ,\linebundle_V /\Gamma_V ,h_V \right)$, a constant $r\in \left( 0,e^{-1} \right) $, a biholomorphic map $\varphi_U : V_r /\Gamma_V \to U $ and a section $e_\linebundle \in H^0 \left( U,\linebundle\big|_{U} \right)$ such that $\left\Vert e_{\linebundle} \right\Vert_{h}^2 \circ \varphi_U = e^u \left| \log h_D \right| $, where $u=O\left( \left| \log h_D \right|^{-\alpha } \right)$ as $h_D \to 0^+ $ to all orders with respect to $\omega_V$. 
\end{enumerate}
Moreover, we say that $U$ is an asymptotic complex hyperbolic cusp of order $\infty $ if for any $ p\in \mathbb{N} $, $u=O\left( \left| \log h_D \right|^{-p } \right)  $ as $h_D \to 0^+ $ to all orders with respect to $\omega_V$.

By abuse of notation, it is sometimes convenient to write $V_r /\Gamma_V $ instead of $U$. 
\end{definition}

Since $M$ is a manifold, we see that the action of the group $\Gamma_V $ on $V$ is free in Definition \ref{definitionasymptoticcomplexhyperbolioccusp}, and $\linebundle_V /\Gamma_V $ is a line bundle. In general, the quotient space $ V /\Gamma_V $ is only an orbifold, and $\linebundle_V /\Gamma_V $ is only a line orbibundle. Before we state our first main result, we need the following assumption.

\begin{definition}[Admissible pair]
\label{definitionnumberssigmagamma}
Let $m\in\mathbb{N } $ and $\xi ,\kappa ,r>0 $. Then a pair $ \left( \gamma  ,\sigma \right) \in \mathbb{R}_{+}^2 $ is said to be $(m, \xi ,\kappa ,r) $-admissible if $r \geq \sigma $, $ \gamma \geq e^{-\kappa \frac{\sqrt{m}}{\log m }} $ and $\log \sigma \geq {e^{-\xi \frac{\log m}{\sqrt{m}}}} \log \gamma  $. 
\end{definition}

For using H\"ormander's $L^2$-estimate, we need the following condition for non-complete K\"ahler manifold. This is a modification of the standard pseudoconvex, i.e., the existence of a psh exhaustion function. 

\begin{definition}[Pseudoconvex]
\label{definitionpseudoconvex}
In this paper, we say that a K\"ahler manifold $(M,\omega )$ is pseudoconvex if there exists a proper plurisubharmonic function $\psi \in C^\infty (M;\mathbb{R} ) $ such that $\psi^{-1} ((-\infty ,k])$ is the union of a compact subset and finitely many complex hyperbolic cusps for any $k\in\mathbb{R}$, and $M = \cup_{k=1}^{\infty} \psi^{-1} ((-\infty , k]) $. 
\end{definition}

\begin{rmk}
  Note that complex hyperbolic cusp $V_r$ is always complete as $h_D \to 0 $, so H\"ormander's $L^2$-estimate holds for $\partialbar$-closed $(0,1)$-form with compact support on pseudoconvex manifold. Let $\psi = -\log (r-\log h_D) $. We see that $(V_r ,\omega )$ are pseudoconvex in this sense.
\end{rmk}
    
Now we can state our first main result, which is the following localization principle. The following result is the higher dimensional analogue of \cite[Theorem 1.1-2]{aumamar1} and \cite[Theorem 1.2]{aumamar2}. 
\begin{thm}
\label{thmcusplocalizationprinciple}
Let $(M,\omega )$ be a K\"ahler manifold, let $(\linebundle ,h)$ be a Hermitian line bundle on $M$ and $\alpha >0$ be a constant such that $U \cong V_r /\Gamma_V $ is an asymptotic complex hyperbolic cusp of order $\alpha $, and $\Ric (h) \geq \epsilon \omega $ and $\Ric (\omega ) \geq -\Lambda \omega $ for some constants $\epsilon ,\Lambda >0$. Suppose that if $(M,\omega )$ is not complete, then $M$ is pseudoconvex and $ \bar{U} $ is complete.

Let $B_{V_t /\Gamma_V ,\omega ,m } (x,y) $ denotes the Bergman kernel on $\left( V_t /\Gamma_V ,\omega ,\linebundle ,h \right) $. Given constants $\xi ,\kappa ,\beta >0$. Then there exists a large constant $ C>1$ satisfying the following property.

Let $m\in\mathbb{N}$, and $\left( \gamma_m ,\sigma_m \right) $ be an $(m, \xi ,\kappa ,r) $-admissible. Then for any $ m\in\mathbb{N} $, the Bergman kernel satisfying that
\begin{eqnarray}
 \left\Vert B_{M,\omega ,m} (x,y) \right\Vert_{C^k ; h^m ,\omega } & \leq & C m^{-l} \left| \log h_D (x) \right|^{-\beta } \left| \log h_D (y) \right|^{-\beta } , \label{equation1thmcusplocalizationprinciple}
\end{eqnarray}
for any $ (x,y) \in \left( \pi_V \left( V_{\gamma_m } \right) \times \left(U\sq \pi_V \left( V_{\sigma_m } \right) \right) \right) \cup \left( \left( U \sq \pi_V \left( V_{\gamma_m } \right) \right) \times \pi_V \left( V_{\gamma_m } \right) \right) $, and
\begin{eqnarray}
  \left\Vert B_{ V_{\sigma_m} /\Gamma_V , \omega ,m } (x,y) - B_{M,\omega ,m} (x,y) \right\Vert_{C^k ; h^m ,\omega } & \leq & C m^{-l} \left| \log h_D (x) \right|^{ -\beta } \left| \log h_D (y) \right|^{ - \beta } , \label{equation2thmcusplocalizationprinciple}
\end{eqnarray}
for any $ (x,y) \in \pi_V \left( V_{\gamma_m } \right) \times \pi_V \left( V_{\gamma_m } \right) $, where $\pi_V $ is the quotient map $ V \to V /\Gamma_V \cong U $, $\left\Vert \cdot \right\Vert_{C^k ; h^m ,\omega } $ is the $C^k$-norm associate with the Hermitian metric $ \pi_1^* h^m  \otimes \pi_2^* h^{-m} $ and the K\"ahler metric $ \omega  $ on $U\times U$.

Moreover, when $m \geq C $, the Bergman kernel functions satisfying that
\begin{eqnarray}
\sup_{V_{\gamma_m } /\Gamma_V } \left| \frac{\rho_{M ,\omega ,m} (x)}{\rho_{V_{\sigma_m } /\Gamma_V , \omega ,m} \left( x \right) } -1 \right| & \leq & C m^{-l} . \label{equation3thmcusplocalizationprinciple}
\end{eqnarray}
\end{thm}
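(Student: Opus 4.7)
The plan is to adapt the Auvray--Ma--Marinescu strategy, combining Tian's peak section method with H\"ormander's $L^2$-estimate, in order to compare the global Bergman kernel $B_{M,\omega,m}$ with the localized one $B_{V_{\sigma_m}/\Gamma_V,\omega,m}$. First I would build a smooth cutoff $\chi_m$ on $M$ supported in $V_{\sigma_m}/\Gamma_V$ and equal to $1$ on a slightly smaller set still containing $V_{\gamma_m}/\Gamma_V$, so that $\bar\partial\chi_m$ is supported in a transition annulus $A_m$ deep inside the cusp. Given an $L^2$-orthonormal basis $\{S_{m,i}\}$ of $H^0_{L^2}(V_{\sigma_m}/\Gamma_V,\linebundle^m)$, I would solve $\bar\partial v_{m,i}=\bar\partial(\chi_m S_{m,i})$ on $M$ via H\"ormander with a twisted weight $h^m e^{-\phi_m}$, where $\phi_m$ is plurisubharmonic and mildly blows up near the cusp tip so that $v_{m,i}$ vanishes there to a prescribed order. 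The positivity hypotheses $\Ric(h)\ge\epsilon\omega$ and $\Ric(\omega)\ge-\Lambda\omega$, together with the pseudoconvexity assumption when $M$ is non-complete (apply H\"ormander on $\psi^{-1}((-\infty,k])$ and take a limit), make this step go through.

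The second ingredient is a pointwise \emph{cusp decay} estimate for holomorphic sections, the higher-dimensional analogue of the main lemma of \cite{aumamar1}: any $S\in H^0_{L^2}(V_r/\Gamma_V,\linebundle^m)$ with $\|S\|_{L^2}\le 1$ satisfies $\|S(x)\|^2_{h^m}\le C_\beta|\log h_D(x)|^{-\beta}$ for any prescribed $\beta$, once $h_D(x)$ is small enough relative to $m$. I would prove this by expanding $S$ in a Fourier-type series along the $U(1)$-orbits coming from the trivialization $e_{\linebundle_V}$ (using that the base $D$ is a flat abelian variety), and applying Tian's standard peak-section estimates on each Fourier mode; the factor $|\log h_D|^{-\beta}$ reflects the logarithmic pole of $h_V=|\log h_D|$. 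Applied to $\bar\partial\chi_m\otimes S_{m,i}$, whose support lies in $A_m$ where $|\log h_D|\gtrsim\log(1/\sigma_m)$, this bounds the H\"ormander right-hand side and makes $v_{m,i}$ negligible on $\pi_V(V_{\gamma_m})$. Consequently $S_{m,i}\mapsto\chi_m S_{m,i}-v_{m,i}$ defines an almost isometric embedding $H^0_{L^2}(V_{\sigma_m}/\Gamma_V,\linebundle^m)\hookrightarrow H^0_{L^2}(M,\linebundle^m)$; the reverse direction, restricting global sections and controlling their tail on $M\setminus\pi_V(V_{\sigma_m})$ again by the same decay estimate, yields \eqref{equation2thmcusplocalizationprinciple} in $L^2$ form. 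The off-diagonal bound \eqref{equation1thmcusplocalizationprinciple} comes out of the same argument by singling out sections that live deep in the cusp at either $x$ or $y$, and the $C^k$-upgrade is standard: Cauchy estimates in quasi-coordinates on $V_r\setminus V_{\gamma_m}$, where the injectivity radius is bounded below, convert the $L^2$ and pointwise bounds into $C^k$ bounds. Finally, \eqref{equation3thmcusplocalizationprinciple} is \eqref{equation2thmcusplocalizationprinciple} on the diagonal combined with a matching lower bound $\rho_{V_{\sigma_m}/\Gamma_V,\omega,m}(x)\gtrsim|\log h_D(x)|^{-\beta}$ coming from explicit peak sections.

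The main obstacle is calibrating the parameters so that all error terms simultaneously beat every prescribed $m^{-l}|\log h_D|^{-\beta}$. The $(m,\xi,\kappa,r)$-admissibility is designed for exactly this balance: $\gamma_m\ge e^{-\kappa\sqrt m/\log m}$ keeps the points under consideration in the region where peak-section decay dominates, while the gap $\log\sigma_m\ge e^{-\xi\log m/\sqrt m}\log\gamma_m$ leaves just enough annular room for $\chi_m$ that the shrinking support of $\bar\partial\chi_m$ contributes an exponential-in-$\sqrt m$ loss, which then swallows any polynomial prefactor in the H\"ormander estimate. Getting these two exponential thresholds to interlock cleanly, and in particular verifying that the peak sections built on the abelian base $D$ descend correctly through the quotient by $\Gamma_V$ via the character $\Theta_{\Gamma_V}$ and through the asymptotic perturbation $u=O(|\log h_D|^{-\alpha})$ without losing the required orders, is where the bulk of the technical work will lie.
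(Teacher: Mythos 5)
Your plan is viable and all of its analytic ingredients (H\"ormander with cut-offs and quasi-plurisubharmonic weights, decay of $L^2$ sections deep in the cusp, Cheeger--Gromov coordinates for the $C^k$ upgrade) do appear in the paper, but you organize the argument the way Auvray--Ma--Marinescu do --- cut off an orthonormal basis of $\hl \left( V_{\sigma_m}/\Gamma_V ,\linebundle^m \right)$, correct by $\bar\partial$, and compare the resulting family with a global basis --- whereas the paper never touches bases. It represents $B_{M,\omega,m}(x,y)$ and all its covariant derivatives through \emph{peak sections} of point functionals $T$ (finite combinations of $\delta_x$ and its derivatives), so the entire comparison collapses to the scalar inequality $\left\Vert T\big|_{\hl \left( M,\linebundle^m \right)} \right\Vert \geq \left( 1-Cm^{-l} \right) \left\Vert T\big|_{\hl \left( U_m,\linebundle^m \right)} \right\Vert$ of Proposition \ref{propcoraseestimate}, which Lemma \ref{peaksectionrestriction} converts into $L^2$-closeness of the two peak sections; the weighted $C^k$ bounds then follow from Corollary \ref{corocoarseestimatecpxhpblccusp}, and (\ref{equation3thmcusplocalizationprinciple}) is immediate because $\rho = \left\Vert T_{0,m} \right\Vert^2$. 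That reformulation buys exactly the two places where your write-up is thinnest: (i) you would have to control the Gram matrix of the corrected family $\chi_m S_{m,i}-v_{m,i}$, which is delicate here because the local Bergman space is infinite-dimensional; and (ii) the ``reverse direction'' cannot literally be ``restricting global sections and controlling their tail on $M\setminus \pi_V \left( V_{\sigma_m} \right)$'' --- an arbitrary global section concentrated away from the cusp has no small tail --- it must be applied only to the extremal sections attached to points of $\pi_V \left( V_{\gamma_m} \right)$, which is precisely what the peak-section formalism isolates. Two calibration remarks: the gain extracted from the annulus of hyperbolic width $\xi\frac{\log m}{\sqrt m}$ is the Agmon factor $e^{-c(\log m)^2}=m^{-c\log m}$, super-polynomial but not exponential in $\sqrt m$; and for points deeper than $V_{r_m^e}$ this mechanism no longer suffices, so one must switch (as in Part 2 of the proof of Proposition \ref{propcoraseestimate}) to a weight blowing up along the entire cusp tip as in Lemma \ref{lmmquasipshhyperboliccuspver2} --- your $\phi_m$ is playing that role, so your two regimes do match the paper's case division.
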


\begin{rmk}
Since there exists a constant $\delta >0 $ independent of $m$ such that $\inj \left( U\sq V_{\gamma_m } \right) \geq \delta \frac{\log m }{\sqrt{m}} $ (see Lemma \ref{lmminjectivityradiushyperbpliccusp}), we see that the asymptotic behavior of $B_{M,\omega ,m} (x,y) $ on $ \left(U\sq V_{\gamma_m } \right) \times \left(U\sq V_{\gamma_m } \right) $ is same to the standard case. See \cite[p. 2242]{hzh1} or \cite[Chapter 4]{mamari1} for details in the standard case.
\end{rmk}

Our proof of Theorem \ref{thmcusplocalizationprinciple} is based on Tian's Peak section method. The key step, Proposition \ref{propcoraseestimate}, is to show that the decay speed of the peak sections is sufficiently fast at a distance. We can use the Agmon type estimate (see Corollary \ref{coroagmontypeestimate}) to obtain this estimate by observing the Bergman kernels can be represented simply by the peak sections.

Now we will give some refinements of Theorem \ref{thmcusplocalizationprinciple} in two special cases. The first case is about the complex hyperbolic cusps with K\"ahler-Einstein metrics. We now recall some properties of it.

Fix a polarized $(n-1)$-dimensional flat Abelian variety $\left( D,\omega_D \right) $ with polarization $\left( \linebundle_D^{-1} ,h_D^{-1} \right) $. Now we can construct an $n$-dimensional complex hyperbolic cusp $\left( V,\omega_V ,\linebundle_V ,h_V \right)$ by the statement before Theorem \ref{thmcusplocalizationprinciple}, where $V= \left\lbrace 0<h_D <1 \right\rbrace $ is an open subset of the total space of the line bundle $ \linebundle_D $, $ \omega_V = -\sqrt{-1} \partial\partialbar \log \left( -\log h_D \right) $, and $ \left( \linebundle_V ,h_V \right) \cong \left( \mathbb{C} ,-\log h_D \right) $ is a Hermitian line bundle on $V$. Let $r\in (0,1)$ be a constant. 

For any given smooth real-valued function $f\in C^\infty \left( \partial \bar{V}_r \right) $, Datar-Fu-Song \cite[Theorem 1.1, Theorem 1.3]{dfs1} proved that there exists a unique complete K\"ahler-Einstein metric $\omega_{r,f} = \omega_V + \sqrt{-1} \partial\partialbar u_{r,f} $ on $V_r $ by considering the Dirichlet problem of the Monge-Amp\'ere equation, where $V_r = \left\lbrace 0<h_D <r \right\rbrace \subset V $, and $u_{r,f} \in C \left( \bar{V}_r \right) $ is a smooth real-valued function on $V_r $ such that $ u_{r,f} \big|_{ \partial \bar{V}_r } =f $, and $\left| u_{r,f} \right| = o(1) $ as $h_D \to 0^+ $. When $n\geq 2$, Fu-Hein-Jiang \cite[Main Theorem]{fhj1} further studied the asymptotic behavior of $\omega_{r,f} $ and $u_{r,f} $ by establishing sharper estimates. For any $k\in\mathbb{Z}_{\geq 0} $, they proved that there exists a constant $c_{r,f} \in\mathbb{R} $ such that
\begin{eqnarray}
\label{fhjasymptoticbehavior}
\left\Vert \nabla^k \left( u_{r,f} + \log \left( 1+c_{r,f} \left| \log h_D \right|^{-1} \right) \right) \right\Vert_{\omega_V } = O \left( \left| \log h_D \right|^{\frac{n+k}{2} - \frac{1}{4}} e^{-2\sqrt{ \lambda_{1,D} \left| \log h_D \right| }} \right) 
\end{eqnarray}
as $ h_D \to 0^+ $, where $\lambda_{1,D} >0 $ denotes the first eigenvalue of $ \left( D,\omega \right) $. See \cite[Main Theorem]{fhj1} for more details. By considering the change of the Hermitian line bundle under the scalar multiplication $ v \mapsto e^{ -\frac{  c_{r,f} }{2} } v $ on the total space of $ \linebundle_D $, (\ref{fhjasymptoticbehavior}) implies that the data $ \left( V_r ,\omega_{r,f} ,\linebundle_V ,e^{-u_{r,f}} h_V \right) $ is an asymptotic hyperbolic cusp of order $\infty $. See also Lemma \ref{lmmdatarfusongmanifold1}. Since the K\"ahler metrics on asymptotic hyperbolic cusps can always be expressed as $\omega_V +\sqrt{-1} \partial\partial u $ for some function $u$, Fu-Hein-Jiang's estimate holds for all asymptotic hyperbolic cusps with K\"ahler-Einstein metrics.

Now we can state our result for the Bergman kernels on asymptotic complex hyperbolic cusps with K\"ahler-Einstein metric. For abbreviation, we denote $e^{-\frac{\sqrt{m}}{\log (m)}}$ briefly by $r_m $ for any $m\geq 2$. When $\Gamma_V =0$ and $ \linebundle^{-1}_D $ is globally generated, the $C^0$-estimates for the quotients between Bergman kernel functions, (\ref{inequalitydatarfusongmanifold3}) and (\ref{inequalityballquotient2}), can be improved to the $C^k$ estimates. We will prove them in Proposition \ref{propdatarfusongmanifold2} and Proposition \ref{propballquotient2}, respectively. Note that the $C^k$ estimates (\ref{inequalitydatarfusongmanifold3}) and (\ref{inequalityballquotient2}) is given in \cite[Theorem 1.3]{aumamar2} in the $1$-dimensional case.

\begin{thm}
\label{thmdatarfusongmanifold}
Let $(M,\omega )$ be a K\"ahler manifold and let $(\linebundle ,h)$ be a Hermitian line bundle on $M$ such that $\Ric (h) \geq \epsilon \omega $ and $\Ric (\omega ) \geq -\Lambda \omega $ for some constants $\epsilon ,\Lambda >0$. Suppose that $M$ is pseudoconvex if $(M,\omega )$ is not complete. 

Assume that $U \cong V_r/\Gamma_V $ is an asymptotic complex hyperbolic cusp of order $\infty $ on $(M,\omega , \linebundle ,h )$ such that $ \bar{U} $ is complete. Then for any given constants $k, l\in\mathbb{N} $ and $\beta>0 $, there are constants $m_0 ,C>0 $ such that for any integer $m\geq m_0 $, the Bergman kernels satisfying that 
\begin{eqnarray}
 \left\Vert B_{M ,\omega ,m} (x,y) \right\Vert_{C^k ; h^m_{V} , \omega_{V}  } & \leq & C m^{-l} \left| \log h_D (x) \right|^{-\beta} \left| \log h_D (y) \right|^{-\beta} , \label{inequalitydatarfusongmanifold1} 
\end{eqnarray}
for any $ (x,y) \in \left( \pi_V \left( V_{r^e_m } \right) \times \left(U\sq \pi_V \left( V_{r_m } \right) \right) \right) \cup \left( \left( U \sq \pi_V \left( V_{r_m } \right) \right) \times \pi_V \left( V_{r^e_m } \right) \right) $, and
\begin{eqnarray}
 \left\Vert B_{M,\omega ,m } (x,y) - B_{V/\Gamma_V , \omega_V ,m} (x,y) \right\Vert_{C^k ; h^m_{V} , \omega_{V} } & \leq & C m^{-l} \left| \log h_D (x) \right|^{-\beta} \left| \log h_D (y) \right|^{-\beta} , \label{inequalitydatarfusongmanifold2} 
\end{eqnarray}
for any $ (x,y) \in \pi_V \left( V_{r_m } \right) \times \pi_V \left( V_{r_m } \right) $. Moreover, the Bergman kernel functions satisfying that
\begin{eqnarray}
\sup_{V_{r_m } } | \frac{\rho_{M ,\omega ,m} (x)}{\rho_{V/\Gamma_V , \omega_{V} ,m} \left( x \right) } -1 | & \leq & C m^{-l} , \label{inequalitydatarfusongmanifold3}
\end{eqnarray}
where $\left\Vert \cdot \right\Vert_{C^k ; h^m ,\omega } $ is the $C^k$-norm associate with the Hermitian metric $ \pi_1^* h^m \otimes \pi_2^* h^{-m} $ and the K\"ahler metric $ \pi_1^* \omega + \pi_2^* \omega  $ on $V\times V$. 
\end{thm}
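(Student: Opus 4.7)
The plan is to view Theorem \ref{thmdatarfusongmanifold} as a refinement of Theorem \ref{thmcusplocalizationprinciple}, obtained by specializing to the admissible pair $(\gamma_m,\sigma_m)=(r_m^e, r_m)$ and then replacing the truncated comparison manifold $V_{\sigma_m}/\Gamma_V$ by the full model cusp $V/\Gamma_V$ using the order $\infty$ hypothesis. The proof splits into three comparison steps.

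\textbf{Step 1 (invoking Theorem \ref{thmcusplocalizationprinciple}).} I would first observe that the pair $(r_m^e, r_m)$ is $(m,\xi,\kappa,r)$-admissible for every prescribed $\xi>0$ once $\kappa\geq e$ and $m$ is sufficiently large: indeed $\gamma_m=e^{-e\sqrt m/\log m}\geq e^{-\kappa\sqrt m/\log m}$ trivially, and the ratio $\log\sigma_m/\log\gamma_m = 1/e$ satisfies $1/e \geq e^{-\xi\log m/\sqrt m}$ for $m\gg 1$. Applying Theorem \ref{thmcusplocalizationprinciple} to $(M,\omega,\linebundle,h)$ then yields the analogues of (\ref{inequalitydatarfusongmanifold1})--(\ref{inequalitydatarfusongmanifold3}), but expressed in the $C^k$-norm built from $(h^m,\omega)$ and with $B_{V_{r_m}/\Gamma_V,\omega,m}$ in place of $B_{V/\Gamma_V,\omega_V,m}$.

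\textbf{Step 2 (changing reference data on $V_{r_m}/\Gamma_V$).} The order $\infty$ hypothesis furnishes, under the trivialization $e_\linebundle\leftrightarrow e_{\linebundle_V}$, an identity $h=e^u h_V$ and $\omega=\omega_V-\sqrt{-1}\partial\bar\partial u$ with $u$ and all its covariant derivatives of size $O(|\log h_D|^{-p})$ for every $p\in\mathbb N$. On $V_{r_m}/\Gamma_V$ one has $|\log h_D|\geq \sqrt m/\log m$, so $m|u|=O(m^{1-p/2}(\log m)^p)=O(m^{-l})$ once $p$ is taken large enough, and similarly for all higher derivatives of $mu$. Consequently the $L^2$ Hilbert spaces $H^0_{L^2}(V_{r_m}/\Gamma_V,\linebundle^m,h^m,\omega)$ and $H^0_{L^2}(V_{r_m}/\Gamma_V,\linebundle_V^m,h_V^m,\omega_V)$, identified via the trivialization, are related by an isometry differing from the identity by $O(m^{-l})$ in operator norm. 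Combined with the exponential off-diagonal decay of the peak sections supplied by Proposition \ref{propcoraseestimate}, this upgrades to a weighted $C^k$-estimate of the form $\Vert B_{V_{r_m}/\Gamma_V,\omega,m}-B_{V_{r_m}/\Gamma_V,\omega_V,m}\Vert_{C^k;h_V^m,\omega_V} = O(m^{-l}|\log h_D(x)|^{-\beta}|\log h_D(y)|^{-\beta})$.

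\textbf{Step 3 (extending the domain to $V/\Gamma_V$).} A second application of Theorem \ref{thmcusplocalizationprinciple}, now to the model data $(V/\Gamma_V,\omega_V,\linebundle_V/\Gamma_V,h_V)$ with cusp $U=V_r/\Gamma_V$ (trivially of order $\infty$, and pseudoconvex via the exhaustion $-\log(-\log h_D)$ on the far end), compares $B_{V/\Gamma_V,\omega_V,m}$ with $B_{V_{r_m}/\Gamma_V,\omega_V,m}$ on $V_{r_m^e}/\Gamma_V\times V_{r_m^e}/\Gamma_V$ with an error bound of the same form. Chaining Steps 1--3 via the triangle inequality yields (\ref{inequalitydatarfusongmanifold1}) and (\ref{inequalitydatarfusongmanifold2}); (\ref{inequalitydatarfusongmanifold3}) follows from (\ref{equation3thmcusplocalizationprinciple}) combined with the diagonal specialization of the Step 2 comparison.

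The main obstacle is Step 2: one needs a quantitative stability statement for the Bergman kernel under perturbations of the Hermitian and K\"ahler data that are only small in the weighted sense supplied by the order $\infty$ hypothesis. The decisive point is that the comparison is only needed on the deep cusp region $V_{r_m}/\Gamma_V$, where the constraint $|\log h_D|\geq \sqrt m/\log m$ defeats the factor $m$ appearing in $e^{mu}$ and leaves an arbitrarily large negative power of $m$; propagating this gain through the reproducing property of the Bergman kernel and the weighted $C^k$ norms is then a delicate but routine computation.
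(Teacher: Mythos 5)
Your proposal is correct and follows essentially the same route as the paper: the paper likewise chains the localization principle of Theorem \ref{thmcusplocalizationprinciple} (applied both to $M$ and to the model $(V/\Gamma_V,\omega_V,\linebundle_V/\Gamma_V,h_V)$) with a perturbation comparison of peak sections on the truncated cusp, the latter being exactly your Step 2 and carried out via Lemma \ref{lmmdistancedifferencepeaksection} together with the weighted derivative bounds of Corollary \ref{corocoarseestimatecpxhpblccusp}. The only slip is in your admissibility check: since $\log\sigma_m$ and $\log\gamma_m$ are negative, the condition $\log\sigma_m\geq e^{-\xi\log m/\sqrt{m}}\log\gamma_m$ amounts to $1/e\leq e^{-\xi\log m/\sqrt{m}}$ (not $\geq$), which indeed holds for $m\gg1$, so the conclusion stands.
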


The next case is about the quotients of the complex hyperbolic ball. Let $\left(  {\mathbb{B}}^n ,\omega_{\mathbb{B}} \right)$ be the $n$-dimensional complex hyperbolic ball with constant bisectional curvature $-1$, and let $\Gamma $ be a lattice in $ \Aut \left( {\mathbb{B}}^n ,\omega_{\mathbb{B}} \right) \cong PU (n,1) $. Here $\Gamma $ is a lattice means that $\Gamma $ is a discrete subgroup of $ PU (n,1)  $ such that $ \left( \mathbb{B}^n / \Gamma ,\omega_{\mathbb{B}} \right) $ has finite volume. When all parabolic elements of $\Gamma $ are unipotent, there exist a finite collection of disjoint open subsets $ U_i \subset \mathbb{B}^n / \Gamma $, $i=1,\cdots , N$, and complex hyperbolic cusps $\left( V_i ,\omega_{V_i }  ,\linebundle_{V_i } ,h_{V_i } \right) $ such that $ \left( U_i ,\omega_{\mathbb{B}^n} \right) \cong \left( V_i ,\omega_{V_i } \right) $, and $ \left( \mathbb{B}^n / \Gamma \right) \sq  \left( \cup_{i=1}^N U_i \right) $ is compact. See \cite[Theorem 1]{mok1} for more details. Note that any neat lattice in $PU(n,1) $ satisfies the above condition, and hence $\mathbb{B}^n / \Gamma $ can be decomposed as above when $\Gamma \subset PU(n,1) $ is a neat lattice.

Recall that any lattice admits a finite index neat normal sublattice \cite[Theorem 6.11]{rag1}. Let $\Gamma_{neat} \unlhd \Gamma $ be a neat normal sublattice such that $\Gamma' = \Gamma / \Gamma_{neat} $ is a finite group. It follows that $\mathbb{B}^n / \Gamma \cong \left( \mathbb{B}^n / \Gamma_{neat} \right) /\Gamma' $. The decomposition result of $ \mathbb{B}^n / \Gamma_{neat} $ now implies that there exist a finite collection of disjoint open subsets $ U_i \subset \mathbb{B}^n / \Gamma $, $i=1,\cdots , N$, and complex hyperbolic cusps $\left( V_i /\Gamma_{V_i } ,\omega_{V_i }  ,\linebundle_{V_i } /\Gamma_{V_i } ,h_{V_i } \right) $ such that $ \left( U_i ,\omega_{\mathbb{B}^n} \right) \cong \left( V_i /\Gamma_{V_i } ,\omega_{V_i } \right) $, and $ \left( \mathbb{B}^n / \Gamma \right) \sq  \left( \cup_{i=1}^N U_i \right) $ is compact. Now we suppose that $\mathbb{B}^n / \Gamma $ is a manifold, that is to say the lattice $\Gamma $ is torsion free. At this time, $\mathbb{B}^n / \Gamma $ satisfies the following assumptions at infinity.

\begin{thm}
\label{thmballquotient}
Let $(M,\omega )$ be a K\"ahler manifold and let $(\linebundle ,h)$ be a Hermitian line bundle on $M$ such that $\Ric (h) \geq \epsilon \omega $ and $\Ric (\omega ) \geq -\Lambda \omega $ for some constants $\epsilon ,\Lambda >0$. Suppose that $M$ is pseudoconvex if $(M,\omega )$ is not complete. 

We follow the notations used in Definition \ref{definitionasymptoticcomplexhyperbolioccusp}. Assume that $U \cong V_r $ is an asymptotic complex hyperbolic cusp on $(M,\omega , \linebundle ,h )$ that satisfies $u=0$ and $ \bar{U} $ is complete. Then for any given constants $k, l\in\mathbb{N} $ and $\beta>0 $, there are constants $m_0 ,C>0$ such that for any integer $m\geq m_0 $, the Bergman kernels satisfying that 
\begin{eqnarray}
\sup_{V_{r^2 } \times V_{r^2 } } \left| \log h_D (x) \right|^\beta \left| \log h_D (y) \right|^\beta \left\Vert B_{M, \omega ,m } (x,y) - B_{ V/\Gamma_V , \omega_V ,m} (x,y) \right\Vert_{C^k ; h^m_{V} , \omega_{V} } & \leq & C m^{-l} , \label{inequalityballquotient1} 
\end{eqnarray}
and the Bergman kernel functions satisfying that
\begin{eqnarray}
\sup_{V_{r^2 } } | \frac{\rho_{M ,\omega ,m} (x)}{\rho_{V/\Gamma_V , \omega_{V} ,m} \left( x \right) } -1 | & \leq & C m^{-l} , \label{inequalityballquotient2}
\end{eqnarray}
where $\left\Vert \cdot \right\Vert_{C^k ; h^m ,\omega } $ is the $C^k$-norm associate with the Hermitian metric $ \pi_1^* h^m \otimes \pi_2^* h^{-m} $ and the K\"ahler metric $ \pi_1^* \omega + \pi_2^* \omega  $ on $\left( V/\Gamma_V \right) \times \left( V/\Gamma_V \right) $. 
\end{thm}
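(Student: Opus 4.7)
The plan is to deduce Theorem \ref{thmballquotient} from two applications of Theorem \ref{thmcusplocalizationprinciple} with a single, non-shrinking admissible pair $(\gamma_m, \sigma_m) = (r^2, r)$. The hypothesis $u = 0$ means that the biholomorphism $\varphi_U : V_r/\Gamma_V \to U$ is an isometry of K\"ahler manifolds along which the Hermitian line bundle data is exact (not merely asymptotic), so the Bergman kernels of the common domain $V_r/\Gamma_V$, computed with the two sets of data, are literally equal under $\varphi_U$. One first checks that $(r^2, r)$ is $(m, \xi, \kappa, r)$-admissible in the sense of Definition \ref{definitionnumberssigmagamma} for any fixed $\xi, \kappa > 0$ and all $m$ large: $\sigma_m = r \leq r$, $\gamma_m = r^2 \geq e^{-\kappa\sqrt{m}/\log m}$ once $2|\log r| \leq \kappa \sqrt{m}/\log m$, and $\log r \geq e^{-\xi \log m/\sqrt{m}} \log r^2$ once $e^{-\xi \log m/\sqrt{m}} \geq \tfrac{1}{2}$, both of which hold for $m$ large.

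Theorem \ref{thmcusplocalizationprinciple} applied to $(M, \omega, \linebundle, h)$ then gives on $V_{r^2}/\Gamma_V \times V_{r^2}/\Gamma_V$
\[ \|B_{V_r/\Gamma_V,\omega,m}(x,y) - B_{M,\omega,m}(x,y)\|_{C^k;h^m,\omega} \leq Cm^{-l}|\log h_D(x)|^{-\beta}|\log h_D(y)|^{-\beta}. \]
The same theorem applied to the exact model $(V/\Gamma_V, \omega_V, \linebundle_V/\Gamma_V, h_V)$ yields the parallel estimate
\[ \|B_{V_r/\Gamma_V,\omega_V,m}(x,y) - B_{V/\Gamma_V,\omega_V,m}(x,y)\|_{C^k;h_V^m,\omega_V} \leq Cm^{-l}|\log h_D(x)|^{-\beta}|\log h_D(y)|^{-\beta}. \]
The hypotheses for this second application are all routine: $\Ric(h_V) = \omega_V$ (so $\epsilon = 1$), $\Ric(\omega_V) = -(n+1)\omega_V$ (so $\Lambda = n+1$), $V/\Gamma_V$ is pseudoconvex by the remark following Definition \ref{definitionpseudoconvex}, $\overline{V_r/\Gamma_V}$ is complete as a closed subset of the complete cusp $V$, and $V_r/\Gamma_V \subset V/\Gamma_V$ is trivially an asymptotic cusp of order $\infty$ with $u = 0$. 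Since $u = 0$ on the original manifold, the identification $B_{V_r/\Gamma_V,\omega,m} = B_{V_r/\Gamma_V,\omega_V,m}$ via $\varphi_U$ is immediate, and the triangle inequality gives \eqref{inequalityballquotient1}.

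The ratio estimate \eqref{inequalityballquotient2} then follows by specializing \eqref{inequalityballquotient1} to $k = 0$ and $y = x$, observing that the weights $|\log h_D(x)|^{-\beta}$ are bounded above on $V_{r^2}$, and dividing by $\rho_{V/\Gamma_V,\omega_V,m}(x)$ after securing a uniform lower bound $\rho_{V/\Gamma_V,\omega_V,m} \geq c_0 > 0$ on $V_{r^2}$. Such a lower bound is standard on the model cusp and can be obtained by producing an $L^2$-normalized peak section at each $x \in V_{r^2}$ via Tian's recipe in a coordinate ball of radius comparable to the injectivity radius at $x$; the relevant peak section construction is already an ingredient in the proof of Proposition \ref{propcoraseestimate}. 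Since the technical heavy lifting is done inside Theorem \ref{thmcusplocalizationprinciple}, the only genuine subtleties left in the present proof are the admissibility check for $(r^2, r)$ and the verification of the hypotheses of Theorem \ref{thmcusplocalizationprinciple} for the model cusp, both of which are formal.
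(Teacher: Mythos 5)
Your derivation of \eqref{inequalityballquotient1} is exactly the paper's argument: check that $(\gamma_m,\sigma_m)=(r^2,r)$ is admissible for large $m$, apply Theorem \ref{thmcusplocalizationprinciple} once to $U\subset M$ and once to $V_r/\Gamma_V\subset V/\Gamma_V$, identify the two intermediate kernels on $V_r/\Gamma_V$ using $u=0$, and conclude by the triangle inequality. That part is fine.

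Your proof of \eqref{inequalityballquotient2}, however, has a genuine gap. You propose to divide the additive estimate by $\rho_{V/\Gamma_V,\omega_V,m}$ ``after securing a uniform lower bound $\rho_{V/\Gamma_V,\omega_V,m}\geq c_0>0$ on $V_{r^2}$.'' No such bound exists: $V_{r^2}=\{0<h_D<r^2\}$ is the full (non-compact) cusp neighborhood, and by the explicit formula of Proposition \ref{propbergmanknv} the model Bergman kernel function behaves like $\frac{n|\Gamma_V|}{2\pi(m-n-1)!}\,\rho_{D,\omega_D,\Delta_{\Gamma_V,m},1}\,|\log h_D(x)|^{m}\,h_D(x)$ as $h_D(x)\to 0^+$ for fixed $m$, hence tends to $0$ deep in the cusp. (Tian's peak section construction cannot rescue this: the injectivity radius at such points is comparable to $|\log h_D(x)|^{-1}$, which is far below the scale $\log m/\sqrt m$ needed for a lower bound of fixed size, and the true decay is faster than any power of $|\log h_D|$.) Consequently the additive bound $Cm^{-l}|\log h_D(x)|^{-2\beta}$ on $|\rho_{M,\omega,m}-\rho_{V/\Gamma_V,\omega_V,m}|$ does not control the ratio where the denominator is exponentially small; this is precisely why the multiplicative estimate is a separate, stronger statement. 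The correct route — and the one the paper takes — is to invoke the ratio estimate \eqref{equation3thmcusplocalizationprinciple} of Theorem \ref{thmcusplocalizationprinciple} itself, again twice: once for $U\subset M$ to compare $\rho_{M,\omega,m}$ with $\rho_{V_r/\Gamma_V,\omega,m}$, and once for $V_r/\Gamma_V\subset V/\Gamma_V$ to compare $\rho_{V/\Gamma_V,\omega_V,m}$ with $\rho_{V_r/\Gamma_V,\omega_V,m}$; since $u=0$ makes the two middle quantities equal, multiplying the two ratio estimates gives \eqref{inequalityballquotient2}. That estimate is itself proved in the paper via Proposition \ref{propcoraseestimate}, i.e.\ by comparing norms of the evaluation functional on the two Bergman spaces, which is multiplicative from the start and avoids any lower bound on the kernel.
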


\begin{rmk}
In the case $n=1$, we give a new proof of Auvray-Ma-Marinescu's estimate for Bergman kernels on punctured Riemann surfaces \cite[Theorem 1.1]{aumamar1} that relies on Tian's peak section method.
\end{rmk}

Let $X= \mathbb{B}^n /\Gamma $, and let $\mathcal{K}_X $ be the canonical bundle on $X$. Then the K\"ahler metric $\omega_{\mathbb{B}^n}$ gives a Hermitian metric $h_X $ on $\mathcal{K}_X $. Since $\Ric \left( \omega_{\mathbb{B}^n} \right) = -(n+1) \omega_{\mathbb{B}^n} $, we have $ \Ric \left( h_X \right) = (n+1) \omega_{\mathbb{B}^n} $, and hence $ \left( X , (n+1)\omega_{\mathbb{B}^n} , \mathcal{K}_X , h_X \right) $ is a polarized complete K\"ahler orbifold. Now we can describe the asymptotic behavior of the supremum of the Bergman kernel functions of $ \left( X , \omega_{\mathbb{B}^n} , \mathcal{K}_X , h_X \right) $. This is the higher dimensional analogue of \cite[Corollary 1.4]{aumamar1}. Note that Theorem \ref{thmcusplocalizationprinciple} also holds for K\"ahler orbifolds with asymptotic complex hyperbolic cusps, so we don't need to assume that $\Gamma $ is torsion free here.

\begin{thm}
\label{thmsupballquotient}
Under the conditions stated above, there exist an integer $\mathcal{N} =\mathcal{N} (n,\Gamma ) \in\mathbb{N} $, a constant $C=C(n,\Gamma ) >0 $, and a sequence of positive constants $\left\lbrace c_i \right\rbrace_{i=1}^{\infty} $, such that $c_{i} =c_{i+\mathcal{N}} $, $ \forall i\in\mathbb{N} $, and satisfy the following properties.

If $ \Gamma $ is cocompact, then
\begin{eqnarray}
\sup_{X} \rho_{X , \omega_{\mathbb{B}^n} , \mathcal{K}_X , h_X ,m} & = & c_m m^{n} + O\left( m^{n-1} \right) ,\textrm{ as $m\to\infty $.} \label{inequalitythmsupballquotient1}
\end{eqnarray}

If $ \Gamma $ is not cocompact, then
\begin{eqnarray}
\sup_{X} \rho_{X , \omega_{\mathbb{B}^n} , \mathcal{K}_X , h_X ,m} & = & c_m m^{n+\frac{1}{2}} + O\left( m^{n } \right) ,\textrm{ as $m\to\infty $.} \label{inequalitythmsupballquotient2}
\end{eqnarray}

Moreover, if $\Gamma $ is neat, then $\mathcal{N}=1$.
\end{thm}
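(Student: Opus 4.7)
The plan is to combine the localization principle of Theorem \ref{thmballquotient} (and its orbifold counterpart via Theorem \ref{thmcusplocalizationprinciple}, which applies because the text preceding the statement reduces the general ball quotient to the orbifold setting) with the orbifold Tian--Yau--Zelditch expansion of Dai--Liu--Ma on the compact part and an explicit model computation on each complex hyperbolic cusp. Decompose $X=K\cup\bigsqcup_{i=1}^N U_i$ with $K$ compact and $U_i\cong V_i/\Gamma_{V_i}$ a cusp neighbourhood. Theorem \ref{thmballquotient} gives $\rho_{X,\omega_{\mathbb{B}^n},m}(x)/\rho_{V_i/\Gamma_{V_i},\omega_{V_i},m}(x)=1+O(m^{-l})$ on the fixed deep piece $V_{i,r_i^2}/\Gamma_{V_i}$ of each cusp, while the ordinary (orbifold) TYZ expansion on the complement contributes at most $O(m^n)$. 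Hence the supremum is governed by the compact piece in the cocompact case and by the supremum of the model cusp Bergman kernel function in the non-cocompact case.

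In the cocompact case, $X$ is a compact polarized K\"ahler orbifold under $((n+1)\omega_{\mathbb{B}^n},\mathcal{K}_X,h_X)$. At smooth points, the homogeneity of the complex hyperbolic metric forces every coefficient of the TYZ expansion to be a constant depending only on $n$, giving $\rho_{X,m}(x)=b_0(n)m^n+O(m^{n-1})$. At an orbifold point $x$ with isotropy $G_x$, the orbifold TYZ expansion supplies an extra contribution from the non-trivial elements of $G_x$ acting on $T_xX$ and $\mathcal{K}_X|_x$; this contribution depends on $m$ only through the characters $g\mapsto\det(g|_{T_xX})^m$, and is therefore periodic in $m$ with period dividing the least common multiple $\mathcal{N}$ of the orders of these isotropy characters over all orbifold points of $X$. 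Taking the supremum and rescaling the volume form by $(n+1)^n$ to pass from the polarization $(n+1)\omega_{\mathbb{B}^n}$ to $\omega_{\mathbb{B}^n}$, one obtains $\sup_X\rho_{X,m}=c_m m^n+O(m^{n-1})$ with $c_m$ periodic of period $\mathcal{N}$; when $\Gamma$ is neat, $X$ is smooth and $\mathcal{N}=1$.

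In the non-cocompact case, the main task is to compute $\sup\rho_{V/\Gamma_V,\omega_V,\mathcal{K}_V,m}$ on each model cusp. Using $\mathcal{K}_V\cong\pi^*\linebundle_D^{-1}$ (since $\mathcal{K}_D\cong\mathcal{O}_D$ for the Abelian base $D$) and the fibration $\pi\colon V\to D$, decompose every section of $\mathcal{K}_V^m$ into fibrewise Fourier modes: in a local trivialization $v=ze_D(p)$, write $s=\sum_{k\ge 1-m}g_k(p)z^k\otimes e_D^{-m}$ with $g_k\in H^0(D,\linebundle_D^{-(m+k)})$ (lower Laurent modes are forced to vanish by ampleness of $\linebundle_D^{-1}$ together with $L^2$-integrability). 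Descending to $V/\Gamma_V$ imposes a character constraint from $\Theta_{\Gamma_V}$ selecting the admissible indices $k$, $\mathcal{N}$-periodically in $m$. The $L^2$-pairing decouples across Fourier modes and, within each mode, factors into a base integral over $D$ (whose leading order is $(m+k)^{n-1}$ by the Bergman kernel asymptotic of $\linebundle_D^{-(m+k)}$) times a one-dimensional Gaussian-type integral in the cusp coordinate $t=-\log h_D$. Summing $|s(x)|_{h_X^m}^2/\|s\|_{L^2}^2$ over an orthonormal basis yields an explicit expression for $\rho_{V/\Gamma_V,m}(x)$: in the bulk $t=O(1)$ a Riemann-integral approximation of the sum recovers the usual $m^n$ TYZ growth, but at depth $t\sim m$ the Riemann approximation breaks down, the extremal modes with $m+k=O(1)$ dominate, and Stirling-type asymptotics deliver an extra $m^{1/2}$ enhancement, producing $\sup\rho_{V/\Gamma_V,m}=c_m m^{n+1/2}+O(m^n)$ with $c_m$ periodic in $m$; when $\Gamma$ is neat, all $\Gamma_{V_i}$ are trivial and $\mathcal{N}=1$.

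The main obstacle is this mode-by-mode analysis on the model cusp: extracting the precise leading coefficient $c_m$ at the optimal depth $t\sim m$, verifying that the Stirling-type enhancement is exactly $m^{1/2}$ regardless of $n$, and carefully bookkeeping the periodicity coming jointly from the characters $\Theta_{\Gamma_V}$ at cusps and from isotropies at orbifold points. The resulting $m^{n+1/2}$ behaviour over the bulk $m^n$ TYZ growth is the natural higher-dimensional generalization of Auvray--Ma--Marinescu's $m^{3/2}$ for punctured Riemann surfaces.
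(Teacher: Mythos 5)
Your proposal is correct and follows essentially the same route as the paper: localization via the orbifold form of Theorem \ref{thmcusplocalizationprinciple}, the explicit Fourier-mode computation of the model cusp Bergman kernel with the Gaussian/Stirling analysis at depth $t\sim m$ producing the $m^{n+\frac{1}{2}}$ enhancement (the paper's Lemma \ref{lmmorthonormalfver}, Proposition \ref{propbergmanknv}, Lemma \ref{lmmapproximationsumfm} and Corollary \ref{corobgmkernelsup}), and periodicity of $c_m$ coming from the characters $\Theta_{\Gamma_V}$ at the cusps and the isotropy characters at orbifold points. The only cosmetic difference is that for the compact part you invoke the Dai--Liu--Ma orbifold expansion, whereas the paper reproves the needed statement (Lemma \ref{lmmbergmankernelhyperbolicSUnfinitequotient}) self-containedly from the explicit Bergman kernel of $\mathbb{B}^n /G_x$ together with peak-section localization, a substitution the paper itself notes is equivalent.
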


Estimates for the upper bound of the Bergman kernel functions of the Shimura varieties are relevant in arithmetic geometry \cite{aaeu1, aumamar1, jk1, pmeu1}. It is possible to generalize our argument for Theorem \ref{thmsupballquotient} to the more general Shimura varieties, and possibly useful for arithmetic geometry. 

Now we apply the arguments in the proof of Theorem \ref{thmcusplocalizationprinciple} to Poincar\'e type cusps. The basic model of the Poincar\'e type cusp consists of a K\"ahler manifold $ \left( \mathbb{D}_{r}^* \right)^k \times B_r^{ 2n- 2k } (0)$ and a Kahler metric 
$ -\sqrt{-1} \sum_{i=1}^{k} \partial\partialbar \log \left( \log \left| z_i \right|^2 \right) + \pi_{k,n}^* \omega_{k,n} $
on $ \left( \mathbb{D}_{r}^* \right)^k \times B_r^{ 2n- 2k } (0)$, where $\pi_{k,n} $ is the projection of $ \left( \mathbb{D}_{r}^* \right)^k \times B_r^{ 2n- 2k } (0) \subset \mathbb{C}^{ n } $ onto $B_r^{ 2n- 2k } (0)$, $\omega_{k,n} $ is a K\"ahler metric on $B_r^{ 2n- 2k } (0)$, and $\mathbb{D}_{r} = \left\lbrace z\in\mathbb{C} \big| |z|<r \right\rbrace $, $\mathbb{D}_{r}^* = \left\lbrace z\in\mathbb{C} \big| 0<|z|<r \right\rbrace $ for any $r>0$. By abuse of notation, we will write the K\"ahler metric $ -\sqrt{-1} \sum_{i=1}^{k} \partial\partialbar \log \left( \log \left| z_i \right|^2 \right) +  \pi_{k,n}^* \omega_{k,n} $ above simply $ \omega_{\model ,k,n } $ when $ \omega_{k,n} $ is an Euclidean metric. 

Fix a simple normal crossing divisor $D$ in an $n$-dimensional compact K\"ahler manifold $ \bar{M} $. By definition, we see that for each $x\in D$, there exists a neighborhood $U_x $ of $x$ and a biholomorphic map 
$$ \varphi_x : U_x \sq D \to \prod_{i=1}^{k_x} \mathbb{D}_{r_{x,i}}^* \times B_{r_{x,k,n}}^{2n-2k_x} (0) \subset \left( \mathbb{D}_{1}^* \right)^{k_x} \times \mathbb{C}^{ n- k_x }  ,$$
where $r_{x,j} \in (0,1] $ and $r_{x,k,n} >0 $ are constants. Let $\omega $ be a K\"ahler metric on $\bar{M}\sq D$. Then we say that $ \omega $ is a metric with Poincar\'e type cusp along $D$ if for any $x\in U$, the data $\left( U_x ,\varphi_x \right) $ constructed above satisfying that 
$$ \left( \varphi^{-1}_x \right)^{*} \omega = -\sqrt{-1} \sum_{i=1}^{k} \partial\partialbar \log \left( \log \left| z_i \right|^2 \right) +   \pi_{k,n}^* \omega_{k,n} + \sqrt{-1} \partial\partialbar u_x ,$$
where $u_x $ is a smooth function on $ \prod_{i=1}^{k_x} \mathbb{D}_{r_{x,i}}^* \times B^{2n-2k_x }_{r_{x,k,n}} (0) $, and there exists a constant $\alpha >0$ such that $u_x \in \cap_{i=1 }^{k_x} O\left( \left| \log \left| z_i \right| \right|^{-\alpha } \right) $ as $\inf_{1\leq i\leq k_x} \left| z_i \right| \to 0^+ $ to all orders with respect to $\left( \varphi^{-1}_x \right)^{*} \omega$.

Let $M$ denote the manifold $ \bar{M} \sq D $. Suppose that $\left( M ,\omega \right) $ is a complete K\"ahler manifold with Poincar\'e type cusp along $D$ and $(\linebundle ,h)$ is a Hermitian line bundle on $M$ such that $\Ric (h) \geq \epsilon \omega $ and $\Ric (\omega ) \geq -\Lambda \omega $ for some constants $\epsilon ,\Lambda >0$. We further assume that for any $x\in D$, there exists a local frame $e_x \in H^0 \left( U_x \sq D ,\linebundle \right) $ such that $ \left\Vert e_x \right\Vert^2 =e^{-u_x - \tau_{k,n} \circ \pi_{k,n} } \left( \prod_{i=1}^{k_x} \left| \log \left| z_i \right|^2 \right| \right) $, where $\pi_{k,n} $ is the projection of $ \left( \mathbb{D}_{1}^* \right)^k \times \mathbb{C}^{ n- k } $ onto $\mathbb{C}^{ n- k } $, and $\tau_{k,n} $ is a K\"ahler potential of $\omega_{k,n} $. It follows that $ \Ric (h) =\omega $ on a neighborhood of $ D $.

By imitating the argument in the proof of Theorem \ref{thmcusplocalizationprinciple}, one can give an analogy of Theorem \ref{thmcusplocalizationprinciple} on K\"ahler manifolds with Poincar\'e type cusps.

\begin{prop}
\label{proppoincarelocalizationprinciple}
Let $\bar{M} $ be a compact K\"ahler manifold, $D$ be a simple normal crossing divisor, and $(M,\omega )= \left( \bar{M} \sq D ,\omega \right) $ be a complete K\"ahler manifold with Poincar\'e type cusp along $D$. Assume that $(\linebundle ,h )$ be a positive line bundle on $M$ that satisfies the above conditions. For any open subset $U\subset M$, we will denote by $B_{U, \omega ,m } (x,y) $ the Bergman kernel on $\left( U ,\omega ,\linebundle ,h \right) $. Fix a Riemannian metric $\bar{g} $ on $\bar{M}$. Write $\bar{d}(x) = \dist_{\bar{g}} \left(  x,D \right) $, $\forall x\in M $. Given constants $\xi ,\kappa ,\beta >0$. Then there exists a constant $ C>0$ satisfying the following property.

Let $m\geq C $ be an integer, and $\left( U_m ,U'_m \right) $ be a pair of open subsets of $M$ such that the distance $\dist_{\omega} \left( U'_m ,M\sq U_m \right) \geq \xi \frac{\log m}{\sqrt{m}} $, and $\left\lbrace x\in M \;\big| \; \bar{d} (x) \leq e^{-\kappa \frac{\sqrt{m}}{\log m }} \right\rbrace \subset U'_m $. Then the Bergman kernel satisfying that
\begin{eqnarray}
 \sup_{ U'_{m } \times \left(M\sq U_m \right) \cup \left(M\sq U_m \right) \times U'_{m } }   \left| \log \bar{d} (x) \right|^\beta \left| \log \bar{d} (y) \right|^\beta  \left\Vert B_{\omega ,m} (x,y) \right\Vert_{C^k  } & \leq & C m^{-l} , \label{equation1thmpoincarelocalizationprinciple}\\
\sup_{ U'_{m } \times U'_{m } } \left| \log \bar{d} (x) \right|^\beta \left| \log \bar{d} (y) \right|^\beta \left\Vert B_{U_m ,\omega ,m } (x,y) - B_{\omega ,m} (x,y) \right\Vert_{C^k } & \leq & C m^{-l} . \label{equation2thmpoincarelocalizationprinciple}\end{eqnarray}
\end{prop}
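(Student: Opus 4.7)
The plan is to run the argument for Theorem \ref{thmcusplocalizationprinciple} with the asymptotic complex hyperbolic cusp replaced by a Poincar\'e-type cusp; the two models share the crucial structural features needed, namely completeness near the boundary, a preferred trivializing frame whose pointwise norm is proportional to $|\log h_D|$ (resp.\ $\prod_{i=1}^{k_x}|\log|z_i|^2|$), and a K\"ahler potential that splits off a logarithm plus a tame perturbation $u_x = O(|\log|z_i||^{-\alpha})$. As in Theorem \ref{thmcusplocalizationprinciple}, the key input is a peak section with sufficiently strong Agmon-type decay at the scale $m^{-1/2}\log m$, which is exactly the scale separating $U'_m$ from $M\setminus U_m$.

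First I would construct local peak sections at an arbitrary $x\in U'_m$ by working in the model chart $\left(\discstar_{r_{x,i}}\right)^{k_x}\times B^{2n-2k_x}_{r_{x,k,n}}(0)$ with the model metric $\omega_{\model,k,n}+\sqrt{-1}\partial\partialbar u_x$ and the polarization $e^{-u_x-\tau_{k,n}\circ\pi_{k,n}}\prod_i|\log|z_i|^2|$. Because the model metric and the polarization split as a product, an approximate peak section is obtained by multiplying, on each factor $\discstar_{r_{x,i}}$, the one-dimensional peak sections of Auvray-Ma-Marinescu (realized as suitable monomials $z_i^{j_i}$ with $j_i\sim m|\log|z_{i,x}||^{-1}$), with a Gaussian peak section on the Euclidean factor $B^{2n-2k_x}_{r_{x,k,n}}(0)$ as in Tian's original construction. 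The $O(|\log|z_i||^{-\alpha})$ decay of $u_x$ to all orders is then enough to convert this quasi-holomorphic section into a genuine element of $H^0_{L^2}\bigl(U_m,\linebundle^m\bigr)$ by H\"ormander's $L^2$-estimate; the complete cusps and the pseudoconvexity of $M$ make the estimate applicable as in Theorem \ref{thmcusplocalizationprinciple}.

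Next I would establish the Agmon-type exponential decay for these peak sections, following Corollary \ref{coroagmontypeestimate}. The bounds $\Ric(h)\geq\epsilon\omega$ and $\Ric(\omega)\geq-\Lambda\omega$ are exactly what drive the Bochner-type integration by parts producing $\|S_{x,m}(y)\|_{h^m}\leq Ce^{-c\sqrt m\,\dist_\omega(x,y)}$, valid up to distances $\delta\frac{\log m}{\sqrt m}$ since, on the product model, the injectivity radius at the slice $\{|z_i|=\mathrm{const}\}$ of a single Poincar\'e cusp is comparable to $|\log|z_i||^{-1}$, and the admissibility condition $\bar d(x)\geq e^{-\kappa\sqrt m/\log m}$ forces $|\log\bar d|\lesssim\sqrt m/\log m$, well beyond the scale $m^{-1/2}\log m$ of peak-section spread. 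Points in $U'_m$ with $\bar d$ even smaller are already contained in the localization set, so no decay issue arises for them: the relevant peak sections at such $x$ automatically have their $L^2$-mass concentrated inside $U'_m$. The $C^k$ upgrade from $C^0$ proceeds by standard elliptic regularity applied on geodesic balls of radius $m^{-1/2}$, exactly as in \cite{mamari1} and in Theorem \ref{thmcusplocalizationprinciple}.

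With these peak sections in hand, \eqref{equation1thmpoincarelocalizationprinciple} follows by expressing $B_{\omega,m}(x,y)$ as a sum over an orthonormal basis dominated by peak sections and invoking the Agmon decay between $x\in U'_m$ and $y\in M\sq U_m$, producing $e^{-c\xi\log m}=m^{-c\xi}$, which beats any $m^{-l}$; the logarithmic prefactor $|\log\bar d|^{-\beta}$ is absorbed by choosing weight $\beta\log|\log\bar d|$ in H\"ormander's estimate, as in the proof of \eqref{equation1thmcusplocalizationprinciple}. The difference estimate \eqref{equation2thmpoincarelocalizationprinciple} is then obtained by comparing the two Bergman reproducing formulas on $U_m$ and $M$ against the same basis of peak sections; the error is controlled by the mass of the peak sections outside $U_m$, again by Agmon decay. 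The main obstacle I anticipate is ensuring that all constants are uniform across the normal-crossing stratification (in particular at deep intersections where $k_x$ may equal $n$, so the model has no Euclidean factor and the injectivity radius degenerates in every direction); this is handled by noting that $\bar d(x)$ is comparable, uniformly in $x\in U_x$, to $\min_i|z_i|$ and by using the product structure of the model to combine the one-dimensional Agmon estimates multiplicatively.
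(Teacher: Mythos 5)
Your overall strategy is the same as the paper's: localize at the scale $\frac{\log m}{\sqrt m}$, represent derivatives of $B_{\omega,m}$ through peak sections of point functionals, control the mass of those peak sections outside $U_m$ via H\"ormander's $L^2$-estimate with quasi-psh weights, and then transfer the comparison of kernels to a comparison of peak sections via Lemma \ref{peaksectionrestriction}. Two of your choices diverge from the paper in inessential ways: you build the model peak sections as explicit products of one-dimensional Auvray--Ma--Marinescu sections with a Gaussian factor, whereas the paper works with the abstract peak section of the functional and only uses the product structure of the model to bound the model Bergman kernel (Corollary \ref{corocoarseestimatepoincarecusp}); and you propose to produce the prefactor $|\log\bar d|^{-\beta}$ by inserting the weight $\beta\log|\log\bar d|$ into H\"ormander's estimate, whereas the paper obtains it from the explicit vanishing of $L^2$ holomorphic sections at the puncture (the model Bergman kernel function decays like a positive power of $|z|$, hence faster than any power of $|\log\bar d|$), combined with the derivative bound of Lemma \ref{lmmderiholosections}. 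Your weight is indeed quasi-psh with bounded negative Hessian, but you would still need a sub-mean-value step to convert the weighted $L^2$ bound into the pointwise $C^k$ bound, so the paper's route is the shorter one.

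The genuine gap is your treatment of points $x\in U'_m$ that lie very deep in the cusp, say $\bar d(x)\le e^{-m^{10}}$. You assert that for such $x$ ``no decay issue arises'' and that the peak section ``automatically'' concentrates inside $U'_m$; this is exactly the nontrivial claim, and it is where the paper spends most of its effort (Part 2 of Proposition \ref{prop2coraseestimate}, relying on Lemma \ref{lmmquasipshpoincarecusp} and the covering lemma for the normal-crossing strata). At such a point the injectivity radius is of order $|\log\bar d(x)|^{-1}\ll m^{-1/2}$, so Tian's Lemma \ref{peaksec} and the Agmon/Bochner argument you invoke are not available on any ball of radius comparable to $\frac{\log m}{\sqrt m}$; moreover, to glue the model peak section into $\hl(M,\linebundle^m)$ without destroying the peak property one needs a weight with a logarithmic pole of order at least $n+k$ at $x$ which is \emph{globally} quasi-psh with Hessian bounded below by $-o(m)\,\omega$ and supported in the cusp region. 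That such a function exists is not automatic in the degenerating Poincar\'e geometry: the paper constructs it by hand from the coordinate functions $\delta^{-2}|y_i-p_i|^2$ with cut-offs adapted to $\omega_{\model,k,n}$, obtaining the curvature bound $-C(|\log\delta|+\epsilon^{-2})\,\omega$, which is $O(\sqrt m)$ and hence absorbable. Without this construction (or an equivalent substitute), both the localization of the peak section at deep points and the comparison $\Vert T_m|_{\hl(M,\linebundle^m)}\Vert\ge(1-Cm^{-l})\Vert T_m|_{\hl(U_m,\linebundle^m)}\Vert$ remain unproved, and estimates (\ref{equation1thmpoincarelocalizationprinciple})--(\ref{equation2thmpoincarelocalizationprinciple}) do not follow for the full range of $(x,y)$ allowed by the hypotheses.
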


This paper is organized as follows. In Section \ref{Preliminaries}, we collect some preliminary results that will be used many times. Then we will calculate the Bergman kernels on the complex hyperbolic cusp in Section \ref{model}. The proofs of Theorem \ref{thmcusplocalizationprinciple} and Proposition \ref{proppoincarelocalizationprinciple} are presented in Section \ref{tpsonprs}. Finally, the proofs of Theorem \ref{thmdatarfusongmanifold}, Theorem \ref{thmballquotient}, and Theorem \ref{thmsupballquotient} are contained in Section \ref{bergmanperturbedmanifoldssection}.

\vspace{0.2cm}

\textbf{Acknowledgement.} I would like to express my deepest gratitude to Professor Gang Tian, my supervisor, for his constant encouragement and guidance.

\section{Preliminaries}
\label{Preliminaries}

Now we state the H\"omander's $L^2$ estimate as follows without proof.  
\begin{prop}
\label{l2m}
Let $(M,\omega )$ be an $n$-dimensional complete K\"ahler manifold. Let $(\linebundle ,h)$ be a hermitian holomorphic line bundle, and let $\psi $ be a function on $M$, which can be approximated by a decreasing sequence of smooth function $\left\lbrace \psi_i \right\rbrace_{i=1}^{\infty} $. Suppose that $$\sqrt{-1}\partial\bar{\partial} \psi_i + \Ric(\omega )+\Ric(h)  \geq \gamma\omega $$
for some positive continuous function $\gamma $ on $M$, $\forall i\in\mathbb{N}$. Then for any $\linebundle $-valued $(0,1)$-form $\zeta\in L^{2}$ on $M$ with $\bar{\partial} \zeta =0$ and $\int_{M} ||\zeta ||^{2} e^{-\psi} \omega^n $ finite, there exists a $\linebundle $-valued function $u\in L^{2}$ such that $\partialbar u=\zeta$ and $$\int_{M} ||u||^{2} e^{-\psi } \omega^n \leq \int_{M} \gamma^{-1} ||\zeta ||^{2} e^{-\psi} \omega^n ,$$
where $||\cdot ||$ denotes the norms associated with $h$ and $\omega $.
\end{prop}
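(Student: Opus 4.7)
The plan is to follow the standard Andreotti–Vesentini–Hörmander scheme: reduce to a Hilbert-space duality problem, then use the Bochner–Kodaira–Nakano identity, with the weight $e^{-\psi}$ absorbed into the metric and completeness of $\omega$ used to guarantee density of compactly supported forms.

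First I would reduce to the case that $\psi$ is smooth. Replacing $\psi$ by the smooth $\psi_i$, I solve the $\bar\partial$-equation $\bar\partial u_i=\zeta$ with the estimate $\int_M\|u_i\|^2 e^{-\psi_i}\omega^n \le \int_M\gamma^{-1}\|\zeta\|^2 e^{-\psi_i}\omega^n \le \int_M\gamma^{-1}\|\zeta\|^2 e^{-\psi}\omega^n$ (using $\psi_i \ge \psi$); then by weak $L^2$ compactness, along a subsequence $u_i \rightharpoonup u$ with $\bar\partial u = \zeta$, and Fatou (using $\psi_i\searrow\psi$, so $e^{-\psi_i}\nearrow e^{-\psi}$) gives the required bound with $\psi$.

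With $\psi$ smooth, the weight is absorbed by replacing $h$ by $\widetilde h = h e^{-\psi}$. For an $\linebundle$-valued $(0,1)$-form $\eta$ compactly supported in $M$, I would invoke the Bochner–Kodaira–Nakano formula (writing $(0,1)$-forms with values in $\linebundle$ as $(n,1)$-forms with values in $\linebundle\otimes K_M^{-1}$) to obtain
\begin{equation*}
\|\bar\partial \eta\|_{\widetilde h,\omega}^2 + \|\bar\partial^*_{\widetilde h,\omega}\eta\|_{\widetilde h,\omega}^2 \;\ge\; \int_M \bigl\langle [\,\sqrt{-1}\partial\bar\partial\psi + \Ric(h) + \Ric(\omega),\,\Lambda_\omega\,]\eta,\eta\bigr\rangle\, e^{-\psi}\omega^n \;\ge\; \int_M \gamma\|\eta\|^2 e^{-\psi}\omega^n,
\end{equation*}
where the last inequality uses the curvature hypothesis and the standard computation that the commutator acts on $(0,1)$-forms as multiplication by the trace with respect to $\omega$. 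Completeness of $\omega$ is invoked here in the usual way (cut-off functions $\chi_k$ with $|d\chi_k|_\omega\to 0$) to ensure that smooth compactly supported forms are a core for $\bar\partial$ and $\bar\partial^*$ in the graph norm, so that the inequality extends to $\eta\in\mathrm{Dom}(\bar\partial)\cap\mathrm{Dom}(\bar\partial^*)$.

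Finally I would set up the duality argument. For $\eta\in\mathrm{Dom}(\bar\partial^*)$, orthogonally decompose $\eta=\eta_1+\eta_2$ with $\eta_1\in\overline{\ker\bar\partial}$ and $\eta_2\in(\ker\bar\partial)^\perp\subset\ker\bar\partial^*$; then $\bar\partial^*\eta=\bar\partial^*\eta_1$ and, since $\bar\partial\zeta=0$, $\langle\zeta,\eta\rangle_{L^2,\psi}=\langle\zeta,\eta_1\rangle_{L^2,\psi}$. Applying the Bochner inequality to $\eta_1$ (for which $\bar\partial\eta_1=0$) together with Cauchy–Schwarz gives
\begin{equation*}
|\langle\zeta,\eta\rangle_{L^2,\psi}|^2 \;\le\; \Bigl(\int_M \gamma^{-1}\|\zeta\|^2 e^{-\psi}\omega^n\Bigr) \cdot \|\bar\partial^*\eta\|_{L^2,\psi}^2 .
\end{equation*}
Thus the linear functional $\bar\partial^*\eta\mapsto\langle\zeta,\eta\rangle_{L^2,\psi}$ is well defined and bounded on $\mathrm{Range}(\bar\partial^*)$; Hahn–Banach extends it to all of $L^2$ and Riesz representation produces $u\in L^2$ with $\|u\|_{L^2,\psi}^2\le\int_M\gamma^{-1}\|\zeta\|^2 e^{-\psi}\omega^n$ and $\bar\partial u=\zeta$ in the distributional sense; elliptic regularity of $\bar\partial$ promotes $u$ to the desired solution.

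The main obstacle is the Bochner–Kodaira–Nakano step combined with completeness: one must genuinely verify that the weighted curvature term is exactly $\sqrt{-1}\partial\bar\partial\psi+\Ric(h)+\Ric(\omega)$ (which is why the Ricci form of $\omega$ enters, through the identification $\Lambda^{0,1}\otimes\linebundle \cong \Lambda^{n,1}\otimes\linebundle\otimes K_M^{-1}$), and that completeness supplies the density needed to pass from compactly supported $\eta$ to general $\eta\in\mathrm{Dom}(\bar\partial)\cap\mathrm{Dom}(\bar\partial^*)$. Everything else is a routine Hilbert-space argument.
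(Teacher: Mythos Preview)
Your sketch is the standard Andreotti--Vesentini--H\"ormander argument and is correct. The paper does not actually prove this proposition: immediately after stating it, the author writes ``Now we state the H\"omander's $L^2$ estimate as follows without proof'' and refers to \cite[Chapter 5]{dm1} and \cite[Chapter IV]{lh1}, which is precisely where the argument you outline can be found.
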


The proof of H\"omander's $L^2$ estimate can be found in \cite[Chapter 5]{dm1} or \cite[Chapter IV]{lh1}. Note that the $L^2$ estimate also holds for complete K\"ahler orbifolds \cite[Proposition 2]{blukas1} and weakly pseudoconvex K\"ahler manifolds \cite[Corollary 5.3]{dm1}.

We introduce the Cheeger-Gromov $C^{m,\alpha }$-norm for K\"ahler manifolds now. 
\begin{definition}[Holomorphic Norms]
\label{holonorm}
Let $(M,\omega ,x)$ be a pointed K\"ahler manifold. We say that the holomorphic $C^{m,\alpha}$-$norm\; on\; the\; scale\; of\; r$ at $x$: 
$$\left\Vert (M,\omega ,x) \right\Vert^{holo}_{C^{m,\alpha} ,r} \leq Q,$$
provided there exists a biholomorphic chart $\phi : B_r (0)  \to   (U,x)\subset M$ such that $\phi (0) =x $, $ |D\phi |\leq e^Q \textrm{ on } B_r (0) \textrm{ and } \left| D\phi^{-1} \right| \leq e^Q \textrm{ on } U $, and for all multi-indices $I\textrm{ with }0\leq |I|\leq m $, $ r^{|I|+\alpha} \left\Vert D^{I}\omega_{i\bar{j}} \right\Vert_{\alpha} \leq Q$. Globally we define
$$ \left\Vert (M,\omega) \right\Vert^{holo}_{C^{m,\alpha} ,r}=\sup_{x\in M} \left\Vert (M,\omega,x) \right\Vert^{holo}_{C^{m,\alpha} ,r}.$$

\end{definition}

By using H\"ormander's $L^2$ estimate appropriately, Tian initiate his peak section method in \cite{tg1}. One of the key steps of Tian's peak section method is to construct the following global section.

\begin{lmm}[{\cite[Lemma 1.2, Lemma 2.3]{tg1}}]
\label{peaksec}
For an $n$-tuple of integers $P=\left( p_1 ,p_2 ,...,p_n \right)\in \mathbb{Z}_{+}^{n}$, an integer $p'> p=\sum_{j=1}^{n} p_j $, and constants $\Lambda ,\epsilon ,Q >0$, there are constants $m_0 , C$ which depending on $\Lambda $, $n$, $p$, $p'$, $\epsilon $, $Q$ with the following property.

Let $(M,\omega ,\linebundle ,h)$ be a polarized manifold such that $\Ric\left( \omega \right) \geq -\Lambda \omega $, $\Ric (h) \geq \epsilon \omega $, and $x\in M$. Suppose that $M$ is pseudoconvex if $(M, \omega)$ isn't complete. Assume that there exists a local coordinate $\left( z_1 ,\cdots ,z_n \right) : B_{\frac{\log m}{Q \sqrt{m}}} (x) \to U\subset \mathbb{C}^n $, such that $x= (0,\cdots ,0) $, $ e^{-Q} \omega_{Euc} \leq \omega \leq e^Q \omega_{Euc} $, and the hermitian matrix $\left( g_{i\bar{j}} \right) $ satisfies that $g_{i\bar{j}} (0) = \delta_{ij} $ , $dg_{i\bar{j}} (0) = 0$, and $\left\Vert g_{i\bar{j}} \right\Vert^{*}_{1,\frac{1}{2}; U} \leq Q $, where $\left\Vert f \right\Vert^{*}_{k,\alpha } $ is the interior norms on a domain in $\mathbb{R}^{2n}$. We further assume that there exists a holomorphic frame $e_\linebundle $ of $\linebundle$ on this coordinate such that the local representation function of $h$, $a=h\left( e_\linebundle ,e_\linebundle \right) $, satisfying that $a(0)=1$, $\frac{\partial^{|I|} a}{\partial z^I }  (0) =0$ for each milti-index $I$ with $|I|\leq 3$, and $\left\Vert a \right\Vert^{* }_{3,\frac{1}{2} ;U }  \leq  \frac{Q \left( \log m \right)^2 }{m}$.

Then there are sequences $a_m$ and $b_m $, smooth $\linebundle $-valued sections $\varphi_m $, and holomorphic global sections $S_m $ in $H^{0} \left( M,\linebundle^m \right)$, $\forall m>m_0$, satisfying
\begin{eqnarray*}
\int_M \left\Vert \varphi_{m} \right\Vert_{h^m}^{2} dV_\omega &\leq & \frac{C}{m^{8p' +2n}} ,\\
\int_{M} \left\Vert S_m \right\Vert_{h^m}^{2} dV_\omega & = & 1, \\
\int_{M\big\sq\left\lbrace |z|\leq \frac{\log(m) }{Q\sqrt{m}} \right\rbrace }\left\Vert S_m \right\Vert_{h^m}^{2} dV_\omega  & \leq &   \frac{C}{m^{2p'}}  ,
\end{eqnarray*}
and locally at $x$,
\begin{align*}
S_m (z)  =  \lambda_{P} \left( 1+ \frac{a_m}{m^{2p'}} \right) \left( z_1^{p_1} \cdots z_n^{p_n} +\varphi_m \right) e_L^m ,
\end{align*}
where $||\cdot ||_{h^m}$ is the norm on $\linebundle^m$ given by $h^m$, $|a_m |\leq C $, $\varphi_m $ is holomorphic on $\left\lbrace |z|\leq \frac{\log(m) }{Q\sqrt{m}} \right\rbrace$, and $||\varphi_{m} ||_{h^m} \leq b_m |z|^{2p'} $ on $U$, moreover
\begin{align*}
\lambda_{P}^{-2} = \int_{\left\lbrace |z|\leq \frac{\log(m) }{Q \sqrt{m}} \right\rbrace } \left| z_1^{p_1} \cdots z_n^{p_n} \right|^2 a^m dV_\omega ,
\end{align*}
and hence we can assume that $ \left| m^{n+p} \lambda^{-2}_P - \pi^n P! \right| \leq Cm^{-\frac{1}{2}} $, where $dV_\omega = \left( \frac{\sqrt{-1}}{2}\right)^n \det\left( g_{i\bar{j}}\right) dz_1 \wedge d\zbar_1 \wedge \cdots \wedge dz_n \wedge d\zbar_n $ is the volume form.
\end{lmm}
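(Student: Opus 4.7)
The plan is to execute Tian's classical peak section construction: build a smooth local ansatz $\tilde S_m$ supported in a small coordinate ball around $x$, then correct it by a $\partialbar$-solution obtained from H\"ormander's estimate with a plurisubharmonic weight that is singular enough at $x$ to force the correction to vanish to high order there.

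\textbf{Step 1 (smooth ansatz and Gaussian tail).} Choose a cutoff $\eta\in C_c^\infty(B_{\log m/(Q\sqrt m)}(x))$ with $\eta\equiv 1$ on the half-sized ball and $|\partialbar\eta|\leq C\sqrt m/\log m$. Define $\tilde S_m := \eta\cdot z_1^{p_1}\cdots z_n^{p_n}\cdot e_L^m$, extended by zero. Under the hypotheses $a(0)=1$, vanishing of all derivatives of $a$ up to order $3$, and $\|a\|^*_{3,1/2}\leq Q(\log m)^2/m$, a Taylor expansion of $\log a$ yields $a^m = e^{-m|z|^2}(1+O((\log m)^2|z|^4))$ on the coordinate chart. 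Since $\partialbar\tilde S_m$ is supported in the annulus $|z|\sim \log m/\sqrt m$, combining the Gaussian factor $e^{-c(\log m)^2}$ with polynomial prefactors gives
\[
\int_M \|\partialbar\tilde S_m\|_{h^m}^2\,dV_\omega \leq Cm^{-N}
\]
for any $N\in\mathbb{N}$.

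\textbf{Step 2 (H\"ormander correction).} Apply Proposition \ref{l2m} with the plurisubharmonic weight $\psi_m = 2(n+p')\chi\log|z|^2$, where $\chi$ is a smooth cutoff equal to $1$ near $x$ and supported in the chart. Since $\log|z|^2$ is psh, for $m$ large the curvature condition
\[
\sqrt{-1}\partial\partialbar\psi_m + \Ric(\omega)+m\Ric(h)\geq \tfrac12 m\epsilon\,\omega
\]
holds: the singular part of $\psi_m$ contributes non-negatively, while the bounded negative contributions from $\Ric(\omega)$ and from $\partial\partialbar$ of the cutoff transition are dominated by $m\epsilon\omega$. Pseudoconvexity of $M$ supplies the hypothesis of Proposition \ref{l2m} in the non-complete case. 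We obtain a smooth $v_m$ with $\partialbar v_m=\partialbar\tilde S_m$ and
\[
\int_M \|v_m\|_{h^m}^2 e^{-\psi_m}\,dV_\omega \leq \tfrac{2}{m\epsilon}\int_M \|\partialbar\tilde S_m\|_{h^m}^2 e^{-\psi_m}\,dV_\omega \leq Cm^{-N},
\]
because $\partialbar\tilde S_m$ is supported away from the singularity of $\psi_m$. Since $\partialbar\tilde S_m\equiv 0$ near $x$, $v_m$ is holomorphic there; the singular weight $|z|^{-4(n+p')}$ together with finiteness of the weighted integral forces $v_m$ to vanish at $x$ to order $>2p'$. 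A Cauchy-type mean-value estimate on balls of radius $\sim 1/\sqrt m$ then upgrades this to the pointwise bound $\|v_m\|_{h^m}\leq b_m|z|^{2p'}$ required for $\varphi_m$.

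\textbf{Step 3 (normalization and asymptotics of $\lambda_P$).} Set $S_m := c_m \lambda_P(\tilde S_m - v_m)$ with $c_m=1+a_m/m^{2p'}$ chosen so that $\|S_m\|_{L^2}=1$. Holomorphy of $S_m$ is immediate from $\partialbar(\tilde S_m-v_m)=0$; the global $L^2$-bound on $\varphi_m$ and the tail estimate $\int_{M\setminus B}\|S_m\|^2\leq Cm^{-2p'}$ follow by combining Step 1, the H\"ormander bound, and the asymptotics of $\lambda_P$ below. To compute $\lambda_P$, insert the Taylor expansion of $a^m$ and Euclideanize the volume form:
\[
\lambda_P^{-2} = \int_{|z|\leq \log m/(Q\sqrt m)}|z^P|^2 a^m\,dV_\omega = \int_{\mathbb{C}^n}|z^P|^2 e^{-m|z|^2}\,d\mu_{\mathrm{Euc}} + \text{errors},
\]
where the main term equals $\pi^n P!/m^{n+p}$, the Gaussian tail outside the small ball is $O(m^{-N})$, and the errors from $(a^m/e^{-m|z|^2}-1)$ and from $dV_\omega/d\mu_{\mathrm{Euc}}-1$ are controlled by the $C^{3,1/2}$-bounds on $a$ and on $g_{i\bar j}$, producing a relative error $O(m^{-1/2})$; the bound $|a_m|\leq C$ follows from the same comparison.

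\textbf{Main obstacle.} The delicate points are (i) keeping the H\"ormander positivity hypothesis uniform in $m$ once the singular weight is added, which is handled by the gap $m\epsilon\gg \Lambda$ for $m$ large, together with pseudoconvexity in the non-complete case; and (ii) promoting the weighted $L^2$-bound on $v_m$ to the pointwise bound on $\varphi_m$ with the correct power of $|z|$, which uses the holomorphy of $v_m$ on a neighborhood of $x$ together with a Cauchy estimate on balls of the natural scale $1/\sqrt m$ adapted to the Gaussian envelope of $a^m$.
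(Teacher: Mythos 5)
This lemma is quoted from Tian \cite{tg1} (Lemmas 1.2 and 2.3) and the paper supplies no proof of its own, so the only relevant comparison is with the cited source: your proposal correctly reconstructs exactly that classical argument --- cutoff ansatz $\eta z^P e_{\linebundle}^m$, H\"ormander correction against the singular weight $\sim \log|z|^2$ whose non-integrability at $x$ forces the correction to vanish to order $>2p'$, and the Gaussian computation giving $\lambda_P^{-2}\approx \pi^n P!\, m^{-n-p}$. The approach is the same as the one the paper relies on, and I see no gap beyond harmless imprecision in the error term of the expansion of $a^m$ (the relative error on the chart is $O(m|z|^4+m|z|^3)=O((\log m)^4 m^{-1/2})$ rather than $O((\log m)^2|z|^4)$, which does not affect the conclusion).
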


\begin{rmk}
By the theory of Cheeger-Gromov convergence, we can find such coordinates and holomorphic frames on polarized manifolds with a bound of Ricci curvature and a lower bound of injective radius. See \cite[Theorem 1.1]{ma1} or \cite[Chapter 11]{pp2} for details.
\end{rmk}

As a corollary, Tian shows that the global holomorphic sections constructed above have the following almost-orthogonal property.

\begin{coro}[\cite{tg1}, Lemma 3.1]
\label{coropeaksec}
Let $S_m$ be a holomorphic global section constructed in Lemma \ref{peaksec}, and $T$ be another holomorphic global section of $\linebundle^m$ with $\int_M \left\Vert T \right\Vert_{h^m}^2 d\V_\omega =1 $, which contains no term $z^P$ in its Taylor expansion at $x_0$. Then
$$ \left| \int_M \left\langle S_m , T \right\rangle_{h^m} d\V_\omega \right| \leq Cm^{-\frac{1}{2}} , \;\; \forall m\geq m_0 ,$$
where $m_0 ,C$ are positive constants depending on $\epsilon ,\Lambda ,Q$.
\end{coro}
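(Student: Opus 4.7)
The plan is to split the integral over $M$ into a small chart ball around $x$ and its complement, and to exploit the Gaussian-like concentration of $S_m$ at $x$ together with the orthogonality of holomorphic monomials under the Gaussian weight. Set $B := \{|z|\leq \frac{\log m}{Q\sqrt m}\}$. On $M\setminus B$, the concentration estimate from Lemma \ref{peaksec} gives $\|S_m\|_{L^2(M\setminus B)}\leq C m^{-p'}$, and Cauchy--Schwarz with $\|T\|_{L^2(M)}=1$ yields $|\int_{M\setminus B}\langle S_m,T\rangle_{h^m}\,dV_\omega|\leq C m^{-p'}$, which is $\ll m^{-1/2}$ once $p'\geq 1$.

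On $B$ I would write $T=T(z)e_L^m$ and Taylor-expand $T(z)=\sum_{Q}c_Q z^Q$; the hypothesis on $T$ becomes $c_P=0$. Substituting the explicit local form $S_m=\lambda_P(1+a_m/m^{2p'})(z^P+\varphi_m)e_L^m$ provided by Lemma \ref{peaksec}, the interior contribution reduces, up to the overall harmless factor $1+O(m^{-2p'})$, to
\[
\lambda_P\sum_{Q\neq P}\bar c_Q\int_B z^P\bar z^Q\,a^m\,dV_\omega+\lambda_P\int_B\varphi_m\,\overline{T(z)}\,a^m\,dV_\omega.
\]
The $\varphi_m$-remainder is controlled by Cauchy--Schwarz together with $\|\varphi_m\|_{L^2}^2\leq C m^{-8p'-2n}$ and $\lambda_P=O(m^{(n+p)/2})$; this is negligible for $p'$ large.

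For the principal sum I would perform the standard Gaussian rescaling $w=\sqrt m\,z$. The hypotheses $g_{i\bar j}(0)=\delta_{ij}$, $dg_{i\bar j}(0)=0$, and $\partial^I a(0)=0$ for all holomorphic multi-indices with $|I|\leq 3$ are precisely what guarantees that, after rescaling, the weight takes the form $a^m\,dV_\omega=m^{-n}e^{-|w|^2}\bigl(1+m^{-1}\Psi(w)+O(m^{-3/2}|w|^6)\bigr)\,dV_{Euc}(w)$, i.e.\ the first non-trivial correction appears only at order $m^{-1}$. Since $w^P$ and $w^Q$ are orthogonal under $e^{-|w|^2}dV_{Euc}$, the $1$ term contributes nothing to the off-diagonal integrals, and standard Gaussian moment estimates applied to the $m^{-1}\Psi$ piece give $|\int_B z^P\bar z^Q a^m dV_\omega|\leq C m^{-n-|P|/2-|Q|/2-1}\sqrt{P!Q!}$. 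Combining with the Parseval-type coefficient bound $|c_Q|^2\pi^n Q!/m^{n+|Q|}\leq 1+o(1)$, which follows from $\|T\|_{L^2(B)}\leq 1$ and the same Gaussian asymptotics, and with $\lambda_P\sim\sqrt{m^{n+p}/(\pi^n P!)}$, each summand is $O(m^{-1}/\sqrt{Q!})$. Summation over $Q$ converges to $O(m^{-1})$, which is dominated by $Cm^{-1/2}$.

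The main obstacle is the off-diagonal monomial integral estimate on $B$: one must verify that the first correction to $a^m\,dV_\omega/(e^{-m|z|^2}dV_{Euc})$ appears genuinely at order $m^{-1}$ after rescaling, and not earlier. This is where the normalization hypotheses on the coordinates and frame (Kähler normal coordinates together with the vanishing of holomorphic derivatives of $a$ through order $3$) all have to combine correctly; once this is in place, the summation over $Q$ and the comparison with $\lambda_P$ are routine, and together with the exterior estimate from Lemma \ref{peaksec} they yield the claimed bound $Cm^{-1/2}$.
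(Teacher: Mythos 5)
Your argument is correct and is essentially the proof the paper points to (Tian's Lemma 3.1 in \cite{tg1}): split the inner product into the chart ball and its complement, use Cauchy--Schwarz with the concentration estimate $\|S_m\|_{L^2(M\setminus B)}\leq Cm^{-p'}$ outside, and inside combine the Taylor expansion of $T$ with $c_P=0$, the $L^2$ bound on $\varphi_m$, and the near-rotational-invariance of $a^m\,dV_\omega$ (which makes $\int_B z^P\bar z^Q e^{-m|z|^2}\,dV_{Euc}$ vanish exactly for $Q\neq P$) so that only the correction to the Gaussian weight contributes. One small quantitative caveat: with only the stated $C^{1,\frac12}$ control of $g_{i\bar j}$ and $C^{3,\frac12}$ control of $a$, the Taylor remainders give $\det g=1+O(|z|^{3/2})$ and $m(\log a+|z|^2)=O(m|z|^{7/2})$, so after rescaling the deviation of the weight from $e^{-|w|^2}$ is of order $m^{-3/4}$ (up to logarithms) rather than the $m^{-1}$ you assert; this still yields the claimed $Cm^{-1/2}$ bound, and this loss coming from the cubic/H\"older-order terms is precisely why the statement records $m^{-1/2}$ rather than $m^{-1}$.
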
 

Then we recall the Tian-Yau-Zelditch expansion theorem of Bergman kernels on a given manifold. The proof can be found in \cite{cat1,dailiuma1,liuzql1,sz1}. Note that Liu-Lu's proof in \cite{liuzql1} only relies on Tian's peak section method.

\begin{thm}
\label{thmregularpart}
For constants $\Lambda ,\epsilon ,\xi ,\delta k,Q >0$, there are constants $m_0 , m_1 , C$ which depending on $\Lambda $, $\epsilon $, $\xi $, $\delta $, $k$, $Q$ with the following property.

Let $(M,\omega )$ be a K\"ahler manifold such that $\Ric\left( \omega \right) \geq -\Lambda \omega $, let $(\linebundle ,h)$ be a Hermitian line bundle on $(M,\omega )$ such that $\Ric (h) \geq \epsilon \omega $, and let $U$ be an open subset of $M$. If $(M, \omega)$ isn't complete, we suppose that $M$ is pseudoconvex. Write $U_m = \left\lbrace y\in M \big| \dist_{\omega} (y,U) < \frac{\xi \log m}{\sqrt{m}} \right\rbrace $ for each $m>m_0 $. Assume that $\bar{U}_{m_0}$ is complete, $\Ric (h) = \omega $ on $U_m $, $\inf_{x\in U_m } \inj (x)\geq \frac{ \delta \log m}{\sqrt{m}} $, and $\sum_{j=0}^{2k} \sup_{U_m} \left\Vert \nabla^j \Ric (\omega ) \right\Vert \leq Q .$

Then there exist coefficients $a_j \in C^\infty (M)$, $\forall j\in\mathbb{N} $, such that
$$ \left\Vert \rho_{\omega ,m} - \sum_{j=0}^{k} a_j m^{n-j} \right\Vert_{C^k (U)} \leq Cm^{n-k-1} ,\;\; \forall m>m_1 .$$
\end{thm}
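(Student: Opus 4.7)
The plan is to follow Liu--Lu's proof \cite{liuzql1}, which derives the Tian--Yau--Zelditch expansion purely from Tian's peak section method. This is well suited to the present setting because the hypotheses $\Ric (h) = \omega$, the injectivity radius lower bound, and the derivative bounds on $\Ric (\omega)$ are only required on the enlarged neighborhood $U_m$, while the peak sections produced by Lemma \ref{peaksec} are essentially localized at scale $\log m /\sqrt{m}$ and therefore live inside $U_m$; the global positivity $\Ric (h) \geq \epsilon \omega$ together with pseudoconvexity is exactly what is needed to still apply H\"ormander's $L^2$-estimate in Proposition \ref{l2m}.

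First, for each fixed $x \in U$, I would invoke the Cheeger--Gromov theory for polarized K\"ahler manifolds (as cited in the remark following Lemma \ref{peaksec}) to construct, on a ball of radius $\log m /(Q' \sqrt{m})$, a biholomorphic coordinate chart centered at $x$ and a holomorphic frame $e_{\linebundle}$ of $\linebundle$ satisfying all the hypotheses of Lemma \ref{peaksec}: the coordinates are normal for $\omega$ at $x$, and $a(z) = h(e_{\linebundle},e_{\linebundle})$ is normalized so that $a(0)=1$ and $D^I a(0) = 0$ for $|I| \leq 3$. The existence of such data uses precisely the injectivity radius lower bound, the $C^{2k}$-bound on $\Ric (\omega)$, and the identity $\Ric (h) = \omega$ on $U_m$.

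Next, for every multi-index $P$ with $|P| \leq 2(k+n+1)$, I would apply Lemma \ref{peaksec} with $p' = k+n+1$ to obtain a global peak section $S_P^x \in H^0 (M,\linebundle^m)$ whose local Taylor expansion at $x$ begins with $\lambda_P z^P e_{\linebundle}^m$, with $\lambda_P^{-2} = \pi^n P!/m^{n+|P|} + O(m^{-n-|P|-1/2})$. Corollary \ref{coropeaksec} shows that $\{S_P^x\}$ is $m^{-1/2}$-close to an orthonormal family. Extending to an $L^2$-orthonormal basis of $H^0_{L^2} (M,\linebundle^m)$ by adjoining sections whose Taylor expansions at $x$ vanish on all monomials $z^P$ with $|P| \leq 2(k+n+1)$, and using $\rho_{\omega,m}(x) = \sum_i \|S_i(x)\|^2_{h^m}$, one expands $\rho_{\omega,m}(x)$ as a finite sum of $\lambda_P^2$'s weighted by the Taylor data of $a$ and $\det (g_{i\bar j})$ at $x$, up to an error of order $m^{n-k-1}$ coming from the Taylor remainder, the almost-orthogonality defect from Corollary \ref{coropeaksec}, and the off-ball tail bound $\int_{M \sq B} \|S_P^x\|^2_{h^m} \leq C m^{-2p'}$. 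Collecting by powers of $1/m$ produces the coefficients $a_j(x)$.

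For the $C^k$-estimate on $U$, I would either run the construction uniformly at points $y$ in an $O(1/\sqrt{m})$-neighborhood of $x$ and exploit that $B_{\omega,m}$ is holomorphic in the first variable and antiholomorphic in the second (so $C^0$-control on a ball of radius $\sim 1/\sqrt{m}$ converts to $C^k$-control after rescaling derivatives by $\sqrt{m}$), or equivalently apply a refined version of Lemma \ref{peaksec} with a higher-order ansatz to produce derivatives of the peak sections directly, as in \cite{liuzql1}. The hard part, and the only genuinely new issue beyond Liu--Lu, is controlling the contribution from sections whose mass is essentially outside $U_m$, where none of the strong local hypotheses are available; this is precisely managed by the tail bound in Lemma \ref{peaksec} and by the H\"ormander estimate applied on $(M,\omega)$ with the global weak positivity $\Ric (h) \geq \epsilon \omega$ and the pseudoconvexity assumption, which together guarantee that the correction $\varphi_m$ built in Lemma \ref{peaksec} has globally controlled $L^2$-norm despite the local nature of the other hypotheses.
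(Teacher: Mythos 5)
The paper does not give its own proof of this theorem: it cites it as the Tian--Yau--Zelditch expansion and explicitly points to Liu--Lu's peak-section derivation in \cite{liuzql1} (together with the remark that completeness/pseudoconvexity are only needed to run H\"ormander's $L^2$-estimate). Your proposal is a faithful outline of exactly that argument, including the correct identification of the only genuinely local issue (controlling mass outside $U_m$ via the tail bound of Lemma \ref{peaksec} and the global weak positivity), so it is consistent with the approach the paper intends.
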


\begin{rmk}
In Lemma \ref{peaksec}, Corollary \ref{coropeaksec} and Theorem \ref{thmregularpart}, we assume the complete and pseudoconvex condition just so that H\"ormander's $L^2$ estimate can be used. So these results still hold when $M$ is replaced by an open subset of $M$.
\end{rmk}

\section{Bergman kernels on complex hyperbolic cusp}
\label{model}

In this section, we calculate the Bergman kernels on the complex hyperbolic cusp by find a special $L^2$ orthonormal basis, and then we consider the asymptotic behavior of Bergman kernels. We only consider the case $n\geq 2$ in this section, because the expression of Bergman kernels on the punctured unit disc is known. For more details, see Subsection 3.1 in \cite{aumamar1}.

\subsection{Expression of the Bergman kernels on complex hyperbolic cusps}
\hfill

First we consider the expansion of holomorphic functions on the total space of line bundle. We will use the letter $\linebundle $ to denote a holomorphic line bundle on a complex manifold $X$. Let $V$ be a connected open neighborhood of $X$ in the total space $\linebundle $. Since $\linebundle^{-1} \otimes  \linebundle \cong \mathbb{C} $, we see that for each section $S\in H^0 \left( X,\linebundle^{-1} \right)$, the map $v\mapsto S\left( v \right)$ gives a holomorphic function on the total space of $\linebundle $. Then we show that any holomorphic function on $V\setminus X$ has a similar structure. 

\begin{lmm}
\label{holoontotal}
Let $f$ be a holomorphic function on $V \setminus X$. Then there exists a sequence of holomorphic sections $S_k \in H^0 \left( M,{\linebundle}^{-k} \right) ,$ $\forall k\in\mathbb{Z} ,$ such that $f$ has the following power series expansion on $V \setminus X$: 
$$ f(v) = \sum_{k\in\mathbb{Z}} f_{S_k} \left( v \right) =\sum_{k\in\mathbb{Z}} S_k \left( v^{k} \right) ,\;\; \forall v \in V\sq X \subset \linebundle , $$
where $v^k = v^{\otimes k} $ is a point in the total space of $\linebundle^k $.

Moreover, if $X$ is compact and $\linebundle $ is a negative line bundle, then we have $S_k =0$, $\forall k<0$.
\end{lmm}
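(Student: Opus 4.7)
The plan is to reduce the global statement to a local Laurent expansion in the fiber direction of $\linebundle$, and then recognize the Laurent coefficients as global holomorphic sections of $\linebundle^{-k}$ via their transformation law under a change of trivialization. First I would fix an open cover $\{U_\alpha\}$ of $X$ on which $\linebundle$ is trivializable, with local frames $e_\alpha$ and induced fiber coordinates $z_\alpha$ on $\linebundle|_{U_\alpha}\cong U_\alpha\times\mathbb{C}$. After shrinking the $U_\alpha$, I may assume that $V$ contains a disc bundle of the form $\{|z_\alpha|<r_\alpha\}$; on the intersection of this neighborhood with $V\setminus X$, classical one-variable Laurent theory (applied in $z_\alpha$ with $x\in U_\alpha$ as a holomorphic parameter) gives a locally uniformly convergent expansion $f(x,z_\alpha)=\sum_{k\in\mathbb{Z}} a_{k,\alpha}(x)\,z_\alpha^k$, and the coefficients are holomorphic in $x$ since they can be written as contour integrals of $f$ in the fiber.

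Next I would compare these local expansions on overlaps $U_\alpha\cap U_\beta$. The transition of fiber coordinates is governed by the cocycle $g_{\alpha\beta}$ of $\linebundle$, and uniqueness of Laurent coefficients forces $a_{k,\alpha}$ and $a_{k,\beta}$ to differ by the $(-k)$-th power of $g_{\alpha\beta}$, i.e.\ precisely by the cocycle of $\linebundle^{-k}$. Consequently $\{a_{k,\alpha}\}$ patches to a global holomorphic section $S_k\in H^0(X,\linebundle^{-k})$, and the tautological local identity $a_{k,\alpha}(x)\,z_\alpha^k=S_k(v^k)$ promotes the local Laurent expansion to the global formula $f(v)=\sum_{k\in\mathbb{Z}} S_k(v^k)$. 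The main delicate point is keeping the cocycle conventions straight so that the sign of the exponent $k$ matches the asserted bundle $\linebundle^{-k}$; once this is verified, convergence of the global sum on $V\setminus X$ is inherited term-by-term from the local Laurent expansions.

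Finally, for the moreover statement I would invoke the standard fact that a negative line bundle over a compact complex manifold has no nonzero holomorphic sections. If $\linebundle$ is negative and $k<0$, then $\linebundle^{-k}$ is a positive tensor power of $\linebundle$ and hence again negative; for any Hermitian metric $h'$ on $\linebundle^{-k}$ with strictly plurisubharmonic weight and any nonzero $S\in H^0(X,\linebundle^{-k})$, the function $\log\|S\|_{h'}^2$ would be strictly plurisubharmonic on the open set $\{S\neq 0\}$ and attain a maximum on the compact manifold $X$, violating the maximum principle. Hence $S_k=0$ for all $k<0$, completing the proof.
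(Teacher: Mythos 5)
Your argument for the main expansion is essentially the paper's: trivialize $\linebundle$ over a cover of $X$, take the fiberwise Laurent expansion with coefficients given by contour integrals (hence holomorphic in the base variable), and check via the Cauchy integral formula that the $k$-th coefficients transform by the $(-k)$-th power of the transition cocycle, so they glue to sections of $\linebundle^{-k}$; both you and the paper then pass from a neighborhood of the zero section to all of $V\setminus X$ by an analytic-continuation/connectedness remark that is left equally informal in each treatment. Where you genuinely diverge is the ``moreover'' clause. The paper argues indirectly: by Kodaira embedding $H^0(X,\linebundle^{-k})\neq 0$ for large $k$, so a hypothetical nonzero $S'\in H^0(X,\linebundle^{k_1})$ with $k_1$ large could be tensored with a nonzero $S\in H^0(X,\linebundle^{-k_1})$ to produce a nonconstant element of $H^0(X,\mathcal{O}_X)\cong\mathbb{C}$, a contradiction. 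You instead invoke the maximum principle: for a metric of negative curvature on $\linebundle^{|k|}$ the function $\log\Vert S\Vert^2$ is strictly plurisubharmonic off the zero set of $S$ and tends to $-\infty$ on it, so it cannot attain its maximum on compact $X$ unless $S\equiv 0$. Your route is more self-contained (it avoids Kodaira embedding entirely and needs only the existence of a negatively curved metric, which is part of the hypothesis), while the paper's route is purely algebro-geometric once ampleness of $\linebundle^{-1}$ is granted; both are correct.
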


\begin{proof}
Let $\left\lbrace U_j \right\rbrace_{j\in J} $ be an open neighborhood of $X$ such that for each $j$, there exists a trivialization $\psi_j :\linebundle |_{U_j} \cong U_j \times \mathbb{C} $ satisfies that $U_j \times \mathbb{D}_1 \subset  \psi_j \left( V\cap \linebundle |_{U_j} \right) $, where $\mathbb{D}_1 $ is the unit disc in $\mathbb{C}$. By the Laurent series expansion on $\mathbb{D}^*_1 = \mathbb{D}_1 \setminus \left\lbrace 0 \right\rbrace $, we can find a sequence of holomorphic functions $\left\lbrace \varphi_{j,k} \right\rbrace_{k\in\mathbb{Z}} \subset \mathcal{O} \left( U_j \right) $ for each $j$, such that $$f\circ \psi_j^{-1} (z,w_j) = \sum_{k\in\mathbb{Z}} \varphi_{j,k} (z) w_j^k ,\; \forall  z \in U_j ,\; w_j \in \mathbb{D}^*_{1} ,$$ 
and the coefficients satisfy
$$\varphi_{j,k} (z) = \frac{1}{2\pi \sqrt{-1}} \int_{\partial \mathbb{D}_1 } \frac{f\circ \psi_{j}^{-1} \left( z,w_j \right) }{w_j^{k+1}} dw_j .$$

Assume that $ U_{j_1} \cap U_{j_2} \neq \emptyset $ for some $j_1 ,j_2 \in J $. Then the transition function of $\linebundle $ on $U_{j_1} \cap U_{j_2} $ is a holomorphic function $\psi_{j_2 j_1} \in \mathcal{O} \left( U_{j_1} \cap U_{j_2} \right) $ defined by $\psi_{j_2}^{-1} \left( z, \psi_{j_2 j_1} (z) \right) = \psi_{j_1}^{-1} (z,1) $, $\forall z\in U_{j_1} \cap U_{j_2} $. The Cauchy integral formula now shows that
\begin{eqnarray*}
\varphi_{j_1 ,k} (z) & = & \frac{1}{2\pi \sqrt{-1}} \int_{\partial \mathbb{D}_1 } \frac{f\circ \psi_{j_1}^{-1} \left( z,w_{j_1} \right) }{w_{j_1}^{k+1}} dw_{j_1} \\
& = & \frac{1}{2\pi \sqrt{-1}} \int_{\partial \mathbb{D}_1 } \frac{f\circ \psi_{j_2}^{-1} \left( z, \psi_{j_2 j_1} (z) w_{j_1} \right) }{w_{j_1}^{k+1}} dw_{j_1} \\
& = & \frac{1}{2\pi \sqrt{-1}} \int_{\partial \mathbb{D}_{\psi_{j_2 j_1} (z)} } \frac{f\circ \psi_{j_2}^{-1} \left( z,  w_{j_2} \right) }{\psi^{-k-1}_{j_2 j_1} (z)w _{j_2}^{k+1}} d \left( \psi^{-1}_{j_2 j_1} (z) w_{j_2} \right) \\
& = & \frac{\psi^{k}_{j_2 j_1} (z)}{2\pi \sqrt{-1} } \int_{\partial \mathbb{D}_{1} } \frac{f\circ \psi_{j_2}^{-1} \left( z,  w_{j_2} \right) }{w_{j_2}^{k+1}} d w_{j_2} \\
& = & \psi^{k}_{j_2 j_1} (z) \varphi_{j_2 ,k} (z).
\end{eqnarray*}

It follows that $\left\lbrace \varphi_{j,k} \right\rbrace_{j\in J}$ gives a holomorphic section $S_k \in H^{0} \left( X,\linebundle^{-k} \right) $. Then we can find an open neighborhood $V_1$ of $X$ in $V$, such that $f$ has the above expansion in $V_1 \setminus X $. Then the connectedness of $V$ implies that $f$ has this expansion in the whole $V \setminus X$.

Now we assume that $X$ is compact and $\linebundle $ is negative. By the Kodaira embedding theorem, we can find a constant $k_0 >0$ such that $H^{0} \left( X,\linebundle^{-k} \right) \neq 0 $ for each $k>k_0$. If $H^{0} \left( X,\linebundle^{k} \right) \neq 0 $ for some $k>0$, then there exists an integer $k_1 >k_0$ such that $H^{0} \left( X,\linebundle^{k_1} \right) \neq 0 $. It follows that we can find a holomorphic function $\phi =S\otimes S' \in H^0 \left( X,\linebundle^{-k_1} \otimes \linebundle^{k_1} \right) \cong H^0 \left( X,\mathbb{C} \right) $ such that $\phi \neq 0$, but $\phi (x)=0$ for some $x$. But the compactness of $X$ implies that $H^0 \left( X,\mathbb{C} \right) \cong \mathbb{C} $, contradiction. Hence we have $H^{0} \left( X,\linebundle^{k} \right) \neq 0 $, $\forall k>0$. This gives $S_k =0$, $\forall k<0$. This completes the proof. 
\end{proof}

Let $\left( \linebundle_D ,h_D \right) $ be a negative line bundle on $D$, and $\left( V,\omega_V ,\linebundle_V ,h_V \right)$ be an $n$-dimensional $(n\geq 2)$ complex hyperbolic cusp defined in Section \ref{intro} now. 

For each $x\in D$, we can find an open neighborhood $U$ of $x$ in $D$, a biholomorphic chart $z = \left( z_1 ,\cdots ,z_{n-1} \right) :U\cong \mathbb{B}^{n-1}_{1} \subset \mathbb{C}^{n-1} $, and a non-vanishing section $e_U \in H^0 \left( U,\linebundle_D \right) $. Let $w$ be a holomorphic map defined by $v=w(v)e_U $, $\forall v\in \linebundle_D \big|_{U} $. Recall that $V$ is a subset of the total space of $\linebundle_D$, we see that the holomorphic map $(z,w)$ gives a holomorphic chart:
$$ (z,w) : \linebundle_D \big|_{U} \cap V \cong  \left\lbrace (z,w) \in \mathbb{B}^{n-1}_{1} \times \mathbb{C} \; \bigg| \; 0< |w| \cdot \left\Vert e_U \right\Vert_{h_D} <1 \right\rbrace .$$

Set $\varphi = \log \left( \left\Vert e_U \right\Vert^2_{h_D} \right) $. Then the volume form on $\pi_D^{-1} (U) \cap V $, $d\V_{\omega_V }$, can be calculated as following:
\begin{eqnarray}
d\V_{\omega_V} & = & \frac{ \left( -\sqrt{-1} \partial\partialbar \log \left( -\log h_D \right) \right)^n }{n!} \nonumber \\
& = & \frac{1}{n!} \left( \sqrt{-1} \frac{\partial\partialbar \log h_D}{ \left| \log h_D \right|} +\sqrt{-1} \frac{\partial \log h_D \wedge \partialbar \log h_D}{\left| \log h_D \right|^2 } \right)^n \nonumber \\
& = & \frac{1}{n!} \left( \sqrt{-1} \frac{\partial\partialbar \varphi }{\left| \log h_D \right|} +\sqrt{-1} \frac{\partial \log h_D \wedge \partialbar \log h_D}{\left| \log h_D \right|^2 } \right)^n \label{calculationvolumeformhyperboliccusp}\\
& = & \frac{\left(\sqrt{-1} \right)^n }{n!} \cdot \frac{ \left( \partial\partialbar \varphi \right)^{n-1} \wedge \partial \left( \log |w|^2 +\varphi \right) \wedge \partialbar \left( \log |w|^2 +\varphi \right) }{\left| \log h_D \right|^{n+1} } \nonumber \\
& = & \frac{ \sqrt{-1} \pi_D^* \left( d\V_{\omega_D} \right) \wedge dw \wedge d \bar{w}  }{n |w|^2 \left| \log h_D \right|^{n+1} } = \frac{ \pi_D^* \left( d\V_{\omega_D} \right) \wedge \omega_V }{n \left| \log h_D \right|^{n-1} } . \nonumber
\end{eqnarray}

Now we can find an orthonormal basis of $\hl \left( V,\linebundle^m_V \right) $.

\begin{lmm}
\label{lmmorthonormalfver}
For $m\geq n+1$, the set
\begin{equation}
\label{orthonormalsetfver}
\left\lbrace \left( \frac{n\cdot q^{m-n} }{ 2\pi (m-n-1)! } \right)^{\frac{1}{2}} f_{S_{q,j} } e^m_{\linebundle_V} \bigg| \; 1\leq j \leq N_q ,\; q\geq 1  \right\rbrace
\end{equation}
forms an orthonormal basis of $\hl \left( V,\linebundle^m_V \right) $, where $\left\lbrace S_{q,j} \right\rbrace_{j=1}^{N_q} $ is an $L^2 $ orthonormal basis in the Hilbert space $\hl \left( D,\linebundle^{-q}_D \right) $, and $f_{S_{q,j} } $ denote the holomorphic function corresponding to $S_{q,j} $ defined in Lemma \ref{holoontotal}.
\end{lmm}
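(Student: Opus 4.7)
The plan is to leverage Lemma~\ref{holoontotal} to decompose every holomorphic section of $\linebundle_V^m$ as a sum of ``Laurent-type'' pieces indexed by global sections of the negative powers $\linebundle_D^{-k}$, and then use the volume-form computation \eqref{calculationvolumeformhyperboliccusp} to show that these pieces are mutually $L^2$-orthogonal with norms expressed by a single Gamma integral.

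First I would write $S \in H^0(V, \linebundle_V^m)$ as $f\, e_{\linebundle_V}^m$ (allowed since $\linebundle_V$ is globally trivialized by $e_{\linebundle_V}$), and apply Lemma~\ref{holoontotal} with $X = D$. Because $D$ is compact and $\linebundle_D$ is negative, only non-negative indices survive, giving $f = \sum_{k \geq 0} f_{S_k}$ for unique $S_k \in H^0(D, \linebundle_D^{-k})$. Locally, in a trivialization $v = w\, e_U$ of $\linebundle_D$ over $U \subset D$ with $\varphi = \log h_D(e_U, e_U)$, this reads $f_{S_q}(z, w) = s_q(z)\, w^q$, and the pointwise squared norm of the corresponding section is $|s_q(z)|^2\, |w|^{2q}\, (-\log h_D)^m$, using $\|e_{\linebundle_V}\|_{h_V}^2 = -\log h_D$.

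Second I would compute the $L^2$-pairing of two such pieces against the volume form \eqref{calculationvolumeformhyperboliccusp}. Passing to polar coordinates $w = r e^{i\theta}$ on each fiber, the $\theta$-integration immediately kills cross terms at different Laurent indices (this is Parseval for the Fourier series $\sum_p T_p(z)\, r^p e^{ip\theta}$). For matching indices $q$, the substitution $t = h_D = r^2 e^\varphi$ reduces the radial part to the classical identity
$$ \int_0^1 t^{q-1}(-\log t)^{m-n-1}\, dt \;=\; \frac{(m-n-1)!}{q^{m-n}}, $$
and the factor $e^{-q\varphi}$ which emerges combines with $|s_q|^2$ to reproduce the pointwise squared norm $\|S_q\|_{h_D^{-q}}^2$ on the base. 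Integrating over $D$ then yields
$$ \bigl\langle f_{S_q}\, e_{\linebundle_V}^m,\; f_{T_q}\, e_{\linebundle_V}^m \bigr\rangle_{L^2, V} \;=\; \frac{2\pi (m-n-1)!}{n\, q^{m-n}}\, \bigl\langle S_q,\, T_q \bigr\rangle_{L^2(D, \linebundle_D^{-q})}, $$
so rescaling by $\left(n q^{m-n}/(2\pi(m-n-1)!)\right)^{1/2}$ turns the orthonormal basis $\{S_{q,j}\}_j$ on the base into orthonormal elements on $V$.

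Finally I would rule out the $k = 0$ mode and confirm completeness. Since $D$ is compact and connected, $H^0(D, \mathcal{O}_D) \cong \mathbb{C}$, so any $f_{S_0}$ is a constant; the radial integrand then carries a non-integrable $r^{-1}\, dr$ at $r = 0$, so any $L^2$-section automatically satisfies $S_0 = 0$, which is why the indexing starts at $q \geq 1$. The hypothesis $m \geq n+1$ is exactly what keeps the Gamma integral finite for $q \geq 1$. For completeness I would check that the orthogonal complement of the proposed family is trivial: if $T = g\, e_{\linebundle_V}^m \in H^0_{L^2}$ is orthogonal to every basis vector, the pairing formula above forces $\langle T_q, S_{q,j}\rangle_{L^2(D)} = 0$ for all $j$ and every $q \geq 1$, hence $T_q = 0$ since $\{S_{q,j}\}_j$ is an orthonormal basis of $H^0(D, \linebundle_D^{-q})$; together with $T_0 = 0$ this gives $g \equiv 0$. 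The main subtlety is the transition from the merely pointwise Laurent expansion of Lemma~\ref{holoontotal} to a clean $L^2$-orthogonal decomposition, and the Fourier-series orthogonality in $\theta$ is precisely what makes that transition automatic.
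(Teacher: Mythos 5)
Your proposal is correct and follows essentially the same route as the paper's proof: decompose via Lemma \ref{holoontotal}, use the volume-form identity \eqref{calculationvolumeformhyperboliccusp} to reduce to fiberwise integrals, kill cross terms by the angular integration, evaluate the radial Gamma integral $\int_0^\infty e^{-qs}s^{m-n-1}\,ds=(m-n-1)!/q^{m-n}$, and exclude the $q=0$ mode by the divergence of $\int_0^1 r^{-1}|\log r|^{m-1-n}\,dr$. Your explicit treatment of completeness and of the $k=0$ exclusion merely spells out what the paper leaves to the final sentence of its proof.
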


\begin{proof}
Let $\widetilde{S}_1 $ and $\widetilde{S}_2 $ be the holomorphic functions corresponding to the holomorphic sections $S_1 \in  H^{0} \left( D,\linebundle_D^{-q_1} \right) $ and $S_2 \in  H^{0} \left( D,\linebundle_D^{-q_2} \right) $, respectively. Then we have
\begin{eqnarray*}
\int_{V} \left\langle f_{{S}_1 } e^m_{\linebundle_V} , f_{{S}_2} e^m_{\linebundle_V} \right\rangle_{h_V^m} d\V_{\omega_V} & = & \int_{V}  f_{S_1 } \bar{f}_{S_2} \left| \log h_D \right|^{m} d\V_{\omega_V} \\
& = & \frac{1 }{n } \int_{V} f_{S_1 } \bar{f}_{S_2} \left| \log h_D \right|^{m+1-n}    d\V_{\omega_D}  \wedge \omega_{V} \\
& = & \int_{D} \left( \frac{ 1 }{n } \int_{V_z}  f_{S_1 } \bar{f}_{S_2} \left| \log h_D \right|^{m+1-n} \omega_{V} \right) d\V_{\omega_D} ,
\end{eqnarray*}
where $V_z = \pi^{-1}_{D} (x)\cap V$, $z\in D$, and $\pi_D : \linebundle_D \to D $ is the projection of line bundle. For each $z \in D$, we can find $e\in \pi^{-1}_{D} (x) $ such that $h_D (e)=1 $. Then we have
\begin{eqnarray*}
\frac{ 1 }{n } \int_{V_z}  f_{S_1 } \bar{f}_{S_2} \left| \log h_D \right|^{m+1-n} \omega_{V} & = & \sqrt{-1} S_1 \left( e^{q_1} \right) \overline{S_2 \left( e^{q_2} \right) } \int_{\mathbb{D}^*_1} \frac{ w^{q_1} \bar{w}^{q_2} \left| \log |w|^2 \right|^{m-1-n} }{n |w|^2 } dw \wedge d\bar{w} \\
& = & \sqrt{-1} \delta_{q_1 ,q_2} h^{-q_1} \left( S_1 ,S_2 \right) \int_{\mathbb{D}^*_1} \frac{ |w|^{2 q_1 -2} \left| \log |w|^2 \right|^{m-1-n} }{n } dw \wedge d\bar{w} ,
\end{eqnarray*}
where $ \delta_{q_1 ,q_2} $ is the Kronecker symbol. It follows that
$$ \int_{V} \left\langle f_{{S}_1 } e^m_{\linebundle_V} ,f_{S_2} e^m_{\linebundle_V} \right\rangle_{h_V^m} d\V_{\omega_V} = 0 ,$$
if $q_1 \neq q_2 $. Now we assume that $q_1 =q_2 =q$. Since $m\geq n+1$, we have 
$$ \int_{\mathbb{D}^*_1} \sqrt{-1} |w|^{ -2} \left| \log |w|^2 \right|^{m-1-n}  dw \wedge d\bar{w} = \int_{0}^{2\pi} 2^{m-n-1} d\theta \int_{0}^{1} r^{-1} \left| \log r \right|^{m-1-n} dr =\infty ,$$
and hence $e_{\linebundle_V}^m \notin \hl \left( V,\linebundle^m_V \right) .$ When $q\geq 1$, by a direct computation, we obtain
\begin{eqnarray*}
\int_{\mathbb{D}^*_1} \frac{ |w|^{2 q -2} \left| \log |w|^2 \right|^{m-1-n} }{ -\sqrt{-1}  } dw \wedge d\bar{w} & = & \int_{0}^{2\pi} 2^{m-n} d\theta \int_{0}^{1} r^{2q-1} \left| \log r \right|^{m-1-n} dr \\
& = & 2^{m-n+1} \pi \int_{0}^{\infty} e^{-2q t} t^{m-1-n} dt \\
& = & 2^{m-n+1} \pi \cdot (2 q)^{n-m} \cdot (m-n-1)! = \frac{2\pi (m-n-1)!}{q^{m-n}} .
\end{eqnarray*}
Then we have
\begin{eqnarray*}
\int_{V} \left\langle f_{S_1} e^m_{\linebundle_V} ,f_{S_2} e^m_{\linebundle_V} \right\rangle_{h_V^m} dV_{\omega_V} & = &  \int_{D} \left( \frac{ 1 }{n } \int_{V_z}  f_{S_1} \bar{f}_{S_2} \left| \log h_D \right|^{m+1-n} \omega_{V} \right) dV_{\omega_D} \\
& = & \int_{D} h^{-q} \left( S_1 ,S_2 \right) dV_{\omega_D} \int_{\mathbb{D}^*_1} \frac{\sqrt{-1} |w|^{2 q -2} \left| \log |w|^2 \right|^{m-1-n} }{n } dw \wedge d\bar{w} \\
& = & \frac{2\pi (m-n-1)!}{n\cdot q^{m-n} } \int_{D} h_D^{-q} \left( S_1 ,S_2 \right) d\V_{\omega_D} .
\end{eqnarray*}
Combining Lemma \ref{holoontotal} and the above calculation, we can conclude that the set (\ref{orthonormalsetfver}) forms an orthonormal basis of $\hl \left( D,\linebundle^{-q}_D \right) $.
\end{proof}

\begin{rmk}
In the case $n=1$, analysis similar to that in the proof of Lemma \ref{lmmorthonormalfver} shows that the set
\begin{equation}
\label{orthonormalsetdim1}
\left\lbrace \left( \frac{n\cdot q^{m-1} }{ 2\pi (m-2)! } \right)^{\frac{1}{2}} e^m_{\linebundle_V} \bigg| \; q\geq 1  \right\rbrace
\end{equation}
forms an orthonormal basis of $\hl \left( V,\linebundle^m_V \right) $ when $m\geq 2$.
\end{rmk}

As another application of the calculation of the volume form on $\left( V ,\omega_V \right) $, we can estimate the injectivity radius on $\left( V ,\omega_V \right) $.

\begin{lmm}
\label{lmminjectivityradiusoncusp}
There exists a constant $\delta =\delta (D) >0 $, such that the injectivity radius $\inj_V (x)  $ becomes a strictly increasing function of $h_D (x) $ on $V_\delta $, and $-\frac{ \delta }{ \log \left( h_D (x) \right) } \leq \inj_V (x) \leq -\frac{ \pi }{ \log \left( h_D (x) \right) } $, $\forall v\in V_\delta $, where $\inj_V (x)$ is the injectivity radius at $x\in V$.
\end{lmm}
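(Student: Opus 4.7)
The plan is to show that for some $\delta=\delta(D)>0$, the identity $\inj_V(x) = -\pi/\log h_D(x)$ holds throughout $V_\delta$, realized by the $S^1$-orbit through $x$; monotonicity and both inequalities then follow at once with the same $\delta$ (possibly replaced by $\min(\delta,\pi)$ so that the lower bound is dominated by the upper).

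First, for the upper bound, I would use the natural $S^1$-action on fibers of $\linebundle_D$. The loop $\gamma_x(\theta) = e^{i\theta}\cdot x$, $\theta\in[0,2\pi]$, lies on the level set $\{h_D=h_D(x)\}$ since $h_D$ is $S^1$-invariant. In local coordinates $(z,w)$ at $\pi_D(x)$, with $z$ normal for $(D,\omega_D)$ and $w$ a fiber coordinate normalized so that $h_D(\pi_D(x),w)=|w|^2$, a direct calculation from $\omega_V = -\sqrt{-1}\partial\partialbar\log(-\log h_D)$ gives $g_{w\bar w}|_x = 1/(|w|^2(\log h_D)^2)$, hence the length of $\gamma_x$ equals $2\pi/|\log h_D(x)|$. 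Since $[\gamma_x]$ generates the fiber $\mathbb{Z}$-summand of $\pi_1(V)$ and is therefore non-trivial, lifting to the universal cover yields $\inj_V(x)\le -\pi/\log h_D(x)$.

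For the lower bound and the equality, I would pass to the universal cover $\tilde V$. Since $(V,\omega_V)$ has $\Ric(\omega_V) = -(n+1)\omega_V$ and in fact constant holomorphic sectional curvature, it is locally isometric to complex hyperbolic $n$-space $\mathbb{H}^n_{\mathbb{C}}$, and $\tilde V$ identifies with a horoball-like region about a parabolic fixed point in the Siegel domain $\{\im\tau>|z|^2\}\subset\mathbb{C}^{n-1}\times\mathbb{C}$. The deck group $\Gamma_0=\pi_1(V)$ is a cocompact lattice in the cusp stabilizer, generated by the unit fiber rotation ($\tau\mapsto\tau+1$) and lifts of the period lattice $\Lambda\subset\mathbb{C}^{n-1}$ of $D$ acting as Heisenberg translations $(z,\tau)\mapsto (z+\lambda,\tau+s+2i\bar\lambda z+i|\lambda|^2)$. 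For a lift $\tilde x$ with $h_D(\tilde x)=r$, central elements give translation length $2\pi k/|\log r|$ for $k\ge 1$, while non-central elements, projecting to a nonzero $\lambda\in\Lambda$, have translation length at least $c_1|\lambda|/\sqrt{|\log r|}$ by an explicit distance computation in the Siegel model. Choosing $\delta\le\pi$ small enough that $2\pi/|\log r|<c_1|\lambda|_{\min}/\sqrt{|\log r|}$ whenever $r<\delta$, where $|\lambda|_{\min}>0$ is the shortest nonzero vector in $\Lambda$, the unit fiber rotation attains the minimum translation length. Therefore $\inj_V(x)=-\pi/\log h_D(x)$ on $V_\delta$, which is strictly increasing in $h_D$, and both stated bounds follow.

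The main obstacle is the lower bound for non-central elements: one must show that in the hyperbolic cusp metric, horizontal displacement by any nonzero $\lambda\in\Lambda$ costs at least $c_1|\lambda|/\sqrt{|\log r|}$ at height $r$. This is carried out by an explicit distance computation in the Siegel model, using that the base contribution to $\omega_V$ is essentially $\omega_D/|\log h_D|$ up to lower order terms. Since $|\lambda|_{\min}>0$ depends only on the abelian variety $D$, the constant $\delta$ can be taken to depend only on $D$, as required.
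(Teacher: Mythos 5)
Your route through the uniformization of $(V,\omega_V )$ as a quotient of a horoball in $\mathbb{C}H^n$ by a lattice in the cusp stabilizer is viable and genuinely different from the paper's, but it contains one concrete error: the exact identity $\inj_V (x) = -\pi /\log h_D (x)$ is false. The $S^1$-orbit $\gamma_x$ is a horocyclic circle, not a geodesic, so its length $2\pi /\left| \log h_D (x) \right|$ only \emph{bounds from above} the displacement of the central deck transformation $\tau \mapsto \tau +2\pi $. In the Siegel model the displacement of a vertical Heisenberg translation by $s$ at height $u$ is $2\,\mathrm{arcsinh} \left( s/(2u) \right)$ up to normalization, strictly less than $s/u$; so ``central elements give translation length $2\pi k/|\log r|$'' is wrong as an equality, and with it the claimed identity for $\inj_V$. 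This does not sink the lemma: the upper bound still follows from the arc length; the lower bound follows from your comparison of the central displacement $\asymp 1/\left| \log h_D \right|$ against the non-central ones $\gtrsim 1/\sqrt{\left| \log h_D \right|}$ (note your bound $c_1 |\lambda | /\sqrt{|\log r|}$ is itself only valid for $|\lambda | \lesssim \sqrt{|\log r|}$ — for larger $\lambda$ the hyperbolic distance grows only logarithmically in $|\lambda |$ — but since you only need the minimum over $\lambda \neq 0$ to exceed the central displacement, this is harmless); and strict monotonicity must be re-derived not from the false identity but from the fact that the displacement function of a parabolic isometry is strictly decreasing along the Busemann function toward its fixed point, i.e.\ $u\mapsto 2\,\mathrm{arcsinh} (\pi /u)$ is strictly decreasing. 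You should also justify, rather than assert, that the fiber class is of infinite order in $\pi_1 (V)$ (it is central in a Heisenberg-type lattice, not a ``$\mathbb{Z}$-summand'' when $\linebundle_D$ is non-trivial).

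For comparison, the paper avoids full uniformization: it passes only to the cover of $V$ induced by the universal cover $\mathbb{C}^{n-1} \to D$, proves strict monotonicity there by the scaling contraction $(z,w)\mapsto (z,tw)$, which strictly decreases lengths of non-constant curves, gets the same upper bound from the circle loop, shows the remaining deck transformations (your non-central elements) displace points by $\gtrsim \left| z(x_1 ) -z(x_2 ) \right| /\sqrt{\left| \log h_D \right|}$ so that the two injectivity radii agree on $V_{\delta_1}$, and obtains the lower bound from Cheeger's lemma (bounded curvature plus the volume lower bound coming from the computed volume form) rather than from translation lengths. Your approach, once corrected to the $\mathrm{arcsinh}$ formula, yields an essentially sharp value of $\inj_V$ at the cost of invoking the explicit lattice structure in $PU(n,1)$; the paper's argument is softer and gives no explicit lower constant.
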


\begin{proof}
Let $\pi_{Q} : \mathbb{C}^{n-1} \to D $ be the universal covering map of $D$. Then the morphism $ \pi_{Q}^* \linebundle_D \to \linebundle_D $ induced by $\pi_{Q} $ is also a universal covering map. For abbreviation, we use the same letter $\pi_{Q} $ to designate it. Since $\mathrm{Pic} \left( \mathbb{C}^{n-1} \right) =0 $, we have $\left( \pi_{Q}^* \linebundle_D ,\pi^* h_D \right) \cong \left( \mathbb{C} ,e^{|z|^2} \right) $, and the isomorphism gives a holomorphic chart $(z,w)\in\mathbb{C}^{n-1} \times \mathbb{C} $ of $\pi_{Q}^{-1} (V) $. It is easy to shows that $\inj_{\pi_{Q}^{-1} (V)} (z,w)$ is a function of $h_D \circ \pi_{Q} (z,w) $ on $\pi_{Q}^{-1} (V) $. For each smooth curve $\gamma :[0,1] \to \pi_{Q}^{-1} (V) $, we can construct a family of curves by $\gamma_t (s) = \left( z \left( \gamma (s) \right) , t w\left( \gamma (s) \right)  \right)  $, $\forall t\in (0,1) $. By a straightforward computation, we obtain $ \left| \gamma'_{t} (s) \right| \leq \left| \gamma' (s) \right| $, and equality holds only when $\left| \gamma' (s) \right| =0 $. It follows that $\inj (z,w) $ is a strictly increasing function of $h_D \circ \pi_{Q} (z,w) $ on $\pi_{Q}^{-1} (V) $.

Let $\gamma (s) = \left( 0,e^{\sqrt{-1} s} w \right) $, $s\in [0,2\pi ]$. Clearly, $ [\gamma ] \neq 0 $ in the fundamental group $ \pi_1 \left( \pi_Q^{-1} (V) \right) $. Then $L (\gamma) = \frac{2\pi }{-\log \left( |w|^2 \right) } $ implies that $\inj_{\pi_{Q}^{-1} (V)} (z,w) \leq -\frac{ \pi }{ \log \left( h_D (v) \right) } $. Choosing $x_1 ,x_2 \in \pi_{Q}^{-1} (x) $ and a smooth curve $\gamma :[0,1] \to \pi_{Q}^{-1} (V) $ such that $x_1 \neq x_2 $, $\gamma (0) =x_1 $ and $\gamma (1) =x_2 $. Assume that $L \left( \gamma \right) \leq 1 $. Then we have $\inf_\gamma \left| w(\gamma ) \right|^2 \geq \left| h_D (x) \right|^e  $, and there exists a constant $\epsilon =\epsilon (n) >0 $ such that $ \dist_{\omega_V} \left( x_1 ,x_2 \right) \geq \frac{\epsilon \left| z\left( x_1 \right) - z\left( x_2 \right) \right| }{ \sqrt{ -e \log \left( h_D (x) \right) } }  .$ Then we can find a constant $\delta_1 =\delta_1 (D) >0 $ such that $\inj_{ \pi_{Q}^{-1} (V)} = \inj_V \circ \pi_{Q} $ on $\pi_{Q}^{-1} \left( V_{\delta_1} \right) $.

Since the holomorphic sectional curvature of $\omega_V$ is a constant, we can conclude that the sectional curvature of $\omega_V$ is bounded. Recall that the volume form on $V$ can be expressed as
$$ d\V_{\omega_V} = \frac{ \pi_D^* \left( d\V_{\omega_D} \right) \wedge \omega_V }{n \left| \log h_D \right|^{n-1} } .$$
Thus we can find a constant $\delta_2 >0$ depending only on $\left( V,\omega_V  \right)$, such that 
$$ \Vol_{\omega_V} \left( B_{\frac{-1 }{\log \left( h_D (x) \right) } } (x) \right) \geq \delta_2 \left( \frac{-1 }{\log \left( h_D (x) \right) } \right)^{2n} ,\forall x\in V_{\delta_2 } .$$
Then the Cheeger's lemma implies this lemma.
\end{proof}

Let $\Gamma_{V} $ be a finite subgroup of $\Aut \left( V,\omega_V \right) $. Then we see that the injectivity radius function is invariant under the action of $\Gamma_{V} $. Hence $h_D $ is invariant under the action of $\Aut \left( V,\omega_V \right) $ on $V$, and the action of $\Gamma_{V} $ on $V$ can be extended to the total space of $\linebundle_D $. Now we further extend the action of $ \Gamma_V $ on $V$ to the line bundle $\linebundle_V $ such that $h_V $ is invariant under this action. Notice that by the invariance of $h_V $ under the action of $ \Gamma_V $, we have a homomorphism $\Theta_{\Gamma_V } : \Gamma_V \to U(1) \subset \mathbb{C}^* $ such that $g \left( e_{\linebundle_V } \right) = \Theta_{\Gamma_V } (g) e_{\linebundle_V }  $, $\forall g \in \Gamma_V $.

Fix $m\geq n+1 $. Now we consider the group action $\Gamma_V \times H^0 \left( V,\linebundle^m_V \right) \to H^0 \left( V,\linebundle^m_V \right) $ given by
\begin{equation}
\label{groupactionconstructiononH0VLVm}
\Delta_{\Gamma_V ,m} : \left\{ \begin{array}{ll}
\Gamma_V \times H^0 \left( V,\linebundle^m_V \right) & \to \;\;\;\;\;\;\; H^0 \left( V,\linebundle^m_V \right) ,\\
\;\;\;\;\; \left( g,fe_{\linebundle_V}^m \right) & \mapsto \Theta_{\Gamma_V } (g)^{m} \left( f \circ g^{-1} \right) e_{\linebundle_V}^m , 
\end{array} \right.
\end{equation}
for any $ g\in\Gamma_V \subset \Aut \left( V,\omega_V \right) $ and $ f\in\mathcal{O}(V) $.

Let $\hl \left( V,\linebundle^m_V \right)_q $ be the subspace of $ \hl \left( V,\linebundle^m_V \right) $ generated by $\left\lbrace f_{S_{q,j}} e^m_{\linebundle_V} \right\rbrace_{j=1}^{N_q} $, and $\hl \left( V,\linebundle^m_V \right)_{ \Gamma_V } $ be the subspace of $ \hl \left( V,\linebundle^m_V \right) $ containing all holomorphic sections that are invariant under the action $\Delta_{\Gamma_V ,m} $. Since $\hl \left( V,\linebundle^m_V \right)_q $ is invariant under the action $\Delta_{\Gamma_V ,m} $, we see that the Hilbert space $\hl \left( V,\linebundle^m_V \right)_{\Gamma_V } $ can be generated by $ \oplus_{q=1}^{\infty } \left( \hl \left( V,\linebundle^m_V \right)_{\Gamma_V } \cap \hl \left( V,\linebundle^m_V \right)_q \right) $. Then there are non-negative integers $N'_{m,q} \leq N_q $ and $L^2$ orthonormal sets $ \left\lbrace S'_{m,q,j} \right\rbrace_{j=1}^{N'_{m,q} } \subset \hl \left( D,\linebundle^{-q}_D \right)_{\Gamma_V } $ such that the set
$$ \left\lbrace \left( \frac{n\cdot q^{m-n} }{ 2\pi (m-n-1)! } \right)^{\frac{1}{2}} f_{S'_{m,q,j}} e^m_{\linebundle_V} \bigg| \; 1\leq j \leq N'_{m,q} ,\; q\geq 1  \right\rbrace $$
forms an orthonormal basis of $\hl \left( V,\linebundle^m_V \right)_{\Gamma_V } $. Now we can define the $\Delta_{\Gamma_V ,m} $-invariant Bergman kernel functions by $\rho_{D,\omega_D ,\Delta_{\Gamma_V ,m} ,q} = \sum_{j=1}^{N'_{m,q}} \left\Vert S'_{m,q,j} \right\Vert_{h_D^{-q}}^2 $.

Then we can obtain the expression of Bergman kernel functions on $V/\Gamma_V $ for $m\geq n+1$.

\begin{prop}
\label{propbergmanknv}
For $m\geq n+1$, the Bergman kernel functions on $\left( V/\Gamma_V ,\omega_V ,\linebundle_V /\Gamma_V ,h_V \right)$ can be expressed as
\begin{equation}
\label{expresspropbergmanknv}
\rho_{ V/\Gamma_V , \omega_V ,m} \left( \pi_V (x) \right) =  \frac{n \left| \Gamma_V \right| \left| \log h_D \left( x \right) \right|^m }{ 2\pi (m-n-1)! } \sum_{q=1}^{\infty} q^{m-n} \rho_{D,\omega_D ,\Delta_{\Gamma_V ,m} ,q} \left( \pi_D (x) \right) h_D (x)^q ,
\end{equation}
where $x\in V$, $\pi_V :V\to V/\Gamma_V $ is the quotient map, and $ \pi_D : V\subset \linebundle_D \to D $ is the projection.
\end{prop}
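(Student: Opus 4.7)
The plan is to compute the Bergman kernel function on $V/\Gamma_V$ directly from an explicit $L^2$-orthonormal basis, using the $\Gamma_V$-invariant basis of $H^0_{L^2}(V,\linebundle_V^m)$ constructed in Lemma \ref{lmmorthonormalfver} and the paragraph that follows it. The first step is to pass from an orthonormal basis of $H^0_{L^2}(V,\linebundle_V^m)_{\Gamma_V}$ to one of $H^0_{L^2}(V/\Gamma_V,(\linebundle_V/\Gamma_V)^m)$. Since $\Gamma_V$ acts freely on $V$ by automorphisms preserving $\omega_V$ and $h_V$, the quotient map $\pi_V : V \to V/\Gamma_V$ is an unramified $|\Gamma_V|$-sheeted covering, so for any $\tilde{S}$ on $V/\Gamma_V$ one has $\int_V \Vert \pi_V^* \tilde{S}\Vert_{h_V^m}^2 \, d\V_{\omega_V} = |\Gamma_V| \int_{V/\Gamma_V} \Vert \tilde{S}\Vert_{h_V^m}^2 \, d\V_{\omega_V}$. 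Consequently, pullback by $\pi_V$ identifies an $L^2$-orthonormal basis $\{\tilde{s}_{m,q,j}\}$ of $H^0_{L^2}(V/\Gamma_V,(\linebundle_V/\Gamma_V)^m)$ with the rescaled basis $\{\sqrt{|\Gamma_V|}\,(nq^{m-n}/(2\pi(m-n-1)!))^{1/2} f_{S'_{m,q,j}} e_{\linebundle_V}^m\}$ inside $H^0_{L^2}(V,\linebundle_V^m)_{\Gamma_V}$.

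Next I would compute the pointwise squared norm at $x\in V$ of each such basis element. The splitting $\Vert f_S e_{\linebundle_V}^m\Vert_{h_V^m}^2 (x) = |f_S(x)|^2 \cdot |\log h_D(x)|^m$ follows immediately from $\Vert e_{\linebundle_V}\Vert_{h_V}^2 = |\log h_D|$. To identify $|f_S(x)|^2$ with the norm of $S \in H^0(D,\linebundle_D^{-q})$ at $\pi_D(x)$, pick a unit vector $e$ in the fiber $\pi_D^{-1}(\pi_D(x))$ (so $h_D(e)=1$) and write $x = we$. Since $f_S(v) = S(v^q)$ is $\mathbb{C}$-linear in the fiber direction of $\linebundle_D^q$, one finds $f_S(x) = w^q S(e^q)$, whence
\[
|f_S(x)|^2 = |w|^{2q} |S(e^q)|^2 = h_D(x)^q \Vert S(\pi_D(x))\Vert_{h_D^{-q}}^2,
\]
where the last equality uses that $e^q$ is a unit vector in $\linebundle_D^q$, so the dual norm of $S(\pi_D(x))$ is computed by evaluation on $e^q$.

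Finally, summing squared pointwise norms over the basis $\{\tilde{s}_{m,q,j}\}$ yields
\[
\rho_{V/\Gamma_V,\omega_V,m}(\pi_V(x)) = \sum_{q\geq 1} \sum_{j=1}^{N'_{m,q}} |\Gamma_V| \cdot \frac{n q^{m-n}}{2\pi(m-n-1)!} h_D(x)^q \Vert S'_{m,q,j}(\pi_D(x))\Vert_{h_D^{-q}}^2 |\log h_D(x)|^m,
\]
and recognizing the inner sum over $j$ as $\rho_{D,\omega_D,\Delta_{\Gamma_V,m},q}(\pi_D(x))$ produces the claimed formula. The argument is essentially an unwinding of definitions; the only point requiring any care is the $|\Gamma_V|$ factor arising from the rescaling of $L^2$-norms under the covering $\pi_V$, together with checking that both $h_D$ and $\rho_{D,\omega_D,\Delta_{\Gamma_V,m},q}\circ \pi_D$ are genuinely $\Gamma_V$-invariant so that the right-hand side descends to a well-defined function of $\pi_V(x)$.
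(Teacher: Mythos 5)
Your proposal is correct and follows essentially the same route as the paper: both compute $\rho_{V/\Gamma_V,\omega_V,m}$ by summing pointwise squared norms over the $\Gamma_V$-invariant orthonormal basis from Lemma \ref{lmmorthonormalfver}, with the factor $|\Gamma_V|$ coming from the $|\Gamma_V|$-sheeted covering $\pi_V$ and the identity $\|f_S e_{\linebundle_V}^m\|_{h_V^m}^2(x)=h_D(x)^q\|S(\pi_D(x))\|_{h_D^{-q}}^2|\log h_D(x)|^m$. Your write-up simply makes explicit the normalization of the $L^2$-norms under pullback and the fiberwise computation of $|f_S(x)|^2$, which the paper leaves implicit.
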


\begin{proof}
By Lemma \ref{lmmorthonormalfver}, we have
\begin{eqnarray*}
\rho_{ V/\Gamma_V , \omega_V ,m} \left( \pi_V (x) \right) & = & \left| \Gamma_V \right| \sum_{q=1}^{\infty} \frac{n q^{m-n } }{2\pi (m-n-1)!} \left( \sum_{j=1}^{N'_{m,q}} \left| \widetilde{S}'_{m,j,q} (x) \right|^2 \cdot \left\Vert e^m_{\linebundle_V} \right\Vert^2_{h_\linebundle^m}  \right) \\
& = & \left| \Gamma_V \right| \sum_{q=1}^{\infty} \frac{n q^{m-n } h_D (x)^q \left| \log h_D (x) \right|^m }{2\pi (m-n-1)!} \left( \sum_{j=1}^{N'_{m,q}} \left\Vert S'_{m,q,j} \left( \pi_D (x) \right) \right\Vert_{h_D^{-q}}^2  \right) \\
& = & \frac{n \left| \Gamma_V \right| \left| \log h_D (x) \right|^m }{ 2\pi (m-n-1)! } \sum_{q=1}^{\infty} q^{m-n} \rho_{D,\omega_D ,\Delta_{\Gamma_V ,m} ,q} \left( \pi_D (x) \right) h_D (x)^q ,
\end{eqnarray*}
which proves the proposition. 
\end{proof}

\begin{rmk}
As in the proof of Proposition \ref{propbergmanknv}, we see that for any integer $k\geq 0$ and $m\geq n+1$, the Bergman kernel functions on $\left( V/\Gamma_V ,\omega_V ,\linebundle_V /\Gamma_V , \left| \log h_D \right|^k h_V \right)$ can be expressed as
\begin{equation}
\label{expresspropbergmanknvloghd}
\rho^{(k)}_{ V/\Gamma_V , \omega_V ,m} \left( \pi_V (x) \right) =  \frac{n \left| \Gamma_V \right| \left| \log h_D \left( x \right) \right|^{m(k+1)} }{ 2\pi (m(k+1) -n-1)! } \sum_{q=1}^{\infty} q^{m(k+1) -n} \rho_{D,\omega_D ,\Delta_{\Gamma_V ,m} ,q} \left( \pi_D (x) \right) h_D (x)^q ,
\end{equation}
where $x\in V$, $\pi_V :V\to V/\Gamma_V $ is the quotient map, and $ \pi_D : V\subset \linebundle_D \to D $ is the projection.
\end{rmk}

\subsection{Asymptotic behavior of Bergman kernel functions on complex hyperbolic cusps}
\hfill

In this subsection, we will consider the asymptotic behavior of Bergman kernel functions on complex hyperbolic cusp. 

Let $\left( V/ \Gamma_V ,\omega_V ,\linebundle_V / \Gamma_V ,h_V \right)$ be a complex hyperbolic cusp defined on an polarized flat Abelian variety $\left( D,\omega_D \right)$ with a polarization $\left( \linebundle^{-1}_D ,h^{-1}_D \right)$. Now we consider the Bergman kernel function near cusp. We can use an argument similar to that in \cite[Section 3]{aumamar1}. 

For each integer $m\geq n+1$, set $$ b_m (a,t) = \frac{n \left| \Gamma_V \right| t^m }{ 2\pi (m-n-1)! } \sum_{q=1}^{\infty} q^{m-n} \rho_{D,\omega_D ,\Delta_{\Gamma_V ,m} ,q} (a) e^{-qt} ,\;\; \forall a\in D ,\; \forall t\in (0,\infty ) .$$
It is clear that $\rho_{V/\Gamma_V , \omega_V ,m} (x ) =  b_m \left( \pi_{D} (x) , -\log h_D (x) \right) $, for any $x\in V$. Then we consider the Taylor expansion of each term in this series. 

Let $f (t) = (1+t) e^{-t} $. Then we see that 
$$ b_m (a,mt ) = \frac{n \left| \Gamma_V \right| m^m e^{-m} }{ 2\pi (m-n-1)! } \sum_{q=1}^{\infty} q^{-n} \rho_{D,\omega_D ,\Delta_{\Gamma_V ,m} ,q} (a) \left( f(qt-1 ) \right)^m ,\;\; \forall x\in D ,\; \forall t\in (0 ,\infty ) .$$
Now we consider the asymptotic behavior of $f_m $ as $m\to\infty $. We can assume that
$$ \sum_{j=1}^{\infty } \frac{1}{j!} \left[ m\sum_{k=3}^{\infty } \frac{ (-1)^{k-1} }{ k } t^{k} \right]^j = \sum_{l=3}^{\infty } \lambda_{m,l} t^l \in \mathbb{R}[[ t ]] ,$$
where $\lambda_{m,l } \in\mathbb{R} $, and $ \mathbb{R}[[ t ]] $ is the formal power series ring over $\mathbb{R} $. Clearly, we have $ \lambda_{m,3} = \frac{m}{3} $.

\begin{lmm}
\label{lmmasymptoticexpansionfunctionfm}
Let $ N\geq 3 $ be an integer. Then we can find a constant $C=C(N) >0 $ such that
\begin{eqnarray}
\label{equationasymptoticexpansionfm}
 \left| \left( f(t ) \right)^m - \left( 1+ \sum_{l=3}^{N } \lambda_{m,l} t^l \right)  e^{-\frac{mt^2}{2}}  \right| \leq \frac{C  }{m^{\frac{N+1}{2} - \left\lfloor \frac{N+1}{3} \right\rfloor } \left( 1+|t+1|^{2N} \right) } ,
\end{eqnarray}
for any integer $m\geq n+1 $ and constant $t\in (-1,\infty ) $.
\end{lmm}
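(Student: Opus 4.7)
The plan is to write $(f(t))^m=e^{-mt^2/2}\cdot e^{u(t)}$, where $u(t):=m(\log(1+t)-t+t^2/2)=m\sum_{k\ge 3}\frac{(-1)^{k-1}}{k}t^k$; the definition of $\lambda_{m,l}$ precisely identifies $\sum_{l\ge 3}\lambda_{m,l}t^l$ with the Taylor expansion of $e^{u(t)}-1$ at $t=0$, which converges for $|t|<1$. A Cauchy estimate for the coefficients of $v^j$, where $v:=\sum_{k\ge 3}\frac{(-1)^{k-1}}{k}t^k$, using $|v(z)|\le |z|^3/(3(1-|z|))$ on $|z|\le 1/2$, yields $|\lambda_{m,l}|\le K_l m^{\lfloor l/3\rfloor}$ for constants $K_l$ depending only on $l$. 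On the disk of convergence the difference in the lemma equals $e^{-mt^2/2}\sum_{l>N}\lambda_{m,l}t^l$, so the task reduces to bounding this tail uniformly in $t$.

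I split $t\in(-1,\infty)$ into three regimes. In the inner regime $|t|\le A\sqrt{(\log m)/m}$, with $A$ chosen large depending on $N$, I truncate $e^u=P_J(u)+R_{J+1}(u)$ at $J:=\lceil N/3\rceil$, so that $3J\ge N$ and the polynomial $P_J(u)$, re-expanded in $t$, contains exactly the Taylor segment $1+\sum_{l=3}^N\lambda_{m,l}t^l$ plus an extra tail $\sum_{l>N}\tilde\lambda_{m,l,J}t^l$ whose coefficients satisfy $|\tilde\lambda_{m,l,J}|\le Cm^J$. A case analysis on the sign of $u$ (using the identity $e^{-mt^2/2}e^u=(f(t))^m\le 1$ when $u\ge 0$, and the trivial bound $e^{-mt^2/2}\le 1$ when $u\le 0$) gives $e^{-mt^2/2}|R_{J+1}|\le |u|^{J+1}/(J+1)!\le Cm^{J+1}|t|^{3(J+1)}$ since $|u|\le Cm|t|^3$ on $|t|\le 1$. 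The extra tail is bounded on $|t|\le 1/2$ by $Cm^J|t|^{N+1}$ via a geometric series in the coefficient estimate. Evaluating both bounds at $|t|=A\sqrt{(\log m)/m}$ and absorbing the resulting $(\log m)^C$ factors by slightly enlarging $A$ produces the required decay $m^{-\alpha}$ with $\alpha=(N+1)/2-\lfloor(N+1)/3\rfloor$.

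In the intermediate regime $A\sqrt{(\log m)/m}\le|t|\le\delta$ (for a small $\delta>0$), the bound $e^{-mt^2/2}\le m^{-A^2/2}$ is smaller than any polynomial rate for $A$ large, and the triangle inequality applied to $(f(t))^m\le 1$ and $|1+\sum_{l=3}^N\lambda_{m,l}t^l|\le Cm^{\lfloor N/3\rfloor}$ finishes this regime. In the outer regime $|t|\ge\delta$ with $t>-1$, both $(f(t))^m\le c^m$ for some $c<1$ and $e^{-mt^2/2}\le e^{-m\delta^2/2}$ are exponentially small in $m$; the factor $(1+|t+1|^{2N})^{-1}$ in the target provides the needed polynomial decay as $t\to\infty$, where $(f(t))^m(1+t)^{2N}$ remains bounded uniformly by a direct computation. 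The main obstacle is the inner regime: balancing the Taylor remainder $Cm^{J+1}|t|^{3(J+1)}$ against the extra tail $Cm^J|t|^{N+1}$ to recover the precise exponent $\alpha$ forces the choice $J=\lceil N/3\rceil$, and the log factors at the boundary must be absorbed carefully through the choice of $A$.
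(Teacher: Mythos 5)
Your overall strategy coincides with the paper's: split $(-1,\infty)$ at the scale $\sqrt{(\log m)/m}$, treat the inner regime by Taylor expansion of $e^{u}$ with $u=m(\log(1+t)-t+t^{2}/2)$, and use the exponential smallness of $(f(t))^{m}$ and of $e^{-mt^{2}/2}$ outside. The outer regimes are essentially fine (though in the intermediate one you should justify the smallness of the term $(f(t))^{m}$ itself via $(f(t))^{m}=e^{-mt^{2}/2+u}\leq e^{-mt^{2}/4}$, since ``$(f(t))^{m}\leq 1$'' plus the triangle inequality does not make that term small). The first genuine problem is the log absorption in the inner regime, which as described is backwards: your bounds $Cm^{J+1}|t|^{3(J+1)}$ and $Cm^{J}|t|^{N+1}$ are increasing in $|t|$, so their suprema sit at the boundary $|t|=A\sqrt{(\log m)/m}$ and carry powers of $\log m$, and enlarging $A$ pushes the boundary outward and makes these values \emph{larger}, while shrinking $A$ ruins the intermediate regime. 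No choice of $A$ removes the logs. The repair is to keep a Gaussian factor attached rather than evaluate at the boundary: in the inner regime $|u|\leq Cm|t|^{3}=o(mt^{2})$, hence $(f(t))^{m}\leq e^{-mt^{2}/4}$, and then $m^{J}|t|^{N+1}e^{-mt^{2}/4}\leq Cm^{J-(N+1)/2}\sup_{s>0}s^{(N+1)/2}e^{-s/4}$ uniformly in $t$, with no logs; this is exactly the role of the paper's step ``$(mt^{2})^{(N+1)/2}e^{-mt^{2}/2}\leq C_{1}$''.

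The second problem is that the exponent bookkeeping does not close. With $J=\lceil N/3\rceil$ your tail bound becomes $Cm^{J-(N+1)/2}$, which reaches the target $Cm^{\lfloor(N+1)/3\rfloor-(N+1)/2}$ only when $\lceil N/3\rceil\leq\lfloor(N+1)/3\rfloor$, and this fails precisely for $N\equiv 1\pmod 3$. This is not a defect you can engineer around: for such $N$ the stated inequality is itself false. Take $N=4$ and $t=m^{-1/2}$; the term $\tfrac{1}{2}u^{2}$, which contains $\tfrac{m^{2}}{18}t^{6}$, contributes $\asymp m^{-1}$ to the left-hand side after multiplication by $e^{-mt^{2}/2}=e^{-1/2}$, whereas the right-hand side is $\asymp m^{-3/2}$. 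The sharp exponent is $\min_{l>N}\bigl(l/2-\lfloor l/3\rfloor\bigr)$, which equals $(N+1)/2-\lfloor(N+1)/3\rfloor$ except when $N\equiv1\pmod 3$. The paper's own Part 1 hides the same issue behind an unjustified ``the Taylor expansion gives''; since only $N=3$ is used downstream (where the exponent is correct and your $J=1$ works), nothing later in the paper is affected, but a complete proof of the lemma as stated for all $N\geq3$ is not possible, and your write-up should either restrict to $N=3$ or correct the exponent.
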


\begin{proof}
It is easy to see that for each given $m\in\mathbb{N} $, there exists a constant $C=C(m,N) >0 $ such that (\ref{equationasymptoticexpansionfm}) holds. So we only need to prove it for sufficiently large $m\geq e^{10N^2} $. We will divide the proof into three parts according to the value of $t$.

When $|t| \leq \frac{\log (m)}{\sqrt{m}} $, the Taylor expansion gives a constant $C_1 =C_1 (N) >0 $ such that
\begin{eqnarray*}
 \left|  \left( f(t ) \right)^m - \left( 1+ \sum_{l=3}^{N } \lambda_{m,l} t^l \right)  e^{-\frac{mt^2}{2}}  \right| & = & e^{-\frac{mt^2}{2}} \left| e^{m\left( \log (1+t) - t +\frac{ t^2}{2} \right)  } - \left( 1+ \sum_{l=3}^{N } \lambda_{m,l} t^l \right) \right| \\
& \leq & C_1 m^{ \left\lfloor \frac{N+1}{3} \right\rfloor - \frac{N+1}{2} } \left( m t^{2} \right)^{\frac{N+1}{2}} e^{-\frac{mt^2}{2}} \leq C_1^2 m^{ \left\lfloor \frac{N+1}{3} \right\rfloor - \frac{N+1}{2} } ,
\end{eqnarray*}
where $\left\lfloor \cdot \right\rfloor $ is the greatest integer function. 

By a straightforward computation, we see that $t^l e^{-\frac{mt^2}{2}} \leq C_2 e^{-\frac{mt^2}{4}} $ for $|t| \geq \frac{\log (m)}{\sqrt{m}} $ and $l=3,\cdots ,N$, where $ C_2 =C_2 (N) >0 $ is a constant. It is sufficient to prove that there are constants $m_0 ,C >0 $ depending only on $N$, such that $ \left( f(t ) \right)^m \leq  C m^{-N} (1+t)^{-N}  $ for $m\geq m_0 $, and $|t| \geq \frac{\log (m)}{\sqrt{m}} $.

When $t \leq -\frac{\log (m)}{\sqrt{m}} $, wee see that $f(t) $ is increasing, and hence the Taylor expansion implies that
$$  \left( f(t ) \right)^m \leq \left( f \left( -\frac{\log (m)}{\sqrt{m}} \right) \right)^m = e^{ m\log \left( 1- \frac{\log (m)}{\sqrt{m}} \right) +m\frac{\log (m)}{\sqrt{m}} } \leq e^{-| \log (m) |^2 +C_3 \frac{| \log (m) |^3}{\sqrt{m}} } ,$$
where $ C_3 =C_3 (N) >0 $ is a constant. Clearly, we can find a constant $m_0 = m_0  (N)>0$ such that $ \left( f(t ) \right)^m \leq m^{-N} $ for $m\geq m_0 $ and $t \leq -\frac{\log (m)}{\sqrt{m}} $.

Now we assume that $t \geq \frac{\log (m)}{\sqrt{m}} $. By a direct calculation, we can conclude that $ f_{m,N} (t) = (1+t)^{N}  \left( f(t ) \right)^m $ is descending for $t\geq \frac{N}{m} $, and hence we can assume that $ (1+t)^{N} f_m $ is descending for $t \geq \frac{\log (m)}{\sqrt{m}} $. Apply the Taylor expansion again, we have
$$ f_{m,N} (t) \leq f_{m,N} \left( \frac{\log (m)}{\sqrt{m}} \right) = e^{ (m+N)\log \left( 1+ \frac{\log (m)}{\sqrt{m}} \right) +m\frac{\log (m)}{\sqrt{m}} } \leq e^{-| \log (m) |^2 +C_4 \frac{| \log (m) |^3}{\sqrt{m}} +N\frac{\log (m)}{\sqrt{m}} } ,$$
where $ C_4 =C_4 (N) >0 $ is a constant. Clearly, we can find a constant $m_1 = m_1  (N)>0$ such that $f_m (t) \leq m^{-N} (1+t)^{-N} $ for $m\geq m_0 $ and $t \geq \frac{\log (m)}{\sqrt{m}} $. This completes the proof.
\end{proof}

Write $G_{m,N} (t) = \left( 1+ \sum_{k=3}^{N } \lambda_{m,k} t^k \right)  e^{-\frac{mt^2}{2}} $ for each $N\geq 3$. Then we can approximate the function $b_{m} $ by suitable combinations of the functions $G_{m,N}$ as follows.

\begin{lmm}
\label{lmmapproximationsumfm}
There exists a constant $C=C\left(N ,V ,\left| \Gamma_V \right| \right) >0$, such that
\begin{eqnarray}
\label{equationlmmasymptoticexpansionfm}
 & & \sup_{a\in D} \left| \frac{2\pi (m-n-1)! }{n\left| \Gamma_V \right| m^m e^{1-m} } b_m (a,mt) \right. \\
& & \;\;\;\;\;\;\;\;\; - \left. t\sum_{q=1}^\infty q^{1-n} \rho_{D,\omega_D ,\Delta_{\Gamma_V ,m} ,q} (a) e^{-qt} G_{m-1 ,N} (qt-1)  \right| \leq \frac{C  }{m^{\frac{N+1}{2} - \left\lfloor \frac{N+1}{3} \right\rfloor } } , \nonumber
\end{eqnarray}
for any integer $m\geq n+1 $ and $t\in (0,\infty ) $.
\end{lmm}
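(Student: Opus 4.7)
The plan is to reduce Lemma \ref{lmmapproximationsumfm} to Lemma \ref{lmmasymptoticexpansionfunctionfm} by a simple algebraic manipulation and then bound the resulting tail series uniformly in $a$, $t$, and $m$ using a classical Bergman kernel estimate on the compact polarized manifold $(D,\omega_D,\linebundle_D^{-1},h_D^{-1})$.

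First I would use the identity $f(qt-1)=qt\,e^{1-qt}$ to factor one copy of $f$, so that
$$
\bigl(f(qt-1)\bigr)^{m} \;=\; qt\, e^{1-qt}\,\bigl(f(qt-1)\bigr)^{m-1}.
$$
Plugging this into the representation of $b_m(a,mt)$ displayed before Lemma \ref{lmmasymptoticexpansionfunctionfm} and multiplying by $\frac{2\pi(m-n-1)!}{n|\Gamma_V|m^m e^{1-m}}$ rewrites the first term on the left of (\ref{equationlmmasymptoticexpansionfm}) as
$$
t\sum_{q=1}^{\infty} q^{1-n}\rho_{D,\omega_D,\Delta_{\Gamma_V,m},q}(a)\,e^{-qt}\bigl(f(qt-1)\bigr)^{m-1}.
$$
Subtracting the second term displayed in (\ref{equationlmmasymptoticexpansionfm}), the quantity inside the absolute value equals
$$
t\sum_{q=1}^{\infty} q^{1-n}\rho_{D,\omega_D,\Delta_{\Gamma_V,m},q}(a)\,e^{-qt}\Bigl[\bigl(f(qt-1)\bigr)^{m-1}-G_{m-1,N}(qt-1)\Bigr].
$$

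Next I would apply Lemma \ref{lmmasymptoticexpansionfunctionfm} at level $m-1$ to each bracketed difference, which gives the pointwise bound $C_1 m^{-(\frac{N+1}{2}-\lfloor\frac{N+1}{3}\rfloor)}(1+(qt)^{2N})^{-1}$. The lemma then reduces to a uniform estimate of the form
$$
t\sum_{q=1}^{\infty}\frac{q^{1-n}\rho_{D,\omega_D,\Delta_{\Gamma_V,m},q}(a)\,e^{-qt}}{1+(qt)^{2N}} \;\leq\; C_2,
$$
independent of $m\geq n+1$, $a\in D$, and $t\in(0,\infty)$. Since $\rho_{D,\omega_D,\Delta_{\Gamma_V,m},q}$ is the Bergman kernel function of a subspace of $\hl(D,\linebundle_D^{-q})$, it is dominated pointwise by the full Bergman kernel function $\rho_{D,\omega_D,q}$. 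Because $(D,\omega_D,\linebundle_D^{-1},h_D^{-1})$ is a compact polarized Kähler manifold of complex dimension $n-1$, the Tian-Yau-Zelditch expansion (cf.\ Theorem \ref{thmregularpart}) yields $\rho_{D,\omega_D,q}(a)\leq C_3 q^{n-1}$ uniformly in $a$ and $q$, whence $q^{1-n}\rho_{D,\omega_D,\Delta_{\Gamma_V,m},q}(a)\leq C_3$.

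What remains is the purely numerical estimate $t\sum_{q\geq 1}e^{-qt}(1+(qt)^{2N})^{-1}\leq C_4$, uniform for $t>0$. For $t\geq 1$ this is controlled by the geometric tail $t/(e^{t}-1)$; for $0<t<1$ the sum is a Riemann sum for the convergent integral $\int_{0}^{\infty}e^{-s}(1+s^{2N})^{-1}\,ds$, with a routine error bound. Combining the three pieces yields (\ref{equationlmmasymptoticexpansionfm}). The only nontrivial input is the uniform upper bound $\rho_{D,\omega_D,q}(a)\leq Cq^{n-1}$; once that is cited, the remainder of the proof is elementary.
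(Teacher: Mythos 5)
Your proposal is correct and follows essentially the same route as the paper: the identity $f(qt-1)=qt\,e^{1-qt}$ converts the series for $b_m(a,mt)$ into $t\sum_q q^{1-n}\rho_{D,\omega_D,\Delta_{\Gamma_V,m},q}(a)e^{-qt}\left(f(qt-1)\right)^{m-1}$, Lemma \ref{lmmasymptoticexpansionfunctionfm} at level $m-1$ controls each bracketed difference by $Cm^{-\frac{N+1}{2}+\lfloor\frac{N+1}{3}\rfloor}\left(1+(qt)^{2N}\right)^{-1}$, the Tian--Yau--Zelditch bound gives $q^{1-n}\rho_{D,\omega_D,\Delta_{\Gamma_V,m},q}(a)\leq C$, and the remaining sum $t\sum_q e^{-qt}(1+(qt)^{2N})^{-1}$ is bounded uniformly in $t$ by integral comparison, exactly as in the paper.
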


\begin{proof}
By Lemma \ref{lmmasymptoticexpansionfunctionfm}, we see that
\begin{eqnarray*}
& & \left| \frac{2\pi (m-n-1)! }{n\left| \Gamma_V \right| m^m e^{1-m} } b_m (a,mt) - t\sum_{q=1}^\infty q^{1-n} \rho_{D,\omega_D ,\Delta_{\Gamma_V ,m} ,q} (a) e^{-qt} G_{m-1 ,N} (t)  \right| \\
& \leq & t \sum_{q=1}^{\infty} q^{1-n} \rho_{D,\omega_D ,\Delta_{\Gamma_V ,m} ,q} (a) e^{-qt} \left| \left(f(qt-1) \right)^{m-1} -G_{m-1 ,N} (qt-1) \right| \\
& \leq & Cm^{-\frac{N+1}{2} + \left\lfloor \frac{N+1}{3} \right\rfloor }  \sum_{q=1}^{\infty}  t e^{-qt} \left( 1+|q t|^{2N} \right)^{-1} \\
& \leq & C^2 m^{-\frac{N+1}{2} + \left\lfloor \frac{N+1}{3} \right\rfloor } e^{-t} \int_{0}^{\infty } t \left( 1+t^N \xi^N \right)^{-1} d\xi \\
& \leq & C^2 m^{-\frac{N+1}{2} + \left\lfloor \frac{N+1}{3} \right\rfloor } e^{-t} \int_{0}^{\infty } \left( 1+ \xi^N \right)^{-1} d\xi \leq C^3 m^{-\frac{N+1}{2} + \left\lfloor \frac{N+1}{3} \right\rfloor } e^{-t} ,
\end{eqnarray*}
where $C=C\left(N ,V ,\left| \Gamma_V \right| \right)>0$ is a constant, and then we get (\ref{equationlmmasymptoticexpansionfm}).
\end{proof}

As a corollary, we can estimate the sup of Bergman kernel functions.

\begin{coro}
\label{corobgmkernelsup}
There exists a constant $C=C\left(N ,V ,\left| \Gamma_V \right| \right) >0$, such that
\begin{eqnarray}
\label{inequalitycorobgmkernelsup}
\left| (2\pi )^{\frac{3}{2}} n^{-1} m^{-n-\frac{1}{2} } \sup_{x\in V/\Gamma_V} \rho_{V/\Gamma_V ,\omega_V ,m} (x) - \left| \Gamma_V \right| \alpha_{D,m} \right| \leq \frac{C}{\sqrt{m}}  ,
\end{eqnarray}
where $\alpha_{D,m} = \sup_{a\in D} \sup_{q\in\mathbb{N} } q^{-n} \rho_{D,\omega_D ,\Delta_{\Gamma_V ,m} ,q} (a) $. Moreover, $\alpha_{D,m} =\alpha_{D,m+\left| \Gamma_V \right| } $, $ \forall m\in\mathbb{N} $, and hence there are constants $c_1 , \cdots ,c_{\left| \Gamma_V \right|} >0$ depending only on $(D,\linebundle_D )$, $\Gamma_V$ and $ \Theta_{\Gamma_V} $, such that 
\begin{eqnarray}
\label{convergencecorobgmkernelsup}
 \left| c_{ j} \left( k\left| \Gamma_V \right| +j \right)^{-n-\frac{1}{2}} \sup_{V} \rho_{V/\Gamma_V ,\omega_V ,k\left| \Gamma_V \right| +j} - 1 \right| \leq \frac{C }{\sqrt{k}} ,\;\; \forall k\in\mathbb{N} ,
\end{eqnarray}
for $j=1,\cdots ,\left| \Gamma_V \right| $.
\end{coro}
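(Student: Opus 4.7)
The plan is to combine Proposition \ref{propbergmanknv} with Lemma \ref{lmmapproximationsumfm} to reduce the statement to an asymptotic analysis of $\sup_{(a,\tau)} \Phi_m(a,\tau)$, where
$$\Phi_m(a,\tau) := \tau\sum_{q=1}^\infty q^{1-n}\rho_{D,\omega_D,\Delta_{\Gamma_V,m},q}(a)\, e^{-q\tau}\, G_{m-1,N}(q\tau-1).$$
Setting $\tau = \left| \log h_D(x) \right|/m$ in Proposition \ref{propbergmanknv} gives $\rho_{V/\Gamma_V,\omega_V,m}(x) = b_m(\pi_D(x), m\tau)$, and then Lemma \ref{lmmapproximationsumfm} with $N=3$ combined with Stirling's formula for $(m-n-1)!$ yields
$$\rho_{V/\Gamma_V,\omega_V,m}(x) = \frac{n\left| \Gamma_V \right| e}{(2\pi)^{3/2}}\, m^{n+\frac{1}{2}}\bigl(\Phi_m(a,\tau) + O(m^{-1})\bigr),$$
so (\ref{inequalitycorobgmkernelsup}) reduces to showing $\sup_{a,\tau}\Phi_m(a,\tau) = e^{-1}\alpha_{D,m} + O(m^{-1/2})$.

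To analyze $\Phi_m$, I introduce $\mu_q(a) := q^{-n}\rho_{D,\omega_D,\Delta_{\Gamma_V,m},q}(a)$ and $\phi(s) := s e^{-s} G_{m-1,N}(s-1)$, so that $\Phi_m(a,\tau) = \sum_q \mu_q(a)\phi(q\tau)$. A direct calculation shows $\phi$ attains its global maximum $\phi(1) = e^{-1}(1 + O(m^{-1}))$ at $s=1$ and is sharply concentrated there with Gaussian width $1/\sqrt{m-1}$. The standard pointwise upper bound for Bergman kernels on the compact polarized Abelian variety $D$ gives $\rho_{D,\omega_D,\Delta_{\Gamma_V,m},q}(a) \leq Cq^{n-1}$, hence $\mu_q(a) \leq C/q$. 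Using the periodicity proved below together with the positivity of each $\alpha_{D,j}$ (which follows because $H^0(D,\linebundle_D^{-q})$ contains nontrivial $\Delta_{\Gamma_V,m}$-invariant sections for $q$ large, by orbit-averaging over the finite group $\Gamma_V$), I set $\alpha_* := \min_{1 \leq j \leq |\Gamma_V|}\alpha_{D,j} > 0$ and $q_\alpha := C/\alpha_*$; then the supremum defining $\alpha_{D,m}$ is attained at some $(a_*, q_*)$ with $q_* \leq q_\alpha$, uniformly in $m$.

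The lower bound then follows by evaluating at $\tau_* = 1/q_*$: the $q_*$-term equals exactly $\alpha_{D,m}\phi(1)$, while for $q \neq q_*$ one has $|q\tau_* - 1| \geq 1/q_\alpha$, so the Gaussian is $O(e^{-(m-1)/(2q_\alpha^2)}) = O(m^{-\infty})$ and hence $\Phi_m(a_*,\tau_*) = \alpha_{D,m} e^{-1} + O(m^{-\infty})$. For the upper bound I split into two regimes. When $\tau \geq 1/(2q_\alpha)$, the nearest integer $q_*(\tau)$ to $1/\tau$ is bounded by $2q_\alpha$, so only this single index contributes non-negligibly and $\Phi_m(a,\tau) \leq \alpha_{D,m}\phi(1) + O(m^{-\infty}) \leq \alpha_{D,m} e^{-1} + O(m^{-1})$. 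When $\tau < 1/(2q_\alpha)$, every $q \leq q_\alpha$ satisfies $|q\tau - 1| \geq 1/2$ and so contributes only exponentially small terms; hence $\Phi_m(a,\tau) \leq C\sum_{q > q_\alpha} q^{-1}\phi(q\tau) + O(m^{-\infty})$, and Euler--Maclaurin comparison of $\tau\sum_q e^{-q\tau}(1+P_{m-1,N}(q\tau-1))e^{-(m-1)(q\tau-1)^2/2}$ with the Gaussian integral $e^{-1}\sqrt{2\pi/(m-1)}(1+O(m^{-1}))$ gives $O(m^{-1/2})$.

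Finally, the periodicity $\alpha_{D,m+\left| \Gamma_V \right|} = \alpha_{D,m}$ comes from the fact that the image of $\Theta_{\Gamma_V}: \Gamma_V \to U(1)$ is a finite subgroup of $U(1)$ of order dividing $\left| \Gamma_V \right|$, so $\Theta_{\Gamma_V}(g)^{m + \left| \Gamma_V \right|} = \Theta_{\Gamma_V}(g)^m$ for every $g \in \Gamma_V$; consequently $\Delta_{\Gamma_V,m + |\Gamma_V|}$ coincides with $\Delta_{\Gamma_V,m}$ on $H^0(D,\linebundle_D^{-q})$ for every $q$, and so do the invariant Bergman kernel functions. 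Setting $c_j := (2\pi)^{3/2}/(n\left| \Gamma_V \right|\alpha_{D,j})$ for $1 \leq j \leq \left| \Gamma_V \right|$, (\ref{convergencecorobgmkernelsup}) is then immediate from (\ref{inequalitycorobgmkernelsup}) and periodicity. The main technical obstacle I expect is the transitional range $\tau \sim 1/\sqrt m$ in the upper bound: multiple indices $q$ overlap within the Gaussian width there, so the naive bound $\mu_q \leq \alpha_{D,m}$ would spoil the estimate by a factor $\sim \sqrt m$; one must instead exploit the stronger decay $\mu_q \leq C/q$ in conjunction with the Euler--Maclaurin comparison indicated above.
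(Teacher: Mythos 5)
Your overall strategy --- reducing to $b_m$ via Proposition \ref{propbergmanknv}, applying Lemma \ref{lmmapproximationsumfm} with $N=3$ together with Stirling's formula, isolating the index nearest to $\tau^{-1}$, and deriving the periodicity from $\Theta_{\Gamma_V }(g)^{\left| \Gamma_V \right|}=1$ --- is the same as the paper's, and your lower bound and your periodicity argument are fine. But there is a genuine gap in the upper bound in the intermediate regime $K m^{-1/2}\le\tau<1/(2q_\alpha )$. There you discard the indices $q\le q_\alpha $ and bound the remainder by $C\sum_{q>q_\alpha } q^{-1}\phi (q\tau )=C\tau \sum_{q} e^{-q\tau }G_{m-1,N}(q\tau -1)$, claiming that an Euler--Maclaurin comparison with $\int_0^\infty e^{-s}G_{m-1,N}(s-1)\,ds =O(m^{-1/2})$ yields $O(m^{-1/2})$. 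This step fails: the integrand is concentrated on an interval of width $\sim m^{-1/2}$ about $s=1$, while the mesh of the Riemann sum is $\tau \gg m^{-1/2}$, so the sum is dominated by the single grid point nearest the peak and is of size $\sim \tau$, not $\sim m^{-1/2}$ (equivalently, the Euler--Maclaurin error is $O(\tau \cdot \mathrm{TV})=O(\tau )$, which is not small here). The bound your argument actually produces on this range is $C\tau \le C/(2q_\alpha )=\alpha_* /2$, a constant; since $1/2>e^{-1}$ this is \emph{not} $\le \alpha_{D,m}e^{-1}+C'/\sqrt{m}$, so the required inequality is not established for these $\tau$. The self-diagnosed ``technical obstacle'' at $\tau \sim m^{-1/2}$ is in fact harmless (there $\mu_q \le C/q \le C\tau \le C m^{-1/2}$ per term, with $O(1/(\tau \sqrt{m}))$ relevant terms); the real trouble is at larger $\tau$, where $\mu_q\le C/q$ is too weak for the dominant index.

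The repair is what the paper does: do not split by the size of $\tau $ at all, but by the index. For every $\tau >0$, the at most three indices with $|q-\lfloor \tau^{-1}\rfloor |\le 1$ are bounded using $\mu_q (a)\le \alpha_{D,m}$ --- the definition of $\alpha_{D,m}$ as a supremum over \emph{all} $q$, not only $q\le q_\alpha $ --- times $\sup_s \phi (s)=e^{-1}+O(m^{-1/2})$; the indices with $|q-\lfloor \tau^{-1}\rfloor |\ge 2$ satisfy a uniform lower bound on $|q\tau -1|$ of order $\tau $, and only for that tail does one compare with the Gaussian integral to get $O(m^{-1/2})$. (A minor related point: in your first regime the bound should read $\alpha_{D,m}\sup_s\phi (s)$ rather than $\alpha_{D,m}\phi (1)$, and $\sup_s \phi (s)=e^{-1}\left( 1+O(m^{-1/2})\right)$ rather than $e^{-1}\left( 1+O(m^{-1})\right)$, because of the term $\lambda_{m-1,3}(s-1)^3=\tfrac{m-1}{3}(s-1)^3$ in $G_{m-1,3}$ evaluated at $|s-1|\sim m^{-1/2}$; this is still within the allowed $C/\sqrt{m}$ error.)
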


\begin{rmk}
By the standard Tian-Yau-Zelditch expansion, we see that $$ \limsup_{q\to\infty} q^{-n} \left( \sup_{a\in D} \rho_{D,\omega_D ,\Delta_{\Gamma_V ,m} ,q} (a) \right) \leq \left| \Gamma_V \right| \limsup_{q\to\infty} q^{-n} \left( \sup_{a\in D} \rho_{D,\omega_D ,q} (a) \right) = \lim_{l\to\infty} (2\pi )^{n-1} l^{-1} =0 .$$ 
Hence we can find an integer $ q_{m} \in \mathbb{N} $ and a point $ a\in D$, such that $\alpha_{D,m} = q_m^{-n} \rho_{D,\omega_D ,\Gamma_V ,q_a } (a) $.
\end{rmk}

\begin{proof}
Let $t=q_m^{-1} $ such that $\alpha_{D,m} = q_{m}^{-n} \rho_{D,\omega_D ,\Delta_{\Gamma_V ,m} ,q_m } (a) $. Then Stirling's formula implies that
\begin{eqnarray}
& & (2\pi )^{\frac{3}{2}} n^{-1} m^{-n-\frac{1}{2} } \sup_{x\in V/\Gamma_V} \rho_{V/\Gamma_V ,\omega_V ,m} (x) \nonumber \\
& \geq & (2\pi )^{\frac{3}{2}} n^{-1} m^{-n-\frac{1}{2} }  b_m (a,mq_{m}^{-1} ) \\
& \geq & \left( 1-Cm^{-1} \right) \left| \Gamma_V \right| \sum_{j=1}^{\infty } j^{-n} \rho_{D,\omega_D ,\Delta_{\Gamma_V ,m} ,j} (a) \left( f(jq_a^{-1} -1) \right)^m \nonumber \\
& \geq & \left( 1-Cm^{-1} \right) \left| \Gamma_V \right| \alpha_{D,m} (a) , \nonumber
\end{eqnarray}
where $C >0$ is a constant independent of $m$. For abbreviation, we use the same letter $C$ for constants independent of $m$. By Lemma \ref{lmmapproximationsumfm}, to prove (\ref{inequalitycorobgmkernelsup}), we only need to prove that
\begin{eqnarray*}
 \sup_{t>0} \left( t\sum_{q=1}^\infty q^{1-n} \rho_{D,\omega_D ,\Delta_{\Gamma_V ,m} ,q } (a) e^{1-qt} G_{m-1 ,3} (qt-1) \right) \leq \alpha_{D,m} (a) +  \frac{C}{\sqrt{m}}  .
\end{eqnarray*}
Note that the standard Tian-Yau-Zelditch expansion implies that $\rho_{D,\omega_D ,\Delta_{\Gamma_V ,m} ,q } (a) \leq C' q^{n-1} $ for some constant $C' >0$ independent of $m$ and $q$. An easy computation now shows that
\begin{eqnarray*}
& & \sup_{t>0} \left( t\sum_{\lfloor t^{-1} \rfloor -1 \leq q\leq \lfloor t^{-1} \rfloor +1 } q^{1-n} \rho_{D,\omega_D ,\Delta_{\Gamma_V ,m} ,q } (a) e^{1-qt} G_{m-1 ,3} (qt-1) \right) \\
& \leq & \alpha_{D,m} + \sup_{t>0} \left( C e^{-\frac{(m-1) t^2 }{10}} +Cm t^3 +Cm t^3 e^{-\frac{(m-1) t^2 }{10}} \right) \leq \alpha_{D,m} +  \frac{C}{\sqrt{m}}  .
\end{eqnarray*}
Then we estimate the remaining terms as follows.
\begin{eqnarray*}
& & \sup_{t>0} \left( t\sum_{ \left| q -\lfloor t^{-1} \rfloor \right| \geq 2 } q^{1-n} \rho_{D,\omega_D ,\Delta_{\Gamma_V ,m} ,q } (a) e^{1-qt} G_{m-1 ,3} (qt-1) \right) \\
&\leq & \sup_{t>0} \left( C t\sum_{ q=1 }^\infty \left( 1+m |qt|^3 \right) e^{-\frac{(m-1)|qt|^2}{2}} \right) \leq C\int_{0}^{\infty } t \left( 1+m |t\xi |^3 \right) e^{-\frac{(m-1)|t\xi |^2}{2}}  d\xi \\
& = & C\int_{0}^{\infty } \left( 1+m y^3 \right) e^{-\frac{(m-1)\zeta^2 }{2}}  d\zeta + C\sup_{\zeta >0} \left( m\zeta^3 e^{-\frac{(m-1)\zeta^2 }{2}} \right) \leq \frac{C}{\sqrt{m}} ,
\end{eqnarray*}
and (\ref{inequalitycorobgmkernelsup}) is proved. 

Since $ \Theta_{\Gamma_V } \left( g \right)^{\left| \Gamma_V \right| } = 1 $, $\forall g\in\Gamma_V $, we see that $\rho_{D,\omega_D ,\Delta_{\Gamma_V ,m} ,q } = \rho_{D,\omega_D ,\Delta_{\Gamma_V ,m +\left| \Gamma_V \right| } ,q } $, $\forall m\in\mathbb{N} $. It follows that $\alpha_{D,m} =\alpha_{D,m+\left| \Gamma_V \right| } $, $ \forall m\in\mathbb{N} $, which completes the proof.
\end{proof}

\begin{rmk}
For any integer $k\geq 0$ and $m\in\mathbb{N} $, let $ \rho^{(k)}_{ V/\Gamma_V , \omega_V ,m} (x) $ be the $m$-th Bergman kernel function on $\left( V/\Gamma_V ,\omega_V ,\linebundle_V /\Gamma_V , \left| \log h_D \right|^k h_V \right)$.

As in the proof of Corollary \ref{convergencecorobgmkernelsup}, the series expansion (\ref{expresspropbergmanknvloghd}) shows that there exists a constant $C=C\left(k, N ,V ,\left| \Gamma_V \right| \right) >0$, such that
\begin{eqnarray}
\label{inequalitycorobgmkernelsup2}
\left| (2\pi )^{\frac{3}{2}} n^{-1} \left(m k+m \right)^{-n-\frac{1}{2} } \sup_{x\in V/\Gamma_V} \rho^{(k)}_{V/\Gamma_V ,\omega_V ,m} (x) - \left| \Gamma_V \right| \alpha_{D,m} \right| \leq \frac{C}{\sqrt{m}}  ,
\end{eqnarray}
where $\alpha_{D,m} = \sup_{a\in D} \sup_{q\in\mathbb{N} } q^{-n} \rho_{D,\omega_D ,\Delta_{\Gamma_V ,m} ,q} (a) $.
\end{rmk}

\section{Localization of Bergman kernels}
\label{tpsonprs}

In this section, we establish some basic estimates for peak sections, and then use it to estimate the distance of global Bergman kernel and the Bergman kernel on asymptotic complex hyperbolic cusp. We will also show that this localization approach is also valid for the Poincar\'e type metrics.

\subsection{Localization of peak sections}
\hfill

Let $(M,\omega )$ be a K\"ahler manifold, $\linebundle $ be a holomorphic line bundle on $M$ equipped with a hermitian metric $h$ whose curvature form is $\omega $. For any closed subspace $X\subset \hl \left( M,\linebundle \right) $ and complex bounded linear functional $0\neq T\in X^* $, we say that a section $S\in X $ is the peak section of $T$, if $ \int_M \left\Vert S \right\Vert^2 d\V_\omega =1 $, and $ T(S)  = \left\Vert T \right\Vert $. By the Riesz representation theorem, $S$ is the peak section of $T$ if and only if $T(S) \in\mathbb{R} $, and $S\perp \ker T$ in $X\subset \hl \left( M,\linebundle \right)$. Note that $\hl \left( M,\linebundle \right) $ is a Hilbert space, and so is $X$. It follows that there exists a unique peak section of $T$ in $X$.

Let $U$ be an open subset of $M$, $X\subset \hl \left( U,\linebundle \right) $ be a closed subspace, and $0\neq T\in X^* $. Then the restriction map gives a bounded linear embedding $ \hl \left( M,\linebundle \right) \to \hl \left( U,\linebundle \right) $. When $X\cap \hl \left( M,\linebundle \right) \nsubseteq \ker T$, we see that $T\neq 0$ on $X\cap \hl \left( M,\linebundle \right) $, and hence we can find a unique peak section of $T\big|_{X\cap \hl \left( M,\linebundle \right) }$ in $ X\cap \hl \left( M,\linebundle \right) $. The peak section of $T\big|_{X\cap \hl \left( M,\linebundle \right) }$ is called the peak section on $M$. The following lemma gives an estimate of the $L^2$ distance between the peak sections of $T\in X^* $ on $U$ and on $M$.

\begin{lmm}
\label{peaksectionrestriction}
Let $S_{U} \in X $ and $S_M \in X\cap \hl \left( M,\linebundle \right) $ be the peak sections on $U$ and $M$ respectively. Assume that $\left\Vert T\big|_{X\cap \hl \left(  M,\linebundle \right)} \right\Vert  \geq \left( 1-\epsilon \right) \left\Vert T \right\Vert >0 $ for some $\epsilon >0 $. Then
$$ \int_{M\sq U} \left\Vert S_{M} \right\Vert^2 d\V_\omega + \int_{U} \left\Vert S_U - S_{M} \right\Vert^2 d\V_\omega \leq 2\epsilon .$$
\end{lmm}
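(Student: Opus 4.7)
The plan is to exploit the defining property of the peak section---namely, that $S_U$ is characterized in $X$ by $\|S_U\|_{L^2(U)}=1$, $T(S_U)=\|T\|\in\mathbb{R}_{\geq 0}$, and $S_U\perp \ker T$ in $X$---together with a single Pythagorean decomposition, and then check that the two quantities we want to bound add up to a clean expression.

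First, I would observe that the restriction $\widetilde{S}_M:=S_M|_U$ lies in $X$ (since $X\subset H^0_{L^2}(U,\linebundle)$ contains $X\cap H^0_{L^2}(M,\linebundle)$ viewed via restriction), and that $T(\widetilde{S}_M)=T(S_M)=\|T|_{X\cap H^0_{L^2}(M,\linebundle)}\|$ is a nonnegative real number which by hypothesis is at least $(1-\epsilon)\|T\|$. Decomposing inside $X$ with respect to the splitting $X=\mathbb{C}\,S_U\oplus (\ker T\cap X)$ (orthogonal for the $L^2(U)$ inner product, since $S_U\perp\ker T$ by the peak characterization), I write
\begin{equation*}
\widetilde{S}_M = a\, S_U + R, \qquad R\in\ker T\cap X,\qquad a=\frac{T(\widetilde{S}_M)}{\|T\|}\in\mathbb{R},\ \ a\geq 1-\epsilon .
\end{equation*}

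Next, setting $b:=\int_{M\sq U}\|S_M\|^2\,d\V_\omega$, the identity $\|S_M\|_{L^2(M)}^2=1$ gives $\|\widetilde{S}_M\|_{L^2(U)}^2=1-b$. By Pythagoras in $X$,
\begin{equation*}
1-b \;=\; \|\widetilde{S}_M\|_{L^2(U)}^2 \;=\; a^2 + \|R\|_{L^2(U)}^2,
\end{equation*}
so $\|R\|_{L^2(U)}^2=1-b-a^2$. Then, using $S_U-\widetilde{S}_M=(1-a)S_U-R$ and the same orthogonality,
\begin{equation*}
\|S_U-\widetilde{S}_M\|_{L^2(U)}^2 \;=\; (1-a)^2 + \|R\|_{L^2(U)}^2 \;=\; 2-2a-b.
\end{equation*}
Adding these,
\begin{equation*}
\int_{M\sq U}\|S_M\|^2\,d\V_\omega + \int_U\|S_U-S_M\|^2\,d\V_\omega \;=\; b + (2-2a-b) \;=\; 2(1-a) \;\leq\; 2\epsilon.
\end{equation*}

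There is no real obstacle here---the only point requiring care is making sure the orthogonal decomposition is done in the ambient Hilbert space $X\subset H^0_{L^2}(U,\linebundle)$ (so that Pythagoras applies to the $L^2(U)$-norm of the restriction $\widetilde{S}_M$, not to $S_M$ itself), and verifying that $a$ is genuinely real so that $(1-a)^2$ works. The rest is just bookkeeping that the two error terms telescope to $2(1-a)$.
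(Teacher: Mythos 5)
Your proof is correct and is essentially the paper's own argument: the paper writes $T(S_M)=(1-\delta)T(S_U)$ and uses $S_M-(1-\delta)S_U\in\ker T$ together with $S_U\perp\ker T$ to evaluate $\int_U\langle S_U,S_M\rangle$, which is exactly your decomposition $\widetilde{S}_M=aS_U+R$ with $\delta=1-a$. Your Pythagorean bookkeeping and the paper's direct expansion of $\|S_U-S_M\|^2$ yield the same telescoping to $2(1-a)\leq 2\epsilon$.
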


\begin{proof}
By definition, we have $T\left( S_M \right) = \left\Vert T\big|_{X\cap  \hl \left( M,\linebundle \right) } \right\Vert \leq \left\Vert T \right\Vert = T\left( S_U \right)$. Hence there exists a constant $\delta \in \left( 0,\epsilon \right] $ such that $T\left( s_M \right) = \left( 1-\delta \right) T\left( s_U \right) $. It follows that $s_M - \left( 1-\delta \right) s_U \in \ker T$. Then we have
$$ \int_{U} \left\langle S_{U} , S_M - \left( 1-\delta \right) S_U \right\rangle d\V_\omega =0 .$$
Combining these, we can conclude that
\begin{eqnarray*}
\int_{U} \left\Vert S_U - S_{M} \right\Vert^2 d\V_\omega & = & \int_U \left\Vert S_{U} \right\Vert^2 +\left\Vert S_{M} \right\Vert^2 -2\re \left\langle S_{U} , S_M \right\rangle  d\V_\omega \\
& = & \int_U \left\Vert S_{U} \right\Vert^2 +\left\Vert S_{M} \right\Vert^2 -2\left( 1-\delta \right) \left\Vert S_{U} \right\Vert^2  d\V_\omega \\
& = & 2\delta - \int_{M\sq U} \left\Vert S_{M} \right\Vert^2 d\V_\omega \leq 2\epsilon -\int_{M\sq U} \left\Vert S_{M} \right\Vert^2 d\V_\omega ,
\end{eqnarray*}
and the lemma follows.
\end{proof}

Now we show how to use the peak section to give some estimates about Bergman kernels. Taking a vector $e_p \in \linebundle^m_{p} $ at $p\in U$ such that $\left\Vert e_p \right\Vert_{h^m} =1 $. Then $T_p (S)=e_p^{*} \left( S \right) $ gives a bounded linear functional of $\hl \left( U,\linebundle^m \right)  $. If $T_p \neq 0$, then the peak section $S_U \in \hl \left( U,\linebundle^m \right)  $ satisfies that $B_U (p,x) = S_U (p) \otimes S_U (x)^* $, $\forall x\in U $, where $B_U $ is the Bergman kernel on $U$. So the Bergman kernel can be represented by a peak section. By combining Lemma \ref{peaksec}, Theorem \ref{thmregularpart} and Lemma \ref{peaksectionrestriction}, we can give a proof of the following Agmon type estimate.

\begin{coro}
\label{coroagmontypeestimate}
Given constants $n,\delta , \Lambda ,\epsilon , Q >0$ and $k\geq 0$, there exists a constant $ C$ which depending only on $n, \delta , \Lambda , \epsilon , k,Q$ with the following property.

Let $(M,\omega )$ be a K\"ahler manifold, $\linebundle $ be a holomorphic line bundle on $M$ equipped with a hermitian metric $h$ such that $\Ric\left( \omega \right) \geq -\Lambda \omega $, $\Ric (h) \geq \epsilon \omega $, and $x\in M$. If $(M, \omega)$ isn't complete, we suppose that $M$ is pseudoconvex. 

Fix $m\geq C $. Write $U_m = B_{\frac{3\delta \log m}{\sqrt{m}} } \left( x_0 \right) $. Assume that $\bar{U}_m $ is compact, $\Ric (h) = \omega $ on $U_m $, $\inf_{y\in U_m } \inj (y)\geq \frac{\delta \log m}{\sqrt{m}} $, and $\sum_{j=0}^{2k} \sup_{U_m} \left\Vert \nabla^j \Ric (\omega ) \right\Vert \leq Q .$ Then we have
$$ \left\Vert B_{M,\omega ,m} (x,y) \right\Vert_{C^l ; h^m ,\omega } \leq C m^{-k+n+l} , $$
for any $x,y\in B_{\frac{2\delta \log m}{\sqrt{m}} } \left( x_0 \right) $ satisfying that $ \dist_{\omega } (x,y) \geq \frac{\delta\log m}{\sqrt{m}} $, where $B_{M,\omega ,m} (x,y)$ is the $m$-th Bergman kernel on $\left( M,\omega ,\linebundle ,h \right) $. 
\end{coro}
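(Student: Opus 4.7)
The plan is to use the peak section factorization of the Bergman kernel together with Tian's explicit peak section construction (Lemma \ref{peaksec}) and the $L^2$ localization given by Lemma \ref{peaksectionrestriction}. Write $B_{M,\omega,m}(x,y) = S_x(x) \otimes S_x(y)^*$, where $S_x \in \hl(M, \linebundle^m)$ is the unit-norm peak section of the evaluation functional $T_x$ at $x$. Then $\|B_{M,\omega,m}(x,y)\|_{h^m} = \sqrt{\rho_{M,\omega,m}(x)} \cdot \|S_x(y)\|_{h^m}$, and since Theorem \ref{thmregularpart} gives $\rho_{M,\omega,m}(x) = O(m^n)$ uniformly on $U_m$, the $C^0$ version of the corollary reduces to the Agmon-type pointwise bound $\|S_x(y)\|_{h^m}^2 \leq Cm^{n-K}$ for any prescribed $K$; the $C^l$ version then follows by Cauchy-type interior estimates on the holomorphic-antiholomorphic kernel $B_{M,\omega,m}(x,y)$, at a cost of $O(m^l)$.

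To estimate $\|S_x(y)\|_{h^m}$, I would localize via Lemma \ref{peaksectionrestriction}. Choose an open subset $V := B_{5\delta\log m/(2\sqrt m)}(x_0) \subset U_m$ containing $B_{2\delta \log m/\sqrt m}(x_0)$ and hence both $x$ and $y$, and apply Theorem \ref{thmregularpart} to both $(M, \omega)$ and $(V, \omega)$ (the latter is allowed by the Remark following Theorem \ref{thmregularpart}). Since the TYZ coefficients at $x$ are local invariants, both admit the same asymptotic expansion, giving $0 \leq \rho_{V,\omega,m}(x) - \rho_{M,\omega,m}(x) \leq Cm^{n-K}$. Lemma \ref{peaksectionrestriction} applied with $\epsilon = O(m^{-K})$ yields
\[
 \int_{M \setminus V} \|S_x\|_{h^m}^2 \, d\V_\omega + \int_V \|S_x - S_x^V\|_{h^m}^2 \, d\V_\omega \leq Cm^{-K},
\]
where $S_x^V$ denotes the peak section on $V$. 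A standard submean-value inequality for the pointwise norm of a holomorphic section of $\linebundle^m$ (via the Bochner formula using $\Ric(h)=\omega$) converts this to a pointwise estimate $\|(S_x - S_x^V)(y)\|_{h^m}^2 \leq Cm^{n-K}$.

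Next, I would apply Lemma \ref{peaksec} on $V$ with $P = 0$ and sufficiently large $p'$ to produce a unit-norm section $\tilde S_0 \in \hl(V, \linebundle^m)$ whose local expression in normal holomorphic coordinates reads $\lambda_0(1+O(m^{-2p'}))(1+\varphi_m)e_L^m$, with $\|\varphi_m(z)\|_{h^m} \leq b_m|z|^{2p'}$. Since $\Ric(h) = \omega$ and the coordinates are normal, $h(e_L,e_L)(z) = \exp(-|z|^2 + O(|z|^3))$, yielding the pointwise Gaussian bound $\|\tilde S_0(z)\|_{h^m}^2 \leq Cm^n \exp(-m|z|^2/2)$ throughout the coordinate chart (outside the coordinate chart, we use the $L^2$ bound $\int_{V \setminus B_{\log m/(Q\sqrt m)}(x)} \|\tilde S_0\|_{h^m}^2 \, d\V_\omega \leq Cm^{-2p'}$ together with the submean-value inequality). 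At $d_\omega(x,y) \geq \delta\log m/\sqrt m$ this is bounded by $Cm^n m^{-\delta^2 \log m/2}$, decaying faster than any polynomial. Combining this with a telescoping application of Lemma \ref{peaksectionrestriction} and Theorem \ref{thmregularpart} on nested subdomains $V \supset V^{(1)} \supset V^{(2)} \supset \cdots$ (on which, after $\sqrt m$-rescaling, the local geometry converges rapidly to the flat Bargmann-Fock model and the TYZ expansion becomes progressively sharper), one obtains $\|S_x^V - \alpha\tilde S_0\|_{L^2(V)}^2 \leq Cm^{-K}$, whence by the submean-value inequality $\|(S_x^V - \alpha\tilde S_0)(y)\|_{h^m}^2 \leq Cm^{n-K}$. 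Assembling the three estimates gives $\|S_x(y)\|_{h^m}^2 \leq Cm^{n-K}$, which proves the Corollary.

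The main technical obstacle is the bootstrapping required to compare $\tilde S_0$ and $S_x^V$ in $L^2$ to arbitrary polynomial order: Lemma \ref{peaksec} directly gives only $|\lambda_0^2 - \pi^{-n}m^n| = O(m^{n-1/2})$, so a single application of Lemma \ref{peaksectionrestriction} in that step yields only $L^2$ closeness of order $m^{-1/4}$. Reaching order $m^{-K/2}$ requires either a refined peak section matching higher-order TYZ coefficients or the iterative/telescoping scheme on shrinking subdomains sketched above, which successively extracts the Bargmann-Fock corrections; this is the most delicate part of the argument.
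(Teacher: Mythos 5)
Your overall architecture — factor $B_{M,\omega,m}(x,y)$ through the peak section $S_x$, then use Theorem \ref{thmregularpart} plus Lemma \ref{peaksectionrestriction} with $\epsilon=O(m^{-K})$ to localize the $L^2$ mass of $S_x$, then convert $L^2$ smallness to pointwise smallness by a sub-mean-value inequality — is exactly the skeleton of the paper's proof. The first half of your argument (the comparison of $S_x$ with $S_x^V$ and the bound on $\int_{M\setminus V}\Vert S_x\Vert^2$) is sound.

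The gap is in the final step, where you bound $\Vert S_x^V(y)\Vert$ by proving $\Vert S_x^V-\alpha\tilde S_0\Vert_{L^2(V)}^2\leq Cm^{-K}$ for arbitrary $K$. This estimate is not attainable by the tools you invoke. Corollary \ref{coropeaksec} and the normalization $|m^{n+p}\lambda_P^{-2}-\pi^nP!|\leq Cm^{-1/2}$ carry an intrinsic $O(m^{-1/2})$ error coming from the non-flatness of the metric at scale $m^{-1/2}$, and this error does not improve when you shrink the domain: Lemma \ref{peaksectionrestriction} compares peak sections of \emph{nested domains} with each other, but $\tilde S_0$ is not the peak section of any domain, so the last link of your telescoping chain — from the peak section of the smallest domain to the explicit model $\tilde S_0$ — still costs $O(m^{-1/2})$ in $L^2$, which destroys the claimed $m^{-K}$ bound. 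Closing this would require constructing peak sections matched to the TYZ expansion to order $k$ (essentially redoing Liu--Lu), which you mention but do not carry out.

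The paper avoids this entirely, and the fix is available with tools you already used. The explicit section of Lemma \ref{peaksec} is needed only for an order-of-magnitude \emph{lower} bound $\Vert T_{P,x}\Vert\gtrsim m^{(n+|P|)/2}$ (where $O(m^{-1/2})$ precision is ample, via the overlap estimate $\vartheta\geq 1-Cm^{-1/2}$). The high-order decay comes instead from applying the localization a second time: by Theorem \ref{thmregularpart} and Lemma \ref{peaksectionrestriction} on the pair $B_{\varepsilon_m/100}(x)\subset V$ one gets $\int_{V\setminus B_{\varepsilon_m/100}(x)}\Vert S_x^V\Vert^2\leq Cm^{-K}$, and then Cauchy--Schwarz at $y$ (which lies in $V\setminus B_{\varepsilon_m/100}(x)$ since $\dist_\omega(x,y)\geq\delta\varepsilon_m$) gives $\Vert S_x^V(y)\Vert^2\leq \rho_{\omega,m}(y)\cdot Cm^{-K}\leq Cm^{n-K}$. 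Equivalently, the paper exploits the symmetry $T_{P_1,x}(S_{0,y,M})\overline{T_{0,y}(S_{0,y,M})}=T_{P_1,x}(S_{P_1,x,M})\overline{T_{0,y}(S_{P_1,x,M})}$ to transfer the evaluation at $y$ of the peak section at $x$ into the evaluation at $x$ of the peak section at $y$, to which the $L^2$ localization applies directly. You should replace your explicit-model comparison with this second localization; as written, that step fails.
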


\begin{proof}
Without loss of generality, we can assume that $\delta =1$. For each $m\geq 2$, we denote $ \frac{ \log (m)}{\sqrt{m} } $ briefly by $\varepsilon_m$. By the theory of Cheeger-Gromov convergence, we can assume that for any point $y\in U_m $, there exists a biholomorphic map $\left( z_1 ,\cdots ,z_n \right) : B_{\mu \varepsilon_m } (y) \to W_y\subset \mathbb{C}^n$ around $y$ such that $z_1 (y)=\cdots =z_n (y)=0 $, $ e^{-Q'} \omega_{Euc} \leq \omega \leq e^{Q' } \omega_{Euc} $, and the hermitian matrix $\left( g_{i\bar{j}} \right) $ satisfies that $g_{i\bar{j}} (0) = \delta_{ij} $, $dg_{i\bar{j}} (0) = 0$, and $\left\Vert g_{i\bar{j}} \right\Vert^{*}_{2k+2,\frac{1}{2}; W_y } \leq Q' $, where $\mu \in (0,1) $, $Q' \geq 10 $ are constants depending only on $n, k,Q$, and $\left\Vert f \right\Vert^{*}_{k,\alpha } $ is the interior norms on a domain in $\mathbb{R}^{2n}$. We further assume that there exists a holomorphic frame $e_y$ of $\linebundle $ on this coordinate such that the local representation function of $h$, $a=\log h\left( e_y ,e_y \right) $, satisfying that $a(0)=0$, $\frac{\partial^{|I|} a}{\partial z^I }  (0) =0$ for each milti-index $I$ with $|I|\leq 3$, and $\left\Vert a \right\Vert^{* }_{2k+4,\frac{1}{2} ;U_y }  \leq Q' \varepsilon_m^2 $.

For notational convenience the same letter $C\geq 10 $ will be used to denote large constants depending only on $n,\Lambda ,k, \epsilon , Q$. 

Let $\left\lbrace S_j \right\rbrace_{j\in J} $ be an orthonormal basis in $\hl \left( M,\linebundle^m \right) $, and $x,y $ be points in $B_{2\varepsilon_m } \left( x_0 \right) $ satisfying that $ \dist_{\omega } (x,y) \geq \varepsilon_m $. Assume that $S_j =f_{j,x} e^m_x $ on $B_{\mu \varepsilon_m} (x) $, and $S_j = f_{j,y} e^m_y $ on $B_{\mu \varepsilon_m} (y) $. Then we have $B_{M,\omega ,m} = a^m_y \left( \sum_{j} f_{j,x} \bar{f}_{j,y} \right) e^m_x \otimes e_y^{-m} $ on $B_{\mu \varepsilon_m} (x) \times B_{\mu \varepsilon_m} (y) $. 

Since $\left\Vert g_{i\bar{j}} \right\Vert^{*}_{2k+2,\frac{1}{2}; W_x } +\left\Vert g_{i\bar{j}} \right\Vert^{*}_{2k+2,\frac{1}{2}; W_y } \leq 2 Q' $ and $\left\Vert a \right\Vert^{* }_{2k+4,\frac{1}{2} ;W_x } + \left\Vert a \right\Vert^{* }_{2k+4,\frac{1}{2} ;W_y } \leq  \frac{2 Q' \left( \log m \right)^2 }{m}$, it is sufficient to show that 
$$ \left| \sum_{j} \frac{\partial^{\left| P_1 \right| } f_{j,x} }{\partial z_x^{P_1}}  \frac{\partial^{\left| P_2 \right| } \bar{f}_{j,y}  }{\partial \zbar_y^{P_2}} \right| \leq Cm^{-k+n+\left| P_1 \right| + \left| P_2 \right| } ,\;\; \forall \left| P_1 \right| + \left| P_2 \right| \leq k ,$$
where $P_i =\left( p_{i,1} ,p_{i,2} ,...,p_{i,n} \right)\in \mathbb{Z}_{+}^{n}$ are $n$-tuples of integers.

For each $S\in H^0 \left( W_y ,\linebundle^m \right) $, $f= e_y^{-m} (S) $ is a holomorphic function on $W_y $, and hence the map $T_{P,y} (S) = \frac{\partial^{|P|} f}{\partial z^P} (y) $ gives a linear functional $T_{P,y} $ on $H^0 \left( W_y ,\linebundle^m \right) $, where $P=\left( p_1 ,p_2 ,...,p_n \right)\in \mathbb{Z}_{+}^{n}$ is an $n$-tuple of integers. Then Lemma \ref{peaksec} shows that $T_{P,y} \big|_{H_{L^2}^0 \left( M ,\linebundle^m \right)} \neq 0 $ for sufficiently large $m$. For any $y\in M$ and open neighborhood $U$ of $y$, let $S_{P,y,U}$ denotes the peak section of $T_{P,y} \big|_{H_{L^2}^0 \left( U ,\linebundle^m \right)} \neq 0 $. Then we only need to show that
\begin{eqnarray}
\left| T_{P_1 ,x,M} \left( S_{P_1,x,M} \right) \cdot T_{P_2 ,y,M} \left( S_{P_1,x,M} \right) \right| \leq Cm^{-k+n+\left| P_1 \right| + \left| P_2 \right| } ,\;\; \forall \left| P_1 \right| + \left| P_2 \right| \leq k . \label{goalcoroagmontype}
\end{eqnarray}

Combining Theorem \ref{thmregularpart} with Lemma \ref{peaksectionrestriction}, we can conclude that
$$ \int_{M \sq B_{ \mu e^{-10 Q'} \varepsilon_m } \left(y_1 \right) } \left\Vert S_{0,y_1 ,M} \right\Vert^2 d\V_\omega \leq Cm^{-k} ,\;\; \forall m\geq C, \;\; \forall y_1 \in B_{\frac{11 \varepsilon_m }{4}} \left( x_0 \right) .$$
Hence we see that
\begin{eqnarray}
\left| T_{P_1 ,y_1 } \left( S_{0,y_2 ,M } \right) \right| & \leq & \left( \int_{M \sq B_{\frac{\varepsilon_m}{100} } \left( y_2 \right) } \left\Vert S_{0,y_2 ,M } \right\Vert^2 d\V_\omega \right)^{\frac{1}{2}} \left| T_{P_1 ,y_1 } \left( S_{P_1 ,y_1 ,M\sq B_{\frac{\varepsilon_m}{100} } \left( y_2 \right) } \right) \right| \label{estimate1coroagmontype} \\
& \leq & Cm^{-\frac{k}{2}} \left| T_{P_1 ,y_1 } \left( S_{P_1 ,y_1 ,  B_{\frac{\varepsilon_m}{100} } \left( y_1 \right) } \right) \right| , \nonumber
\end{eqnarray}
for any $m\geq C$, and $ y_1 ,y_2 \in B_{\frac{5\varepsilon_m }{2}} \left( x_0 \right) $ satisfying that $ \dist_{\omega } \left( y_1 ,y_2 \right) \geq \frac {\varepsilon_m }{40} $.

By Lemma \ref{peaksec}, we can find a holomorphic section 
$$ S'_{P_1 ,y_1 , B_{\frac{\varepsilon_m}{100} } \left( y_1 \right) } \in \hl \left( B_{\frac{\varepsilon_m}{100} } \left( y_1 \right) ,\omega ,\linebundle^m ,h^m \right) ,$$
such that $ \left\Vert S'_{P_1 ,y_1 , B_{\frac{\varepsilon_m}{100} } \left( y_1 \right) } \right\Vert_{L^2 ; h^m ,\omega} =1 $, $ 0 < T_{P_1 ,y_1 } \left( S'_{P_1 ,y_1 , B_{\frac{\varepsilon_m}{100} } \left( y_1 \right) } \right) \leq C m^{ \frac{n+\left| P_1 \right| }{2} } $, and Corollary \ref{coropeaksec} now implies that
$$ \left| \int_{B_{ \frac {\varepsilon_m }{100}} \left( y_1 \right) } \left\langle S'_{P_1 ,y_1 , B_{\frac{\varepsilon_m}{100} } \left( y_1 \right) } ,S'' \right\rangle_{h^m } d\V_{\omega} \right| \leq C m^{-\frac{1}{2}} \left\Vert S'' \right\Vert_{L^2 ; h^m ,\omega} , $$
for any $m\geq C$, and $ S'' \in \ker T_{P_1 ,y_1} \cap \hl \left( B_{\frac{\varepsilon_m}{100} } \left( y_1 \right) ,\omega ,\linebundle^m ,h^m \right) $. Let 
$$ \vartheta_{P_1 ,y_1 ,\frac{\varepsilon_m}{100} } = \int_{B_{ \frac {\varepsilon_m }{100}} \left( y_1 \right) } \left\langle S'_{P_1 ,y_1 , B_{\frac{\varepsilon_m}{100} } \left( y_1 \right) } ,S_{P_1 ,y_1 , B_{\frac{\varepsilon_m}{100} } \left( y_1 \right) } \right\rangle_{h^m } d\V_{\omega} .$$
Then we have $ S'_{P_1 ,y_1 , B_{\frac{\varepsilon_m}{100} } \left( y_1 \right) } - \vartheta_{P_1 ,y_1 ,\frac{\varepsilon_m}{100} } S_{P_1 ,y_1 , B_{\frac{\varepsilon_m}{100} } \left( y_1 \right) }  \in \ker T_{P_1 ,y_1}$, and we have
\begin{eqnarray}
1-  \vartheta_{P_1 ,y_1 ,\frac{\varepsilon_m}{100} }^2 & = & \left\Vert S'_{P_1 ,y_1 , B_{\frac{\varepsilon_m}{100} } \left( y_1 \right) } - \vartheta_{P_1 ,y_1 ,\frac{\varepsilon_m}{100} } S_{P_1 ,y_1 , B_{\frac{\varepsilon_m}{100} } \left( y_1 \right) } \right\Vert^2_{L^2 ;h^m ,\omega } \label{estimate2coroagmontype} \\
& = & \left\langle S'_{P_1 ,y_1 , B_{\frac{\varepsilon_m}{100} } \left( y_1 \right) } , S'_{P_1 ,y_1 , B_{\frac{\varepsilon_m}{100} } \left( y_1 \right) } - \vartheta_{P_1 ,y_1 ,\frac{\varepsilon_m}{100} } S_{P_1 ,y_1 , B_{\frac{\varepsilon_m}{100} } \left( y_1 \right) } \right\rangle_{L^2 ;h^m ,\omega } \nonumber \\
& \leq & Cm^{-\frac{1}{2}} , \;\; \forall m\geq C . \nonumber
\end{eqnarray}
We thus get $ \vartheta_{P_1 ,y_1 ,\frac{\varepsilon_m}{100} } \geq 1-Cm^{-\frac{1}{2}} $, $ \forall m\geq C $. Now (\ref{estimate1coroagmontype}) becomes
\begin{eqnarray}
\left| T_{P_1 ,y_1 } \left( S_{0,y_2 ,M } \right) \right| & \leq & Cm^{-\frac{k}{2}} \left| T_{P_1 ,y_1 } \left( S_{P_1 ,y_1 ,  B_{\frac{\varepsilon_m}{100} } \left( y_1 \right) } \right) \right| \label{estimate3coroagmontype} \\
& = & C m^{-\frac{k}{2}} \vartheta_{P_1 ,y_1 ,\frac{\varepsilon_m}{100} }^{-1} \left| T_{P_1 ,y_1 } \left( S'_{P_1 ,y_1 ,  B_{\frac{\varepsilon_m}{100} } \left( y_1 \right) } \right) \right| \leq C^2  m^{-\frac{k}{2} + \frac{n+\left| P_1 \right| }{2} }, \nonumber
\end{eqnarray}
for any $m\geq C$, and $ y_1 ,y_2 \in B_{\frac{5\varepsilon_m }{2}} \left( x_0 \right) $ satisfying that $ \dist_{\omega } \left( y_1 ,y_2 \right) \geq \frac {\varepsilon_m }{40} $.

It is easy to check that
\begin{eqnarray}
 T_{P_1 ,y_1 } \left( S_{0,y_2 ,M} \right) \overline{T_{0 ,y_2 } \left( S_{0,y_2 ,M} \right) } & = &  \sum_{j} T_{P_1 ,y_1 } \left( S_{j} \right) \overline{T_{0 ,y_2 } \left( S_{j} \right) } \label{estimate4coroagmontype} \\
 & = & T_{P_1 ,y_1 } \left( S_{P_1 ,y_1 ,M} \right) \overline{T_{0 ,y_2 } \left( S_{P_1 ,y_1 ,M} \right) } . \nonumber
\end{eqnarray}
Let $y_1 =x $. Then we have
\begin{eqnarray*}
\left\Vert S_{P_1 ,x ,M} \left( y_2 \right) \right\Vert_{h^m } & = & \left| T_{0 ,y_2 } \left( S_{P_1 ,x ,M} \right) \right| \\
& = & \left| T_{P_1 ,x } \left( S_{0 ,y_2 ,M} \right) \right| \cdot \left| T_{0 ,y_2 } \left( S_{0 ,y_2 ,M} \right) \right| \cdot \left| T_{P_1 ,x } \left( S_{P_1 ,x ,M} \right) \right|^{-1} \\
& \leq & C m^{-\frac{k-n- \left| P_1 \right| }{2}} \cdot m^{\frac{n}{2}} \cdot m^{-\frac{n+\left|P_1 \right| }{2}} \\
& = & C m^{\frac{-k+n}{2}} ,\;\; \forall m\geq C ,\;\; \forall y_2 \in B_{\frac{5\varepsilon_m }{2}} \left( x_0 \right) \big\sq B_{\frac{\varepsilon_m}{40}} \left( x \right) .
\end{eqnarray*}
It follows that
\begin{eqnarray}
& & \left| T_{P_2 ,y } \left( S_{P_1 , x ,M } \right) \right| \label{estimate5coroagmontype} \\
& \leq & \left( \int_{B_{2\varepsilon_m} \left( x \right) \sq B_{\frac{\varepsilon_m}{40} } \left( x \right) } \left\Vert S_{P_1 ,x ,M}   \right\Vert^2 d\V_\omega \right)^{\frac{1}{2}} \left| T_{P_2 ,y } \left( S_{P_2 ,y ,M\sq B_{\frac{\varepsilon_m}{100} } \left( x \right) } \right) \right| \nonumber \\
& \leq & Cr^n_m m^{-\frac{k}{2}}  \left| T_{P_2 ,y } \left( S_{P_2 ,y , B_{\frac{\varepsilon_m}{100} } \left( y \right) } \right) \right| \nonumber \\
& \leq & C^2 m^{\frac{-k+\left| P_2 \right| }{2}} (\log m)^n  , \nonumber
\end{eqnarray}
and (\ref{goalcoroagmontype}) is proved.
\end{proof}

\begin{rmk}
It is also shown in the above argument that for each given constant $\mu \in (0,1) $, the peak section, $S_{P_1 ,x ,M} $, satisfying that
\begin{equation}
\label{estimatermkbeloecoroagmontype}
\int_{B_\frac{2 \delta \log m}{ \sqrt{m}} (x) \big\sq B_\frac{\mu \delta \log m}{\sqrt{m}} (x)  } \left\Vert S_{P_1 ,x ,M} \right\Vert^2 d\V_{\omega } \leq Cm^{-\frac{k}{2} } (\log m)^n ,\;\; \forall m\geq C,
\end{equation}
where $ C$ is a constant depending only on $n, \Lambda , \epsilon , \delta , \mu , k,Q$.
\end{rmk}

\subsection{Approximate the peak section by approximating the metric }
\label{peaksecperturbedmanifoldssubsection}
\hfill

Let $M$ be a K\"ahler manifold, $(\linebundle ,h) $ be a Hermitian line bundle on $M$, let $\omega $ and $\omega' $ be K\"ahler metrics on $M$, and let $u$ be a smooth real-valued function on $M$. Suppose that there are constants $\delta ,\epsilon >0 $ such that $ e^{-\epsilon } \omega^n \leq \omega'^n \leq  e^\epsilon \omega^n $ and $\left| u \right| \leq \delta $. Write $ h' =e^{-u} h $. Clearly, we see that $ \hl \left( M,\omega ,\linebundle ,h \right) = \hl \left( M, \omega' ,\linebundle ,h' \right) $ as linear subspaces of $H^0 \left( M,\linebundle \right) $, and the $L^2 $-norms are equivalent. So we can use the same notation $\hl \left( M,\linebundle \right)$ for them if we don't emphasize the norms. Let $X $ be a closed subspace of $ \hl \left( M,\linebundle \right) $. Fix a bounded linear functional $0\neq T : \hl \left( M,\linebundle \right) \to \mathbb{C} $.

Let $S_T $ be the peak section of $T$ in $X\subset \hl \left( M,\omega ,\linebundle ,h \right) $, and $S'_T $ be the peak section of $T$ in $ X\subset \hl \left( M, \omega' ,\linebundle ,h' \right) $. Then we can estimate the $L^2$-norm of $ S_T -S'_T $ as follows.

\begin{lmm}
\label{lmmdistancedifferencepeaksection}
Under the above assumptions, we have
\begin{eqnarray}
\left\Vert S_T -S'_T \right\Vert^2_{L^2 ;h,\omega } \leq e^{ \delta +\epsilon  } - e^{ - \delta -\epsilon  } ,
\end{eqnarray}
and
\begin{eqnarray}
e^{-\frac{\delta +\epsilon }{2}} T \left( S'_T \right) \leq T \left( S_T \right) \leq e^{\frac{\delta +\epsilon }{2} } T \left( S'_T \right) .
\end{eqnarray}
\end{lmm}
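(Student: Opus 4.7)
The plan is to treat both $L^2$-structures as two inner products on the common underlying space $\hl \left( M,\linebundle \right)$ and to relate them through a single positive self-adjoint operator. Write $\mu = \delta + \epsilon$. Because the weight $e^{-u}\omega'^n/\omega^n$ lies pointwise in $[e^{-\mu},e^{\mu}]$, one obtains the global comparison $e^{-\mu}\left\Vert S \right\Vert^2_{L^2;h,\omega} \leq \left\Vert S \right\Vert^2_{L^2;h',\omega'} \leq e^{\mu}\left\Vert S \right\Vert^2_{L^2;h,\omega}$ for every $S$. Passing to dual norms halves the exponent, and since $T(S_T) = \left\Vert T \right\Vert_{h,\omega}$ and $T(S'_T) = \left\Vert T \right\Vert_{h',\omega'}$, the second claim $e^{-\mu/2} T(S'_T) \leq T(S_T) \leq e^{\mu/2} T(S'_T)$ follows immediately.

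For the first claim, introduce the bounded self-adjoint operator $A$ on $X$ characterized by $\left\langle AS_1, S_2 \right\rangle_{L^2;h,\omega} = \left\langle S_1,S_2 \right\rangle_{L^2;h',\omega'}$; the norm comparison reads as the operator inequality $e^{-\mu} I \leq A \leq e^{\mu} I$ on $X$. The Riesz representatives of $T$ in the two inner products are $\left\Vert T \right\Vert_{h,\omega} S_T$ and $\left\Vert T \right\Vert_{h',\omega'} S'_T$, and they differ precisely by multiplication by $A$, so $S_T = \kappa A S'_T$ with $\kappa := \left\Vert T \right\Vert_{h',\omega'}/\left\Vert T \right\Vert_{h,\omega} \in [e^{-\mu/2}, e^{\mu/2}]$.

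Setting $\gamma := \left\Vert S'_T \right\Vert^2_{L^2;h,\omega}$, the normalizations $\left\Vert S_T \right\Vert^2_{L^2;h,\omega} = 1$ and $\left\Vert S'_T \right\Vert^2_{L^2;h',\omega'} = 1$ translate to $\kappa^2 \left\langle A^2 S'_T, S'_T \right\rangle_{L^2;h,\omega} = 1$ and $\left\langle A S'_T, S'_T \right\rangle_{L^2;h,\omega} = 1$. Expanding $S_T - S'_T = (\kappa A - I) S'_T$ and using these two identities gives
\begin{equation*}
\left\Vert S_T - S'_T \right\Vert^2_{L^2;h,\omega} = 1 + \gamma - 2\kappa.
\end{equation*}
The crucial input is the operator inequality $(A - e^{-\mu} I)(A - e^{\mu} I) \leq 0$ coming from the spectral bracketing of $A$; testing it on $S'_T$ yields $1/\kappa^2 - 2\cosh(\mu) + \gamma \leq 0$, hence $\gamma \leq 2\cosh(\mu) - 1/\kappa^2$. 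Since the function $\kappa \mapsto -2\kappa - 1/\kappa^2$ attains its maximum $-3$ on $(0,\infty)$ at $\kappa = 1$, one concludes
\begin{equation*}
\left\Vert S_T - S'_T \right\Vert^2_{L^2;h,\omega} \leq 2\cosh(\mu) - 2 = \bigl( e^{\mu/2} - e^{-\mu/2} \bigr)^2 \leq e^{\mu} - e^{-\mu},
\end{equation*}
the last step using $2e^{-\mu} \leq 2$ for $\mu \geq 0$.

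The main obstacle is that the naive estimate $\gamma \leq e^{\mu}$ coming from the norm comparison alone is too crude to close the chain: one must couple both moment identities $\left\langle A S'_T, S'_T \right\rangle = 1$ and $\left\langle A^2 S'_T, S'_T \right\rangle = 1/\kappa^2$ through the quadratic operator inequality bracketing $A$, which is the operator-theoretic counterpart of a Popoviciu-type variance bound, and then optimize the resulting one-variable expression in $\kappa$.
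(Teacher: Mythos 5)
Your argument is correct, and for the first inequality it takes a genuinely different route from the paper. The paper exploits the Riesz characterization of $S_T$ as the unit element of $X$ orthogonal, in the $(h,\omega)$ inner product, to $\ker T$: setting $\lambda = T(S'_T)/T(S_T)$ one has $S'_T - \lambda S_T \in \ker T \cap X$, hence $\langle S'_T , S_T \rangle_{L^2;h,\omega} = \lambda$, and then $\Vert S'_T - \lambda S_T \Vert^2_{L^2;h,\omega} = \Vert S'_T \Vert^2_{L^2;h,\omega} - \lambda^2 \leq e^{\delta+\epsilon} - e^{-\delta-\epsilon}$. You instead package the comparison of the two $L^2$ structures into the operator $A$, extract the two moment identities $\langle A S'_T , S'_T\rangle = 1$ and $\langle A^2 S'_T , S'_T\rangle = \kappa^{-2}$, and couple them through the spectral bracketing $(A - e^{-\mu}I)(A - e^{\mu}I) \leq 0$ before optimizing in $\kappa$. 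Your cross-term computation $\langle S'_T , S_T\rangle_{L^2;h,\omega} = \kappa$ is the paper's orthogonality fact in disguise (your $\kappa$ is the paper's $\lambda$), so the real divergence is in how $\gamma = \Vert S'_T \Vert^2_{L^2;h,\omega}$ is controlled: the paper uses only $\gamma \leq e^{\mu}$ and $\lambda^2 \geq e^{-\mu}$, while you use the sharper variance-type bound $\gamma \leq 2\cosh\mu - \kappa^{-2}$. What this buys is worth noting: you bound the quantity actually asserted in the lemma, $\Vert S_T - S'_T\Vert^2_{L^2;h,\omega} = 1 + \gamma - 2\kappa$, whereas the paper's displayed chain bounds $\Vert S'_T - \lambda S_T\Vert^2_{L^2;h,\omega} = \gamma - \lambda^2$ and identifies it with $\Vert S_T - S'_T\Vert^2_{L^2;h,\omega}$, which is literally valid only when $\lambda = 1$; the two differ by $(1-\lambda)^2$, harmless in the applications (where $\delta + \epsilon \to 0$) but a gap in the written argument that your operator-theoretic version closes, at the price of a heavier setup. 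You also obtain the marginally sharper constant $2\cosh(\delta+\epsilon) - 2$. For the second inequality your dual-norm argument is the same as the paper's, which tests $T$ on each peak section rescaled to have unit norm in the other $L^2$ structure.
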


\begin{proof}
By definition, we can obtain
\begin{eqnarray}
\left\Vert S_T \right\Vert^2_{L^2 ; h' ,\omega' } =  \frac{1}{n!} \int_M \left\Vert S_T \right\Vert^2_{ h' }  \omega^n \leq \frac{ e^{\delta +\epsilon } }{n!} \int_M \left\Vert S_T \right\Vert^2_{ h }  \omega^n = e^{\delta +\epsilon } .
\end{eqnarray}
Hence we have
\begin{eqnarray}
\label{estimate2lmmdistancedifferencepeaksection}
T \left( S'_T \right) \geq T \left( \left\Vert S_T \right\Vert^{-1}_{L^2 ;  h' , \omega' } S_T \right) = \left\Vert S_T \right\Vert^{-1}_{L^2 ; h' , \omega' } T \left( S_T \right) \geq e^{-\frac{\delta +\epsilon }{2}} T \left(S_T \right) .
\end{eqnarray}
Similarly, we can conclude that
\begin{eqnarray}
\left\Vert S'_T \right\Vert^2_{L^2 ; h, \omega } =  \frac{1}{n!} \int_M \left\Vert S'_T \right\Vert^2_{ h }  \omega^n \leq \frac{ e^{\delta +\epsilon } }{n! } \int_M \left\Vert S'_T \right\Vert^2_{ h' }  \omega'^n = e^{\delta +\epsilon } ,
\end{eqnarray}
and then we see that
\begin{eqnarray}
\label{estimate4lmmdistancedifferencepeaksection}
 T \left( S_T \right) \geq T \left( \left\Vert S'_T \right\Vert^{-1}_{L^2 ; h,\omega } S'_T \right) = \left\Vert S'_T \right\Vert^{-1}_{L^2 ;h,\omega } T \left( S'_T \right) \geq e^{-\frac{\delta + \epsilon }{2}} T \left( S'_T \right) .
\end{eqnarray}
Combining (\ref{estimate2lmmdistancedifferencepeaksection}) with (\ref{estimate4lmmdistancedifferencepeaksection}), we get
\begin{eqnarray}
 e^{-\frac{\delta +\epsilon }{2}} T \left( S'_T \right) \leq T \left( S_T \right) \leq e^{\frac{\delta +\epsilon }{2} } T \left( S'_T \right) .
\end{eqnarray}

Let $\lambda \in \left[ e^{-\frac{\delta +\epsilon }{2}} , e^{\frac{\delta +\epsilon }{2}} \right]  $ be the unique constant such that $T \left( S'_T \right) =\lambda T \left( S_T \right) $. Then we have $ S'_T -\lambda S_T \in ker \mathrm{T} \cap X $. It follows that 
\begin{eqnarray}
\int_M \left\langle S'_T , S_T \right\rangle_{h} d\V_{\omega } -\lambda = \int_M \left\langle S'_T -\lambda S_T , S_T \right\rangle_{h} d\V_{\omega } =0 ,
\end{eqnarray}
and hence
\begin{eqnarray}
\left\Vert S_T -S'_T \right\Vert^2_{L^2 ;h,\omega } & = & \int_M \left\langle S'_T -\lambda S_T , S'_T -\lambda S_T \right\rangle_{h} d\V_{\omega } \nonumber \\
& = & \left\Vert S'_T \right\Vert^2_{L^2 ;h,\omega } -2\lambda \int_M \left\langle S'_T , S_T \right\rangle_{h} d\V_{\omega } +\lambda^2 \\
& = & \left\Vert S'_T \right\Vert^2_{L^2 ;h,\omega } - \lambda^2 \leq e^{ \delta +\epsilon  } - e^{ - \delta -\epsilon  } , \nonumber
\end{eqnarray}
and the proof is complete.
\end{proof}

\subsection{Peak sections on asymptotic complex hyperbolic cusps}
\hfill

We will prove Theorem \ref{thmcusplocalizationprinciple} in this part. 

In this subsection, we let $(M,\omega )$ be a K\"ahler manifold with $\Ric (\omega ) \geq -\Lambda \omega $ and let $( \linebundle ,h )$ be a Hermitian line bundle with $\Ric (h) \geq \epsilon \omega $, where $\epsilon ,\Lambda >0$ are constants. Suppose that $M$ is pseudoconvex if $(M,\omega )$ is not complete. Assume that there exists an asymptotic complex hyperbolic cusp $ \left( U,\omega ,\linebundle ,h \right) \cong \left( V_r /\Gamma_V ,\omega_V -\sqrt{-1}\partial\partialbar u ,\linebundle_V / \Gamma_V , e^u h_V \right) $ on $(M,\omega )$ such that $\bar{U}$ is complete, where $r\in (0,1)$ is a positive constant, and $u=o\left( 1 \right)$ as $h_D \to 0^+ $ to all orders with respect to $\omega_V$. Note that $ \left( \linebundle_V ,h_V \right) \cong \left( \mathbb{C} , \left| \log h_D \right| \right) $ on $V$. 

It will cause no confusion if we use the same letter to designate a subset of $V_r /\Gamma_V $ and the open subset in $U$ corresponding to it. Assume that $\bar{U} $ is complete in $(M,\omega )$. 

Since $u=o\left( 1 \right)$ as $h_D \to 0^+ $ to all orders with respect to $\omega_V$, we can estimate the lower bounds of injectivity radius on $U$. For any subset $W\subset M$, we write $\inj (W) =\inf_{x\in W} \inj_M (x) $.

\begin{lmm}
\label{lmminjectivityradiushyperbpliccusp}
Let $r$ be the constant defined in above, and $k\in\mathbb{N}$ be a constant. Then there exists a constant $C=C (k,M,\omega )>0 $ such that the curvature $ \sum_{j=0}^{5k+10n} \left\Vert \nabla^j \Ric (\omega ) \right\Vert_{\omega } \leq C $ on $U$, and for each $t\in (0,r) $, we have $ \inj \left( U\sq \pi_V \left( V_t \right) \right) \geq C^{-1} \left| \log t \right|^{-1} $, where $\pi_V : V\to V/\Gamma_V $ is the quotient map.
\end{lmm}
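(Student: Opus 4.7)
The plan is to transfer Lemma \ref{lmminjectivityradiusoncusp} and the homogeneity of the model metric $\omega_V$ to the perturbed data $(U,\omega,\linebundle,h)$, using that the defining function $u$ of the asymptotic cusp decays to all orders with respect to $\omega_V$. Since $(V,\omega_V)$ has constant holomorphic sectional curvature, its Riemann tensor is $\omega_V$-parallel and $\|\nabla^j\Ric(\omega_V)\|_{\omega_V}$ is uniformly bounded on $V$ for every $j$. The asymptotic cusp condition makes $u$ and its $\omega_V$-covariant derivatives uniformly bounded on $U$ (indeed, decaying as $h_D\to 0$), so $\omega$ is uniformly equivalent to $\omega_V$ on $U$ and $C^\infty$-close to it near the tip. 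Expanding the Ricci curvature formula in a local holomorphic frame adapted to $\omega_V$, the covariant derivatives of $\Ric(\omega)$ up to order $5k+10n$ are universally polynomial in $\Ric(\omega_V)$, $\omega_V$, and $\nabla^j u$ for $j\leq 5k+10n+2$, all bounded quantities; this yields the curvature estimate.

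For the injectivity radius, I first treat the model metric $\omega_V$ on $V_r/\Gamma_V$. Lemma \ref{lmminjectivityradiusoncusp} gives $\inj_{V,\omega_V}(x)\geq \delta_1/|\log h_D(x)|$ on $V_{\delta_1}$ for some $\delta_1=\delta_1(D)>0$, while on the compact region $\{\delta_1\leq h_D<r\}$ the injectivity radius of $\omega_V$ is bounded below by a positive constant, which trivially majorizes $C^{-1}/|\log h_D|$ for $C$ large. Passing to the quotient: $\Gamma_V$ is finite and acts freely (since $M$ is a manifold), and the extension of its action to the total space of $\linebundle_D$ preserves $h_D$. Any nontrivial $g\in \Gamma_V$ either acts non-trivially on $D$ --- with displacement bounded below by a positive constant in the flat metric on $D$, hence also in $\omega_V$ --- or acts trivially on $D$ and by multiplication by a nontrivial root of unity on the fibers, in which case the orbit displacement in the cusp metric scales as $1/|\log h_D|$. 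Using $\inj_{V_r/\Gamma_V,\omega_V}(\pi_V(x))=\min\bigl(\inj_{V,\omega_V}(x),\,\tfrac{1}{2}\min_{g\neq e}d_{\omega_V}(x,g\cdot x)\bigr)$, this gives $\inj_{V_r/\Gamma_V,\omega_V}(\pi_V(x))\geq C^{-1}/|\log h_D(x)|$ throughout $V_r$.

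Finally I transfer from $\omega_V$ to the ambient metric $\omega$: uniform equivalence of $\omega$ and $\omega_V$ on $\bar U$ makes Riemannian distances comparable up to a bounded factor, so local injectivity radii are comparable. Completeness of $\bar U$ together with the fact that $\dist_\omega(x,M\setminus U)$ grows without bound as $h_D(x)\to 0^+$ (by integrating the radial profile of $\omega_V$, cf.\ \eqref{calculationvolumeformhyperboliccusp}) guarantees that any geodesic loop at $x\in U$ short enough to witness the injectivity radius stays within $U$, so $\inj_M(x)\geq C^{-1}/|\log h_D(x)|$ on $U$. Taking the infimum over $U\setminus \pi_V(V_t)=\{h_D\geq t\}$ yields the claimed bound. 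The main obstacle is the quotient step: controlling the $\Gamma_V$-displacement uniformly as $h_D\to 0^+$ where the cusp metric collapses in the fiber-angular direction. This is resolved by the explicit form of the $\Gamma_V$-action on $\linebundle_D$ constructed in the paragraph preceding Definition \ref{definitionasymptoticcomplexhyperbolioccusp}.
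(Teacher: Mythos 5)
Your curvature estimate follows the same route as the paper (constant holomorphic sectional curvature of $\omega_V$ plus the all-orders decay of $u$), and that part is fine. For the injectivity radius, however, you take a genuinely different and more fragile route than the paper, and two steps do not hold as stated. First, your dichotomy for nontrivial $g\in\Gamma_V$ is incomplete: an automorphism of the flat Abelian variety $D$ can act non-trivially yet have fixed points (e.g.\ $z\mapsto -z$), so its displacement in $\omega_D$ is \emph{not} bounded below by a positive constant. Near such a fixed point the $D$-displacement degenerates, and you must fall back on the fiber rotation (which is non-trivial there because the action on $V$ is free) to recover the $|\log h_D|^{-1}$ lower bound; as written, this case is simply missing. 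Second, and more seriously, the assertion that uniform equivalence of $\omega$ and $\omega_V$ on $\bar U$ ``makes local injectivity radii comparable'' is false as a general implication: two uniformly bi-Lipschitz metrics can have wildly different injectivity radii, since conjugate points and geodesic loops are not controlled by $C^0$ data. Comparability of distances gives you comparability of volumes, not of injectivity radii.

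The paper avoids both problems by running Cheeger's lemma directly on $(U,\omega)$: the bounded curvature (from the first half of the lemma) together with the volume lower bound $\Vol_\omega\left( B_{|\log h_D(x)|^{-1}}(x)\right)\geq \delta\,|\log h_D(x)|^{-2n}$, which descends to the quotient and survives the perturbation by $u$ because the volume form of $\omega_V$ is computed explicitly in \eqref{calculationvolumeformhyperboliccusp}, immediately yields $\inj(x)\geq C^{-1}|\log h_D(x)|^{-1}$ with no need to analyze the $\Gamma_V$-displacement or to compare exponential maps. You can repair your argument either by adopting this Cheeger-lemma route for the transfer step (you already have all the ingredients: the curvature bound and the volume comparability), or by upgrading ``uniform equivalence'' to the genuine $C^2_{\omega_V}$-closeness of $\omega$ to $\omega_V$ near the tip and arguing via Jacobi fields and Klingenberg's lemma — but the latter requires real work that your one-sentence claim does not supply.
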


\begin{proof}
Since the holomorphic sectional curvature of $\omega_V$ is a constant, we can conclude that the sectional curvature of $\omega_V$ is bounded. Then $u=o\left( 1 \right)$ implies that the sectional curvature of $\omega$ is bounded on $U$. Similarly, for each $j\in\mathbb{Z}_{\geq 0} $, $\left\Vert \nabla^j \Ric (\omega ) \right\Vert_\omega $ is bounded on $U$. As in the proof of Lemma \ref{lmminjectivityradiusoncusp}, Cheeger's lemma now shows that there exists a constant $ \zeta >0$ depending only on $(M,\omega ,L,h)$, such that $\inj (x) \geq \zeta\left| \log t \right|^{-1} $ on $U\sq \pi_V \left( V_t \right) $, for any $t\in (0,r) $. Note that $\bar{U} $ is complete.
\end{proof}

Now we can construct suitable quasi-plurisubharmonic functions for using $L^2$ estimate.

\begin{lmm}
\label{lmmquasipshhyperboliccuspver2}
Let $r$ be the constant defined in above, and $\beta >0 $, $\delta \in \left( 0, e^{-1} \right) $ be constants. Suppose that $ \delta <\min\{ e^{-1} ,r \} $. Then for any point $y\in V_{ \delta^{1+2\beta } } $, there exists a quasi-plurisubharmonic function $\psi \in L^1 (M,\omega ) $ such that $\psi \leq 0 $, $ \lim_{x\to y} \left( \psi (x) - \log \left( \dist_{\omega} (x,y) \right) \right) = -\infty $, $\sqrt{-1} \partial\partialbar \psi \geq -C\left| \log \delta \right| \omega $, and $\mathrm{supp} \psi \subset V_\delta /\Gamma_V $, where $C>0$ is a constant dependent only on $(M,\omega ) $, and $\beta $.
\end{lmm}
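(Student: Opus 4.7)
My plan is to write $\psi = \psi_{1}+\psi_{2}$ as the sum of a global cutoff that depends only on $h_D$ and a local holomorphic log pole centered at $y$; each will be controlled separately and the sum combined at the end.

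For $\psi_{1}$, pick a smooth $\chi\colon\mathbb{R}\to[0,1]$ with $\chi\equiv 0$ on $(-\infty,1]$ and $\chi\equiv 1$ on $[1+\beta,\infty)$, and set
\[
\psi_{1}(x) \;=\; -|\log\delta|\,\chi\!\left(\frac{|\log h_D(x)|}{|\log\delta|}\right).
\]
Then $\psi_{1}\le 0$ with support in $V_{\delta}/\Gamma_V$, and $\psi_{1}\equiv -|\log\delta|$ on $V_{\delta^{1+\beta}}/\Gamma_V$. The bound on its $\sqrt{-1}\partial\partialbar$ uses the two identities available on the cusp: $\sqrt{-1}\partial\partialbar \log h_D = \pi_D^*\omega_D \ge 0$, and the inequality $\sqrt{-1}\partial\log h_D\wedge\partialbar\log h_D \le |\log h_D|^2\,\omega_V$ read off from (\ref{calculationvolumeformhyperboliccusp}). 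On the cutoff region $\{1\le|\log h_D|/|\log\delta|\le 1+\beta\}$ these give $\sqrt{-1}\partial\partialbar\psi_{1}\ge -C_{\beta}|\log\delta|\,\omega_V$, and since $u = O(|\log h_D|^{-\alpha})$ makes $\omega\asymp\omega_V$ on $U$, also $\sqrt{-1}\partial\partialbar\psi_{1}\ge -C_{\beta}'|\log\delta|\,\omega$.

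For $\psi_{2}$, fix a local trivialization of $\linebundle_D$ near $\pi_D(y)$, yielding holomorphic coordinates $(z,w)\in\mathbb{C}^{n-1}\times\mathbb{C}^{*}$ with $y=(z_y,w_y)$. Choose scaling constants $c_{1}\sim|\log h_D(y)|^{-1}$ and $c_{2}\sim(|w_y|^{2}|\log h_D(y)|^{2})^{-1}$ so that
\[
F(x) \;:=\; c_{1}|z-z_y|^{2}+c_{2}|w-w_y|^{2}\;\asymp\;\dist_{\omega}(x,y)^{2}
\]
in a neighborhood of $y$; then $\log F$ is plurisubharmonic as the logarithm of a sum of squared moduli of holomorphic functions, with a logarithmic pole of coefficient $2$ (with respect to $\dist_\omega$) at $y$. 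Pick $R_{\beta}>0$ of order $|\log\delta|^{-1/2}$---small enough that the set $\{F\le R_{\beta}^{2}\}$ lies inside $V_{\delta^{1+\beta}}/\Gamma_V$, which is possible because the $\omega$-distance between the shells $\partial V_{\delta^{1+2\beta}}$ and $\partial V_{\delta^{1+\beta}}$ is bounded below by $\tfrac12\log((1+2\beta)/(1+\beta))$ (a constant depending only on $\beta$), passing to a local cover to unroll short fiber loops if needed. For a smooth $\eta\colon\mathbb{R}\to[0,1]$ with $\eta\equiv 1$ on $(-\infty,1/2]$, $\eta\equiv 0$ on $[1,\infty)$, put
\[
\psi_{2}(x) \;=\; \eta\bigl(F(x)/R_{\beta}^{2}\bigr)\,\log\bigl(F(x)/R_{\beta}^{2}\bigr),
\]
extended by zero. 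A direct computation from the choice of $c_{i}$ shows $|\partial F|_{\omega}^{2}\asymp F$, so on the annulus $\{R_{\beta}^{2}/2\le F\le R_{\beta}^{2}\}$ both $|\partial\log F|_\omega$ and $|\partial\eta|_\omega$ are $\lesssim R_{\beta}^{-1}$ while $|\log(F/R_\beta^2)|$ is bounded; combined with $\eta\cdot\sqrt{-1}\partial\partialbar\log F\ge 0$, this yields $\sqrt{-1}\partial\partialbar\psi_{2}\ge -C_{\beta}''R_{\beta}^{-2}\,\omega\ge -C_{\beta}''|\log\delta|\,\omega$.

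Setting $\psi = \psi_{1}+\psi_{2}$ delivers all four conclusions: both summands are $\le 0$, so $\psi\le 0$; $\mathrm{supp}\,\psi\subset V_{\delta}/\Gamma_V$ by construction; near $y$ one has $\psi_{2}(x)\sim 2\log\dist_{\omega}(x,y)$ and $\psi_1$ is smooth at $y$ with $\psi_1(y)=-|\log\delta|$, so $\psi(x)-\log\dist_{\omega}(x,y)\sim -|\log\delta|+\log\dist_{\omega}(x,y)\to -\infty$; and the two $\partial\partialbar$-estimates add to give $\sqrt{-1}\partial\partialbar\psi\ge -C|\log\delta|\,\omega$. The main obstacle is the estimate for $\psi_{2}$: a naive log pole cut off inside the injectivity ball at $y$ would only give $\sqrt{-1}\partial\partialbar\ge -C|\log\delta|^{2}\,\omega$, because $\inj(y)\asymp|\log h_D(y)|^{-1}$ can be as small as $|\log\delta|^{-1}$; the trick is to exploit the anisotropy of the cusp metric by placing the cutoff in the horospherical (horizontal) direction, where the metric is less degenerate and one can take $R_{\beta}\sim|\log\delta|^{-1/2}$, while handling short fiber loops on a suitable local cover.
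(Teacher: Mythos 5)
Your $\psi_1$ is fine (though dispensable), and the idea of producing the log pole from $\log\bigl(\sum_i |g_i|^2\bigr)$ with the $g_i$ holomorphic and vanishing at $y$ is the right one. The gap is in $\psi_2$. The function $F=c_1|z-z_y|^2+c_2|w-w_y|^2$ is built from local coordinates and from the metric coefficients at the single point $y$, so the comparisons $F\asymp\dist_\omega(\cdot,y)^2$ and $|\partial F|_\omega^2\asymp F$ hold only at scales below the injectivity radius $\inj(y)\asymp|\log h_D(y)|^{-1}$. For $y\in V_{\delta^{1+2\beta}}$ you only know $|\log h_D(y)|\ge (1+2\beta)|\log\delta|$; there is no upper bound, so $\inj(y)$ can be arbitrarily small compared with your cutoff scale $R_\beta\sim|\log\delta|^{-1/2}$. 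Concretely, $\{F\le R_\beta^2\}$ contains the Euclidean disc $|w-w_y|\le R_\beta\,|w_y|\,|\log h_D(y)|$, whose radius exceeds $|w_y|$ as soon as $|\log h_D(y)|\gtrsim|\log\delta|^{1/2}$; it then reaches $w=0$ and contains points at arbitrarily large $\omega$-distance from $y$ on which $F$ stays small, so $F\not\gtrsim\dist_\omega^2$ and $|\partial F|_\omega^2\not\asymp F$ on the cutoff annulus. In the horizontal direction the Euclidean radius of $\{F\le R_\beta^2\}$ is $R_\beta|\log h_D(y)|^{1/2}\sim(|\log h_D(y)|/|\log\delta|)^{1/2}$, which can exceed the size of the chart and wraps many times around the abelian variety $D$ (whose diameter at depth $h_D(y)$ is $\sim|\log h_D(y)|^{-1/2}$). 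Passing to a local cover does not repair this: the resulting function does not descend to $M$, and averaging over the deck group reintroduces the problem. (Your stated motivation also has the inequality backwards: $y\in V_{\delta^{1+2\beta}}$ gives an upper bound $\inj(y)\lesssim|\log\delta|^{-1}$, not a lower bound, so a cutoff inside the injectivity ball yields only $-C|\log h_D(y)|^2\omega$, which is unboundedly worse than $-C|\log\delta|^2\omega$.)

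The paper's proof avoids this by making the pole function global: it takes a large power $\linebundle_D^{-k}$, an orthonormal basis $\{S_j\}$ of $\hl\left(D,\linebundle_D^{-k}\right)$ with $S_j\left(\pi_D(y)\right)=0$ for $j\ge 2$, and sets $f=|\widetilde S_1-\widetilde S_1(y)|^2+\sum_{j\ge 2}|\widetilde S_j|^2$ using the holomorphic functions $\widetilde S_j$ on $V$ from Lemma \ref{holoontotal}. Then $\log f$ is plurisubharmonic on all of $V$ with a logarithmic pole at $y$, and the only negativity comes from the cutoff, which is placed in the shell $V_\delta\setminus V_{\delta^{1+\beta}}$ --- at a fixed $\omega$-distance $\gtrsim\log(1+\beta)$ from $y$, independent of how deep $y$ sits --- where $|\log f-\log\sup f|$ and $\|\nabla\log f\|_\omega$ are both $O(|\log\delta|)$. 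If you want to keep your two-piece structure, you must replace $F$ by such a globally defined sum of squares of holomorphic functions; the local-coordinate construction cannot be made to work at scale $|\log\delta|^{-1/2}$.
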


\begin{proof}
We assume that $\Gamma_V =0 $ at first. By Theorem \ref{thmregularpart}, for sufficiently large integer $k $, we have $ (2\pi )^{n-2} \leq k^{1-n} \rho_{D,\omega_D ,k} \leq (2\pi )^{n } $. Choosing an $L^2$ orthonormal basis $\{ S_j \}_{j=1}^{N_k } $ of $\hl \left( D,\linebundle^{-k}_D \right) $ satisfying that $S_j \left( \pi_D (y) \right) =0 $, $ \forall j\geq 2 $. Note that we can construct holomorphic functions $ \widetilde{S}_j $ from $S_j $ as in Lemma \ref{holoontotal}. Let 
$$ f (x ) = \left| \widetilde{S}_1 \left( x \right) -\widetilde{S}_1 \left( y \right) \right|^2 + \sum_{j\geq 2} \left| \widetilde{S}_j \left( x \right)  \right|^2 ,\;\;  \forall x\in V .$$ 
Then we have $ f(x) = \rho_{D,k} \left( \pi_D (x) \right) h_D (x)^k + \left| \widetilde{S}_1 (y) \right|^2 - 2 \mathrm{Re} \left( \widetilde{S}_1 (x) \overline{\widetilde{S} }_1 (y) \right) $. Clearly, we see that $\left| \widetilde{S}_1 (y) \right|^2 \leq \delta^{k\left(1+2\beta \right) } (2\pi )^{n } $. Since $u=o(1)$, we can conclude that there exists a constant $k_0 >0$ depending only on $(M,\omega )$ and $\beta $, such that $ \left\Vert \nabla \log f (x) \right\Vert_{\omega} \leq C_1 \left| \log \delta \right| ,$ for any $ x\in V_\delta \sq V_{\delta^{1+\beta } } $ and $k\geq k_0 $, where $C_1 $ is a constant depending only on $(M,\omega )$, $k$ and $\beta $. Fix a large integer $k $.

By a straightforward calculation, we see that $ \dist_{\omega} \left( V_{\delta^{1+\beta} } ,M\sq V_\delta \right) \geq C_2^{-1} \log \left( 1+ \beta \right) $ for some constant $C_2 $ depending only on $(M,\omega )$. Then we can choose a cut-off function $\eta \in C^\infty \left( M \right) $ such that $\eta =1 $ on $V_{\delta^{1+\beta } } $, $\eta =0 $ on $M\sq V_\delta $, $0\leq \eta \leq 1$, and $ \sum_{i=0}^2 \left\Vert \nabla^i \eta \right\Vert_{\omega } \leq C_3 ,$ where $C_3 $ is a constant depending only on $(M,\omega )$. 

Let $C_f = \sup_{V_\delta} f $, and $\psi_0 (x) = \eta (x) \left( \log f(x) - \log C_f \right) $. It is easy to see that 
$$ \sup_{ V_\delta \sq V_{\delta^{1+\beta } } } \sum_{i=0}^1 \left\Vert \nabla^i \left( \log f(x) - \log C_f \right) \right\Vert_{\omega } \leq C_4 \left| \log \delta \right| ,$$
where $C_4 $ is a constant depending only on $(M,\omega )$ and $\beta $. Hence we have
\begin{eqnarray*}
\sqrt{-1} \partial\partialbar \psi_0 (x) & = & \sqrt{-1} \left( \log f(x) - \log C_f \right) \partial\partialbar \eta (x) + \sqrt{-1} \partial \log f(x) \wedge \partialbar \eta \\
& & + \sqrt{-1} \partial \eta (x) \wedge \partialbar \log f(x) + \sqrt{-1}  \eta (x)  \partial\partialbar \log f(x) \\
& & \geq -3C_3 C_4 \left| \log\delta \right| \omega ,\;\; \forall x\in V_\delta \sq V_{\delta^{1+\beta}} .
\end{eqnarray*}
Here we used $ \sqrt{-1} \partial\partialbar \log f(x) \geq 0 $ on $V$. 

Now we consider the case $ \Gamma_V \neq 0 $. Let $\psi_0 $ be the quasi-plurisubharmonic function constructed on $ \left( V_r , \pi_V^* \omega \right) $. Since $ \psi = \sum_{g\in \Gamma_V} \psi_0 \circ g $ is invariant under the action of $\Gamma_V $, we see that $\psi $ gives the function we need.
\end{proof}

\begin{rmk}
If we choose $f(x) = \sum_{j=1}^{N_k } \left| S_j \left( x \right)  \right|^2 $ in the proof of Lemma \ref{lmmquasipshhyperboliccuspver2}, then we can find a quasi-plurisubharmonic function $\psi \in L^1 (M,\omega ) $ such that $\psi \leq 0 $, $ \lim_{h_D (x) \to 0} \left( \psi (x) - \log \left( h_D (x) \right) \right) = -\infty $, $\sqrt{-1} \partial\partialbar \psi \geq -C\left| \log \delta \right| \omega $, $\mathrm{supp} \psi \subset V_\delta $, and $\psi$ is invariant under the action of $\Gamma_V $ on $V_r \cong U $, where $C>0$ is a constant dependent only on $(M,\omega ) $, and $\beta $.
\end{rmk}

Since $\left( U, \omega , \linebundle ,h \right) \cong \left( V_r /\Gamma_V , \omega_V -\sqrt{-1} \partial\partialbar u , \linebundle_V /\Gamma_V , e^u h_V \right) $ and $ \left( \linebundle_V ,h_V \right) \cong \left( \mathbb{C} , \left| \log h_D \right| \right) $ on $V$, we can conclude that any $\linebundle^m$-valued holomorphic section on $U$ can be represented by a $\Delta_{\Gamma_V ,m}$-invariant holomorphic function on $V_r $, $\forall m\in\mathbb{N} $. It follows that any finite linear combination of $\delta_x $ and its derivatives gives a bounded linear functional on $ H_{L^2}^0 \left( U, \linebundle^m \right) $, where $\delta_x $ is the Dirac function at $x$. By applying H\"ormander's $L^2 $ estimate in a similar way as in \cite[Proposition 1.1]{tg1}, we can give the following estimates for such peak sections. For abbreviation, we denote $e^{- \frac{\sqrt{m}}{\log m }}$ and $\frac{\log m}{\sqrt{m}} $ briefly by $r_m$ and $\varepsilon_m $, respectively, for any $m\geq 2$. 

\begin{prop}
\label{propcoraseestimate}
Let $\kappa $ and $ \delta $ be given positive constants. Then there are constants $C, m_0 $ that depend only on $\left( M,\omega ,\linebundle,h \right) , k , \kappa ,\delta $ and $l$, and satisfies the following property.

Let $m\geq n+1 $ be an integer satisfies that $r^{\kappa }_m < e^{-1} r $. Choosing a constant $\gamma_m \in \left[ r^{\kappa }_m , e^{-1} r \right) $, a point $x_m \in \pi_V \left( V_{\gamma_m } \right) $, and a linear functional $ T_m $ defined by finite linear combination of $\delta_{x_m} $ and its derivatives, where $\pi_V : V\to V/\Gamma_V $ is the quotient map. Assume that the order of derivatives contained in $T_m$ is at most $k$. Let $ U_{m} $ be an open neighborhood of $x_m \in M$. Suppose that $U_m \subset U $ and $\mathrm{dist}_{\omega } \left( \pi_V \left( V_{\gamma_m} \right) , M\sq U_m \right) \geq \delta \varepsilon_m $. Then
\begin{eqnarray}
\label{inequalitypropcoraseestimate}
\left\Vert T_m \big|_{\hl \left( M,\linebundle^m \right) } \right\Vert \geq \left( 1-\frac{C}{m^l } \right) \left\Vert T_m \big|_{\hl \left( U_m,\linebundle^m \right) } \right\Vert .
\end{eqnarray}
\end{prop}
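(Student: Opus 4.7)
The plan is to prove (\ref{inequalitypropcoraseestimate}) by constructing, from the peak section $S_{U_m}\in \hl(U_m,\linebundle^m)$ of $T_m$, an extension $\tilde S_M\in \hl(M,\linebundle^m)$ satisfying $T_m(\tilde S_M) = T_m(S_{U_m})$ and $\|\tilde S_M\|_{L^2, M}\leq (1+Cm^{-l})\|S_{U_m}\|_{L^2, U_m}$; these two facts immediately give
\[
\bigl\|T_m\big|_{\hl(M,\linebundle^m)}\bigr\| \geq \frac{T_m(\tilde S_M)}{\|\tilde S_M\|_{L^2, M}} \geq (1-Cm^{-l})\bigl\|T_m\big|_{\hl(U_m,\linebundle^m)}\bigr\|,
\]
which is (\ref{inequalitypropcoraseestimate}). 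I take $\tilde S_M := \eta S_{U_m} - v$, where $\eta\in C_c^\infty(U_m;[0,1])$ equals $1$ on a small neighborhood of $\pi_V(V_{\gamma_m})$, and $v$ solves $\bar\partial v = \alpha := \bar\partial\eta\wedge S_{U_m}$. Writing $\varepsilon_m=\log m/\sqrt m$, the assumption $\dist_\omega(\pi_V(V_{\gamma_m}), M\sq U_m)\geq\delta\varepsilon_m$ allows $\eta$ to be chosen with $\|\bar\partial\eta\|_\omega\leq C/(\delta\varepsilon_m)$ and $\mathrm{supp}\,\bar\partial\eta$ at distance at least $\delta\varepsilon_m/4$ from $x_m$.

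To obtain $v$ I apply H\"ormander's $L^2$-estimate (Proposition \ref{l2m}) with weight $\varphi = 2(n+k+1)\psi$, where $\psi\leq 0$ is a quasi-plurisubharmonic function with a logarithmic pole at $x_m$, supported in a ball $B_r(x_m)\subset \{\eta=1\}$ of radius $r$ of order $\delta\varepsilon_m$, and satisfying $\sqrt{-1}\partial\bar\partial\psi\geq -C\omega$; such $\psi$ exists because Lemma \ref{lmminjectivityradiushyperbpliccusp} together with $\gamma_m\geq r_m^\kappa$ gives $\inj(x_m)\geq c\varepsilon_m/\kappa$ and a uniform bound on the curvature of $\omega$ on $U$. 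With $\Ric(\omega)\geq -\Lambda\omega$ and $\Ric(h)=\omega$ on $U$, the positivity condition $\sqrt{-1}\partial\bar\partial\varphi + \Ric(\omega)+m\Ric(h)\geq (m/2)\omega$ holds for $m$ large. H\"ormander then produces $v$ with
\[
\int_M \|v\|^2_{h^m} e^{-\varphi}\,d\V_\omega \leq \frac{2}{m}\int_M\|\alpha\|^2_{h^m}\,e^{-\varphi}\,d\V_\omega .
\]
Since $\mathrm{supp}\,\alpha\cap\mathrm{supp}\,\psi=\emptyset$ and $\psi\leq 0$, this simplifies to $\|v\|^2_{L^2,M}\leq (C'/(\log m)^2)\int_{\mathrm{supp}\,\bar\partial\eta}\|S_{U_m}\|^2_{h^m}\,d\V_\omega$; moreover, the pole of $\varphi$ of order $2(n+k+1)$ at $x_m$ forces $v$ to vanish there to order at least $k+1$, so that $T_m(v)=0$ and $T_m(\tilde S_M) = T_m(S_{U_m})$.

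The main obstacle is the Agmon-type decay estimate
\[
\int_{\mathrm{supp}\,\bar\partial\eta}\|S_{U_m}\|^2_{h^m}\,d\V_\omega \leq Cm^{-2l}(\log m)^2\,\|S_{U_m}\|^2_{L^2, U_m},
\]
which delivers $\|v\|_{L^2,M}\leq Cm^{-l}\|S_{U_m}\|_{L^2,U_m}$ and hence $\|\tilde S_M\|_{L^2,M}\leq (1+Cm^{-l})\|S_{U_m}\|_{L^2,U_m}$. I will obtain it by adapting the argument behind Corollary \ref{coroagmontypeestimate} to the peak section $S_{U_m}$ itself rather than to the global Bergman kernel. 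The injectivity radius bound and local holomorphic coordinates at $x_m$ inherited from the cusp chart supply the hypotheses of Lemma \ref{peaksec}, which, with parameter $p'$ taken large depending on $l$ and $k$ and combined over the multi-indices $|P|\leq k$ appearing in $T_m$, produces an explicit $L^2$-normalized section $S_{ex,m}\in \hl(U_m,\linebundle^m)$ whose $L^2$-mass is essentially concentrated in $B_{\log m/(Q\sqrt m)}(x_m)\subset \{\eta=1\}$ and with $T_m(S_{ex,m})=\bigl\|T_m|_{\hl(U_m,\linebundle^m)}\bigr\|\bigl(1+O(m^{-p'})\bigr)$. The almost-orthogonality of Corollary \ref{coropeaksec}, sharpened to polynomial order $O(m^{-l})$ by exploiting the higher vanishing of $\varphi_m$ in Lemma \ref{peaksec} when $p'$ is large, then yields $\|S_{ex,m}-S_{U_m}\|_{L^2,U_m}=O(m^{-l})$, so $S_{U_m}$ inherits the concentration of $S_{ex,m}$, which is the required Agmon decay and completes the proof.
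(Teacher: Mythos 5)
Your overall strategy (cut off the peak section of $T_m$ on $U_m$, correct by H\"ormander's $L^2$-estimate with a weight carrying a high-order logarithmic pole at $x_m$ so that $T_m(v)=0$, and control $\bar\partial(\eta S_{U_m})$ by an Agmon-type concentration estimate) is the same as the paper's in spirit, but it has a genuine gap: you assume throughout that there is a coordinate ball of radius comparable to $\varepsilon_m=\log m/\sqrt m$ centered at $x_m$, and this is false when $x_m$ is deep in the cusp. The hypothesis only says $x_m\in\pi_V(V_{\gamma_m})$, i.e.\ $h_D(x_m)<\gamma_m$; there is no lower bound on $h_D(x_m)$, and by Lemma \ref{lmminjectivityradiusoncusp} the injectivity radius at $x_m$ is \emph{at most} $\pi|\log h_D(x_m)|^{-1}$, which can be far smaller than $\varepsilon_m$. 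Your reading of Lemma \ref{lmminjectivityradiushyperbpliccusp} is reversed: the bound $\inj\geq C^{-1}|\log t|^{-1}$ holds on $U\setminus\pi_V(V_t)$, i.e.\ for points \emph{outside} the sublevel set, whereas $x_m$ lies inside $V_{\gamma_m}$. Consequently, for $x_m$ near the tip of the cusp you cannot construct $\psi$ as a function with a log pole supported in a metric ball $B_r(x_m)$ with $r\sim\delta\varepsilon_m$ (such a ball wraps around the cusp and is not a coordinate chart), and you cannot invoke Lemma \ref{peaksec} at $x_m$ to produce the concentrated section $S_{ex,m}$, since that lemma requires a holomorphic coordinate on $B_{\log m/(Q\sqrt m)}(x_m)$. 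This is precisely why the paper splits the proof into two cases: for $x_m\notin\pi_V(V_{r_m^e})$ it argues as you do, but for $x_m\in\pi_V(V_{r_m^e})$ it replaces the local log-pole weight by the globally constructed quasi-plurisubharmonic function of Lemma \ref{lmmquasipshhyperboliccuspver2} (built from sections of $\linebundle_D^{-k}$ on the cross-section $D$, supported on the whole cusp neighborhood $V_{\delta_m}$ rather than on a metric ball), and replaces the cutoff near $x_m$ by a cutoff adapted to the sublevel sets $V_{r_m^{1.5}}\supset V_{r_m^2}$.

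A secondary issue: even in the shallow case, your claim that Corollary \ref{coropeaksec} can be "sharpened to polynomial order $O(m^{-l})$ by taking $p'$ large", yielding $\|S_{ex,m}-S_{U_m}\|_{L^2}=O(m^{-l})$, is not justified. The $O(m^{-1/2})$ in the almost-orthogonality comes from the curvature of the metric (the deviation of $a^m\,dV_\omega$ from the Gaussian model), not from the truncation order $p'$, so increasing $p'$ does not improve it; and an $O(m^{-1/2})$ closeness is not enough to transfer the $O(m^{-2p'})$ concentration of $S_{ex,m}$ to $S_{U_m}$. The paper circumvents this: it establishes the decay of the peak section on the annulus via the reproducing identity $T_{P_1,y_1}(S_{0,y_2})\overline{T_{0,y_2}(S_{0,y_2})}=T_{P_1,y_1}(S_{P_1,y_1})\overline{T_{0,y_2}(S_{P_1,y_1})}$ combined with the Bergman kernel function bound and with Lemma \ref{peaksectionrestriction} applied to nearby points $y_2$, using the $O(m^{-1/2})$ almost-orthogonality only to bound $\vartheta_{P_1,y_1}$ from below. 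You would need to reroute your Agmon step along these lines rather than through a polynomial-order comparison with an explicit model section.
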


\begin{proof}
Without loss of generality, we can assume that $\kappa =\delta =1 $. When $T_m \big|_{\hl \left( U_m,\linebundle^m \right) } = 0  ,$ this proposition is obvious. So we can assume that $ T_m \big|_{\hl \left( U_m,\linebundle^m \right) } \neq 0 $ from now.

The proof falls naturally into two parts by whether $x_m $ belongs to the set $\pi_V \left(V_{r^e_m} \right) $. 

\smallskip

\par {\em Part 1.} We start by prove the estimate (\ref{inequalitypropcoraseestimate}) when $x_m \in \pi_V \left( V_{\gamma_m } \right) \sq \pi_V \left( V_{r^e_m} \right) $.

By Lemma \ref{lmminjectivityradiushyperbpliccusp}, we see that $ \left\Vert \left( M,\omega ,x_m \right) \right\Vert^{holo}_{C^{ 5k+10,\frac{1}{2} } ,\varepsilon_m } \leq Q' $, where $ \left\Vert \cdot \right\Vert^{holo}_{C^{ k,\alpha } ,r } $ is the Cheeger-Gromov $ C^{k,\alpha } $-norm defined in Definition \ref{holonorm}, and $ Q' >0$ is a constant depending only on $\left( M,\omega ,\linebundle ,h \right)$, $k$ and $r $. Then we can find a holomorphic coordinate $\phi_{x_m } : \left( z_1 ,\cdots ,z_n \right) : W_{\xi ,x_m } \to B_{\xi \varepsilon_m } (0) \subset \mathbb{C}^n $ around $x_m $, such that the hermitian matrices $\left( g_{i\bar{j}} \right) $ satisfying that $g_{i\bar{j}} (0) = \delta_{ij} $ , $dg_{i\bar{j}} (0) = 0$, $ e^{-Q} \omega_{Euc} \leq \left( \phi_{x_m }^{-1} \right)^* \omega \leq e^Q  \omega_{Euc}  $, and there are holomorphic frames $e_{1,m}$ of $\linebundle_V $ on $ W_{\xi ,x_m } $ such that $a=\log h \left( e_{1,m} ,e_{1,m} \right) $ satisfying that $a(0)=0$, $\frac{\partial^{|I|} a}{\partial z^I }  (0) =0$ for each milti-index $I$ with $|I|\leq 3$, and $\left\Vert a \right\Vert^{* }_{5k+10,\frac{1}{2} ;W_{\xi ,x_m } }  \leq  Q\varepsilon_m^2 ,$ where $\xi ,Q >0$ are constants depending only on $\left( M,\omega ,\linebundle ,h \right)$, $k$ and $r $. For each $\lambda \in\left( 0,\xi \right) $, the inverse image of $B_{\lambda\varepsilon_m} (0)$ about the map $\phi_{x_m}$ will be denoted by $W_{\lambda ,x_m }$.

Let $\eta (t) \in C^\infty (\mathbb{R} ) $ be a cut-off function such that $\eta =1 $ on $(-\infty ,1]$, $\eta =0 $ on $[2,\infty )$, $-2\leq \eta'\leq 0$, $\left|\eta'' \right| \leq 8$. Set $\eta_{\lambda ,m} (z) =  \eta \left( \frac{4  |z|^2}{ \lambda^{2} \varepsilon_m^{2} } \right) $, and $\psi (z) =4(k+n) \log \left( \frac{4 |z|^2}{ \lambda^{2} \varepsilon_m^{2} } \right) \eta_{\lambda ,m} (z)  $. Then we have $\sqrt{1}\partial\partialbar \psi \geq -C_1 \lambda^{-2} \varepsilon_m^{-2} \omega_{Euc} $, where $C_1 =C_1 (n ) >0 $ is a constant. It will cause no confusion if we use the same letter to designate a function on $B_{\lambda\varepsilon_m} (0)$ and the composite of it with the map $\phi_{x_m}$.

Since $ T_m\big|_{\hl \left( U_m,\linebundle^m \right) } \neq 0 ,$ there exists a peak section $S_{T_m ,W_{\lambda ,x_m }}$ of $T_m $ on $W_{\lambda ,x_m }$. By the H\"ormander's $L^2$ estimate, we can find a smooth $\linebundle^m$-valued section $u_1\in L^2 $ on $M$ such that $\partialbar u_1 = \partialbar \left( \eta_{\lambda ,m} S_{T_m ,W_{\lambda ,x_m } } \right) =  \partialbar \eta_{\lambda ,m} \otimes S_{T_m ,W_{\lambda ,x_m } } $, and 
\begin{eqnarray}
\int_{M} \left\Vert u_1 \right\Vert_{h^m}^{2} e^{-\psi } d\V_\omega & \leq & \frac{C_2 }{\epsilon m -C_1 e^Q \lambda^{-2} \varepsilon_m^{-2} } \int_{M } \left\Vert \partialbar u_1 \right\Vert_{h^m ,\omega }^{2} e^{-\psi } d\V_\omega \nonumber \\
& \leq & C_2^2  \left( \log m \right)^{-2} \int_{\textrm{supp} \left\Vert \nabla\eta_{\lambda ,m} \right\Vert} \left\Vert S_{T_m ,W_{\lambda ,x_m } } \right\Vert_{h^m }^{2} e^{-\psi } d\V_\omega \\
& \leq & C^3_2  \left( \log m \right)^{-2} \int_{W_{\lambda ,x_m } } \left\Vert S_{T_m ,W_{\lambda ,x_m } } \right\Vert_{h^m }^{2} d\V_\omega = C^3_2  \left( \log m \right)^{-2}  , \nonumber
\end{eqnarray}
for any $ m\geq 1+e^{10 C_1 e^Q \lambda^{-2} \epsilon^{-1} } $, where $C_2 >1$ is a constant depending only on $\epsilon$, $C_1$, $\Lambda $, $\lambda$, $k$ and $r$. We assume that $m\geq 1+e^{10 C_1 e^Q \lambda^{-2} \epsilon^{-1} } $ from now. Then the integrability of $u_1 $ implies that $T_m \left( u_1 \right) =0$, and hence we have
\begin{eqnarray}
\left\Vert T_m \big|_{\hl \left( M,\linebundle^m \right) } \right\Vert & \geq & \left\Vert \eta_{\lambda ,m} S_{T_m ,W_{\lambda ,x_m }} - u_1 \right\Vert_{L^2 ; h^m ,\omega }^{-1} \left| T_m \left( \eta_{\lambda ,m} S_{T_m ,W_{\lambda ,x_m }} - u_1 \right) \right| \nonumber \\
& \geq & \left( \left\Vert \eta_{\lambda ,m} S_{T_m ,W_{\lambda ,x_m }}  \right\Vert_{L^2 ; h^m ,\omega } + \left\Vert u_1 \right\Vert_{L^2 ; h^m ,\omega } \right)^{-1} \left| T_m \left( S_{T_m ,W_{\lambda ,x_m }} \right) \right| \\
& \geq & \left( 1+ C^3_2  \left( \log m \right)^{-1} \right)^{-1} \left| T_m \left( S_{T_m ,W_{\lambda ,x_m }} \right) \right| . \nonumber
\end{eqnarray}

Analysis similar to that in the proof of Corollary \ref{coroagmontypeestimate} shows that for any $\lambda\in \left( 0,\frac{\xi}{10} \right) $,
\begin{eqnarray}
& & \left\Vert  T_m \left( S_{T_m ,W_{9\lambda ,x_m }} \right) S_{T_m ,W_{9\lambda ,x_m }} (y) \right\Vert = \left\Vert  T_m \left( S_{y,W_{9\lambda ,x_m} } \right) S_{y,W_{9\lambda ,x_m} } (y) \right\Vert \nonumber \\
& \leq & \left( \int_{W_{\lambda ,x_m}} \left\Vert  S_{y,W_{9\lambda ,x_m} }  \right\Vert^2 d\V_\omega \right)^{\frac{1}{2}} \left| T_m \left( S_{T_m,W_{\lambda ,x_m}} \right) \right| \left( \rho_{m,\omega ,W_{9\lambda ,x_m}} (y) \right)^{\frac{1}{2}} \\
& \leq & C_3 m^{ -2k - 4n-l } \left|  T_m \left( S_{T_m,W_{9\lambda ,x_m}} \right) \right| ,\;\; \forall y\in W_{8\lambda ,x_m} \sq W_{3\lambda ,x_m} , \nonumber
\end{eqnarray}
where $C_3 = C_3 \left( \lambda ,C_1 ,C_2 ,\epsilon ,\Lambda ,k,r \right) >0 $ is a constant, and $S_{y,W}$ is the peak section of $\delta_y $ on $W$. It follows that $\left\Vert S_{T_m,W_{3\lambda ,x_m}}  \right\Vert \leq C_3 m^{ -2k - 4n -l} $ on $W_{8\lambda ,x_m} \sq W_{3\lambda ,x_m}$. By applying the H\"ormander's $L^2$ estimate on $W_{8\lambda ,x_m} \sq W_{3\lambda ,x_m}$, we can conclude that
\begin{eqnarray}
\label{estimate1propcoraseestimatehyperbolic}
\left\Vert T_m \big|_{\hl \left( M,\linebundle^m \right) } \right\Vert  \geq  \left( 1- C_4  m^{-2k-2n-l} \right) \left| T_m \left( S_{T_m,W_{9\lambda ,x_m}} \right) \right| ,
\end{eqnarray}
where $C_4 = C_4 \left( \lambda ,C_1 ,C_2 ,\epsilon ,\Lambda ,k,r \right) >0 $ is a constant. Fix $\lambda = \frac{\xi}{100} $. It is easy to see that 
\begin{eqnarray}
\label{estimate2propcoraseestimatehyperbolic}
\left| T_m \left( S_{T_m,W_{9\lambda ,x_m}} \right) \right| = \left\Vert T_m \big|_{\hl \left( W_{9\lambda ,x_m} ,\linebundle^m \right) } \right\Vert \geq \left\Vert T_m \big|_{\hl \left( U_m,\linebundle^m |_{U_m} \right) } \right\Vert . 
\end{eqnarray}
Combining (\ref{estimate1propcoraseestimatehyperbolic}) with (\ref{estimate2propcoraseestimatehyperbolic}), we can conclude that the estimate (\ref{inequalitypropcoraseestimate}) holds for $x_m \in V_{\gamma_m } \sq V_{r^e_m} $ and $C= e^{ \left( 2k+2n+l \right) \left( C_4 + 10 C_1 e^Q \lambda^{-2} \epsilon^{-1} \right) } $.

\smallskip

\par {\em Part 2.} To complete the proof, we need to prove the estimate (\ref{inequalitypropcoraseestimate}) when $x_m \in \pi_V \left( V_{r^e_m} \right) $. 

By Lemma \ref{lmmquasipshhyperboliccuspver2}, we can find a quasi-plurisubharmonic function $\psi_m $ on $M$ by choosing $\delta_m = r^{2}_m $ and $\beta_m = 0.1 $. Then we have $\sqrt{-1} \partial\partialbar \psi_m \geq -C_5 \varepsilon_m^{-1} \omega $, where $C_5 >0$ is a constant depends only on $\left( M,\omega \right) $. Let $ \eta_m \in C^\infty \left( M \right) $ be a cut-off function such that $\eta_m =1$ on $ \pi_V \left( V_{\delta_m} \right) $, $\eta_m =0$ on $M\sq \pi_V \left( V_{r^{1.5}_m} \right) $, and $\sum_{i=1}^{i=2} \left\Vert \nabla^i \eta_m \right\Vert_{\omega } \leq C_6 $, where $C_6 >0$ is a constant depends only on $\left( M,\omega \right) $.

Choosing a large integer $m_0$ such that $ m_0 >2C_5 \sqrt{m_0} +1 $. By the H\"ormander's $L^2$ estimate, we can find a smooth $\linebundle^m $-valued section $u_2 $ on $M$ such that $\partialbar u_2 = \partialbar \left( \eta_{m} S_{T_m ,V_{r^{1.5}_m} / \Gamma_V } \right) $, and 
\begin{eqnarray}
& & \int_{M} \left\Vert u_2 \right\Vert_{h^m}^{2} e^{-5(n+k)\psi_m } d\V_\omega \nonumber \\
& \leq & \frac{1 }{m -C_3 \varepsilon_m^{-1} } \int_{M} \left\Vert \partialbar u_2 \right\Vert_{h^m ,\omega}^{2} e^{-5(n+k)\psi_m } d\V_\omega \\
& \leq &  \frac{C_4 }{m -C_3 \sqrt{m} } \int_{ \pi_V \left( V_{r^{1.5}_m } \right) \sq \pi_V \left( V_{\delta_m } \right) } \left\Vert S_{T_m ,V_{r^{1.5}_m} / \Gamma_V } \right\Vert_{h^m}^{2} d\V_\omega \leq \frac{C_7 }{m -C_3 \sqrt{m} } , \nonumber
\end{eqnarray}
where $ S_{T_m ,V_{r^{1.5}_m} / \Gamma_V  }$ is the peak section of $T_m$ on $V_{r^{1.5}_m} / \Gamma_V  $, and $C_7 >0$ is a constant depending only on $\left( M,\omega ,\linebundle,h \right) $ and $k$. Then there exists a constant $C_8 >0$ depending only on $\left( M,\omega ,\linebundle,h \right) $ and $k$, such that $\int_{M} \left\Vert u_2 \right\Vert_{h^m}^{2} d\V_\omega \leq C_8 m^{-1} $, $\forall m\geq m_0 $. By the integrability of $u_2 $, we see that $T_m \left( u_2 \right) =0$, and hence we have
\begin{eqnarray}
\left\Vert T_m \big|_{\hl \left( M,\linebundle^m \right) } \right\Vert & \geq & \left( \int_{M} \left\Vert \eta_{m} S_{T_m ,V_{r^{1.5}_m} / \Gamma_V  } - u_2 \right\Vert_{h^m}^2 d\V_\omega \right)^{-\frac{1}{2}} \left| T_m \left( \eta_{m} S_{T_m ,V_{r^{1.5}_m} / \Gamma_V  } - u_2 \right) \right| \nonumber \\
& \geq & \left( 1+ C_8 m^{-1} \right)^{-\frac{1}{2}} \left| T_m \left( S_{T_m ,V_{r^{1.5}_m}/ \Gamma_V  } \right) \right| . 
\end{eqnarray}
Analysis similar to that in the Part 1 now shows that for any integer $l\geq 0$,
\begin{eqnarray}
& & \left\Vert  T_m \left( S_{T_m ,U_m } \right) S_{T_m ,U_m } (y) \right\Vert = \left\Vert  T_m \left( S_{y,U_m } \right) S_{y,U_m } (y) \right\Vert \nonumber \\
& \leq & \left( \int_{V_{r^{1.5}_m} / \Gamma_V  } \left\Vert  S_{y,U_{m} }  \right\Vert_{h^m }^2 d\V_\omega \right)^{\frac{1}{2}} \left| T_m \left( S_{T_m,V_{r^{1.5}_m / \Gamma_V  }} \right) \right| \left( \rho_{m,\omega ,U_m} (y) \right)^{\frac{1}{2}}  \\
& \leq & C_9 m^{ -2k - 4n-l } \left|  T_m \left( S_{T_m,U_m} \right) \right| ,\;\; \forall y\in \pi_V \left( V_{r_m } \right) \sq \pi_V \left( V_{r^{1.4}_m } \right) , \nonumber
\end{eqnarray}
where $C_9 >0 $ is a constant depending only on $\left( M,\omega ,\linebundle,h \right) ,k$ and $l$, and $S_{y,W}$ is the peak section of $\delta_y $ on $W$. Since $ T_m\big|_{\hl \left( V_{\beta_m} / \Gamma_V  ,\linebundle^m \right) } \neq 0 $, we can see that $\left\Vert  S_{T_m ,U_m } (y) \right\Vert \leq C_7 m^{ -2k - 4n-l } $, for any $ y\in \pi_V \left( V_{r_m } \right) \sq \pi_V \left( V_{r^{1.4}_m } \right) $. 

Let $ \chi_m \in C^\infty \left( M \right) $ be a cut-off function such that $\chi_m =1$ on $ \pi_V \left( V_{r_m^{1.4}} \right) $, $\chi_m =0$ on $M\sq \pi_V \left( V_{r^{1.1}_m} \right) $, and $\sum_{i=1}^{2} \left\Vert \nabla^i \chi_m \right\Vert \leq C_{10} $, where $C_{10} >0$ is a constant depending only on $\left( M,\omega ,\linebundle,h \right) ,k$ and $l$. By using the H\"ormander's $L^2$ estimate to the weight function $e^{-5(n+k)\psi_m } $ and the smooth $\linebundle^m $-valued $(0,1)$-form $\partialbar \left( \chi_m S_{T_m ,U_m} \right) $, we can conclude that
\begin{eqnarray} 
\left\Vert T_m \big|_{\hl \left( M,\linebundle^m \right) } \right\Vert  \geq  \left( 1- C_{11}  m^{-2k-2n-l} \right) \left\Vert T_m \big|_{\hl \left( U_m ,\linebundle^m \right) } \right\Vert  ,
\end{eqnarray}
where $C_{11} >0 $ is a constant depending only on $\left( M,\omega ,\linebundle,h \right) ,k$ and $l$. This completes the proof.
\end{proof}

We assume that $u=O\left( \left| \log h_D \right|^{-\alpha } \right)$ as $h_D \to 0^+ $ to all orders with respect to $\omega_V$ for some constant $\alpha >0$ from now. Then we can give an estimate for the derivatives of holomorphic sections. 

\begin{lmm}
\label{lmmderiholosections}
We follow the above assumption about $u$. Given constants $n,k\in\mathbb{N}$ and $Q>0 $. There exists a constant $C>0$ with the following property.

Let $(M,\omega )$ be an $n$-dimensional K\"ahler manifold, $\linebundle $ be a holomorphic line bundle on $M$ equipped with a hermitian metric $h$, and $x\in M$. Assume that  $ \inj \left( B_r (x) \right) \geq r $, $\Ric (h) = \omega $ on $B_r (x) $, and $ \sum_{j=0}^{k} \sup_{B_r (x)} \left\Vert \nabla^j \Ric (\omega ) \right\Vert_{\omega} \leq Q $, where $r>0$ is a constant. Then for any $k,m\in\mathbb{N }$ and $S\in \hl \left( M,\linebundle^m \right) $, we have $ \left\Vert \nabla^l S (x) \right\Vert^2 + \left\Vert \nabla^{*l} S^* (x) \right\Vert^2  \leq C m^{n+k } \left( \min \left\lbrace \sqrt{m} r ,1 \right\rbrace \right)^{-2n-2k} \left\Vert S \right\Vert^2_{L^2 ; B_r (x) } .$
\end{lmm}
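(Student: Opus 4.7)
The plan is to pass to a holomorphic chart in which $S$ is represented by a holomorphic function and then establish Cauchy-type derivative estimates at two natural scales, depending on whether $\sqrt{m}\, r \geq 1$ or $\sqrt{m}\, r < 1$. First I would invoke the Cheeger--Gromov convergence theory (as used in the proof of Corollary~\ref{coroagmontypeestimate} and as in \cite{ma1}) to obtain, for some $r_0 = \min\{r,\, c_0(n,k,Q)\}$, a biholomorphic chart $\phi : B_{r_0}(x) \to W \subset \mathbb{C}^n$ with $\phi(x) = 0$ in which the metric components $(g_{i\bar{j}})$ are normalized to $\delta_{ij}$ at $0$ with vanishing first derivatives and bounded $C^{k+1,1/2}$-norm, together with a holomorphic frame $e_\linebundle$ such that $a = h(e_\linebundle, e_\linebundle)$ satisfies $a(0) = 1$, $\log a$ vanishes to second order at $0$, and $\|\log a\|_{C^{k+2,1/2}; W}$ is bounded. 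Writing $S = f\, e_\linebundle^m$ with $f$ holomorphic, one has $\|S\|_{h^m}^2 = |f|^2 a^m$, and the covariant derivative $\nabla^l S(x)$ can be expressed as $\partial^l f(0)$ plus a finite sum of strictly lower-order derivatives of $f$ at $0$ times smooth coefficients built from the metric and $\log a$; these lower-order error terms will be absorbed by induction on $l$.

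In the regime $\sqrt{m}\, r_0 \geq 1$, I would rescale by $z \mapsto z/\sqrt{m}$ and work on the unit Euclidean ball in the rescaled chart. The rescaled weight $a(z/\sqrt{m})^m$ is uniformly bounded above and below on that unit ball, since $m \log a(z/\sqrt{m}) = O(1)$ follows from the second-order vanishing of $\log a$. Applying the Cauchy integral formula to the holomorphic function $\tilde{f}(w) = f(w/\sqrt{m})$ gives $|\partial_w^l \tilde{f}(0)|^2 \leq C \int_{|w|\leq 1} |\tilde{f}|^2 \, dV$, which after converting back via $\partial_z^l f(0) = m^{l/2} \partial_w^l \tilde{f}(0)$ and $dV_\omega \asymp m^{-n}\, dV_{\mathrm{Euc}}$ yields $\|\nabla^l S(x)\|^2 \leq C\, m^{n+l}\, \|S\|_{L^2; B_{r_0}(x)}^2$.

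In the regime $\sqrt{m}\, r < 1$, one cannot rescale all the way to unit scale, so I would work directly on $B_r(x)$. Here $m r^2 < 1$, so the weight $a^m$ varies on $B_r(x)$ only by a bounded factor (again by the second-order vanishing of $\log a$ at $0$), and thus $\|S\|_{h^m}^2 \asymp |f|^2$ there. The standard Cauchy estimate on a Euclidean ball of radius comparable to $r$ gives $|\partial^l f(0)|^2 \leq C\, r^{-2n-2l} \int_{B_r(x)} |f|^2\, dV$, and rewriting $r^{-2n-2l} = m^{n+l} (\sqrt{m}\, r)^{-2n-2l}$ gives the claimed bound. The estimate for $\|\nabla^{*l} S^*(x)\|^2$ is obtained by applying the same argument to the dual line bundle $(\linebundle^m)^*$ with its induced Hermitian metric; the sign of $\Ric(h)$ plays no role in pointwise derivative estimates, which depend only on smoothness in the chart. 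The main obstacle is the inductive bookkeeping that converts $\partial^l f(0)$ into the full covariant derivative $\nabla^l S(x)$: the discrepancy involves sums of products of derivatives of $\log a$ and of the Christoffel symbols of $\omega$ with lower-order derivatives of $f$, and one must verify that each such error term is absorbed into a bound of the same form at lower values of $l$.
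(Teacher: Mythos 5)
Your proposal is correct and follows essentially the same route as the paper: the paper first rescales $\omega\mapsto m\omega$ to reduce to $m=1$ (which is exactly your coordinate rescaling $z\mapsto z/\sqrt{m}$, with the two regimes $\sqrt{m}r\gtrless 1$ absorbed into the single factor $\min\{\sqrt{m}r,1\}$), then takes a Cheeger--Gromov chart with a normalized frame so that $S=fe_x$ with $\|a-1\|^*$ controlled by $r^2$, and concludes by standard interior estimates for derivatives of holomorphic (harmonic) functions. Your treatment of the weight $a^m$ via the second-order vanishing of $\log a$ and the inductive conversion of $\partial^l f(0)$ into $\nabla^l S(x)$ just spells out details the paper leaves implicit.
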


\begin{proof}
Without loss of generality, we can assume that $\left\Vert S \right\Vert_{L^2 ; B_r (x) } \neq 0 $. By replacing $(M, \omega )$ with $(M,m\omega )$, we can reduce this lemma to the case of $m=1$. By the theory of Cheeger-Gromov $C^{k,\alpha } $-convergence, we can find a coordinate $\left( z_1 ,\cdots ,z_n \right) : B_{r } (x) \to U_x \subset \mathbb{C}^n$ such that the image of $x$ is $ 0 $, $ e^{-Q'} \omega_{Euc} \leq \omega \leq e^{Q'} \omega_{Euc} $, and the hermitian matrix $\left( g_{i\bar{j}} \right) $ satisfies that $g_{i\bar{j}} (0) = \delta_{ij} $ , $dg_{i\bar{j}} (0) = 0$, and $\left\Vert g_{i\bar{j}} \right\Vert^{*}_{k+1,\frac{1}{2}; U_x } \leq Q' $, where $\left\Vert f \right\Vert^{*}_{k,\alpha } $ is the interior norms on a domain in $\mathbb{R}^{2n}$, and $Q' =Q' (n,k,Q) >0 $ is a constant. We further assume that there exists a holomorphic frame $e_x$ of $\linebundle $ on this coordinate such that the local representation function of $h$, $a=h\left( e_x ,e_x \right) $, satisfying that $a(0)=1$ and $\left\Vert a-1 \right\Vert^{* }_{k+1,\frac{1}{2} ;U_x }  \leq Q' r^2 $. 

Write $S=f e_x $, where $f\in\mathcal{O} \left( U_x \right) $ is a holomorphic function. Now we are reduced to establishing the estimate $ \sum_{l=0}^{k} \left\Vert \nabla^l f (0) \right\Vert_{\omega_{Euc}}^2 \leq C  \left( \min \left\lbrace r ,1 \right\rbrace \right)^{-2n-2k} \int_{U_x} |f|^2 d\V_{\mathbb{C}^n} $ on the Euclidean domain $U_x \subset \mathbb{C}^n $ containing $B_{e^{-Q'} r} (0)$. This is an immediate consequence of the standard interior estimates for derivatives of harmonic functions. This is our claim.
\end{proof}

As a corollary of Lemma \ref{lmmderiholosections}, we can give the following estimate of the derivatives of the holomorphic sections around the complex hyperbolic cusps. Here we denote $e^{- \frac{\sqrt{m}}{\log m }}$ and $\frac{\log m}{\sqrt{m}} $ briefly by $r_m$ and $\varepsilon_m $, respectively, for any $m\geq 2$. 

\begin{coro}
\label{corocoarseestimatecpxhpblccusp}
We follow the above assumption about $u$. Given $\delta ,\kappa ,\beta ,k >0$, there exists a constant $C>1$ depending only on $(M,\omega ,\linebundle ,h)$, $\delta $, $\kappa $, $\beta $ and $k$ satisfying the following property.

Let $m \geq C $ be an integer such that $ \pi_V \left( V_{r_m^{\kappa }} \right) \subset U $, $U_m $ be a sequence of open subsets of $U$ containing $\pi_V \left( V_{r_m^{\kappa }} \right) $, and $ \widetilde{U}_m $ be open subsets of $U_m$ such that $ \dist_{\omega } \left( \widetilde{U}_m ,M\sq U_m \right) \geq \delta \varepsilon_m $. Then for any $S\in \hl \left( U_m ,\linebundle^m \right) $, we have $ \left\Vert \nabla^k S (x) \right\Vert_{h^m ,\omega}^2 + \left\Vert \nabla^{*k} S^* (x) \right\Vert_{h^m ,\omega}^2 \leq C m^{C } \left| \log h_D (x) \right|^{-\beta } \left\Vert S \right\Vert^2_{L^2 ;h^m ,\omega} $, $ \forall x\in \widetilde{U}_m $, where $S^* $ is the smooth section of $\linebundle^{-m} $ given by $S^* (e) = h^m \left( e,S \right) $, and $\nabla^* $ is the Chern conncection of the Hermitian line bundle $\left( \linebundle^{-m} ,h^{-m} \right) $.
\end{coro}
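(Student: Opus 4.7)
The plan is based on Riesz duality: the pointwise bound is equivalent to showing that for each derivative-evaluation functional $T_{P,x}$ of order $|P| \leq k$ at $x$ (paired with a fixed unit vector in the fiber), the operator norm satisfies $\bigl\|T_{P,x}|_{\hl(U_m,\omega,\linebundle,h)}\bigr\|^2 \leq Cm^C\,|\log h_D(x)|^{-\beta}$. I split the argument into two regimes according to the depth of $x$ in the cusp.

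In the \emph{moderate} regime $|\log h_D(x)| \leq m^{1/\alpha}$, the target factor $|\log h_D(x)|^{-\beta}$ is at least polynomial in $m^{-1}$, so a direct application of Lemma \ref{lmmderiholosections} at scale $r = c\min\{|\log h_D(x)|^{-1},\varepsilon_m\}$ suffices. The choice of $r$ is admissible because $B_r(x) \subset U_m$ by the distance hypothesis and the injectivity radius at $x$ is at least $c|\log h_D(x)|^{-1}$ by Lemma \ref{lmminjectivityradiushyperbpliccusp}, whose bounds on derivatives of the Ricci curvature also verify the remaining hypotheses of Lemma \ref{lmmderiholosections}. The resulting bound $\|\nabla^k S(x)\|^2 \leq Cm^C|\log h_D(x)|^{2n+2k}\|S\|_{L^2;U_m}^2$ is absorbed into $Cm^{C'}|\log h_D(x)|^{-\beta}\|S\|^2$ after enlarging $C$, since $|\log h_D(x)|^{2n+2k+\beta}$ is itself polynomial in $m$ in this regime.

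In the \emph{deep} regime $|\log h_D(x)| > m^{1/\alpha}$, the perturbation $u = O(|\log h_D|^{-\alpha})$ satisfies $m|u| \leq 1$ on a neighborhood of $x$, so the $L^2$ norms and Chern-connection norms of $(\omega, h)$ and of the model $(\omega_V, h_V)$ are uniformly equivalent there; up to enlarging $C$ we may replace $(\omega, h)$ by $(\omega_V, h_V)$. Applying Proposition \ref{propcoraseestimate} on the complete K\"ahler manifold $(V/\Gamma_V, \omega_V, \linebundle_V/\Gamma_V, h_V)$ --- which satisfies $\Ric(h_V) = \omega_V$ and $\Ric(\omega_V) = -(n+1)\omega_V$ and contains $V_r/\Gamma_V$ as a trivial asymptotic cusp ($u \equiv 0$) --- with peak point $x$ and $\gamma_m = \max\{h_D(x) + m^{-1},\, r_m^\kappa\}$, yields
\[
\bigl\|T_{P,x}|_{\hl(U_m, \omega_V, h_V)}\bigr\| \leq (1-Cm^{-l})^{-1}\bigl\|T_{P,x}|_{\hl(V/\Gamma_V, \omega_V, h_V)}\bigr\|,
\]
reducing the problem to a pointwise bound on the explicit model Bergman kernel. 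The right-hand side is then computed via the orthonormal basis of Lemma \ref{lmmorthonormalfver}: differentiating each $c_q f_{S_{q,j}}e^m_{\linebundle_V}$ with the Chern connection $\nabla = d + m\,\partial\log|\log h_D|$ on $\linebundle_V^m$, summing, and using $\rho_{D,q} \lesssim q^{n-1}$ from Theorem \ref{thmregularpart}, one obtains
\[
\bigl\|T_{P,x}|_{\hl(V/\Gamma_V)}\bigr\|^2 \leq Cm^{C'}\,\frac{t^{m}}{(m-n-1)!}\sum_{q\geq 1}q^{m-n+C''}h_D(x)^q, \qquad t := |\log h_D(x)|.
\]
For $t \geq A_\beta m\log m$, the sum is dominated by $q = 1$, giving $Ct^{m+C''}e^{-t}/(m-n-1)!$, which by Stirling decays faster than $t^{-\beta}$ once $A_\beta$ is chosen large; for $m^{1/\alpha} \leq t < A_\beta m\log m$ the derivative version of Corollary \ref{corobgmkernelsup} bounds the series by $Cm^{C'''}$, a polynomial factor in $m$ which is absorbed into $Cm^{C''''}t^{-\beta}$ since $t$ is itself polynomial in $m$.

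The main technical obstacle is the bookkeeping in the derivative computation on the model: each application of the Chern connection $\nabla = d + m\,\partial\log|\log h_D|$ contributes polynomial factors in $m$ and produces mixed terms combining intrinsic $\omega_V$-derivatives of $f_{S_{q,j}}$ with the curvature one-form and its derivatives, and one must verify that only polynomial dependence in $m$, $t$, and $q$ survives so that the Stirling asymptotics in both subregimes continue to yield the claimed $|\log h_D(x)|^{-\beta}$ decay.
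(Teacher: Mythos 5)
Your overall architecture (split by depth in the cusp; direct elliptic estimate in the shallow regime, transfer to the explicit model in the deep regime) parallels the paper's, and your moderate regime is essentially identical to the first half of the paper's proof. The deep regime, however, contains a genuine gap at the metric-replacement step. You argue that since $m|u|\leq 1$ "on a neighborhood of $x$", the $L^2$ norms of $(\omega ,h)$ and $(\omega_V ,h_V)$ are equivalent and you may replace one by the other in $\left\Vert T_{P,x}\big|_{\hl (U_m ,\cdot )}\right\Vert$. But that functional norm involves integration over all of $U_m$, which by hypothesis reaches out at least to $\pi_V (V_{r_m^{\kappa}})$, i.e. to points where $\left| \log h_D \right| \sim \kappa\sqrt{m}/\log m$. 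There $e^{mu}=e^{O(m^{1-\alpha /2}(\log m)^{\alpha})}$, which is exponentially large in $m$ whenever $\alpha <2$, so the two $L^2$ norms on $U_m$ are \emph{not} uniformly comparable and the replacement is not justified for general $\alpha >0$. The obvious repair — first shrink the domain to the region $W$ where $m|u|\lesssim 1$, using $\left\Vert T\big|_{\hl (U_m)}\right\Vert \leq \left\Vert T\big|_{\hl (W)}\right\Vert$ — forces $W\subset V_{e^{-(Cm)^{1/\alpha}}}$, and then Proposition \ref{propcoraseestimate} can no longer bridge from $W$ back to the full model $V/\Gamma_V$, because that proposition requires a tube $V_{\gamma_m}$ with $\gamma_m \geq r_m^{\kappa}$ sitting inside the intermediate domain with an $O(\varepsilon_m)$ buffer, and $e^{-(Cm)^{1/\alpha}}\ll r_m^{\kappa}$ when $\alpha \leq 2$. (A secondary issue of the same kind: even granting the metric swap, the corollary's hypotheses only give $\dist_{\omega}(\widetilde{U}_m ,M\sq U_m)\geq \delta\varepsilon_m$, not the buffer around the whole tube $\pi_V (V_{\gamma_m})$ that Proposition \ref{propcoraseestimate} demands; this part is fixable by passing to $\pi_V (V_{r_m^{\kappa}})$ as an intermediate domain.)

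The paper's proof avoids this trap by never comparing $L^2$ norms over the shallow part of $U_m$. It uses the trivial pointwise bound $\left\Vert S(y)\right\Vert^2 \leq \rho_{U_m ,\omega ,m}(y)$ together with domain monotonicity of the Bergman kernel function, $\rho_{U_m ,\omega ,m}\leq \rho_{V_{\tau_m}/\Gamma_V ,\omega ,m}$ on the deeply truncated tube $V_{\tau_m}/\Gamma_V$ with $\tau_m = e^{-m^{2+2\alpha^{-1}}}$ (both with the \emph{perturbed} metric, so no comparison is needed), and only then invokes Lemma \ref{lmmdistancedifferencepeaksection} on $V_{\tau_m}/\Gamma_V$, where $m|u|=O(m^{-1})$ uniformly. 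The truncated model kernel is then computed explicitly (the $\tau_m^{-q}$ factors from the truncated normalization are harmless) and shown to be $\leq Ch_D(x)^{1/4}$ for $x\in V_{\tau_m^m}$, so the $L^2$ mass of $S$ near $x$ is exponentially small and Lemma \ref{lmmderiholosections} finishes. If you re-route your deep regime through domain monotonicity of the kernel and the truncated model rather than through Proposition \ref{propcoraseestimate} and the full model, your argument closes; your Stirling analysis of the resulting series is otherwise sound.
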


\begin{proof}
Without loss of generality, we can assume that $\left\Vert S \right\Vert_{L^2 ; h^m , \omega } =1 $ and $\kappa =\delta =1$. 

Note that that $u=O\left( \left| \log h_D \right|^{-\alpha } \right)$ as $h_D \to 0^+ $ to all orders with respect to $\omega_V$ for some constant $\alpha >0$. Let $\tau_m =e^{-m^{2+2\alpha^{-1}}} $. By Lemma \ref{lmminjectivityradiushyperbpliccusp}, we see that there exists a constant $C_1 >1$ depending only on $(M,\omega) $ and $k$, such that for any $m\in\mathbb{N }$, we have $\inj \left( U\sq \pi_V \left( V_{\tau^m_m} \right) \right) \geq C_1^{-1} m^{-2-2\alpha^{-1} -1} ,$ and $ \sum_{j=0}^{2k} \left\Vert \nabla^j \Ric (\omega ) \right\Vert_\omega \leq C_1 $ on $U$. Then we can use Lemma \ref{lmmderiholosections} to find a constant $C_2 >0$ depending only on $(M,\omega) $ and $k$, such that
\begin{eqnarray}
 \sup_{ \widetilde{U}_m \sq \pi_V \left( V_{\tau^m_m} \right) } \left( \left\Vert \nabla^k S \right\Vert^2 +\left\Vert \nabla^{*k} S^* \right\Vert^2 \right) \leq C_2 m^{C_2 } .
\end{eqnarray}
Since $ \left| \log h_D \right| \leq m^{2+2\alpha^{-1} } $ on $\widetilde{U}_m \sq \pi_V \left( V_{\tau^m_m} \right) $, we can conclude that 
\begin{eqnarray}
 \left\Vert \nabla^k S (x) \right\Vert^2 +\left\Vert \nabla^{*k} S^* (x) \right\Vert^2  \leq C_2 m^{C_2 + 2 \beta +2\alpha^{-1} \beta } \left| \log h_D (x) \right|^{-\beta } ,\;\; \forall x\in \widetilde{U}_m \sq \pi_V \left( V_{\tau^m_m} \right) .
\end{eqnarray}
By the definition, we have $\left\Vert S \right\Vert^2 \leq \rho_{ U_m , \omega ,\linebundle ,h ,m}  $ on $ U_m $. Since $h=\left| \log h_D \right| e^u $, $\omega = \omega_V -\sqrt{-1} \partial\partialbar u $ and $u=O\left( \left| \log h_D \right|^{-\alpha} \right) $, we see that
\begin{eqnarray}
\sup_{V_{\tau_m } / \Gamma_V } \left( \left| \log \left( \frac{h^m}{\left| \log h_D \right|^m } \right) \right| +\left| \log \left( \frac{\omega^n}{\omega^n_V } \right) \right| \right) & \leq & C_3 m \left| \log \tau_{m} \right|^{- m \alpha } \leq C_3 m^{-1} ,
\end{eqnarray}
where $C_3 >1 $ is a constant depending only on $(M,\omega) $. Lemma \ref{lmmdistancedifferencepeaksection} now shows that 
\begin{equation}
\rho_{V_{\tau_m } / \Gamma_V ,\omega ,m } (x) \leq e^{C_3 } \rho_{V_{\tau_m } / \Gamma_V ,\omega_V ,m } (x) , \;\; \forall x\in V_{\tau_m } / \Gamma_V ,
\end{equation}
where $\rho_{V_{\tau_m } / \Gamma_V ,\omega ,m } $ is the $m$-th Bergman kernel on $\left( V_{\tau_m } /\Gamma_V ,\omega ,\linebundle ,h \right) $.

Analysis similar to that in the proof of Proposition \ref{propbergmanknv} shows that for any $m\geq n+1 $,
\begin{eqnarray}
\rho_{V_{\tau_m } / \Gamma_V ,\omega_V ,m } (x) & \leq & C_4 \left| \log h_D (x) \right|^m \sum_{q=1}^\infty q^{n+1} \tau^{-q}_m h_D (x)^q ,\;\; \forall x\in V_{\tau_m } /\Gamma_V ,
\end{eqnarray}
where $C_4 >1 $ is a constant depending only on $(M,\omega) $. Then we can conclude that
\begin{eqnarray}
\rho_{V_{\tau_m } / \Gamma_V ,\omega_V ,m } (x) & \leq & C_4 h_D (x)^{\frac{1}{2}} \left| \log h_D (x) \right|^m \sum_{q=1}^\infty q^n \left( \tau^{-q}_m h_D (x)^{\frac{1}{2}} \right)^q \nonumber \\
& \leq & C_5 \left| \log h_D (x) \right|^m h_D (x)^{\frac{1}{2}} \leq C^2_5 \left( \tau_m^{\frac{m}{4}} \left| m \log \tau_m \right|^m \right) h_D (x)^{\frac{1}{4}} \leq C^3_5 h_D (x)^{\frac{1}{4}},
\end{eqnarray}
for any $m\geq n+1 $ and $x \in V_{\tau_m^m} /\Gamma_V $, where $C_5 >1 $ is a constant depending only on $(M,\omega) $.

It follows that there exists a constant $C_6 >1 $ depending only on $(M,\omega) $, such that for any $x \in V_{\tau_m^m} /\Gamma_m $, we have $ \left\Vert S \right\Vert^2_{L^2 , B_{1 } (x) ; h^m ,\omega } \leq C_6 h_D (x)^{C_6^{-1}} $. Applying Lemma \ref{lmmderiholosections} again, we can find a constant $C_7$ depending only on $(M,\omega), k  $ and $\beta $ such that $\left\Vert \nabla^k S (x )\right\Vert^2 + \left\Vert \nabla^{*k} S^* (x) \right\Vert^2 \leq C_7 \left| \log h_D (x) \right|^{-\beta } $ on $V_{\tau_m^m}$. This completes the proof.
\end{proof}

Before proving Theorem \ref{thmcusplocalizationprinciple}, we now prove that $\rho_{V_{\sigma_m } /\Gamma_V , \omega ,m} > 0 $ everywhere. Hence the fraction in inequality (\ref{equation3thmcusplocalizationprinciple}) is well defined.

\begin{lmm}
\label{lmmconstructholomorphicfunctions}
Let $x \in V/\Gamma_V $, $m_0 \geq n+1 $ be an integer, and $ T\in H^0 \left( V/\Gamma_V, \left( \linebundle_V / \Gamma_V \right)^{m_0 } \right)^* $ be a linear functional defined by finite linear combination of $\delta_{x } $. Assume that $T\neq 0$ as a distribution, and the quotient space $V/\Gamma_V $ is a manifold. Then $T \big|_{ \hl \left( V/\Gamma_V,\left( \linebundle_V / \Gamma_V \right)^{m_0 } \right) } \neq 0$.
\end{lmm}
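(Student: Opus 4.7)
The plan is to argue by contrapositive: assume $T$ vanishes on all of $\hl \left( V/\Gamma_V , \left( \linebundle_V / \Gamma_V \right)^{m_0 } \right)$ and deduce that every coefficient of $T$ is zero, contradicting $T \neq 0$ as a distribution. The key is to pair $T$ against the explicit orthonormal basis of Lemma \ref{lmmorthonormalfver}, restricted to the $\Delta_{\Gamma_V ,m_0}$-invariant subspace, whose elements take the form $f_S e^{m_0}_{\linebundle_V}$ for $S \in \hl \left( D, \linebundle_D^{-q} \right)_{\Gamma_V }$ and $q \geq 1$.

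First, lift $x$ to $\tilde x \in V$; since the $\Gamma_V$-action is free, $\pi_V$ is a local biholomorphism near $\tilde x$. Choose a local holomorphic trivialization of $\linebundle_D $ near $\pi_D (\tilde x )$ to obtain holomorphic coordinates $(z,w) \in \mathbb{C}^{n-1} \times \mathbb{C} $ on $V$ near $\tilde x $ with $\tilde x = (0, w_0)$ and $w_0 \neq 0$, where $w$ is the fiber coordinate. Trivialize $\linebundle_V $ by the global frame $e_{\linebundle_V }$. In this trivialization
\[
 T = \sum_{ |\alpha | + \beta \leq N } c_{\alpha, \beta } \, \partial_{z}^{\alpha } \partial_w^{\beta} \big|_{\tilde x } ,
\]
and $T \neq 0$ as a distribution means some $c_{\alpha, \beta} \neq 0$. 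A local section $S$ of $\linebundle_D^{-q}$ is given by $\tilde S (z)$ times the dual frame, so by Lemma \ref{holoontotal} we have $f_S (z,w) = \tilde S (z) w^q$, and
\[
 T\left( f_S e^{m_0 }_{\linebundle_V } \right) = \sum_\alpha A_\alpha (q) \, \partial_z^\alpha \tilde S (0) , \qquad A_\alpha (q) = \sum_\beta c_{\alpha ,\beta } \frac{q!}{(q- \beta )!} w_0^{q - \beta } .
\]

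Second, since $\linebundle_D^{-1} $ is ample on the Abelian variety $D$, Kodaira embedding yields that for large $q$ the space $H^0 (D, \linebundle_D^{-q} )$ surjects onto the $N$-jet space at any prescribed finite set of points. Averaging under $\Delta_{\Gamma_V , m_0 }$ over the finite orbit $\Gamma_V \cdot \pi_D (\tilde x )$ then produces $\Delta_{\Gamma_V , m_0 }$-invariant sections realizing any prescribed $N$-jet at $\pi_D (\tilde x)$, at least for $q$ in an arithmetic progression of large integers compatible with the root-of-unity character $\Theta_{\Gamma_V }^{m_0}$ on the stabilizer of $\pi_D (\tilde x)$. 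The hypothesis $T|_{\hl} = 0$ then forces $A_\alpha (q ) = 0$ for all $|\alpha | \leq N $ and all such $q$.

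Finally, for $q > N$ we have $A_\alpha (q) = w_0^q P_\alpha (q)$ with $P_\alpha (q) = \sum_\beta w_0^{-\beta} c_{\alpha ,\beta } \, q(q-1) \cdots (q-\beta +1 )$, a polynomial in $q$ of degree $\leq N$. Since $w_0 \neq 0$ and $A_\alpha (q)$ vanishes on infinitely many $q$, one gets $P_\alpha \equiv 0$; linear independence of the falling factorials forces $c_{\alpha ,\beta } = 0$ for every $\alpha $ and $\beta $, a contradiction. The main obstacle is the second step: producing $\Delta_{\Gamma_V ,m_0 }$-invariant sections with prescribed jets at $\pi_D(\tilde x)$, which requires controlling the stabilizer contribution via the character $\Theta_{\Gamma_V}^{m_0 }$. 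This obstacle is mild because the polynomial argument in the final step only needs $A_\alpha (q) = 0$ along an arithmetic progression.
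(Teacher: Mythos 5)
Your route is genuinely different from the paper's proof. The paper twists up to $\left( \linebundle_V /\Gamma_V \right)^{m_0 + p\left| \Gamma_V \right|}$, produces an $L^2$ peak section there via Lemma \ref{peaksec}, divides by the canonical trivializing section of the $\left| \Gamma_V \right|$-th power to obtain a germ of a holomorphic section of $\linebundle_V^{m_0}$ on which $T$ is nonzero, and then extracts a nonzero Fourier coefficient using Lemma \ref{holoontotal}; you instead interpolate jets directly on $D$ using ampleness of $\linebundle_D^{-1}$ and close with an elementary polynomial identity in $q$. When $\Gamma_V =0$, or more generally when the stabilizer of $a=\pi_D \left( \tilde{x} \right) $ for the induced $\Gamma_V$-action on $D$ is trivial, your argument is complete and correct, and it has the merit of avoiding H\"ormander's estimate entirely. (Note also that the lemma as stated only involves $\delta_x$ itself, without derivatives; for that order-zero case your arithmetic-progression device does work, by orthogonality of characters of the stabilizer.)

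In the general equivariant case with derivatives, however, your second step has a genuine gap. Let $H\leq \Gamma_V $ be the stabilizer of $a$. Each $g\in H$ acts on the fiber $\linebundle_D \big|_a $ by a root of unity $\mu_g$ (with $\mu_g \neq 1$ for $g\neq e$, since the action on $V$ is free, so $g\mapsto \mu_g$ is an injective character of $H$), but $g$ may also act on $D$ near $a$ by a nontrivial rotation $\sigma_g$ fixing $a$. Restricting $q$ to the progression on which $\Theta_{\Gamma_V} (g)^{m_0} \mu_g^{-q} =1 $ only normalizes the fiber weights; the $N$-jet at $\tilde{x}$ of the averaged section is still the image of $j^N_a \tilde{S}$ under a sum over $g\in H$ of $d\sigma_g$-twisted pullbacks, i.e.\ a projection onto the $H$-invariant jets, so arbitrary jets are \emph{not} realizable. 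Concretely, if $H=\mathbb{Z} /2 $ acts on $T_a D$ by $-1$ and $T=\partial_{z_1} \big|_{\tilde{x}} $, then along your ``compatible'' progression every invariant section of degree $q$ is annihilated by $T$, and your step 2 produces nothing. The repair is to keep all $q$ and all (non-invariant) $S$ with vanishing jets at the other orbit points, write $T$ applied to the average as $w_0^q \sum_{g\in H} \Theta_{\Gamma_V} (g)^{m_0} \mu_g^{-q} L_{g,q} \left( j^N_a \tilde{S} \right) $ with $L_{g,q}$ depending polynomially on $q$ of degree at most $N$, and invoke linear independence of the sequences $q\mapsto \mu_g^{-q} q^j $ for distinct roots of unity $\mu_g$ to isolate the $g=e$ term, which is exactly your $A_\alpha (q)$; your falling-factorial argument then finishes. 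So the obstacle you flagged is surmountable, but not by the arithmetic-progression restriction you propose.
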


\begin{proof}
To shorten notation, we write $N$ instead of $\left| \Gamma_V \right| $.
 
Since $ \Theta_{\Gamma_V } (g^{ N} ) = 1 $, $\forall g\in\Gamma_V $, we see that $\left( V/\Gamma_V,\left( \linebundle_V / \Gamma_V \right)^{N } \right)  \cong  \left( \mathbb{C} , \left| \log h_D \right|^{ N } \right) $. Then there exists a holomorphic section $e \in H^0 \left( V/\Gamma_V,\left( \linebundle_V / \Gamma_V \right)^{N } \right) $ such that $ \left\Vert e \right\Vert_{h^{N}_V }^2 = \left| \log h_D \right|^{ N} $. For each $ p \in \mathbb{Z}_{\geq 0} $, we can define $ T_p \in H^0 \left( V/\Gamma_V,\left( \linebundle_V / \Gamma_V \right)^{m } \right)^* $ by 
$$ T_p \left( S \right) = T \left( e^{-p} (S) \right) e^{p } ,\;\; \forall S \in H^0 \left( V /\Gamma_V , \left( \linebundle_V /\Gamma_V \right)^{ m_0 + pN } \right) .$$

Since $ \left( V/\Gamma_V ,\omega_V , \linebundle_V / \Gamma_V , h_V \right) $ is a polarized complete K\"ahler manifold with constant Ricci curvature $\Ric \left( \omega_V \right) = -(n+1) \omega_V $, we see that Lemma \ref{peaksec} implies that there exist a large integer $p_1 $ and a section $S_{p_1} \in \hl \left( V/\Gamma_V, \left( \linebundle_V/\Gamma_V \right)^{m_0 + p_1 N } \right) $ such that $T_{p_1 } ( S_{p_1} ) \neq 0$. It follows that $ T \left( e^{-p_1} (S) \right) \neq 0 $.

Let $S' =f e^{m_0}_{\linebundle_V } \in \hl \left( V ,\linebundle_{V}^{m_0 } \right)_{\Gamma_V } $ be the pullback of $e^{-p_1} (S) $. By the statement above Proposition \ref{propbergmanknv}, we can find constants $a_{q,j} $ such that $ \sum_{q= 1 }^{\infty} \sum_{j=1}^{N'_{m_0 ,q} } a_{ q,j} \widetilde{S}'_{m_0 ,q,j} $ converge to $f $ uniformly on a neighborhood of $x$, where $\left\lbrace S'_{m_0 ,q,j} \right\rbrace_{j=1}^{N'_{m_0 ,q}} $ is an $L^2 $ orthonormal set in the Hilbert space $\hl \left( D,\linebundle^{-q}_D \right) $, and $\widetilde{S}'_{m_0 , q,j} $ denote the holomorphic function corresponding to $S'_{m_0 ,q,j} $ defined in Lemma \ref{holoontotal}. Now we see that there exists a holomorphic function $\widetilde{S}_{m_0 ,q,j} $ such that $ T \left( \widetilde{S}'_{m_0 ,q,j} e_{\linebundle_V }^{m_0 } \right) \neq 0 $, and the lemma follows.
\end{proof}

Now we are ready to prove Theorem \ref{thmcusplocalizationprinciple}.

\vspace{0.2cm}

\noindent \textbf{Proof of Theorem \ref{thmcusplocalizationprinciple}: }
Take a fixed $l\in\mathbb{N} $ at first. We only need to show this theorem in the case where $\xi =\kappa =1$, the more general case is then a fairly immediate consequence. 

The proof will be divided into three parts, proving estimates (\ref{equation1thmcusplocalizationprinciple}), (\ref{equation2thmcusplocalizationprinciple}) and (\ref{equation3thmcusplocalizationprinciple}) respectively. 

\smallskip

\par {\em Part 1:} First, we need to prove (\ref{equation1thmcusplocalizationprinciple}).

Set $\left( x_m ,y_m \right) \in \pi_V \left( V_{\gamma_m } \right) \times \left(U\sq \pi_V \left( V_{\sigma_m } \right) \right) $. Choosing vectors $e_{x_m} \in \linebundle^m \big|_{x_m} $ and $e_{y_m} \in \linebundle^m \big|_{y_m} $ such that $\left\Vert e_{x_m} \right\Vert_{h^m} = \left\Vert e_{y_m} \right\Vert_{h^m} = 1 $. Let $k_1 ,k_2 \in\mathbb{Z}_{\geq 0} $ such that $k_1 +k_2 \leq k $. Consider the unit tangent vectors $v_{1,m} ,\cdots ,v_{k_1 ,m} \in T_{x_m} M \oplus 0 \subset T_{\left( x_m ,y_m \right)} ( M\times M) $ and $v_{k_1 +1,m} ,\cdots ,v_{k_1 +k_2 ,m} \in 0 \oplus T_{y_m} M \subset T_{\left( x_m ,y_m \right)} (M\times M) $. Now we can define linear functionals on the space of holomorphic sections on small neighborhoods of $x_m $ and $y_m $ as following:
\begin{eqnarray}
\nabla^{k_1} S \left( v_{1,m} ,\cdots ,v_{k_1 ,m}  \right) & = & T_{1,m} (S) e_{x_m} ,\\
\nabla^{* k_2} S^* \left( v_{k_1 +1,m} ,\cdots ,v_{k_1 +k_2,,m}  \right) & = & \overline{ T_{2,m} (S) } e^*_{y_m} , 
\end{eqnarray}
for any holomorphic section $ S $ of $ \linebundle^m $ around $x_m $ and $y_m$, respectively. Then we have
\begin{eqnarray}
& & \left\Vert \nabla_x^{k_1} \nabla_y^{*k_2} B_{\omega ,m} \left( x_m ,y_m \right) \left( v_{ 1,m} ,\cdots ,v_{k_1 +k_2,m}  \right) \right\Vert_{h^m ,\omega } \nonumber \\
& = & \left\Vert \sum_{j} \nabla^{k_1} S_j \big|_{x_m} \left( v_{ 1,m} ,\cdots ,v_{k_1 ,m}  \right) \otimes \nabla^{*k_2} S^*_j \big|_{y_m} \left( v_{ k_1 +1 ,m} ,\cdots ,v_{k_1 +k_2 ,m}  \right) \right\Vert_{h^m ,\omega } \\
& = & \left\Vert \left( \sum_j T_{1,m} \left( S_j \right) \overline{T_{2,m} \left( S_j \right) } \right) e_{x_m} \otimes e_{y_m}^* \right\Vert_{h^m } = \left| T_{1,m} \left( S_{T_{1,m}} \right) \overline{T_{2,m} \left( S_{T_{1,m}} \right) } \right|  , \nonumber
\end{eqnarray}
where $\left\Vert \cdot \right\Vert_{h^m ,\omega }$ means the norms corresponding to the Hermitian metric $ \pi_1^* h^m \otimes \pi_2^* h^{-m} $ and K\"ahler metric $ \pi_1^* \omega + \pi_2^* \omega $ on $M\times M $, the family $\left\lbrace S_j \right\rbrace_{j\in J} $ is an $L^2$-orthonormal basis of $ \hl \left( M,\linebundle^m \right) $, and $S_{T_{1,m}} \in \hl \left( M,\linebundle^m \right) $ is the peak section of the functional $T_{1,m} $ on $M$. Since $u=o(1)$ as $h_D \to 0^+ $, we can find a constant $\epsilon' >0$ depending only on $(M,\omega ,\linebundle ,h ) $ such that 
\begin{eqnarray}
& & \min \left\lbrace \dist_{\omega } \left( \pi_V \left(V_{\gamma_m } \right) ,M\sq \pi_V \left( V_{\sqrt{\gamma_m \sigma_m }} \right) \right) , \right.  \\
& & \;\;\;\;\;\;\;\;\;\;\;\;\;\;\;\;\;\;\; \left. \dist_{\omega } \left( M\sq \pi_V \left(V_{\sigma_m } \right) ,  \pi_V \left( V_{\sqrt{\gamma_m \sigma_m }} \right) \right) \right\rbrace \geq \epsilon' \varepsilon_m , \nonumber
\end{eqnarray}
and Lemma \ref{lmminjectivityradiushyperbpliccusp} now shows that there exists a constant $\delta \in \left( 0,\epsilon' \right)$ depending only on $(M,\omega) $, such that $ \inj \left( U\sq \pi_V \left(V_{\gamma_m } \right) \right) \geq \delta \varepsilon_m  $. By Corollary \ref{corocoarseestimatecpxhpblccusp}, we see that there exists a constant $C_0 >0$ depending only on $(M,\omega ,\linebundle ,h ) ,k $ and $\beta $, such that
\begin{eqnarray}
& & \left| \log h_D \left( x_m \right) \right|^{2\beta} \left\Vert T_{1,m} \big|_{\hl \left( M,\linebundle^m \right) } \right\Vert  \\
& & \;\;\;\;\;\;\;\;\;\;\;\;\;\;\;\;\;\;\; + \left| \log h_D \left( y_m \right) \right|^{2\beta} \left\Vert T_{2,m} \big|_{\hl \left( B_{\delta \varepsilon_m} \left( y_m \right) ,\linebundle^m \right) } \right\Vert \leq C_0 m^{C_0}  .\nonumber
\end{eqnarray}
By using Lemma \ref{peaksectionrestriction} and Proposition \ref{propcoraseestimate} to $V_{\gamma_m }$ and $V_{\sqrt{\gamma_m \sigma_m } } $, we can conclude that for each integer $q\in\mathbb{N}$, there exists a constant $C_1 >0$ depending only on $(M,\omega ,\linebundle ,h ) ,k$ and $q $, such that 
\begin{equation}
\int_{M\sq \pi_V \left( V_{\sqrt{\gamma_m \sigma_m } } \right) } \left\Vert S_{T_{1,m}} \right\Vert_{h^m}^2 d\V_{\omega} \leq C_1 m^{-q} .
\end{equation}
Choosing $q= 2l+4+4\lfloor C_0 \rfloor $, where $\lfloor \cdot \rfloor$ is the greatest integer function. It follows that
\begin{eqnarray}
\left| T_{1,m} \left( S_{T_{1,m}} \right) \overline{T_{2,m} \left( S_{T_{1,m}} \right) } \right| & \leq & \left\Vert T_{1,m} \right\Vert \cdot \left\Vert T_{2,m} \big|_{\hl \left( B_{\delta \varepsilon_m} \left( y_m \right),\linebundle^m \right) } \right\Vert \cdot \left\Vert S_{T_{1,m}} \right\Vert_{L^2 ,B_{\delta \varepsilon_m} \left( y_m \right) ;h^m  } \nonumber \\
& \leq & C_0^2 C_1 m^{2C_0 } m^{-l-2-2\lfloor C_0 \rfloor} \left| \log h_D \left( x_m \right) \right|^{-\beta} \left| \log h_D \left( y_m \right) \right|^{-\beta} \\
& \leq & C_0^2 C_1 m^{-l } \left| \log h_D \left( x_m \right) \right|^{-\beta} \left| \log h_D \left( y_m \right) \right|^{-\beta} . \nonumber
\end{eqnarray}
Hence we can conclude that 
\begin{equation}
\sup_{ \pi_V \left( V_{\gamma_m } \right) \times \left(U\sq \pi_V \left( V_{\sigma_m } \right) \right) }   \left| \log h_D (x) \right|^\beta \left| \log h_D (y) \right|^\beta  \left\Vert B_{\omega ,m} (x,y) \right\Vert_{C^k ;h^m ,\omega } \leq C_2 m^{-l} .
\end{equation}
where a constant $C_2 >0$ depending only on $(M,\omega ,\linebundle ,h ) ,k,l $ and $\beta $. By exchanging $x_m $ and $y_m $, one can see that
\begin{equation}
\sup_{ \left(U\sq \pi_V \left( V_{\sigma_m } \right) \right) \times \pi_V \left( V_{\gamma_m } \right) }   \left| \log h_D (x) \right|^\beta \left| \log h_D (y) \right|^\beta  \left\Vert B_{\omega ,m} (x,y) \right\Vert_{C^k ;h^m ,\omega } \leq C_2 m^{-l} .
\end{equation} 

\smallskip

\par {\em Part 2.} Our task now is to prove (\ref{equation2thmcusplocalizationprinciple}).

Set $\left( x_m ,y_m \right) \in \pi_V \left( V_{\gamma_m } \right) \times \pi_V \left( V_{\gamma_m } \right) $. Let $e_{x_m} \in \linebundle^m \big|_{x_m} $ and $e_{y_m} \in \linebundle^m \big|_{y_m} $ such that $\left\Vert e_{x_m} \right\Vert_{h^m} = \left\Vert e_{y_m} \right\Vert_{h^m} = 1 $. Choosing $k_1 ,k_2 \in\mathbb{Z}_{\geq 0} $ such that $k_1 +k_2 \leq k $, unit tangent vectors $v_{1,m} ,\cdots ,v_{k_1 ,m} \in T_{x_m} M \oplus 0 \subset T_{\left( x_m ,y_m \right)} ( M\times M) $, and unit tangent vectors $v_{k_1 +1,m} ,\cdots ,v_{k_1 +k_2 ,m} \in 0 \oplus T_{y_m} M \subset T_{\left( x_m ,y_m \right)} (M\times M) $. Now we can define a linear functional on $\hl \left( \pi_V \left( V_{\gamma_m } \right) , \linebundle^m \right)$ as following:
\begin{eqnarray}
\nabla^{k_1} S \left( v_{1,m} ,\cdots ,v_{k_1 ,m}  \right) & = & T_{1,m} (S) e_{x_m} ,\\
\nabla^{* k_2} S^* \left( v_{k_1 +1,m} ,\cdots ,v_{k_1 +k_2,,m}  \right) & = & \overline{ T_{2,m} (S) } e^*_{y_m} , 
\end{eqnarray}
for any $ S \in\hl \left( \pi_V \left( V_{\gamma_m } \right) , \linebundle^m \right) $. Let $S_{T_{i,m ,\sigma_m}} \in \hl \left( M,\linebundle^m \right) $ and $S_{T_{i,m}} \in \hl \left( M,\linebundle^m \right) $ denote the peak sections of the functional $T_{i,m} $ on $\pi_V \left( V_{\sigma_m } \right) $ and $M$, respectively. As in the argument in Part 1, a straightforward computation shows that
\begin{eqnarray}
& & \left\Vert \nabla_x^{k_1} \nabla_y^{*k_2} \left( B_{\omega ,m} \left( x_m ,y_m \right) - B_{\omega ,m,\sigma_m } \right) \left( x_m ,y_m \right) \left( v_{ 1,m} ,\cdots ,v_{k_1 +k_2,m}  \right) \right\Vert_{h^m ,\omega}  \\
& = & \left\Vert \left( T_{1,m} \left( S_{T_{1,m}} \right) \overline{T_{2,m} \left( S_{T_{1,m}} \right) } -T_{1,m} \left( S_{T_{1,m,\sigma_m }} \right) \overline{T_{2,m} \left( S_{T_{1,m,\sigma_m }} \right) } \right) e_{x_m} \otimes e_{y_m}^* \right\Vert_{h^m} \nonumber \\
& \leq & \left| T_{1,m} \left( S_{T_{1,m}} - S_{T_{1,m,\sigma_m }} \right) \right| \cdot \left| T_{2,m} \left( S_{T_{1,m,\sigma_m }} \right)  \right| + \left| T_{1,m} \left( S_{T_{1,m}} \right) \right| \cdot \left| T_{2,m} \left( S_{T_{1,m }} - S_{T_{1,m,\sigma_m }} \right)  \right| . \nonumber
\end{eqnarray}
By Corollary \ref{corocoarseestimatecpxhpblccusp}, we see that there exists a constant $C_3 >0$ depending only on $(M,\omega ,\linebundle ,h ) $ and $k $, such that 
\begin{eqnarray}
 & & \left| \log h_D \left( x_m \right) \right|^{2\beta} \left\Vert T_{1,m} \big|_{\hl \left( \pi_V \left( V_{\sigma_m } \right) ,\linebundle^m \right) } \right\Vert \\
 & & \quad\quad\quad\quad\quad\;\;\;\;\; + \left| \log h_D \left( y_m \right) \right|^{2\beta} \left\Vert T_{2,m} \big|_{\hl \left( \pi_V \left( V_{\sigma_m } \right) ,\linebundle^m \right) } \right\Vert \leq C_3 m^{C_3} . \nonumber
\end{eqnarray}
Combining Lemma \ref{peaksectionrestriction} and Proposition \ref{propcoraseestimate}, we see that for each given integer $q\in\mathbb{N}$, there exists a constant $C_4 >0$ depending only on $(M,\omega ,\linebundle ,h ) ,k$ and $q $, such that
\begin{eqnarray}
\int_{ \pi_V \left( V_{\sigma_m } \right) } \left\Vert S_{T_{1,m}} - S_{T_{1,m,\sigma_m }} \right\Vert^2 d\V_{\omega} \leq C_3 m^{-q} .
\end{eqnarray} 
Choosing $q= 2l+4+4\lfloor C_2 \rfloor $, where $\lfloor \cdot \rfloor$ is the greatest integer function. Then we have
\begin{eqnarray}
& & \left| T_{1,m} \left( S_{T_{1,m}} - S_{T_{1,m,\sigma_m }} \right) \right| \cdot \left| T_{2,m} \left( S_{T_{1,m,\sigma_m }} \right)  \right| + \left| T_{1,m} \left( S_{T_{1,m}} \right) \right| \cdot \left| T_{2,m} \left( S_{T_{1,m }} - S_{T_{1,m,\sigma_m }} \right)  \right| \\
& \leq & 2 \left\Vert T_{1,m} \big|_{\hl \left( \pi_V \left( V_{\sigma_m } \right) ,\linebundle^m \right) } \right\Vert \cdot \left\Vert T_{2,m} \big|_{\hl \left( \pi_V \left( V_{\sigma_m } \right) ,\linebundle^m \right) } \right\Vert \cdot \left\Vert S_{T_{1,m}} - S_{T_{1,m,\sigma_m }} \right\Vert_{L^2 ; V_{ \sigma_m } } \nonumber \\
& \leq & 2C_2^2 C_3 m^{2C_0} m^{-l-2-2\lfloor C_2 \rfloor } \left| \log h_D \left( x_m \right) \right|^{-\beta} \left| \log h_D \left( y_m \right) \right|^{-\beta} \nonumber \\
& \leq & 2C_2^2 C_3 m^{-l} \left| \log h_D \left( x_m \right) \right|^{-\beta} \left| \log h_D \left( y_m \right) \right|^{-\beta}. \nonumber
\end{eqnarray}
Then we can obtain the estimate (\ref{equation2thmcusplocalizationprinciple}), and the theorem follows.

\smallskip

\par {\em Part 3.} It remains to prove (\ref{equation3thmcusplocalizationprinciple}).

Let $ x_m \in \pi_V \left( V_{\gamma_m } \right) $, and $e_{x_m} \in \linebundle^m \big|_{x_m} $ such that $\left\Vert e_{x_m} \right\Vert_{h^m} = 1 $. Then we can define a linear functional $T_{0,m} : \hl \left( \pi_V \left( V_{\gamma_m } \right) ,\linebundle^m \right) \to \mathbb{C} $ by $ S\left( x_m \right) = T_{0,m} \left(S\right) e_{x_m } $, $ \forall S\in \hl \left( \pi_V \left( V_{\gamma_m } \right) ,\linebundle^m \right) $. Then we have
\begin{eqnarray}
\frac{\rho_{M ,\omega ,m} (x)}{\rho_{V_{\sigma_m } /\Gamma_V , \omega ,m} \left( x \right) } = \frac{\left\Vert T_{0,m} \big|_{\hl \left( M,\omega ,\linebundle^m ,h^m \right) } \right\Vert }{\left\Vert T_{0,m} \big|_{\hl \left( V_{\sigma_m } /\Gamma_V ,\omega ,\linebundle^m ,h^m \right) } \right\Vert} ,
\end{eqnarray}
and (\ref{equation3thmcusplocalizationprinciple}) follows from Proposition \ref{propcoraseestimate}.
\qed

\subsection{Peak sections on Poincar\'e type cusps}
\hfill

Now we consider the peak sections on Poincar\'e type cusps. This subsection is an analogy of the previous subsection on Poincar\'e type cusp. Let $\bar{M} $ be a compact K\"ahler manifold, $D$ be a simple normal crossing divisor, and $(M,\omega )= \left( \bar{M} \sq D ,\omega \right) $ be a complete K\"ahler manifold with Poincar\'e type cusp along $D$. Fix a Riemannian metric $\bar{g}$ on $\bar{M}$. Write $\bar{d}(x) = \dist_{\bar{g}} \left(  x,D \right) $, $\forall x\in M $. Assume that $(M,\omega )$ is a complete K\"ahler manifold with $\Ric (\omega ) \geq -\Lambda \omega $ and $( \linebundle ,h )$ is a Hermitian line bundle with $\Ric (h) \geq \epsilon \omega $, where $\epsilon ,\Lambda >0$ are constants. 

The proof of the following estimate of the injectivity radius on $M$ is similar to that of Lemma \ref{lmminjectivityradiushyperbpliccusp}, so we omit it.

\begin{lmm}
\label{lmminjectivityradiuspoincarecusp}
Given an integer $k\in \mathbb{N} $. Then there exists a constant $C=C(k,M,\omega )>0 $ such that the curvature $ \sum_{j=0}^{5k+10n} \left\Vert \nabla^j \Ric (\omega ) \right\Vert_{\omega } \leq C $ on $M$, and for any $x\in M$, the injectivity radius $ \inj_M \left( x \right) \geq C^{-1} \left( \left| \log \bar{d} (x) \right| +1 \right)^{-1} $.
\end{lmm}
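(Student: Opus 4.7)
The proof follows the template of Lemma \ref{lmminjectivityradiusoncusp}. My plan is to first establish the curvature bound, then produce a volume lower bound for balls of the right radius, and finally invoke Cheeger's lemma.

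For the curvature bound, away from $D$ the set $\{\bar{d}\geq \delta\}$ is a compact subset of $M$ on which $\omega$ is smooth, so $\sum_{j=0}^{5k+10n}\|\nabla^j \Ric(\omega)\|_\omega$ is automatically bounded there. Near a point of $D$, I use the local chart $\varphi_x:U_x\setminus D\to (\disc_{r_{x,i}}^*)^{k_x}\times B_{r_{x,k,n}}^{2n-2k_x}(0)$ to write $\omega=\omega_{\model,k,n}+\sqrt{-1}\partial\partialbar u_x$. The model metric is a product of Poincaré metrics on the $\disc^*$ factors (each of constant curvature) with a smooth K\"ahler metric on the Euclidean factor, and hence has uniformly bounded covariant derivatives of $\Ric$ with respect to itself; since $u_x\in\bigcap_{i}O(|\log|z_i||^{-\alpha})$ to all orders with respect to the pulled-back metric, it contributes only a uniformly bounded correction. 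Combining compactness away from $D$ with these local estimates yields the global curvature bound.

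For the injectivity radius near $D$, the main task is to produce a lower bound on $\Vol_\omega(B_{r_x}(x))$ with $r_x=c(|\log\bar d(x)|+1)^{-1}$. On each Poincar\'e factor, the substitution $z=\exp(-s+i\theta)$ transforms the metric into (a constant multiple of) $(ds^2+d\theta^2)/s^2$ on the half-strip $\{s>0,\,\theta\in\mathbb{R}/2\pi\mathbb{Z}\}$; in these coordinates $s=|\log|z_i||$, and a straightforward computation in the universal cover $\mathbb{H}$ shows that for $\rho\leq c/s$, the geodesic ball of radius $\rho$ at $(s,\theta)$ has area at least $c'\rho^2$. Since the local universal cover of $\pi_1^{-1}(V_{r_m}^*)^{k_x}\times B$ splits as a product of such half-strips with the transverse ball, and the transverse metric is uniformly equivalent to Euclidean at scales $\leq 1$, the product structure gives $\Vol_{\omega_{\model,k,n}}(B_{r_x}(x))\geq c'' r_x^{2n}$. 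The perturbation $\sqrt{-1}\partial\partialbar u_x$ is small in the model $C^2$-norm, so the same lower bound holds for $\omega$ up to a universal constant. Cheeger's lemma, with the curvature bound just established, then yields $\inj_M(x)\geq C^{-1} r_x$. Far from $D$ the estimate is trivial since $\inj_M$ is then bounded below by a positive constant while $(|\log\bar d(x)|+1)^{-1}$ is also bounded.

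The only delicate point, as in Lemma \ref{lmminjectivityradiusoncusp}, is certifying that on the local universal cover the relevant geodesic loop really is the shortest nontrivial one; the monotonicity argument from that earlier lemma (contracting curves in the fiber direction strictly decreases length unless the curve is already fiberwise constant) transfers factor-by-factor to the product model, after which the small perturbation by $\sqrt{-1}\partial\partialbar u_x$ cannot destroy the conclusion because its size is $o(1)$ as $\bar d\to 0$. This is the one step that requires care, but the rest reduces to explicit computation on $\mathbb{H}\times B$.
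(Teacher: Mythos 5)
Your proposal is correct and follows essentially the same route the paper intends: the paper omits this proof precisely because it is the same template as Lemma \ref{lmminjectivityradiushyperbpliccusp} and Lemma \ref{lmminjectivityradiusoncusp}, namely a curvature bound from the explicit model metric plus the decaying perturbation $u_x$, a volume lower bound for balls of radius comparable to $(|\log \bar{d}(x)|+1)^{-1}$ obtained from the product of half-strip factors, and Cheeger's lemma. Your factor-by-factor treatment of the normal crossing model and the identification of the shortest loop with the factor of smallest $|z_i|$ are exactly the needed adaptations.
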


Choosing $x\in D$. By definition, we can find a neighborhooda $U_x $ of $x$ and a biholomorphic map 
$$ \varphi_x : U_x \sq D \to \prod_{i=1}^{k_x} \mathbb{D}_{r_{x,i}}^* \times B^{2n-2k_x }_{r_{x,k_x ,n}} (0) \subset \left( \mathbb{D}_{1}^* \right)^{k_x } \times \mathbb{C}^{ n- k_x }  ,$$
such that $ \left( \varphi^{-1}_x \right)^{*} \omega = -\sqrt{-1} \sum_{i=1}^{k_x} \partial\partialbar \log \left( \log \left| z_i \right|^2 \right) + \pi_{ k_x ,n }^* \omega_{x, k_x,n } + \sqrt{-1} \partial\partialbar u_x $, where $r_{x,j} $, $ r_{x,k_x ,n} $ are positive constants, $\omega_{x, k_x,n } $ is a K\"ahler metric on $B^{2n-2k_x }_{r_{x,k_x ,n}} (0)$, $u_x $ is a smooth function on $ U_x \sq D $, and there exists a constant $\alpha >0$ such that $u_x \circ \varphi_x^{-1} \in \cap_{i=1 }^{k_x} O\left( \left| \log \left| z_i \right| \right|^{-\alpha } \right) $ as $\inf_{1\leq i\leq k_x} \left|z_{j,i} \right| \to 0^+ $ to all orders with respect to $\omega_{\model}$ on $\prod_{i=1}^{k_x} \mathbb{D}_{r_{x,i}}^* \times B^{2n-2k_x }_{r_{x,k_x ,n}} (0) $. 

Let us denote by $U_{x,t,s} $ the domain $\left( \mathbb{D}^*_{t} \right)^{k_x } \times B^{2n-2k_x }_{s} (0) $ for any constants $t,s\in (0,1) $. It will cause no confusion if we use the same letter to designate a subset of $U_x \sq D$ and its image under $\varphi_x $. Now we can construct quasi-plurisubharmonic functions on $U_x \sq D$. This is an analogy of Lemma \ref{lmmquasipshhyperboliccuspver2} on Poincar\`e type cusp.

\begin{lmm}
\label{lmmquasipshpoincarecusp}
Let $\beta ,\delta ,\epsilon >0 $ be constants.  Assume that $ \delta < e^{-2}n^{-2} r_{x,j} $, $j=1,\cdots ,k_x $, and $\epsilon \leq e^{-2}n^{-2} r_{x,n} $. Then for any $p\in U_{x,\delta^{1+2\beta } ,\epsilon } $, there exists a quasi-plurisubharmonic function $\psi \in L^1 (M,\omega ) \cap C^\infty \left( M\sq \{ p \} \right) $ such that $\psi \leq 0 $, $\mathrm{supp} \psi \subset U_{x,\delta ,4\epsilon } $, $\sqrt{-1} \partial\partialbar \psi \geq -C \left( \left| \log \delta \right| + \epsilon^{-2} \right) \omega $ and $ \lim_{y\to p} \left( \psi (y) - \log \left( \dist_{\omega } (p,y) \right) \right) = -\infty $, where $C>0$ is a constant dependent only on $(M,\omega ) $, and $\beta $.
\end{lmm}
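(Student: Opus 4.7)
The plan is to mirror the construction in Lemma \ref{lmmquasipshhyperboliccuspver2}, but adapt both the plurisubharmonic function and the cutoff to the mixed geometry of the Poincaré cusp, which has $k_x$ logarithmic directions together with $n-k_x$ Euclidean directions.

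First, I work in the coordinates supplied by $\varphi_x$, write $p = (z^{(0)}, w^{(0)})$ with $|z_i^{(0)}| \leq \delta^{1+2\beta}$ and $\|w^{(0)}\| \leq \epsilon$, and take the plurisubharmonic building block
\[
f(z,w) \;=\; \sum_{i=1}^{k_x} |z_i - z_i^{(0)}|^{2} \;+\; \|w - w^{(0)}\|^{2},
\]
so that $\log f$ is psh on $U_x\setminus D$, equals $-\infty$ only at $p$, and satisfies $\log f \sim 2\log \dist_\omega(\cdot , p)$ near $p$. Then I multiply by a product cutoff $\eta = \eta_1(z)\eta_2(w)$, where $\eta_1(z) = \prod_i \tilde\chi(|\log|z_i||/|\log\delta|)$ is a Poincaré-adapted cutoff transitioning over the scale $|z_i|\in[\delta^{1+\beta},\delta]$, and $\eta_2(w)$ is a standard Euclidean cutoff at the scale $\|w-w^{(0)}\|\in[2\epsilon,3\epsilon]$. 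Because the transition region of $\eta_1$ has bounded Poincaré width $\sim \log(1+\beta)$, one gets $|\partial\eta_1|_\omega + |\sqrt{-1}\partial\bar\partial\eta_1|_\omega \leq C(\beta)$; meanwhile $|\partial\eta_2|_\omega \leq C/\epsilon$ and $|\sqrt{-1}\partial\bar\partial\eta_2|_\omega \leq C/\epsilon^{2}$. Setting $\psi = \eta(\log f - \log C_f)$ with $C_f = \sup_{\mathrm{supp}\,\eta} f$ gives $\psi\leq 0$, support in $U_{x,\delta,4\epsilon}$, smoothness on $M\setminus\{p\}$, membership in $L^1$, and the required log blow-up with $\lim_{y\to p}(\psi - \log\dist_\omega(p,y)) = -\infty$.

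Next I expand
\[
\sqrt{-1}\partial\bar\partial\psi \;=\; \eta\sqrt{-1}\partial\bar\partial\log f \;+\; \sqrt{-1}(\partial\eta\wedge\bar\partial\log f + \partial\log f\wedge\bar\partial\eta) \;+\; (\log f - \log C_f)\sqrt{-1}\partial\bar\partial\eta,
\]
use $\sqrt{-1}\partial\bar\partial\log f\geq 0$, and estimate the other two pieces on each transition. On the transition of $\eta_1$ (some $i_0$ with $|z_{i_0}|\in[\delta^{1+\beta},\delta]$, and $\eta_2 = 1$), the hypothesis $|z_{i_0}^{(0)}|\leq \delta^{1+2\beta}$ forces $|z_{i_0}-z_{i_0}^{(0)}|\sim |z_{i_0}|$, hence $f\gtrsim |z_{i_0}|^{2}$, and a case-by-case analysis (splitting $|z_i|\lessgtr\delta^{1+\beta}$ for $i\neq i_0$, and using the elementary bound $t^{2}(\log t)^{2}\leq C$ on $(0,1]$) yields $|\partial\log f|_\omega \leq C(\beta)|\log\delta|$; combined with $|\log f-\log C_f|\leq C(\beta)|\log\delta|$ and the bounded $\omega$-derivatives of $\eta_1$, this contributes $\geq -C(\beta)|\log\delta|\omega$. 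On the transition of $\eta_2$ one has $f\geq \|w-w^{(0)}\|^{2}\geq 4\epsilon^{2}$, so $|\partial\log f|_\omega \leq C/\epsilon$ and $|\log f - \log C_f|\leq C$ (since $f/C_f \sim 1$ there), producing $\geq -C\epsilon^{-2}\omega$ from both the mixed and Hessian terms. Combining, $\sqrt{-1}\partial\bar\partial\psi \geq -C(|\log\delta|+\epsilon^{-2})\omega$.

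The main obstacle is the control of the cross terms in the mixed derivative piece where $\partial_{z_i}\eta_1$ and $\bar\partial_{\bar w_j}\log f$ (or their conjugates) couple: on the $\eta_1$-transition one has $|w-w^{(0)}|\leq 2\epsilon$ but only $f\gtrsim \delta^{2(1+\beta)}$, so the naive bound on $|\partial_w\log f|_\omega$ is $\epsilon\,\delta^{-2(1+\beta)}$, which is not controlled by $|\log\delta|+\epsilon^{-2}$. I plan to handle this by either (i) rescaling the $w$-part of $f$, replacing $\|w-w^{(0)}\|^{2}$ by $\delta^{2(1+\beta)}\|w-w^{(0)}\|^{2}/\epsilon^{2}$, so that the $w$-gradient is uniformly of size $O(\epsilon^{-1})$ while $\log f$ retains its log pole at $p$; or (ii) applying AM-GM of the form $\sqrt{-1}(\partial\eta\wedge\bar\partial\log f + c.c.)\geq -\lambda\sqrt{-1}\partial\eta\wedge\bar\partial\eta - \lambda^{-1}\sqrt{-1}\partial\log f\wedge\bar\partial\log f$ with $\lambda$ chosen to separately balance the $z$- and $w$-contributions. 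Either way, the essential observation is that $|\partial\eta_1|_\omega$ vanishes on the interior of $\mathrm{supp}\,\eta_1$ (so the bad cross term is confined to the thin transition shell in $z$), and there the $w$-weight in the rescaled $f$ tames the $\partial_w\log f$ factor.
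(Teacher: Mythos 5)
Your construction follows the same architecture as the paper's: a quadratic function $f$ centred at $p$, a product cut-off $\eta_1(z)\eta_2(w)$ that is Poincar\'e-adapted in the disc directions and Euclidean at scale $\epsilon$ in the ball directions, and $\psi=\eta_1\eta_2(\log f-\log C_f)$. (The paper takes $f=\delta^{-2}\sum_{i\le k_x}|z_i-z_i^{(0)}|^2+\epsilon^{-2}\|w-w^{(0)}\|^2$ from the outset.) You have also correctly isolated the one genuinely delicate term, namely the coupling of $\partial\eta_1$ with $\partial_w\log f$ on the shell where some $|z_{i_0}|\in[\delta^{1+\beta},\delta]$ while $w$ stays near $w^{(0)}$: there $f$ is bounded below only by a positive power of $\delta$, not by a constant, and the paper's blanket assertion $|\nabla\log f|_\omega\le C(|\log\delta|+\epsilon^{-1})$ on the whole transition region is exactly the estimate that fails at such points (with the paper's weights one only gets $|\partial_w\log f|\lesssim\epsilon^{-1}\delta^{-\beta}$ there). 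So the difficulty you flag is real and is not addressed in the paper either.

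However, neither of your proposed resolutions closes the gap. Fix (ii) cannot work alone: optimising the AM--GM parameter only returns the geometric mean $|\partial\eta_1|_\omega\cdot|\partial_w\log f|$, which is genuinely of size $\epsilon^{-1}\delta^{-\beta}$ (resp.\ $\delta^{-1-\beta}$ for your unweighted $f$) at a point with $|z_{i_0}|=\delta^{1+\beta}$, $z_i=z_i^{(0)}$ for $i\ne i_0$, and $\|w-w^{(0)}\|$ comparable to $\sqrt{f_{\min}/a_w}$. Fix (i) does make $|\partial_w\log f|=O(\epsilon^{-1})$ uniformly, but it relocates the divergence into the zeroth-order term $(\log f-\log C_f)\,\eta_1\sqrt{-1}\partial\partialbar\eta_2$: with $a_w/a_z=\delta^{2(1+\beta)}\epsilon^{-2}$ one only has $f\ge\delta^{2(1+\beta)}$ on the $\eta_2$-transition (take $z=z^{(0)}$, $\|w-w^{(0)}\|=\epsilon$) while $C_f\sim\delta^2$, so $|\log(f/C_f)|\sim 2\beta|\log\delta|$ there and this term has size $|\log\delta|\,\epsilon^{-2}$, which is not $O(|\log\delta|+\epsilon^{-2})$; in the regime where the lemma is applied ($\delta\sim e^{-b\sqrt{m}/\log m}$, $\epsilon\sim\log m/\sqrt{m}$) it is of order $m^{3/2}$ and would destroy the subsequent H\"ormander estimate. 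In fact the cross term forces $a_w\lesssim a_z\delta^{2+2\beta}\epsilon^{-2}$ while keeping $\log(f/C_f)$ bounded on the $\eta_2$-shell forces $a_w\gtrsim a_z\delta^{2}\epsilon^{-2}$, so no quadratic reweighting inside this ansatz can satisfy both. A way out is to drop the cut-off entirely: with weights $a_z=\delta^{-2}$, $a_w=\epsilon^{-2}$ the sublevel set $\{f<1/4\}$ is already contained in $U_{x,\delta,4\epsilon}$, so one may set $\psi=\chi(\log(4f))$ with $\chi$ smooth concave, $\chi(t)=t$ for $t\le-2$ and $\chi=0$ for $t\ge0$; then $\sqrt{-1}\partial\partialbar\psi\ge\chi''\,\sqrt{-1}\partial\log f\wedge\partialbar\log f\ge-C f^{-1}\sqrt{-1}\partial\partialbar f\ge-C((\log\delta)^2+\epsilon^{-2})\omega$, because $\chi''$ is supported where $f\ge e^{-2}/4$. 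This yields $(\log\delta)^2$ rather than $|\log\delta|$, which is forced by comparing the Euclidean and Poincar\'e metrics on $\mathbb{D}^*_\delta$ and is harmless for the application.
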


\begin{proof}
Fix a point $p=\left( p_1 ,\cdots ,p_n \right) \in  \left( \mathbb{D}^*_{\delta^{1+2\beta } } \right)^{k_x} \times B^{2n-2k_x }_{\epsilon} (0) = U_{x,\delta^{1+2\beta } ,\epsilon } $. Let 
$$ f(y) =\sum_{i=1}^{k_x } \delta^{-2} \left| y_i - p_i \right|^2 + \sum_{i= k_x +1}^{n} \epsilon^{-2} \left| y_i - p_i \right|^2 ,\;\; \forall y=\left( y_1,\cdots ,y_n \right) \in U_x .$$ 
It is easy to see that $ f(y) \geq \min \left\lbrace \delta^{2\beta } , e^{-2} \right\rbrace $, $ \forall y\in U_x \sq U_{x,\delta^{1+\beta } , 2\epsilon } $, and $f(y) \leq 16n $, $ \forall y\in U_{x,\delta , 4\epsilon } $. 

Since $u=o(1)$, we can conclude that $ \left| \nabla \log f (x) \right| \leq C_1 \left( \left| \log \delta \right| + \epsilon^{-1} \right) ,$ $\forall x\in U_{x,\delta , 4\epsilon } \sq U_{x,\delta^{1+\beta } , 2\epsilon } $, where $C_1 $ is a constant depending only on $(M,\omega )$ and $\beta $.

Now we consider the model metric $\omega_{\model } = -\sum_{i=1}^{k_x} \partial\partialbar \log \left| \log \left| z_i \right|^2 \right| $ on $\left( \mathbb{D}^*_{\delta } \right)^{k_x} $. By a straightforward calculation, we see that $ \dist_{\omega_{\model}} \left( \left( \mathbb{D}^*_{\delta^{1+\beta } } \right)^{k_x} ,\left( \mathbb{D}^*_{1 } \right)^{k_x} \sq \left( \mathbb{D}^*_{\delta } \right)^{k_x} \right) \geq C_2^{-1} \log \left( 1+ \beta \right) $ for some constant $C_2 $ depending only on $(M,\omega )$. Then we can choose a cut-off function $\eta_1 \in C^\infty \left( \left( \mathbb{D}^*_{1 } \right)^{k_x} \right) $ such that $\eta_1 =1 $ on $\left( \mathbb{D}^*_{\delta^{1+\beta } } \right)^{k_x} $, $\eta_1 =0 $ on $\left( \mathbb{D}^*_{1 } \right)^{k_x} \sq \left( \mathbb{D}^*_{\delta } \right)^{k_x} $, $0\leq \eta_1 \leq 1$, and $ \sum_{i=0}^2 \left\Vert \nabla^i \eta_1 \right\Vert_{\omega_{\model}} \leq C_3 ,$ where $C_3 >0 $ is a constant depending only on $n$ and $\beta $. Choosing another cut-off function $\eta_2 \in C_0^\infty \left( B^{2n-2k_x }_{4\epsilon } (0) \right) $ such that $\eta_2 =1 $ on $B^{2n-2k_x }_{2\epsilon} (0) $, $0\leq \eta_2 \leq 1$, and $ \sum_{i=0}^2 \epsilon^i \left\Vert \nabla^i \eta_2 \right\Vert_{\omega_{Euc}} \leq C_4 ,$ where $C_4 $ is a constant depending only on $n $. By the biholomorphic map $ \varphi_x $, we see that $\eta_1 $ and $\eta_2 $ can be viewed as functions on $U_x $.

Let $M = \sup_{U_{x,\delta ,4\epsilon}} f $, and $\psi (x) = \eta_1 (x) \eta_2 (x) \left( \log f(x) - \log M \right) $. Analysis similar to that in the proof of Lemma \ref{lmmquasipshhyperboliccuspver2} shows that $\varphi_x $ satisfies the properties we need.
\end{proof}

We will denote by $W_{r } $ the domain $\left\lbrace x\in M \big| \bar{d} (x,D) <r  \right\rbrace \subset M =\bar{M} \sq D $. Now we consider the peak sections around Poincar\'e type cusps. To apply Lemma \ref{lmmquasipshpoincarecusp} to points around $D$, we need the following decomposition result for the manifold $(M,\omega) $. For abbreviation, we denote $e^{- \frac{\sqrt{m}}{\log m}}$ and $ \frac{\log m }{ \sqrt{m} } $ briefly by $r_m$ and $ \varepsilon_m $, respectively, for any $m\geq 2$.

\begin{lmm}
There are constants $m_0 ,b_1 ,b_2 ,b_3 ,C_1 ,C_2 >0$, and a decreasing sequence $\left\lbrace \xi_m \right\rbrace_{m=1}^\infty $ that converges to $0$ as $m\to\infty $ satisfying the following property.

Let $m>m_0 $ be a constant. Then there are points $y_{i,m} \in D $, $i=1,\cdots , N_m $, positive constants $C_{1,i,m } \leq C_1 $, $C_{2,i,m } \leq C_2 $, open neighborhoods $U_{i,m} $ of $y_{i,m} $ in $\bar{M} $ and biholomorphic maps 
$$ \varphi_{i,m} : U_{i,m} \sq D \to \big( \mathbb{D}^*_{r_m^{b_1} } \big)^{k_{i,m}} \times B^{2n-2k_x }_{ b_{2} \varepsilon_m } (0) $$ 
such that $U_{i,m} \sq D \subset W_{r_m } $, $ W_{r^{b_3}_m} \subset \cup_{i=1}^{N_m} \varphi^{-1}_{i,m} \left( \big( \mathbb{D}^*_{r_m^{C_{1 ,i,m} b_1} } \big)^{k_{i,m}} \times B^{2n-2k_x }_{ \frac{ b_{2} \varepsilon_m }{C_{2,i,m} } } (0) \right) $, and the K\"ahler metrics $ \left(\varphi_{i,m }^{-1} \right)^* \omega = \omega_{\model , k_{i,m} ,n} +\sqrt{-1} \partial\partialbar u_{i,m} $ on $U_{i,m} \sq D $ for some real valued function $u_{i,m}$ satisfying that $ \left\Vert u_{i,m} \right\Vert_{C^2 ;\omega }  \leq \xi_m $ on $ U_{i,m} \sq D $, $i=1,\cdots ,N_m $.
\end{lmm}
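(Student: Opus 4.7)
The plan is to reduce to a finite collection of fixed-scale Poincar\'e cusp charts obtained by compactness of $D$, and on each such chart perform two zooms: a tangential K\"ahler--normal coordinate change that flattens the non-Euclidean tangential metric at a chosen base point, and a restriction of the normal directions to the deep scale $r_m^{b_1}$. The error $u_{i,m}$ splits as a sum of two contributions, both $o(1)$: the ambient perturbation $u_x$ in the cusp definition, which on $|z_i|\leq r_m^{b_1}$ is $O(|\log r_m^{b_1}|^{-\alpha})=O((\sqrt{m}/\log m)^{-\alpha b_1})$ to all orders, and the Taylor remainder from the tangential normalization on a ball of radius $b_2\varepsilon_m$, which is $O(\varepsilon_m)$.

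\textbf{Steps 1 and 2 (atlas and tangential normalization).} By compactness of $D$, cover it by finitely many fixed-scale Poincar\'e charts $\{(V_j,\psi_j)\}_{j=1}^J$ with $\psi_j:V_j\sq D\to(\mathbb{D}^*_{\rho_0})^{k_j}\times B^{2n-2k_j}_{\sigma_0}(0)$ and $(\psi_j^{-1})^*\omega = \omega_{\model,k_j,n}+\sqrt{-1}\partial\partialbar(u_j+\tau_j)$, where $\tau_j$ is a K\"ahler potential for $\omega_{k_j,n,j}-\omega_{\mathrm{Euc}}$ and $u_j\in\bigcap_i O(|\log|z_i||^{-\alpha})$ to all orders. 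Using the SNC structure, arrange this atlas so that for every stratum $\cap_{i\in I}D_i$ there is at least one chart with $k_j=|I|$ whose normal coordinates label exactly the components in $I$. Next, for each $w_0\in B^{2n-2k_j}_{\sigma_0/2}(0)$, choose a holomorphic tangential change of variables $\Psi_{j,w_0}$ on a fixed neighborhood of $w_0$ that puts the tangential K\"ahler metric into normal form at $w_0$: $\Psi_{j,w_0}^*\omega_{k_j,n,j}=\omega_{\mathrm{Euc}}+\sqrt{-1}\partial\partialbar v_{j,w_0}$, with $v_{j,w_0}$ vanishing to second order at $w_0$. By smoothness on the precompact base, $\|v_{j,w_0}\|_{C^k(B_{b_2\varepsilon_m}(w_0))}=O(\varepsilon_m)$ uniformly in $w_0$ and $j$; crucially $\Psi_{j,w_0}$ acts only in the tangential coordinate, so the product structure with $(\mathbb{D}^*_{r_m^{b_1}})^{k_j}$ is preserved.

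\textbf{Step 3 (assembly).} Fix $b_1,b_2>0$ and $b_3:=2b_1$ (the precise admissible values are pinned down by the inequalities in Step 4). For each $j$ take an $\varepsilon_m/10$-net $\{w_{j,\ell}\}_{\ell=1}^{L_{j,m}}$ in $B^{2n-2k_j}_{\sigma_0/2}(0)$ and relabel the pairs $(j,\ell)$ as $i=1,\dots,N_m$. Set $y_{i,m}:=\psi_j^{-1}(0,w_{j,\ell})\in D$, $k_{i,m}:=k_j$, and
\[
\varphi_{i,m}(p):=\bigl(z(\psi_j(p)),\ \Psi_{j,w_{j,\ell}}(w(\psi_j(p)))\bigr),
\]
defined on $U_{i,m}\sq D:=\psi_j^{-1}\bigl((\mathbb{D}^*_{r_m^{b_1}})^{k_j}\times\Psi_{j,w_{j,\ell}}^{-1}(B_{b_2\varepsilon_m}(0))\bigr)$. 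For $b_1\geq 1$ we have $U_{i,m}\sq D\subset W_{r_m}$. Pulling back gives $(\varphi_{i,m}^{-1})^*\omega=\omega_{\model,k_{i,m},n}+\sqrt{-1}\partial\partialbar u_{i,m}$ with $u_{i,m}=u_j\circ\psi_j^{-1}+v_{j,w_{j,\ell}}$, and the two estimates above combine to give $\|u_{i,m}\|_{C^2;\omega}\leq\xi_m:=C\bigl(|\log r_m^{b_1}|^{-\alpha}+\varepsilon_m\bigr)\to 0$.

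\textbf{Step 4 (covering).} Given $p\in W_{r_m^{b_3}}$, set $I(p):=\{i:\bar d(p,D_i)<c_0 r_m^{b_1}\}$ for a small fixed $c_0>0$. Then $I(p)\neq\emptyset$ since $\bar d(p,D)<r_m^{2b_1}<c_0 r_m^{b_1}$ for large $m$. The SNC hypothesis ensures the components in $I(p)$ share a common intersection within distance $O(r_m^{b_1})$ of $p$, so by Step 1 we can pick $j$ with normal coordinates indexing exactly $I(p)$ and base point within $O(r_m^{b_1})$ of this intersection; choose $\ell$ with $|w(\psi_j(p))-w_{j,\ell}|\leq\varepsilon_m/10$. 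In this chart $|z_i(\psi_j(p))|\leq C_0 c_0 r_m^{b_1}\leq r_m^{C_{1,i,m} b_1}$ with $C_{1,i,m}=1+\log(C_0 c_0)/(b_1\log r_m)$ uniformly bounded, and the tangential displacement lies in $B_{b_2\varepsilon_m/C_{2,i,m}}(0)$ with $C_{2,i,m}=10b_2$. The main obstacle is exactly this stratified matching: for every $p$ near $D$ one must produce an atlas chart whose normal directions are \emph{exactly} the components of $D$ close to $p$, not merely some subset. The SNC structure of $D$ combined with the stratum-by-stratum construction of the atlas in Step 1 and the uniform choice of $c_0$ make this possible for every $p\in W_{r_m^{b_3}}$ and every large $m$.
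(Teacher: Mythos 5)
Your overall skeleton (fixed finite atlas by compactness of $D$, an $m$-dependent refinement, and a splitting of $u_{i,m}$ into the ambient perturbation plus a normalization remainder) matches the paper's, but there is a genuine gap in Steps 1 and 4: your $m$-dependent refinement lives only in the tangential factor $B^{2n-2k_j}_{\sigma_0/2}(0)$ of a \emph{fixed} atlas, and this cannot produce the charts needed for points at \emph{intermediate depth} relative to some components of $D$. Concretely, take $n=2$, $D=D_1\cup D_2$ crossing at $P$, local coordinates $(z_1,z_2)$, and $p_m=(r_m^{b_3},r_m^{1/2})$ with $b_1\geq 1$. Then $p_m\in W_{r_m^{b_3}}$ and $I(p_m)=\{1\}$, so the lemma requires a chart with $k_{i,m}=1$ containing $p_m$, i.e.\ one Poincar\'e direction ($z_1$) and one ``tangential'' direction ($z_2$) carrying an approximately Euclidean metric on a ball of radius $b_2\varepsilon_m$. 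No chart of your atlas does this: the $k=2$ chart at $P$ restricted to $(\mathbb{D}^*_{r_m^{b_1}})^2$ misses $p_m$ because $|z_2(p_m)|=r_m^{1/2}>r_m^{b_1}$, while every fixed $k=1$ chart indexing exactly $D_1$ is centered at a point of $D_1$ at a fixed positive distance from $D_2$ and hence cannot contain $p_m$ for large $m$, since $|z_2(p_m)|\to 0$. Your claim in Step 4 that one can ``pick $j$ with normal coordinates indexing exactly $I(p)$ and base point within $O(r_m^{b_1})$ of this intersection'' is impossible with finitely many $m$-independent base points; no choice of $b_1,b_3,c_0$ avoids this intermediate regime.

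The missing ingredient, which is the actual content of the paper's proof, is twofold. First, the base points must range over the entire open strata $Z_l$ minus an $m$-dependent neighborhood $B^{Euc,n}_{\epsilon_{l,j}r_m}(\cup_{q>l}Z_q)$ of the deeper strata — in particular over points whose non-vanishing normal coordinates have modulus anywhere between $\sim r_m$ and the fixed scale. Second, in each non-deep normal direction one needs a logarithmic change of variable $\phi_z:U_z\to\mathbb{D}_{\delta|\log|z||^{-1}}$ turning the one-dimensional Poincar\'e metric near $z$ into an approximately Euclidean metric on a disc of radius comparable to $\varepsilon_m$; your tangential normalization $\Psi_{j,w_0}$ acts only on the $B^{2n-2k_j}$ factor and cannot absorb the Poincar\'e metric in the $z_2$-direction of the example above. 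Your treatment of the genuinely tangential directions and of the error $u_x=O(|\log r_m^{b_1}|^{-\alpha})$ is fine, but without these two points the covering assertion $W_{r_m^{b_3}}\subset\cup_i\varphi_{i,m}^{-1}(\cdots)$ fails.
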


\begin{proof}
By the compactness of the divisor $D\subset \bar{M} $, we can find points $\widetilde{y}_j \in D $, $j=1,\cdots ,N$, open neighborhoods $\widetilde{U}_j $ of $ \widetilde{y}_j $ in $\bar{M} $, and biholomorphic maps 
$$ \varphi_j = \left( z_{j,1} \cdots ,z_{j,n} \right) : \widetilde{U}_j \to \big( \mathbb{D}_{\epsilon_j } \big)^{k_{j}} \times B_{ \epsilon_j }^{2n-2k_{j}} \left( 0 \right) $$ 
such that $\varphi_j \left( \widetilde{y}_j \right) =0 $, $ \varphi_j \left( \widetilde{U}_j \sq D \right) = \big( \mathbb{D}^*_{\epsilon_j } \big)^{k_{j}} \times B_{ \epsilon_j }^{2n-2k_{j}} \left( 0 \right) $, $ D\subset \cup_{j=1}^N \varphi_j^{-1} \left( \big( \mathbb{D}_{\epsilon_j^2 } \big)^{k_{j}} \times B_{ \frac{\epsilon_j }{2} }^{2n-2k_{j}} \left( 0 \right) \right)  $, and $\left( \varphi_j^{-1} \right)^* \omega = -\sqrt{-1} \sum_{i=1}^{k_j} \partial\partialbar \log \left( \log \left| z_i \right|^2 \right) + \pi_{k_j ,n}^* \omega_{j} +\sqrt{-1} \partial\partialbar u_j $, where $\pi_i $ is the projection of $\mathbb{C}^n $ onto the $i$-th component, $\epsilon_j >0$ are constants, $\omega_{j} $ is a K\"ahler metric on $ B_{ \epsilon_j }^{2n-2k_{j}} \left( 0 \right) $, and $u_j = o (1) $ as $ \inf_{1\leq i\leq k_x} \left|z_{j,i} \right| \to 0^+ $ to all orders with respect to $\omega $. 

Fix $j\in \{ 1,\cdots ,N \}$. It is sufficient to construct the open subsets and biholomorphic maps on $ \widetilde{U}_j $. Without loss of generality, we can assume that $\omega = \varphi_j^* \omega_{\model } $. For each nonnegative integer $ q\leq k_j $, we consider the set
$$ Z_q = \bigcup_{\left\lbrace i_1 ,\cdots , i_q \right\rbrace \subset \left\lbrace 1,\cdots ,k_j \right\rbrace  } \left( \bigcap_{l=1}^{q} \left\lbrace z_{j,i_l} =0 \right\rbrace  \Bigg\sq \bigcup_{l\in \left\lbrace 1,\cdots ,k_j \right\rbrace \sq \left\lbrace i_1 ,\cdots , i_q \right\rbrace } \left\lbrace z_{j,l} =0 \right\rbrace \right) \subset \widetilde{U}_j . $$
Then we have $\cup_{q=0}^{k_j} Z_q = \widetilde{U}_j \cap D $, and $Z_q \cap Z_{q'} =\empty $ for each $q\neq q'$.

By Lemma \ref{lmminjectivityradiuspoincarecusp}, we can find a constant $\delta >0$ such that for any point $z\in\mathbb{D}_1^* $, there exist an open neighborhood $U_{z } $ of $z$ in $\mathbb{D}_1^* $, an increasing function $\varrho (t) $ on $(0,1)$, and a biholomorphic map $\phi_z : U_z \to \mathbb{D}_{\delta \left| \log |z| \right|^{-1} } $ such that $\phi_z (z)=0 $, $\lim_{t\to 0^+ } \varrho (t) =0 $, and $ \omega_{\model ,1,1} - \phi_z^* \omega_{Euc} = \sqrt{-1} \partial\partialbar u_{U_z } $ for some function $ u_{ U_z } $ satisfying that $ \left\Vert u_{U_z } \right\Vert_{C^2 ; \omega_{1,1} } \leq \varrho (|z|) $ on $U_z $, where $\omega_{Euc} $ is the Euclidean K\"ahler form on $\mathbb{C}^n $. 

Choosing $y\in Z_q \cap \varphi_j^{-1} \left( \big( \mathbb{D}_{\epsilon_j^2 } \big)^{k_{j}} \times B_{ \frac{\epsilon_j }{2} }^{2n-2k_{j}} \left( 0 \right) \right) $. Without loss of generality, we can assume that $y\in \cap_{l=1}^q \left\lbrace z_{j,l} =0 \right\rbrace  $. Let $t\leq n^{-2} e^{-2} \epsilon_j^2 $ be a positive constant. When $y\notin B^{Euc ,n}_{t} \left( \cup_{k=p+1}^n Z_k \right) $, then we can find an increasing function $\widetilde{\varrho } (t) $ on $(0,1)$, an open neighborhood $\widetilde{U}_{y,t} $ of $y$ in $\widetilde{U}_j $ and a biholomorphic map 
$$\widetilde{\phi}_{y,t} = \left( z_{y,t,1} , \cdots ,z_{y,t,n} \right) : \widetilde{U}_{y,t} \to \left( \mathbb{D}_{\delta t } \right)^{p} \times B_{\delta \left| \log t \right|^{-1} }^{2n-2p} \left( 0 \right) $$ 
such that $\lim_{t\to 0^+ } \widetilde{\varrho } (t) =0 $, $\widetilde{\phi}_{y,t} (y)=0 $, $ \widetilde{U}_{y,t} \sq D = \cup_{l=1}^{p} \left\lbrace z_{y,t,l} =0 \right\rbrace $, and $ \widetilde{\phi}_{y,t}^* \omega_{\model } - \omega = \sqrt{-1} \partial\partialbar u_{\widetilde{U}_{y,t} } $ for some function $ u_{\widetilde{U}_{y,t} } $ satisfying that $ \big\Vert u_{\widetilde{U}_{y,t} } \big\Vert_{C^2 ; \omega  } \leq \varrho (t) $ on $U_{y,t} $, where $B^{Euc ,n}_t (x) $ is the Euclidean ball in $\mathbb{C}^n $ with radius $t$ and center $x\in\mathbb{C}^n $, and $B^{Euc ,n}_t (A)=\cup_{x\in A} B^{Euc ,n}_t (x) $.

Now we see that for each constant $\mu >0 $, there are constants $C_j >0 $ and $\epsilon_{l,j} >0 $, $l=1,2,\cdots ,k_j $, such that 
\begin{eqnarray}
 \varphi^{-1}_{j } \left( \big( \mathbb{D}_{r_m^{C_j } } \big)^{k_{j}} \times B_{ \frac{ \epsilon_j}{2} }^{2n-2k_{j}} \left( 0 \right) \right) & \subset & \bigcup_{l=1}^{k_j } \left( \bigcup_{y\in Z_l \sq B^{Euc ,n}_{\epsilon_{l,j} r_m} \left( \bigcup_{q>l } Z_q \right) } \widetilde{U}_{y,\epsilon_{l,j} r_m } \right) \label{constructionLemmadecompositionpoincaretype} \\
& \subset & \varphi^{-1}_{j } \left( \big( \mathbb{D}_{\mu r_m } \big)^{k_{j}} \times B_{ \epsilon_j }^{2n-2k_{j}} \left( 0 \right) \right) , \nonumber
\end{eqnarray}
for sufficiently large $m$. 

Clearly, we can find a small constant $\mu >0 $ such that $\varphi^{-1}_{j } \left( \left( \mathbb{D}^*_{\mu r_m } \right)^{k_{j}} \times B_{ \epsilon_j }^{2n-2k_{j}} \left( 0 \right) \right) \subset W_{r_m } $, for sufficiently large $m$. Then the open $ \widetilde{U}_{y,\epsilon_{l,j} r_m } $ subsets in (\ref{constructionLemmadecompositionpoincaretype}) are the open subsets we need.

Then we can prove the lemma by choosing $j=1,\cdots ,N$ in the above argument.
\end{proof}

This is the Poincar\`e type cusp version of Proposition \ref{propcoraseestimate}. The proof of this proposition is similar to the proof of Proposition \ref{propcoraseestimate}, except that Lemma \ref{lmmquasipshhyperboliccuspver2} is replaced by Lemma \ref{lmmquasipshpoincarecusp}.

\begin{prop}
\label{prop2coraseestimate}
Given constants $k,\delta ,\kappa ,l>0 $, there exists a constant $C>0$ depending only on $(M,\omega ,\linebundle ,h ),k,\delta ,\kappa $ and $l$, satisfying the following property.

For any integer $m\in\mathbb{N} $, choose a constant $\gamma_m \geq r^\kappa_m $, a point $p_m \in W_{\gamma_m } $, and a linear functional $ T_m $ defined by finite linear combination of $\delta_{p_m} $ and its derivatives. Assume that the order of derivatives contained in $T_m$ is at most $k$, $\forall m\in\mathbb{N} $. Suppose that there exists a sequence of open subsets $ \left\lbrace U_{m} \right\rbrace_{m=1}^\infty $ of $M$ such that $\mathrm{dist}_\omega \left( W_{\gamma_m} , M\sq U_m \right) \geq \delta \varepsilon_m $, $ \forall m\in\mathbb{N} $. Then we have
\begin{eqnarray}
\label{inequality2propcoraseestimate}
\left\Vert T_m \big|_{\hl \left( M,\linebundle^m \right) } \right\Vert \geq \left( 1-\frac{C}{m^l } \right) \left\Vert T_m \big|_{\hl \left( U_m,\linebundle^m |_{U_m} \right) } \right\Vert .
\end{eqnarray}
\end{prop}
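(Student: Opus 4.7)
The plan is to mirror the argument used for Proposition \ref{propcoraseestimate}, with Lemma \ref{lmmquasipshhyperboliccuspver2} replaced by Lemma \ref{lmmquasipshpoincarecusp} and the geometry at infinity controlled by the decomposition lemma proved just above. As a preliminary reduction we may assume $\delta=\kappa=1$, and we may assume $T_m\big|_{\hl(U_m,\linebundle^m)}\neq 0$, as otherwise \eqref{inequality2propcoraseestimate} is trivial. We split the argument into two cases according to whether $p_m$ lies in the cusp neighborhood $W_{r_m^{b_3}}$ of $D$ (with $b_3$ given by the decomposition lemma) or in the ``bulk'' region $W_{\gamma_m}\sq W_{r_m^{b_3}}$.

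In the bulk case the injectivity radius at $p_m$ is bounded below by $C^{-1}\varepsilon_m$ by Lemma \ref{lmminjectivityradiuspoincarecusp}, and the Cheeger--Gromov theory gives a holomorphic chart on a ball $W_{\lambda ,p_m}$ of $\omega$-radius $\lambda\varepsilon_m$ on which $\omega$ and $h$ take essentially Euclidean form. The argument of Part 1 of the proof of Proposition \ref{propcoraseestimate} then applies verbatim: take a peak section $S_{T_m,W_{\lambda ,p_m}}$, multiply by a radial cut-off $\eta_{\lambda,m}$, and correct the $\partialbar$-error using H\"ormander's $L^2$-estimate with the weight $\psi=4(k+n)\log(4|z|^2/\lambda^2\varepsilon_m^2)\eta_{\lambda,m}$. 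The positivity $\Ric(h)\geq\epsilon\omega$ dominates the negative contribution $\sqrt{-1}\partial\partialbar\psi\geq -C_1\lambda^{-2}\varepsilon_m^{-2}\omega$ once $m$ is large enough, and the standard Agmon-type localization of Corollary \ref{coroagmontypeestimate} yields \eqref{inequality2propcoraseestimate} with loss $O(m^{-l})$.

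In the cusp case we apply the decomposition lemma to locate a chart $\varphi_{i,m}:U_{i,m}\sq D\to(\mathbb{D}_{r_m^{b_1}}^*)^{k_{i,m}}\times B_{b_2\varepsilon_m}^{2n-2k_{i,m}}(0)$ containing $p_m$ with $(\varphi_{i,m}^{-1})^*\omega$ a small $C^2$-perturbation of $\omega_{\model,k_{i,m},n}$. Pulling back by $\varphi_{i,m}$ we are in the setting of Lemma \ref{lmmquasipshpoincarecusp}, so we can choose $\delta_m=r_m^2$ and $\epsilon_m' \sim\varepsilon_m $ and build a quasi-psh function $\psi_m$ on $M$ with a $\log$-pole at $p_m$, supported in $U_{x,\delta_m,4\epsilon_m'}$, and satisfying $\sqrt{-1}\partial\partialbar\psi_m\geq -C(|\log r_m|+\varepsilon_m^{-2})\omega\geq -C'\sqrt m\,\omega$. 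Let $\eta_m$ be a cut-off equal to $1$ on a smaller such box and vanishing outside $U_{x,\delta_m,4\epsilon_m'}$. Applying H\"ormander's $L^2$-estimate with the weight $5(n+k)\psi_m$ to $\partialbar(\eta_m S_{T_m,U_m})$, the defect between $\epsilon m$ and $C'\sqrt m$ gives a gain of order $m^{-1/2}$ for the $L^2$-norm of the correction $u$; the log-pole of $\psi_m$ forces $T_m(u)=0$. Iterating this construction with a nested family of cut-offs, as in Part 2 of the proof of Proposition \ref{propcoraseestimate}, turns the $m^{-1/2}$ gain into $m^{-l}$ for arbitrary $l$.

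The main obstacle I expect is controlling the error on the overlaps of the charts $\{U_{i,m}\}$ produced by the decomposition lemma: the function $\psi_m$ constructed in Lemma \ref{lmmquasipshpoincarecusp} is intrinsically local to one chart, whereas the peak section on $U_m$ may concentrate along several strata $Z_q$ meeting at the point $p_m$. The remedy is to use the fact that the model metric $\omega_{\model,k_{i,m},n}$ is flat in the transverse directions, so the perturbation $u_{i,m}$ is $o(1)$ in $C^2$ uniformly in $m$, and hence the estimate $\sqrt{-1}\partial\partialbar\psi_m\geq -C\sqrt m\,\omega$ survives the change between $\omega_{\model}$ and $\omega$ via Lemma \ref{lmmdistancedifferencepeaksection}; once this is in hand the localization follows from a single application of H\"ormander, just as in the complex hyperbolic cusp case.
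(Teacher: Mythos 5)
Your plan matches the paper's proof, which is precisely the two-case split (bulk versus a small neighborhood of $D$) with Part 1 of the proof of Proposition \ref{propcoraseestimate} handling the bulk via the injectivity radius bound of Lemma \ref{lmminjectivityradiuspoincarecusp}, and Part 2 handling the cusp case after substituting Lemma \ref{lmmquasipshpoincarecusp} for Lemma \ref{lmmquasipshhyperboliccuspver2}. The only cosmetic difference is in how you describe the amplification: in Part 2 of that proof the passage from the initial crude gain (which is $O(m^{-1})$, not $O(m^{-1/2})$) to $O(m^{-l})$ is achieved by one further H\"ormander application once the Agmon-type decay of the peak section on an intermediate annulus is established, rather than by an unbounded iteration of nested cut-offs.
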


\begin{rmk}
Note that for any point $p$ on the manifold $M$, we can choose a trivialization of $\linebundle $ around $p$, so $\delta_p $ can still represent a linear functional on $\hl \left( M,\linebundle \right) $.
\end{rmk}

\begin{proof}
Without loss of generality, we can assume that $\kappa =\delta =1 $. When $T_m \big|_{\hl \left( U_m,\linebundle^m \right) } = 0  ,$ this proposition is obvious. So we can assume that $ T_m \big|_{\hl \left( U_m,\linebundle^m \right) } \neq 0 $ from now.

Let $\beta >0 $ be a large constant. When $p_m \notin W_{\gamma^{\beta }_m } $, analysis similar to that in the first part of the proof of Proposition \ref{propcoraseestimate} shows that (\ref{inequality2propcoraseestimate}) holds for some constant $C$ depending only on $(M,\omega ,\linebundle ,h ),k,\delta ,\beta ,\kappa $ and $l$. 

Now we assume that $p_m \in W_{\gamma^{\beta }_m } $. In this case, by replacing Lemma \ref{lmmquasipshhyperboliccuspver2} with Lemma \ref{lmmquasipshpoincarecusp}, we can apply the argument in the second part in the proof of Proposition \ref{propcoraseestimate} to give a proof of this proposition, and the proof is complete.
\end{proof}

The next result is an analogy of Corollary \ref{corocoarseestimatecpxhpblccusp} on K\"ahler manifolds with Poincar\`e type cusps, and it is also a corollary of Lemma \ref{lmmderiholosections}.

\begin{coro}
\label{corocoarseestimatepoincarecusp}
Given $\epsilon ,\kappa ,\beta ,k >0$, there exists a constant $C>0$ depending only on $(M,\omega)$, $\epsilon$, $\kappa $, $\beta $ and $k$ satisfying the following property.

Let $U_m $ be an open subset of $M$ containing $W_{r_m^{\kappa }} $ for some integer $m\geq C$, and $ \widetilde{U}_m $ be an open subset of $U_m$ such that $ \dist_{\omega } \left( \widetilde{U}_m ,M\sq U_m \right) \geq \epsilon \varepsilon_m $. Then for any $S\in \hl \left( U_m ,\linebundle^m \right) $, we have $ \left\Vert \nabla^k S (x) \right\Vert^2 + \left\Vert \nabla^{*k} S^* (x) \right\Vert^2 \leq C m^{C } \left| \log \bar{d} (x) \right|^{-\beta } \left\Vert S \right\Vert^2_{L^2 ;U_m} $, $\forall x\in \widetilde{U}_m $, where $S^* $ is the smooth section of $\linebundle^{-m} $ given by $S^* (e) = h^m \left( e,S \right) $, and $\nabla^* $ is the Chern conncection of the Hermitian line bundle $\left( \linebundle^{-m} ,h^{-m} \right) $.
\end{coro}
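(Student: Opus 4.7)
The plan is to mirror the proof of Corollary~\ref{corocoarseestimatecpxhpblccusp}, splitting the target region $\widetilde{U}_m$ into two regimes according to how close $x$ is to the divisor $D$. As there, normalize $\left\Vert S\right\Vert_{L^2;U_m}=1$ and $\epsilon=\kappa=1$. Fix a threshold $\tau_m = e^{-m^{2+2\alpha^{-1}}}$, where $\alpha>0$ is the rate coming from $u_x=\cap_i O(|\log|z_i||^{-\alpha})$ in the definition of the Poincar\'e type cusp.

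In the ``outer'' regime $\widetilde{U}_m\setminus W_{\tau_m^m}$, every point satisfies $|\log\bar d(x)|\leq m^{2+2\alpha^{-1}}$. By Lemma~\ref{lmminjectivityradiuspoincarecusp} applied to each such $x$, there is a constant $C_1=C_1(M,\omega,k)$ with $\inj(x)\geq C_1^{-1} m^{-2-2\alpha^{-1}-1}$, and $\sum_{j\leq 5k+10n}\Vert \nabla^j\mathrm{Ric}(\omega)\Vert_\omega\leq C_1$ on $M$. Applying Lemma~\ref{lmmderiholosections} to the ball $B_{r_x}(x)$ of this radius then gives a bound $\Vert\nabla^kS(x)\Vert^2+\Vert\nabla^{\ast k}S^\ast(x)\Vert^2\leq C_2 m^{C_2}$, and since $|\log\bar d(x)|\leq m^{2+2\alpha^{-1}}$, we can absorb the desired factor $|\log\bar d(x)|^{-\beta}$ at the cost of enlarging the polynomial power of $m$, exactly as in Corollary~\ref{corocoarseestimatecpxhpblccusp}.

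In the ``inner'' regime $\widetilde{U}_m\cap W_{\tau_m^m}$, the strategy is first to bound $\Vert S\Vert^2$ pointwise by the Bergman kernel function $\rho_{U_m,\omega,m}$, and then to estimate that kernel on the Poincar\'e type model. Pulling back by $\varphi_x$ and using that $\Vert u_x\Vert_{C^2;\omega}=o(1)$ near $D$, Lemma~\ref{lmmdistancedifferencepeaksection} bounds $\rho_{U_m,\omega,m}$ up to a factor $1+O(m^{-1})$ by the model Bergman kernel function on $(\mathbb{D}^\ast_r)^{k_x}\times B_r^{2n-2k_x}(0)$ with the product metric $\omega_{\mathrm{mod},k_x,n}$ and line bundle weight $e^{-u_x-\tau_{k,n}\circ\pi_{k,n}}\prod_{i=1}^{k_x}|\log|z_i|^2|$. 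On this product, an $L^2$ orthonormal basis splits as a tensor product: in each punctured disc factor the basis $\{(q^{m-1}/(2\pi(m-2)!))^{1/2} z_i^{q-1}\,(e_{\linebundle})^m\}_{q\geq 1}$ is exactly the one computed in the remark after Lemma~\ref{lmmorthonormalfver}, while the Euclidean factor contributes the standard polynomial basis. Summing termwise yields the pointwise estimate $\rho_{\mathrm{model},m}(z)\leq C\prod_{i=1}^{k_x}|\log|z_i||^m |z_i|^{1/2}$ by the same geometric-series trick used in the proof of Corollary~\ref{corocoarseestimatecpxhpblccusp}, which on $W_{\tau_m^m}$ gives $\rho_{U_m,\omega,m}(x)\leq C_3\bar d(x)^{c}$ for some $c>0$. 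Consequently $\Vert S\Vert^2_{L^2;B_1(x)}\leq C_3\bar d(x)^{c}$ when $B_1(x)$ sits inside $U_m$; after absorbing the very tame injectivity radius and Ricci bounds from Lemma~\ref{lmminjectivityradiuspoincarecusp}, a second application of Lemma~\ref{lmmderiholosections} turns this $L^2$ decay into the desired pointwise $|\log\bar d(x)|^{-\beta}$ decay of $\Vert\nabla^kS\Vert^2+\Vert\nabla^{\ast k}S^\ast\Vert^2$.

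The main obstacle is the inner regime: specifically, producing a clean pointwise decay estimate for $\rho_{\mathrm{model},m}$ on the product model $(\mathbb{D}^\ast_r)^{k_x}\times B_r^{2n-2k_x}(0)$. Unlike the complex hyperbolic cusp, which admits an essentially canonical Laurent-type expansion (Lemma~\ref{holoontotal} and Lemma~\ref{lmmorthonormalfver}), here one must assemble the expansion factor-by-factor and control the tensor product of bases; the perturbation $u_x$ and the potential $\tau_{k,n}$ must be absorbed through Lemma~\ref{lmmdistancedifferencepeaksection}. Once the product-type series bound $\sum_q q^{\mathrm{poly}}\prod_i|z_i|^q|\log|z_i||^m$ is in hand, the decay $\bar d(x)^c$ follows by the same geometric estimate as in the hyperbolic cusp case, and the rest is routine.
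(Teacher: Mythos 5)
Your outline matches the paper's proof: the same two--regime split by the size of $\bar d(x)$, the outer regime handled by Lemma \ref{lmminjectivityradiuspoincarecusp} plus Lemma \ref{lmmderiholosections}, and the inner regime handled by bounding $\Vert S\Vert^2$ by a Bergman kernel function of a product model and exploiting the factorization of that kernel together with the punctured--disc decay already extracted in the proof of Corollary \ref{corocoarseestimatecpxhpblccusp}.

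One step in your inner regime is not correct as written, though it is fixable by exactly the device the paper uses. You compare $\rho_{U_m,\omega,m}$ with the model Bergman kernel function on the \emph{full} chart $\left( \mathbb{D}^*_r \right)^{k_x}\times B_r^{2n-2k_x}(0)$ and claim the bound $\rho_{\mathrm{model},m}(z)\leq C\prod_{i=1}^{k_x}\left| \log |z_i| \right|^m |z_i|^{1/2}$. Two problems: first, the full chart need not be contained in $U_m$ (which is only assumed to contain $W_{r_m^{\kappa}}$), so the monotonicity $\rho_{U_m}\leq\rho_{U'}$ is only available for $U'\subset U_m$; second, for a point $x$ with $\bar d(x)$ tiny, typically only \emph{one} coordinate $z_{j,l}(x)$ is small, and the factors with $|z_i|$ of order one do not satisfy any bound of the form $\left| \log |z_i| \right|^m |z_i|^{1/2}$ --- for those factors the correct input is the $O\left( m^{3/2} \right)$ bound from Corollary \ref{corobgmkernelsup} and the Tian--Yau--Zelditch expansion. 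The paper resolves both issues simultaneously by taking the comparison domain to be the product in which only the $l$-th disc is shrunk to radius $\gamma_m=e^{-m^{2+2\alpha^{-1}}}$ (so the domain is still a product, is contained in $U_m$, and its kernel factors), and then extracting the decay $\left| \log |z_{j,l}| \right|^{-\beta-2n}$ solely from the $\rho_{\mathbb{D}^*_{\gamma_m}}$ factor, exactly as in the computation inside the proof of Corollary \ref{corocoarseestimatecpxhpblccusp}. With that adjustment your argument goes through and coincides with the paper's.
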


\begin{proof}
By the compactness of the divisor $D\subset \bar{M} $, we can find points $\widetilde{y}_j \in D $, $j=1,\cdots ,N$, open neighborhoods $\widetilde{U}_j $ of $ \widetilde{y}_j $ in $\bar{M} $, and biholomorphic maps 
$$ \varphi_j = \left( z_{j,1} \cdots ,z_{j,n} \right) : \widetilde{U}_j \to \left( \mathbb{D}_{\epsilon_j } \right)^{k_{j}} \times B_{ \epsilon_j }^{2n-2k_{j}} \left( 0 \right) $$ 
such that $\varphi_j \left( \widetilde{y}_j \right) =0 $, $ \varphi_j \left( \widetilde{U}_j \sq D \right) = \big( \mathbb{D}^*_{\epsilon_j } \big)^{k_{j}} \times B_{ \epsilon_j }^{2n-2k_{j}} \left( 0 \right) $, $ D\subset \cup_{j=1}^N \varphi_j^{-1} \left( \left( \mathbb{D}_{\epsilon_j^2 } \right)^{k_{j}} \times B_{ \epsilon^2_j }^{2n-2k_{j}} \left( 0 \right) \right) $, and $\left( \varphi_j^{-1} \right)^* \omega = -\sqrt{-1} \sum_{i=1}^{k_j} \partial\partialbar \log \left( \log \left| z_i \right|^2 \right) + \pi_{k_j ,n}^* \omega_{ j } +\sqrt{-1} \partial\partialbar u_j $, where $\epsilon_j >0$ are constants, $\omega_j $ is a K\"ahler metric on $ B_{ \epsilon_j }^{2n-2k_{j}} \left( 0 \right) $, and $u_j = \cap_{i=1}^{k_j } O \left( \left| \log \left|z_{j,i} \right| \right|^{-\alpha } \right) $ as $ \inf_{1\leq i\leq k_x} \left|z_{j,i} \right| \to 0^+ $ to all orders with respect to $\omega_{ \model } $. 

Fix $j\in \{ 1,\cdots ,N \} $. Let $\gamma_m = e^{-m^{2+2\alpha^{-1}}} $. Now we consider the Bergman kernel functions on $ \widetilde{U}_{\widetilde{y}_j , \gamma_m ,\epsilon ,l } = \varphi_j^{-1} \left( \left( \mathbb{D}^*_{\epsilon } \right)^{l-1 }  \times \mathbb{D}^*_{\gamma_m }  \times \left( \mathbb{D}^*_{\epsilon } \right)^{k_{j} -l} \times B_{ \epsilon }^{2n-2k_{j}} \left( 0 \right) \right) \subset \widetilde{U}_j \sq D $, $l=1,\cdots ,k_j $. 

As in the proof of Corollary \ref{corocoarseestimatecpxhpblccusp}, we only need to find a constant $C_{j,l,\beta } >0 $ independent of $m$, such that for sufficiently large integer $m $, 
$$ \rho_{\widetilde{U}_{\widetilde{y}_j , \gamma_m ,\epsilon_j ,l } ,\varphi_j^* \omega_{\model } ,m } (x) \leq C_{j,l,\beta } m^{C_{j,l,\beta } } \left| \log \left| z_{j,l } (x) \right| \right|^{-\beta -2n} ,\;\; \forall x \in \widetilde{U}_{\widetilde{y}_j , \gamma^e_m ,\epsilon_j^2 ,l } .$$

By a straightforward calculation, one can obtain
\begin{eqnarray*}
\rho_{\widetilde{U}_{\widetilde{y}_j , \gamma_m ,\epsilon_j ,l } ,\varphi_j^* \omega_{\model } ,m } (x) & = & \left( \prod_{ \begin{subarray}{l}
1\leq  i  \leq k_j\\
\;\;\; i \neq l
\end{subarray}} \rho_{\mathbb{D}_{\epsilon_j }^* ,\omega_{\model } ,m } \left( z_{j,i} (x) \right) \right) \cdot \rho_{\mathbb{D}_{\gamma_m }^* ,\omega_{\model } ,m } \left( z_{j,l} (x) \right) \\
& & \cdot \; \rho_{B^{2n-2k_{j} }_{ \epsilon_j } (0) ,\omega_{Euc } ,m } \left( z_{j,k_j +1} (x) ,\cdots , z_{j,n} (x) \right) .
\end{eqnarray*}
Combining Corollary \ref{corobgmkernelsup} with Tian-Yau-Zelditch expansion, we see that there exists a constant $C_1 =C_1 (n) >0 $ such that $\rho_{\widetilde{U}_{\widetilde{y}_j , \gamma_m ,\epsilon_j ,l } ,\varphi_j^* \omega_{\model } ,m } (x) \leq C_1 m^{\frac{3n}{2}} \rho_{\mathbb{D}_{\gamma_m }^* ,\omega_{\model } ,m } \left( z_{j,l} (x) \right) $ for sufficiently large $m$. It is sufficient to show that $ \rho_{\mathbb{D}_{\gamma_m }^* ,\omega_{\model } ,m } \left( z \right) \leq C_2 m^{C_2 } \left| \log \left| z \right| \right|^{-\beta -2n} $, $ \forall z \in \mathbb{D}_{\gamma_m^e }^* ,$ which is shown in the proof of Corollary \ref{corocoarseestimatecpxhpblccusp}. This completes the proof.
\end{proof}

We conclude this section by proving Proposition \ref{proppoincarelocalizationprinciple}.

\vspace{0.2cm}

\noindent \textbf{Proof of Theorem \ref{proppoincarelocalizationprinciple}: }
Fix $l\in\mathbb{N} $ at first. We only need to show this theorem in the case where $\xi =\kappa =1$, the more general case is then a fairly immediate consequence. 

Similar to the proof of Proposition \ref{thmcusplocalizationprinciple}, we also split the proof of this theorem into two parts, proving estimates (\ref{equation1thmpoincarelocalizationprinciple}) and (\ref{equation2thmpoincarelocalizationprinciple}) respectively. 

\smallskip

\par {\em Part 1:} First, we need to prove (\ref{equation1thmpoincarelocalizationprinciple}).

Set $\left( x_m ,y_m \right) \in V_{m } \times \left(M\sq U_{m } \right) $. Choosing vectors $e_{x_m} \in \linebundle^m \big|_{x_m} $ and $e_{y_m} \in \linebundle^m \big|_{y_m} $ such that $\left\Vert e_{x_m} \right\Vert = \left\Vert e_{y_m} \right\Vert = 1 $. Let $k_1 ,k_2 \in\mathbb{Z}_{\geq 0} $ such that $k_1 +k_2 \leq k $. Consider the unit tangent vectors $v_{1,m} ,\cdots ,v_{k_1 ,m} \in T_{x_m} M \oplus 0 \subset T_{\left( x_m ,y_m \right)} ( M\times M) $, and $v_{k_1 +1,m} ,\cdots ,v_{k_1 +k_2 ,m} \in 0 \oplus T_{y_m} M \subset T_{\left( x_m ,y_m \right)} (M\times M) $. Now we can define bounded linear functionals on small neighborhoods of $x_m $ and $y_m $ as following:
\begin{eqnarray*}
\nabla^{k_1} S \left( v_{1,m} ,\cdots ,v_{k_1 ,m}  \right) & = & T_{1,m} (S) e_{x_m} ,\\
\nabla^{* k_2} S^* \left( v_{k_1 +1,m} ,\cdots ,v_{k_1 +k_2,,m}  \right) & = & \overline{ T_{2,m} (S) } e^*_{y_m} , 
\end{eqnarray*}
for any holomorphic section $ S $ of $ \linebundle^m $ around $x_m $ and $y_m$, respectively. Then we have
$$ \left\Vert \nabla_x^{k_1} \nabla_y^{*k_2} B_{\omega ,m} \left( x_m ,y_m \right) \left( v_{ 1,m} ,\cdots ,v_{k_1 +k_2,m}  \right) \right\Vert = \left| T_{1,m} \left( S_{T_{1,m}} \right) \overline{T_{2,m} \left( S_{T_{1,m}} \right) } \right|  ,$$
where $\left\lbrace S_j \right\rbrace_{j\in J} $ is an $L^2$-orthonormal basis of $ \hl \left( M,\linebundle^m \right) $, and $S_{T_{1,m}} \in \hl \left( M,\linebundle^m \right) $ is the peak section of the functional $T_{1,m} $ on $M$. Since $u=o(1)$ as $h_D \to 0^+ $, we can find a sequence of open subsets $\left\lbrace \widetilde{U}_m \right\rbrace_{m=2}^\infty $ and a constant $\epsilon >0$ independent of $m$ such that 
$$\inf \left( \dist_{\omega } \left( V_{m } ,M\sq \widetilde{U}_m \right) , \dist_{\omega } \left( \widetilde{U}_m ,M\sq U_{m} \right) \right) \geq \epsilon \frac{\log m}{\sqrt{m}} ,\;\; \forall m\in\mathbb{N} ,$$ 
and Lemma \ref{lmminjectivityradiuspoincarecusp} now shows that there exists a constant $\delta \in (0,\epsilon )$ independent of $m$ such that $ \inj \left( M\sq V_{m } \right) \geq \delta \frac{\log (m)}{\sqrt{m}} $. By Corollary \ref{corocoarseestimatepoincarecusp}, we see that there exists a constant $C_0 >0$ depending only on $(M,\omega ,\linebundle ,h ) $ and $k $, such that 
$$ \left| \log \bar{d} \left( x_m \right) \right|^{2\beta} \left\Vert T_{1,m} \big|_{\hl \left( M,\linebundle^m \right) } \right\Vert + \left| \log \bar{d} \left( y_m \right) \right|^{2\beta} \left\Vert T_{2,m} \big|_{\hl \left( B_{\frac{\delta \log m}{\sqrt{m}}},\linebundle^m \right) } \right\Vert \leq C_0 m^{C_0} ,\;\; \forall m\in\mathbb{N} ,$$ 
where $\epsilon >0$ is a constant independent of $m$. By using Lemma \ref{peaksectionrestriction} and Proposition \ref{prop2coraseestimate} to $V_{m }$ and $ \widetilde{U}_{m} $, we can conclude that for each integer $q\in\mathbb{N}$, there exists a constant $C_1 >0$ depending only on $(M,\omega ,\linebundle ,h ),k$ and $q $, such that 
$$ \int_{M\sq \widetilde{U}_{m} } \left\Vert S_{T_{1,m}} \right\Vert^2 d\V_{\omega} \leq C_1 m^{-q} ,\;\; \forall m\in\mathbb{N} .$$
Choosing $q= 2l+4+4\lfloor C_0 \rfloor $, where $\lfloor \cdot \rfloor$ is the greatest integer function. It follows that
\begin{eqnarray*}
\left| T_{1,m} \left( S_{T_{1,m}} \right) \overline{T_{2,m} \left( S_{T_{1,m}} \right) } \right| & \leq & \left\Vert T_{1,m} \right\Vert \cdot \left\Vert T_{2,m} \big|_{\hl \left( B_{\frac{\delta \log m}{\sqrt{m}}},\linebundle^m \right) } \right\Vert \cdot \left\Vert S_{T_{1,m}} \right\Vert_{L^2 ;B_{\frac{\delta \log m}{\sqrt{m}}} } \\
& \leq & C_0^2 C_1 m^{2C_0 } m^{-l-2-2\lfloor C_0 \rfloor} \left| \log \bar{d} \left( x_m \right) \right|^{-\beta} \left| \log \bar{d} \left( y_m \right) \right|^{-\beta} \\
& \leq & C_0^2 C_1 m^{-l } \left| \log \bar{d} \left( x_m \right) \right|^{-\beta} \left| \log \bar{d} \left( y_m \right) \right|^{-\beta} .
\end{eqnarray*}
Hence we can conclude that 
$$\sup_{ V_{m } \times \left(M\sq U_{m } \right) }   \left| \log \bar{d} (x) \right|^\beta \left| \log \bar{d} (y) \right|^\beta  \left\Vert B_{\omega ,m} (x,y) \right\Vert_{C^k  } \leq C m^{-l} .$$
By exchanging $x_m $ and $y_m $, one can see that the proof of 
$$\sup_{ \left(M\sq U_{m } \right) \times V_{m } }   \left| \log \bar{d} (x) \right|^\beta \left| \log \bar{d} (y) \right|^\beta  \left\Vert B_{\omega ,m} (x,y) \right\Vert_{C^k  } \leq C m^{-l} $$ 
is similar to the argument above.

\smallskip

\par {\em Part 2.} It remains to prove (\ref{equation2thmpoincarelocalizationprinciple}).

Set $\left( x_m ,y_m \right) \in V_{m } \times V_{m }$. Let $e_{x_m} \in \linebundle^m \big|_{x_m} $ and $e_{y_m} \in \linebundle^m \big|_{y_m} $ such that $\left\Vert e_{x_m} \right\Vert = \left\Vert e_{y_m} \right\Vert = 1 $. Choosing $k_1 ,k_2 \in\mathbb{Z}_{\geq 0} $ such that $k_1 +k_2 \leq k $, unit tangent vectors $v_{1,m} ,\cdots ,v_{k_1 ,m} \in T_{x_m} M \oplus 0 \subset T_{\left( x_m ,y_m \right)} ( M\times M) $, and $v_{k_1 +1,m} ,\cdots ,v_{k_1 +k_2 ,m} \in 0 \oplus T_{y_m} M \subset T_{\left( x_m ,y_m \right)} (M\times M) $. Now we can define bounded linear functionals on $V_{\gamma_m}$ as following:
\begin{eqnarray*}
\nabla^{k_1} S \left( v_{1,m} ,\cdots ,v_{k_1 ,m}  \right) & = & T_{1,m} (S) e_{x_m} ,\\
\nabla^{* k_2} S^* \left( v_{k_1 +1,m} ,\cdots ,v_{k_1 +k_2,,m}  \right) & = & \overline{ T_{2,m} (S) } e^*_{y_m} , 
\end{eqnarray*}
for any $ S \in\hl \left( V_{m} , \linebundle^m \right) $. Let $S_{T_{U_m ,1,m }} \in \hl \left( U_m ,\linebundle^m \right) $ and $S_{T_{1,m}} \in \hl \left( M,\linebundle^m \right) $ denote the peak sections of the functional $T_{1,m} $ on $U_{m}$ and $M$, respectively. As in the argument in Part 1, a straightforward computation shows that
\begin{eqnarray*}
& & \left\Vert \nabla_x^{k_1} \nabla_y^{*k_2} \left( B_{\omega ,m} \left( x_m ,y_m \right) - B_{U_m ,\omega ,m } \right) \left( x_m ,y_m \right) \left( v_{ 1,m} ,\cdots ,v_{k_1 +k_2,m}  \right) \right\Vert \\
& = & \left\Vert \left( T_{1,m} \left( S_{T_{1,m}} \right) \overline{T_{2,m} \left( S_{T_{1,m}} \right) } -T_{1,m} \left( S_{T_{U_m ,1,m }} \right) \overline{T_{2,m} \left( S_{T_{U_m ,1,m }} \right) } \right) e_{x_m} \otimes e_{y_m}^* \right\Vert \\
& \leq & \left| T_{1,m} \left( S_{T_{1,m}} - S_{T_{U_m ,1,m }} \right) \right| \cdot \left| T_{2,m} \left( S_{T_{U_m ,1,m }} \right)  \right| + \left| T_{1,m} \left( S_{T_{1,m}} \right) \right| \cdot \left| T_{2,m} \left( S_{T_{1,m }} - S_{T_{U_m ,1,m }} \right)  \right| .
\end{eqnarray*}
By Corollary \ref{corocoarseestimatepoincarecusp}, we see that there exists a constant $C_2 >0$ depending only on $(M,\omega ,\linebundle ,h ) $ and $k $, such that 
$$ \left| \log \bar{d} \left( x_m \right) \right|^{2\beta} \left\Vert T_{1,m} \big|_{\hl \left( U_m ,\linebundle^m \right) } \right\Vert + \left| \log \bar{d} \left( y_m \right) \right|^{2\beta} \left\Vert T_{2,m} \big|_{\hl \left( U_m ,\linebundle^m \right) } \right\Vert \leq C_2 m^{C_2} ,\;\; \forall m\in\mathbb{N} ,$$ 
where $\epsilon >0$ is a constant independent of $m$. Combining Lemma \ref{peaksectionrestriction} and Proposition \ref{prop2coraseestimate}, we see that for each given integer $q\in\mathbb{N}$, there exists a constant $C_3 >0$ depending only on $(M,\omega ,\linebundle ,h ) ,k$ and $q $, such that 
$$ \int_{ V_{ \sigma_m } } \left\Vert S_{T_{1,m}} - S_{T_{U_m ,1,m }} \right\Vert^2 d\V_{\omega} \leq C_3 m^{-q} ,\;\; \forall m\in\mathbb{N} .$$
Choosing $q= 2l+4+4\lfloor C_2 \rfloor $, where $\lfloor \cdot \rfloor$ is the greatest integer function. Then we have
\begin{eqnarray*}
& & \left| T_{1,m} \left( S_{T_{1,m}} - S_{T_{U_m ,1,m }} \right) \right| \cdot \left| T_{2,m} \left( S_{T_{U_m ,1,m  }} \right)  \right| + \left| T_{1,m} \left( S_{T_{1,m}} \right) \right| \cdot \left| T_{2,m} \left( S_{T_{1,m }} - S_{T_{U_m ,1,m }} \right)  \right| \\
& \leq & 2 \left\Vert T_{1,m} \big|_{\hl \left( U_{M },\linebundle^m \right) } \right\Vert \cdot \left\Vert T_{2,m} \big|_{\hl \left( U_{m },\linebundle^m \right) } \right\Vert \cdot \left\Vert S_{T_{1,m}} - S_{T_{U_m ,1,m  }} \right\Vert_{L^2 ; U_{m } } \\
& \leq & 2C_2^2 C_3 m^{2C_0} m^{-l-2-2\lfloor C_2 \rfloor } \left| \log \bar{d} \left( x_m \right) \right|^{-\beta} \left| \log \bar{d} \left( y_m \right) \right|^{-\beta} \\
& \leq & 2C_2^2 C_3 m^{-l} \left| \log \bar{d} \left( x_m \right) \right|^{-\beta} \left| \log \bar{d} \left( y_m \right) \right|^{-\beta}.
\end{eqnarray*}
Then we can obtain the estimate (\ref{equation2thmpoincarelocalizationprinciple}), and the theorem follows.\qed

\section{Bergman kernels in two special cases}
\label{bergmanperturbedmanifoldssection}

In this section, we consider the Bergman kernels on complex hyperbolic cusps with K\"ahler-Einstein metrics and quotients of complex ball.

\subsection{Bergman kernels on complex hyperbolic cusps with K\"ahler-Einstein metrics}
\hfill

Let $(M,\omega )$ be a K\"ahler manifold and let $(\linebundle ,h)$ be a Hermitian line bundle on $M$ such that $\Ric (h) \geq \epsilon \omega $ and $\Ric (\omega ) \geq -\Lambda \omega $ for some constants $\epsilon ,\Lambda >0$. Suppose that $M$ is pseudoconvex if $(M,\omega )$ is not complete. Assume that $U \cong V_r /\Gamma_V $ is an asymptotic complex hyperbolic cusp on $(M,\omega , \linebundle ,h )$ such that $ \left( U,\omega \right) $ is a K\"ahler-Einstein manifold with $\Ric (\omega) = -(n+1) \omega $, and $ \bar{U} $ is complete. By definition, $\omega = \omega_V -\sqrt{-1} \partial\partialbar f $ for some function $f\in C^\infty \left( \bar{V}_r /\Gamma_V \right) $.

First, we show that $U$ is an asymptotic complex hyperbolic cusp of order $\infty$. Since $\pi_V^* \omega $ is a K\"ahler-Einstein metric on $V_r $, we only need to consider the case $\Gamma_V =0 $. 

For each constant $t>0 $, we denote the restriction of the scalar multiplication $v\to tv $ on the domain $V_{t^{-1} } \subset \linebundle_D $ as $\Phi_t $. By definition, $ \Phi_t : V_{t^{-1} } \to V_1 =V $ is a biholomorphic map. The pullbacks $\Phi_t^* \linebundle_V $ and $\Phi_t^* \linebundle^{-1}_V $ are holomorphic line bundles on $V_{t^{-1}} $, and the Bergman kernels satisfying that $ \Phi_t^* B_{V ,\omega_V ,m } = B_{V_{t^{-1}} ,\Phi_t^* \omega_V ,m } $ on $ V_{t^{-1}} \times V_{t^{-1} } $. Note that $ \Phi_t^* e_{\linebundle_V } $ is a non-vanishing holomorphic section on $V_{t^{-1}}$ and $ \left\Vert  \Phi_t^* e_{\linebundle_V } \right\Vert = \left| \log h_D \circ \Phi_t \right| $, so we see that $\Phi_t^* \linebundle_V $ and $\Phi_t^* \linebundle^{-1}_V $ are trivial line bundles on $V_{t^{-1}}$. Now we regard $ \left( \Phi_t^* \linebundle_V , \Phi_t^* h_V \right) $ and $ \left( \Phi_t^* \linebundle^{-1}_V , \Phi_t^* h^{-1}_V \right) $ as the restriction of holomorphic line bundles $\linebundle_V $ and $ \linebundle^{-1}_V $ on $V \cap V_{t^{-1}} $ respectively, just replace the Hermitian metrics $h_V $ and $h_V^{-1}$ with the Hermitian metrics $\left\Vert  \Phi_t^* e_{\linebundle_V } \right\Vert \cdot \left\Vert e_{\linebundle_V } \right\Vert^{-1} h_V $ and $ \left\Vert   e_{\linebundle_V } \right\Vert \cdot \left\Vert \Phi_t^* e_{\linebundle_V } \right\Vert^{-1} h^{-1}_V $, respectively. Therefore, the pull back of the Bergman kernels, $ \Phi_t^* B_{V ,\omega_V ,m } $, can be regarded as the $\pi_1^* \linebundle_V \otimes \pi_2^* \linebundle_V^{-1} $-valued functions on $ \left( V \cap V_{t^{-1}} \right) \times \left( V \cap V_{t^{-1}} \right) $. By considering the change of the Hermitian line bundle under the mapping $ \Phi_t $, we can prove the following lemma. 

\begin{lmm}
\label{lmmdatarfusongmanifold1}
Let $\left( V,\omega_V ,\linebundle_V ,h_V \right) $ be an $n$-dimensional complex hyperbolic cusp, $t>0 $ be a constant, and $ \Phi_t :V_{t^{-1}} \to V $ be the restriction of the scalar multiplication $v\mapsto tv $ on $V_{t^{-1}} \subset \linebundle_D $.

Then we have $ \Phi_t^* \omega_V = \omega_V -\sqrt{-1} \partial\partialbar\log \left( 1-2 \log t \left| \log h_D \right|^{-1} \right) $ on $ V\cap V_{t^{-1}} $.
\end{lmm}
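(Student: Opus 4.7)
The plan is to reduce the identity to an elementary computation with the explicit formula $\omega_V=-\sqrt{-1}\partial\overline{\partial}\log(-\log h_D)$, using that the scalar multiplication $\Phi_t$ interacts with the Hermitian metric $h_D$ in a very transparent way. First I would record the basic compatibility: since $h_D$ is a Hermitian metric on the line bundle $\linebundle_D$, for every $v\in\linebundle_D$ one has $h_D(tv)=t^2 h_D(v)$, so
\begin{equation*}
h_D\circ\Phi_t = t^2\,h_D\quad\text{on } V_{t^{-1}}\subset\linebundle_D.
\end{equation*}
In particular, the domain condition $0<h_D\circ\Phi_t<1$ becomes $0<h_D<t^{-2}$, which is consistent with $\Phi_t$ mapping $V_{t^{-1}}$ into $V=\{0<h_D<1\}$ once $t\geq 1$, and with the intersection $V\cap V_{t^{-1}}$ in the general case.

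Next I would pull back the Kähler potential. Since $\Phi_t$ is holomorphic, pullback commutes with $\partial\overline{\partial}$, so
\begin{equation*}
\Phi_t^*\omega_V = -\sqrt{-1}\,\partial\overline{\partial}\log\bigl(-\log(h_D\circ\Phi_t)\bigr) = -\sqrt{-1}\,\partial\overline{\partial}\log\bigl(|\log h_D|-2\log t\bigr),
\end{equation*}
using that $\log h_D<0$ on $V$, hence $-\log h_D=|\log h_D|$. Factoring out $|\log h_D|$ inside the logarithm gives
\begin{equation*}
\log\bigl(|\log h_D|-2\log t\bigr) = \log|\log h_D| + \log\!\left(1-\tfrac{2\log t}{|\log h_D|}\right),
\end{equation*}
and applying $-\sqrt{-1}\,\partial\overline{\partial}$ to both sides produces the claimed decomposition
\begin{equation*}
\Phi_t^*\omega_V = \omega_V - \sqrt{-1}\,\partial\overline{\partial}\log\!\left(1-2\log t\,|\log h_D|^{-1}\right).
\end{equation*}

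There is essentially no obstacle: the only point worth being careful about is the sign convention and the domain of definition of the logarithms, i.e.\ ensuring that $|\log h_D|-2\log t>0$ throughout $V\cap V_{t^{-1}}$ so that the splitting is legitimate. This is automatic from $h_D\circ\Phi_t<1$, since it forces $|\log h_D|>2\log t$ (and for $t\leq 1$ the quantity $-2\log t$ is non-negative, so positivity is immediate on all of $V$). Once the signs are checked, the identity is a direct consequence of the multiplicative scaling law for $h_D$ under $\Phi_t$.
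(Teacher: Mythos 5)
Your proof is correct and follows exactly the paper's argument: both use $h_D\circ\Phi_t=t^2h_D$ to pull back the K\"ahler potential and then split $\log\left(\left|\log h_D\right|-2\log t\right)$ as $\log\left|\log h_D\right|+\log\left(1-2\log t\left|\log h_D\right|^{-1}\right)$. One small slip in your side remark: since the preimage of $V$ under $\Phi_t$ is $V_{t^{-2}}$ and $t^{-1}\leq t^{-2}$ exactly when $t\leq 1$, it is for $t\leq 1$ (not $t\geq 1$) that $\Phi_t$ maps all of $V_{t^{-1}}$ into $V$ --- but this does not affect the computation, which is only asserted on $V\cap V_{t^{-1}}$ where both potentials make sense.
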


\begin{proof}
By definition, we have $\omega_V = -\sqrt{-1} \partial\partialbar \log \left| \log h_D \right| $, and hence
\begin{eqnarray*}
\Phi_t^* \omega_V & = & -\sqrt{-1} \Phi_t^* \left( \partial\partialbar \log \left| \log h_D \right| \right) = -\sqrt{-1} \partial\partialbar \log \left| \log h_D \circ \Phi_t \right| \\
& = & -\sqrt{-1} \partial\partialbar \log \left| \log t^2 h_D \right| = -\sqrt{-1} \partial\partialbar \log \left( \left| \log h_D \right| - 2\log t \right) \\
& = & -\sqrt{-1} \partial\partialbar \log \left| \log h_D \right| - \sqrt{-1} \partial\partialbar \log\left( 1-2 \log t \left| \log h_D \right|^{-1} \right) \\
& = & \omega_V -\sqrt{-1} \partial\partialbar \log\left( 1-2 \log t \left| \log h_D \right|^{-1} \right) ,\;\; \textrm{on $ V\cap V_{t^{-1}} $.}
\end{eqnarray*}
This establishes the formula.
\end{proof}

Combining Lemma \ref{lmmdatarfusongmanifold1} with Fu-Hein-Jiang's estimate (\ref{fhjasymptoticbehavior}), we see that $U$ is an asymptotic complex hyperbolic cusp of order $\infty$ by choosing $t= e^{ -\frac{  c_{r,f} }{2} }  $, where $c_{r,f}$ is the constant in (\ref{fhjasymptoticbehavior}).

We now come to the proof of Theorem \ref{thmdatarfusongmanifold}. Here $r_m $ denotes $e^{-\frac{\sqrt{m}}{\log m}}$ for $m\geq 2$.

\vspace{0.2cm}

\noindent \textbf{Proof of Theorem \ref{thmdatarfusongmanifold}: }
Without loss of generality, we can assume that $\Gamma_V =0$. By definition, there exists a smooth function $u$ on $U$ such that $ \left( U,\omega ,\linebundle ,h \right) \cong \left( V_r ,\omega_V -\sqrt{-1}\partial\partialbar u ,\linebundle_V , e^u h_V \right) $ and for any $\alpha >0$, $u=O\left( \left| \log h_D \right|^{-\alpha } \right)$ as $h_D \to 0^+ $ to all orders with respect to $\omega_V$. 

We divide our proof into three parts.

\smallskip

\par {\em Part 1.} At first, we prove estimate (\ref{inequalitydatarfusongmanifold1}). 

By definition, we see that $ U $ is an asymptotic complex hyperbolic cusp on $(M,\omega ,\linebundle ,h )$, and hence the estimate (\ref{inequalitydatarfusongmanifold1}) is a consequence of the localization principle (\ref{equation1thmcusplocalizationprinciple}). 

\smallskip

\par {\em Part 2.}
We now start to consider estimate (\ref{inequalitydatarfusongmanifold2}). 

By the localization principle (\ref{equation2thmcusplocalizationprinciple}), one can see that the estimate (\ref{inequalitydatarfusongmanifold2}) can be localized, and hence we only need to show that there are constants $m_0 ,C>0$ independent of $m$, such that
$$ \sup_{V_{r_m } \times V_{r_m } } \left| \log h_D (x) \right|^\beta \left| \log h_D (y) \right|^\beta \left\Vert B_{V_{\sqrt{r_m } } ,\omega ,m } (x,y) - B_{V_{\sqrt{r_m } } , \omega_{V} ,m } (x,y) \right\Vert_{C^k ; h_{V}^m , \omega_{V} } \leq C m^{-l} , $$
for any $m\geq m_0 $. Choosing a large integer $m_1 $ such that $r_m \leq r^4 $, $\forall m\geq m_1 $. Fix an integer $m\geq m_1$ and a point $\left( x_m ,y_m \right) \in V_{r_m } \times V_{r_m }$. Let $e_{x_m} \in \linebundle_V^m \big|_{x_m} $ and $e_{y_m} \in \linebundle_V^m \big|_{y_m} $ such that $\left\Vert e_{x_m} \right\Vert_{h_{V}^m} = \left\Vert e_{y_m} \right\Vert_{h_{V}^m} = 1 $. Choosing nonnegative integers $k_1 ,k_2 $ such that $k_1 +k_2 \leq k $, unit tangent vectors $v_{1,m} ,\cdots ,v_{k_1 ,m} \in T_{x_m} V \oplus 0 \subset T_{\left( x_m ,y_m \right)} ( V\times V) $, and unit tangent vectors $v_{k_1 +1,m} ,\cdots ,v_{k_1 +k_2 ,m} \in 0 \oplus T_{y_m} V \subset T_{\left( x_m ,y_m \right)} (V\times V) $. Here the metric we consider is $ \omega_{V} $. It is easy to see that $\hl \left( V_{\sqrt{\gamma_m }},\omega , \linebundle_V^m ,h^m \right) = \hl \left( V_{\sqrt{\gamma_m }}, \omega_{V} , \linebundle_V^m , h^m_{V} \right) $ as vector subspaces of $H^0 \left( V_{\sqrt{\gamma_m }} ,\linebundle^m_V \right) $, and the $L^2$-norms are equivalent. Now we can define bounded linear functionals on $V_{\gamma_m}$ as following:
\begin{eqnarray*}
\nabla^{k_1} S \left( v_{1,m} ,\cdots ,v_{k_1 ,m}  \right) & = & T_{1,m} (S) e_{x_m} ,\\
\nabla^{* k_2} S^* \left( v_{k_1 +1,m} ,\cdots ,v_{k_1 +k_2,,m}  \right) & = & \overline{ T_{2,m} (S) } e^*_{y_m} , 
\end{eqnarray*}
for any $ S \in\hl \left( V_{\gamma_m  } , \linebundle_V^m \right) $. Let $S_{T_{i,m }} \in \hl \left( V_{\sqrt{\gamma_m }},\linebundle_V^m \right) $ and $S'_{T_{i,m}} \in \hl \left( V_{\sqrt{\gamma_m }},\linebundle_V^m \right) $ denote the peak sections of the functional $T_{i,m} $ on $ V_{\sqrt{\gamma_m }} $ associated with $\left( h, \omega \right)$ and $ \left( h_V , \omega_V \right) $, respectively. By a straightforward computation, we can conclude that
\begin{eqnarray*}
& & \left\Vert \nabla_x^{k_1} \nabla_y^{*k_2} \left( B_{V_{\sqrt{r_m } } ,\omega ,m } (x,y) - B_{V_{\sqrt{r_m } } , \omega_{V} ,m } (x,y) \right) \left( x_m ,y_m \right) \left( v_{ 1,m} ,\cdots ,v_{k_1 +k_2,m}  \right) \right\Vert_{h_V^m } \\
& = & \left\Vert \left( T_{1,m} \left( S_{T_{1,m}} \right) \overline{T_{2,m} \left( S_{T_{1,m}} \right) } -T_{1,m} \left( S'_{T_{1,m }} \right) \overline{T_{2,m} \left( S'_{T_{1,m }} \right) } \right) e_{x_m} \otimes e_{y_m}^* \right\Vert_{h_V^m } \\
& \leq & \left| T_{1,m} \left( S_{T_{1,m}} - S'_{T_{1,m }} \right) \right| \cdot \left| T_{2,m} \left( S'_{T_{1,m }} \right)  \right| + \left| T_{1,m} \left( S_{T_{1,m}} \right) \right| \cdot \left| T_{2,m} \left( S_{T_{1,m }} - S'_{T_{1,m }} \right)  \right| .
\end{eqnarray*}
By Corollary \ref{corocoarseestimatecpxhpblccusp}, we see that there exists a constant $C_1 >0$ independent of $m $, such that 
$$ \left| \log h_D \left( x_m \right) \right|^{2\beta} \left\Vert T_{1,m} \big|_{\hl \left( V_{\sqrt{r_m} },\linebundle_V^m \right) } \right\Vert + \left| \log h_D \left( y_m \right) \right|^{2\beta} \left\Vert T_{2,m} \big|_{\hl \left( V_{\sqrt{r_m} },\linebundle_V^m \right) } \right\Vert \leq C_1 m^{C_1} ,\;\; \forall m\in\mathbb{N} ,$$ 
where $\epsilon >0$ is a constant independent of $m$. The norm we used here is the norm associated with the metric $\omega_{V}$ and the Hermitian metric $h^m_{V}$. Combining Fu-Hein-Jiang's estimate (\ref{fhjasymptoticbehavior}) with Lemma \ref{lmmdistancedifferencepeaksection}, we see that for each given integer $q\in\mathbb{N}$, there exists a constant $C_2 >0$ depending only on $V,r $ and $q $, such that 
$$ \int_{ V_{ \sqrt{r_m} } } \left\Vert S_{T_{1,m}} - S'_{T_{1,m }} \right\Vert_{h^m_{V}  }^2 d\V_{\omega_{V}} \leq C_2 m^{-q} ,\;\; \forall m\in\mathbb{N} .$$
Choosing $q= 2l+4+4\lfloor C_2 \rfloor $, where $\lfloor \cdot \rfloor$ is the greatest integer function. Then we have
\begin{eqnarray*}
& & \left| T_{1,m} \left( S_{T_{1,m}} - S'_{T_{1,m }} \right) \right| \cdot \left| T_{2,m} \left( S'_{T_{1,m }} \right)  \right| + \left| T_{1,m} \left( S_{T_{1,m}} \right) \right| \cdot \left| T_{2,m} \left( S_{T_{1,m }} - S'_{T_{1,m }} \right)  \right| \\
& \leq & 2 \left\Vert T_{1,m} \big|_{\hl \left( V_{\sqrt{r_m} },\linebundle^m \right) } \right\Vert \cdot \left\Vert T_{2,m} \big|_{\hl \left( V_{\sqrt{r_m} },\linebundle^m \right) } \right\Vert \cdot \left\Vert S_{T_{1,m}} - S'_{T_{1,m }} \right\Vert_{L^2 ; V_{ \sqrt{r_m } } ,h_{V}^m ,\omega_{V} } \\
& \leq & 2C_1^2 C_2 m^{2C_0} m^{-l-2-2\lfloor C_2 \rfloor } \left| \log h_D \left( x_m \right) \right|^{-\beta} \left| \log h_D \left( y_m \right) \right|^{-\beta} \\
& \leq & 2C_1^2 C_2 m^{-l} \left| \log h_D \left( x_m \right) \right|^{-\beta} \left| \log h_D \left( y_m \right) \right|^{-\beta}.
\end{eqnarray*}
Then we can obtain the estimate (\ref{inequalitydatarfusongmanifold2}).

\smallskip

\par {\em Part 3.}
In this part, we prove the inequality (\ref{inequalitydatarfusongmanifold3}). 

For any point $x\in V_{r } $, we choose a vector $e_{x} \in \linebundle_V \big|_x $ such that $ \left\Vert e_x \right\Vert_{h_V } =1 $. Hence we can define bounded linear functionals $T_{x,m} \in \hl \left( V_{r_m } ,\linebundle_{V}^m \right)^* $ by $ S\big|_{x} = T_{x,m} \left( S \right) e^m_{x} $, $\forall S\in\hl \left( V_{r_m } ,\linebundle_{V}^m \right) $, where $m>0 $ is an integer such that $r_m \leq r^4 $, and $x\in V_{r_m }$. For any open subset $U'$ of $V $ containing $x$, we denote the peak sections of $T_{x,m}$ on $ \hl \left( U',\omega_V ,\linebundle_V^m ,h_V^m \right) $ as $S'_{U',x,m}$. Similarly, for any open subset $U'$ of $M $ containing $x$, we will denote by $S_{U',x,m}$ the peak sections of $T_{x,m}$ on $ \hl \left( U',\omega ,\linebundle^m ,h^m  \right) $. It is easy to see that 
$$ \frac{\rho_{M ,\omega ,m} (x)}{\rho_{V, \omega_{V} ,m} \left( x \right) } = \left| \frac{T_{x,m} \left( S_{M ,x,m} \right)}{T_{x,m} \left( S'_{V ,x,m} \right) } \right|^2 e^{mu } .$$

By definition, there are constants $m_2 ,C_3 >0$ depending only on $V,r$ and $l$, such that
$$ \sup_{V_{r_m} } \left| e^{mu } -1 \right| \leq C_3 m^{-l},\;\; \forall m\geq m_2 .$$

Applying Proposition \ref{propcoraseestimate} to $ T_{x,m} \left( S_{M ,x,m} \right) $ and $ T_{x,m} \left( S'_{V ,x,m} \right) $, one can see that there are constants $m_3 ,C_4 >0$ depending only on $V,r,f$ and $l$, such that
$$ \sup_{x\in V_{r_m} } \left( \left| \frac{T_{x,m} \left( S_{M ,x,m} \right) }{T_{x,m} \left( S_{V_{\sqrt{r_m}} ,x,m} \right) } -1 \right| + \left| \frac{T_{x,m} \left( S'_{V ,x,m} \right) }{T_{x,m} \left( S'_{V_{\sqrt{r_m}} ,x,m} \right) } -1 \right| \right) \leq C_4 m^{-l},\;\; \forall m\geq m_3 .$$

By definition, $ \omega = \omega_V -\sqrt{-1} \partial\partialbar u $, and $h = e^{u} h_V $. Lemma \ref{lmmdistancedifferencepeaksection} now implies that that there are constants $m_4 ,C_5 >0$ depending only on $V,r$ and $l$, such that
$$ \sup_{x\in V_{r_m} } \left| \frac{T_{x,m} \left( S_{V_{\sqrt{r_m}} ,x,m} \right) }{T_{x,m} \left( S'_{V_{\sqrt{r_m}} ,x,m} \right) } -1 \right| \leq C_5 m^{-l},\;\; \forall m\geq m_4 .$$
It follows that
\begin{eqnarray*}
& & \sup_{x\in V_{r_m} } \left| \frac{\rho_{M ,\omega ,m} (x)}{\rho_{V, \omega_{V} ,m} \left( x \right) } -1 \right| = \sup_{x\in V_{r_m} } \left| \left| \frac{T_{x,m} \left( S_{M ,x,m} \right)}{T_{x,m} \left( S'_{V ,x,m} \right) } \right|^2 e^{mu } -1 \right| \\
& = & \sup_{x\in V_{r_m} } \left| \left| \frac{T_{x,m} \left( S_{M ,x,m} \right) }{T_{x,m} \left( S_{V_{\sqrt{r_m}} ,x,m} \right) }  \right|^2 \cdot \left| \frac{T_{x,m} \left( S_{V_{\sqrt{r_m}} ,x,m} \right) }{ T_{x,m} \left( S'_{V_{\sqrt{r_m}} ,x,m} \right) }  \right|^2 \cdot \left| \frac{ T_{x,m} \left( S'_{V_{\sqrt{r_m}} ,x,m} \right) }{ T_{x,m} \left( S'_{V ,x,m} \right) }  \right|^2 \cdot e^{mu } -1 \right| \\
& \leq & \left( 1+2C_4 m^{-l} \right)^4 \left( 1+2C_5 m^{-l} \right)^2 \left( 1+2C_3 m^{-l} \right) -1  \leq \left( 1+2C_4 \right)^4 \left( 1+2C_5 \right)^2 \left( 1+2C_3 \right) m^{-l} ,
\end{eqnarray*}
for any integer $m\geq \sum_{j=2}^4 \left( m_j +10 C_{j+1} \right) $.
\qed

\smallskip

Before considering the $C^k$-estimate of the quotients of Bergman kernel functions, we need the following lemma.

\begin{lmm}
\label{lmmdatarfusongmanifold2}
Let $ \left( V,\omega_V ,\linebundle_V ,h_V \right) $ be a complex hyperbolic cusp defined in Section \ref{intro}, and $S\in \hl \left( V,\linebundle_V \right) $ such that $ \left\Vert S \right\Vert_{L^2 ; h_V^m , \omega_V }^2 =1 $. Then for each $k\in\mathbb{N}$, there exists a constant $C>0$ depending only on $\left( V,\omega_V ,\linebundle_V ,h_V \right) $ and $k$, such that
\begin{eqnarray}
\left\Vert S (x) \right\Vert_{C^k ; h^m_V , \omega_V }^2  \leq Cm^C \left| \log h_D (x) \right|^C \rho_{\mathbb{D}^*_1 ,m} \left( \sqrt{ h_D (x) } \right) ,
\end{eqnarray}
for any $x\in V_{e^{-2}} $ and integer $m\geq C$, where $\rho_{\mathbb{D}^*_1 ,m} $ is $m$-th Bergman kernel on the polarized punctured unit disc $ \left( \mathbb{D}_1^* ,\omega_{\mathbb{D}^*} ,\mathbb{C} , \left| \log \left| z \right|^2 \right| \right) $.
\end{lmm}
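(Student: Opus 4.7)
The plan is to establish a pointwise majorization of the Bergman kernel function on $V$ by $\rho_{\mathbb{D}^*_1, m}(\sqrt{h_D(\cdot)})$ at the level of $C^0$, and then to bootstrap to the $C^k$ estimate via Cauchy-type interior bounds on a ball whose radius is tuned to the natural scales of the cusp.

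First I prove the $C^0$ case, which is the natural starting point since $\|S(y)\|^2_{h_V^m} \leq \rho_{V,\omega_V,m}(y)$ for any $L^2$-normalized $S$. Combining Proposition \ref{propbergmanknv} (with $\Gamma_V$ trivial) with the Tian--Yau--Zelditch bound $\rho_{D,\omega_D,q} \leq C q^{n-1}$ on the compact polarized abelian variety $(D,\omega_D,\mathcal{L}_D^{-1},h_D^{-1})$, and matching coefficients against the $n = 1$ formula for the Bergman kernel on $\mathbb{D}^*_1$ recorded in the remark after Lemma \ref{lmmorthonormalfver}, yields
\[
\rho_{V,\omega_V,m}(y) \leq C m^{n-1}\,\rho_{\mathbb{D}^*_1,m}\bigl(\sqrt{h_D(y)}\bigr),\qquad y \in V.
\]

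For $k \geq 1$ and $x \in V_{e^{-2}}$, set $r = (C_0 m |\log h_D(x)|)^{-1}$ with $C_0$ a large constant to be fixed. By Lemma \ref{lmminjectivityradiusoncusp} we have $\inj_V(y) \geq c/|\log h_D(y)| \geq r$ on $B_r(x)$ for $m$ large, while $\Ric(\omega_V) = -(n+1)\omega_V$ and $\Ric(h_V) = \omega_V$ supply the curvature and metric hypotheses. Applying Lemma \ref{lmmderiholosections} then gives
\[
\|\nabla^l S(x)\|^2_{h_V^m,\omega_V} \leq C m^{n+l}(\sqrt{m}\,r)^{-2n-2l}\,\|S\|^2_{L^2; B_r(x)}
\leq C' m^{2n+2l}\, |\log h_D(x)|^{2n+2l}\,\|S\|^2_{L^2; B_r(x)},
\]
for $0 \leq l \leq k$.

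It remains to bound $\|S\|^2_{L^2; B_r(x)}$ in terms of $\rho_{\mathbb{D}^*_1,m}(\sqrt{h_D(x)})$. Working in the local coordinates $(z,w)$ on $\mathcal{L}_D$ with $h_D = |w|^2 e^\varphi$, the computation (\ref{calculationvolumeformhyperboliccusp}) shows that the fiber-direction part of $\omega_V$ is $\sim |\log h_D|^{-2}$ times the cylindrical metric $|dw/w|^2$. Consequently $B_r(x)$ is contained in the locus $|\log(h_D(y)/h_D(x))| \leq C/m$, and similarly the base coordinates $z$ vary on a scale $O(\sqrt{|\log h_D(x)|}/m)$. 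Next I use that $\rho_{\mathbb{D}^*_1,m}(\sqrt{t}) = \sum_q \frac{q^{m-1}}{2\pi(m-2)!} t^q |\log t|^m$ concentrates on indices $q \sim (m-1)/|\log t| \leq m/2$ since $t \leq e^{-2}$; replacing $t$ by $t\,e^{\pm 1/m}$ multiplies each term by at most $e^{q/m}$, which is $O(1)$ on the concentration window while the geometric tails are trivially controlled. Hence $\rho_{\mathbb{D}^*_1,m}(\sqrt{h_D(y)}) \leq C\,\rho_{\mathbb{D}^*_1,m}(\sqrt{h_D(x)})$ throughout $B_r(x)$. Combining this with the $C^0$ bound and $\Vol_{\omega_V}(B_r) \leq C r^{2n}$ yields $\|S\|^2_{L^2; B_r(x)} \leq C m^{-n-1} |\log h_D(x)|^{-2n}\,\rho_{\mathbb{D}^*_1,m}(\sqrt{h_D(x)})$, and plugging into the previous display produces the claimed bound with, e.g., $C = 2n + 2k$.

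The main technical point is the pointwise stability of $\rho_{\mathbb{D}^*_1,m}$ under rescaling of its argument by $e^{\pm 1/m}$; this is what dictates the choice of the radius $r \sim (m|\log h_D(x)|)^{-1}$. Everything else -- the $L^2$-to-pointwise Cauchy estimate, Cheeger's injectivity radius bound, and the expression for $\rho_{V,\omega_V,m}$ -- is either a direct invocation of lemmas already established or a routine consequence of the explicit diagonalization of $H^0_{L^2}(V,\mathcal{L}_V^m)$ from Lemma \ref{lmmorthonormalfver}.
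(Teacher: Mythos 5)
Your proposal is correct, and it shares the paper's overall architecture (interior Cauchy-type derivative estimates from Lemma \ref{lmmderiholosections} on a small ball, followed by a comparison of the Bergman kernel function on that ball with $\rho_{\mathbb{D}^*_1,m}(\sqrt{h_D(x)})$ via the explicit series of Proposition \ref{propbergmanknv}), but the execution differs in a way worth recording. The paper takes the ball radius to be $C^{-1}|\log h_D(x)|^{-10}$, which forces a two-case analysis: for $x\in V_{e^{-2}}\sq V_{e^{-\epsilon\sqrt{m}}}$ the perturbation $h_D(y)=h_D(x)^{1\pm C|\log h_D(x)|^{-10}}$ is not controllable term-by-term in the series, so the paper instead invokes the Tian--Yau--Zelditch expansion (Theorem \ref{thmregularpart}) to bound $\rho_{\omega_V,m}$ by a constant there, and only for deep points $x\in V_{e^{-\epsilon\sqrt{m}}}$ does it run the series comparison. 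You instead first record the clean global pointwise bound $\rho_{V,\omega_V,m}(y)\leq Cm^{n-1}\rho_{\mathbb{D}^*_1,m}(\sqrt{h_D(y)})$ on all of $V$, and then shrink the radius to $r\sim (m|\log h_D(x)|)^{-1}$, so that $\log h_D$ varies only by $O(1/m)$ over $B_r(x)$; the stability of $\rho_{\mathbb{D}^*_1,m}$ under the rescaling $t\mapsto te^{\pm C/m}$ then holds uniformly for all $t\leq e^{-2}$, and no case split is needed. The smaller radius costs only extra powers of $m$ and $|\log h_D(x)|$, which the statement absorbs, so this is a mild simplification. The one step you wave at --- ``the geometric tails are trivially controlled'' --- does need a line: for $q>2m$ the ratio of consecutive terms of $\sum_q q^{m-1}(te^{1/m})^q$ is at most $e^{(m-1)/q}\cdot te^{1/m}\leq e^{-1}$, so the tail is dominated by its first term, which is comparable to the corresponding term of $\sum_q q^{m-1}t^q$; together with the uniform bound $e^{q/m}\leq e^2$ for $q\leq 2m$ this gives the claimed comparability. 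With that line supplied, the argument is complete.
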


\begin{proof}
Combining Lemma \ref{lmminjectivityradiushyperbpliccusp} with Lemma \ref{lmmderiholosections}, we can find a constant $C_1 >0$ depending only on $\left( V,\omega_V ,\linebundle_V ,h_V \right) $ and $k$, such that
$$ \left\Vert \nabla^k S(x) \right\Vert_{C^k ; h^m_V , \omega_V }^2 \leq C_1 m^{C_1} \left| \log h_D (x) \right|^{C_1 } \int_{B_{C_1^{-1} \left| \log h_D (x) \right|^{-10} } (x) } \rho_{\omega_V ,m} (y) d\V_{\omega_V} ,$$
for any $x\in V_{e^{-2}} $ and integer $m\geq C_1 $. By a direct computation, we can conclude that there exists a constant $C_2 >0$ depending only on $\left( V,\omega_V ,\linebundle_V ,h_V \right) $, such that
$$ B_{C_1^{-1} \left| \log h_D (x) \right|^{-10} } (x) \subset \left\lbrace  y\in V \bigg| h_D (x)^{1+C_2 \left| \log h_D (x) \right|^{-10} } \leq h_D (y) \leq h_D (x)^{1-C_2 \left| \log h_D (x) \right|^{-10} }  \right\rbrace , $$
for any $ x\in V_{e^{-2} } $. Combining Theorem {thmregularpart} with Lemma \ref{lmminjectivityradiushyperbpliccusp}, we see that there exists a constant $\epsilon \in \left( 0,e^{-1} \right) $ depending only on $\left( V,\omega_V ,\linebundle_V ,h_V \right) $, such that
$$ \frac{(m+n)!}{(2\pi )^{n+1} m! } \leq \rho_{\omega_V ,m} (x) \leq \frac{(m+n)!}{(2\pi )^{n-1} m! }  ,$$
for any $x\in V_{e^{-2}} \sq V_{e^{-\sqrt{\epsilon m} }} $ and integer $m\geq \epsilon^{-1} $. Hence there exists a constant $C_3 >0$ depending only on $\left( V,\omega_V ,\linebundle_V ,h_V \right) $, such that
$$ \left\Vert \nabla^k S(x) \right\Vert_{C^k ; h^m_V , \omega_V }^2 \leq C_3 m^{C_3 } \left| \log h_D (x) \right|^{C_3 } \leq C^3_2 m^{2C_3 } \left| \log h_D (x) \right|^{2C_3 } \rho_{\mathbb{D}^*_1 ,m} \left( \sqrt{ h_D (x) } \right) ,$$
for any $x\in V_{e^{-2}} \sq V_{e^{-\epsilon \sqrt{ m} }} $ and integer $m\geq C_3 $.

Now we will give an estimate of $ \rho_{\omega_V ,m} (y) $ for $ y\in B_{C_1^{-1} \left| \log h_D (x) \right|^{-10} } (x) $ and $x\in V_{e^{-\epsilon \sqrt{ m} }} $. By Proposition \ref{propbergmanknv}, we can conclude that
\begin{eqnarray}
\rho_{\omega_V ,m} (y) & = & \frac{n\left| \log h_D (y) \right|^m }{2\pi (m-n-1)! } \sum_{l=1}^\infty l^{m-n} \rho_{\omega_D ,l} \left( \pi_D (y) \right) h_D (y)^l \nonumber \\
& \leq & C_4 \frac{n\left| \log h_D (x) \right|^m \left( 1+C_2 \left| \log h_D (x) \right|^{-10} \right)^m }{2\pi (m-n-1)! } \sum_{l=1}^\infty l^{m-1} h_D (x)^{l \left( 1-C_2 \left| \log h_D (x) \right|^{-10} \right) } \\
& \leq & C^2_4 \frac{n\left| \log h_D (x) \right|^m }{2\pi (m-n-1)! } \sum_{l=1}^{\infty } l^{m-1} h_D (x)^{l \left( 1-C_2 \left| \log h_D (x) \right|^{-10} \right) } \nonumber \\
& \leq & C^3_4 m^n \rho_{\mathbb{D}^*_1 ,m} \left( \sqrt{h_D (x)} \right), \nonumber
\end{eqnarray}
for any $m\geq C_4 $, where $C_4 >0$ is a constant depending only on $\left( V,\omega_V ,\linebundle_V ,h_V \right) $. Combining these inequalities, we can prove this lemma.
\end{proof}

Now we need to estimate the $C^k $ norm on $\left( V ,\omega_V +\pi^*_D \omega_D \right) $. Let $\dya \in D $ and
 $z=\left( z_1 ,\cdots ,z_{n-1} \right) :U_\dya \to \mathbb{C}^{n-1} $ be a holomorphic coordinate around $\dya \in D $ such that $z ( \dya ) =0 $ and $ \omega_D = z^* \omega_{Euc} $. Fix a holomorphic section $e_{\linebundle_D } \in H^0 \left( U_\dya ,\linebundle_D \right) $ satisfying that $ \left\Vert e_{\linebundle_D } \right\Vert_{h_D}^2 = e^{|z|^2 } $ on $U_\dya $. 
Now we can construct a holomorphic map $\varphi : U_\dya \times \mathbb{R}_{+} \times \mathbb{R} \to \pi_D^{-1} \left( U_\dya \right) $ by $ \varphi (z,r,\theta ) = re^{\sqrt{-1} \theta } e_{\linebundle_D } (z) $, $\forall (z,r,\theta ) \in U_\dya \times \mathbb{R}_{+} \times \mathbb{R} $. It is clear that $ rank_{(z,r,x)} \left( d\varphi \right) = 2n $, $\forall (z,r,\theta ) \in U_\dya \times \mathbb{R}_{+} \times \mathbb{R} $, and
$$ \varphi^{-1} \left( \pi_D^{-1} \left( U_\dya \right) \cap V \right) = \left\lbrace (z,r,\theta ) \in U_\dya \times \mathbb{R}_{+} \times \mathbb{R} \big| r^2 < e^{-|z|^2 } \right\rbrace . $$
Write $x_{2i-1} = \re z_i $ and $x_{2i} =\im z_i $, $i=1,\cdots ,n-1$. Let
\begin{eqnarray}
\mathcal{X}_j & = & \left| \log h_D \right|^{ \frac{1}{2}} \mathcal{Y}_j = \mathcal{Z}_j = \frac{\partial}{\partial x_{j}} ,\;\; j=1,\cdots ,2n-2 ,\nonumber \\
\mathcal{X}_{2n-1} & = & \mathcal{Y}_{2n-1} =  r\left| |z|^2 +\log r^2 \right| \mathcal{Z}_{2n-1} = r\left| |z|^2 +\log r^2 \right| \frac{\partial}{\partial r} , \label{vectorfieldscusp} \\
\mathcal{X}_{2n } & = & \mathcal{Y}_{2n } =  r\left| |z|^2 +\log r^2 \right| \mathcal{Z}_{2n } = \left| |z|^2 +\log r^2 \right| \frac{\partial}{\partial \theta } \nonumber .
\end{eqnarray}
Then we can obtain the following estimate.

\begin{lmm}
\label{lmmgradientomegaomega+D}
Under the above assumptions, for any given $k\in\mathbb{N} $, there exists a constant $C>0$ depending only on $\left( V,\omega_V \right) $ and $U_{\dya} $, such that for any point $x\in V_{e^{-2}} \cap \pi_D^{-1} \left( U_\dya \right) $ and function $f\in C^k \left( V;\mathbb{R} \right) $, we have 
\begin{eqnarray}
\left\Vert \nabla^k f(x) \right\Vert_{\omega_V +\pi^*_{D} \omega_D } & \leq & C \left| \log h_D (x) \right|^C \sum_{l=0}^k \sum_{1\leq i_1 , \cdots , i_l \leq 2n } \left| \mathcal{X}_{i_1} \left( \cdots \left( \mathcal{X}_{i_l} \left( f \right)  \right) \cdots \right) (x) \right| \\
& \leq & C^2 \left| \log h_D (x) \right|^{2 C } \sum_{p=0}^k \left\Vert \nabla^p f(x) \right\Vert_{\omega_V +\pi^*_{D} \omega_D } , \nonumber
\end{eqnarray}
\begin{eqnarray}
& & \left\Vert \nabla^k f(x) \right\Vert_{\omega_V +\pi^*_{D} \omega_D } + \left\Vert \nabla^k f(x) \right\Vert_{\omega_V } \nonumber \\
& \leq & C \left| \log h_D (x) \right|^C \sum_{p=0}^k \sum_{1\leq i_1 , \cdots , i_p \leq 2n } \left| \mathcal{Y}_{i_1} \left( \cdots \left( \mathcal{Y}_{i_p} \left( f \right)  \right) \cdots \right) (x) \right| \\
& \leq & C^2 \left| \log h_D (x) \right|^{2C} \sum_{p=0}^k \left\Vert \nabla^p f(x) \right\Vert_{\omega_V } , \nonumber
\end{eqnarray}
and
\begin{eqnarray}
\sum_{p=0}^k \left\Vert \nabla^p f(x) \right\Vert_{ \pi^*_{D} \omega_D +\sqrt{-1} \partial\partialbar h_D } & \leq & C \sum_{p=0}^k \sum_{1\leq i_1 , \cdots , i_p \leq 2n } \left| \mathcal{Z}_{i_1} \left( \cdots \left( \mathcal{Z}_{i_p} \left( f \right)  \right) \cdots \right) (x) \right| \\
& \leq & C^2 \sum_{p=0}^k \left| h_D (x) \right|^{-C} \left\Vert \nabla^p f(x) \right\Vert_{\omega_V } . \nonumber
\end{eqnarray}

Moreover, if $f=f_D \circ \pi_D $ for some function $f_D \in C^k \left( D;\mathbb{R} \right) $, then
\begin{eqnarray}
\left\Vert \nabla^k f(x) \right\Vert_{\omega_V +\pi^*_{D} \omega_D } \leq C \sum_{l=0}^k \left\Vert \nabla^l f_D \left( \pi_D (x) \right) \right\Vert_{ \omega_D } .
\end{eqnarray}
\end{lmm}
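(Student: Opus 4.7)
The plan is a direct coordinate computation based on the explicit form of $\omega_V$. In coordinates $(z_1,\dots,z_{n-1},w)$ with $v=we_{\linebundle_D}(z)$ we have $h_D(v)=|w|^2e^{|z|^2}$, so $|\log h_D|=-\log|w|^2-|z|^2$ and (\ref{calculationvolumeformhyperboliccusp}) gives
$$\omega_V \;=\; \sqrt{-1}\,\frac{\omega_{D,\mathrm{Euc}}}{|\log h_D|} \;+\; \frac{\sqrt{-1}}{|\log h_D|^2}\left(\frac{dw}{w}+\sum_i \bar z_i\,dz_i\right)\wedge\left(\frac{d\bar w}{\bar w}+\sum_j z_j\,d\bar z_j\right).$$
From this explicit expression (with $|z|\leq 1$ on $U_{\dya}$ and $|\log h_D|\geq 2$ on $V_{e^{-2}}$) I will verify that the real frame $\{\mathcal{X}_j\}_{j=1}^{2n}$ has inner products with respect to $\omega_V+\pi_D^*\omega_D$ which are bounded above and below by universal constants; that $\{\mathcal{Y}_j\}$ is a quasi-orthonormal frame for $\omega_V$ up to factors $|\log h_D|^{\pm C}$; and that $\{\mathcal{Z}_j\}$ is quasi-orthonormal for $\pi_D^*\omega_D+\sqrt{-1}\partial\bar\partial h_D$ up to factors $h_D^{\pm C}$. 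In particular each of the three metrics is pointwise comparable, up to the stated polynomial losses, to $\sum_j\theta_j\otimes\theta_j$ with $\{\theta_j\}$ the dual coframe of the appropriate family.

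Next I compute the Levi-Civita Christoffel symbols in each of these three frames. The Lie brackets $[\mathcal{X}_i,\mathcal{X}_j]$, $[\mathcal{Y}_i,\mathcal{Y}_j]$, $[\mathcal{Z}_i,\mathcal{Z}_j]$ are computed directly from (\ref{vectorfieldscusp}); they vanish except for brackets involving $\mathcal{X}_{2n-1},\mathcal{X}_{2n}$ (and analogues for $\mathcal{Y},\mathcal{Z}$), and in those cases produce a combination of frame vectors with coefficients bounded by $C|\log h_D|^C$ (respectively $Ch_D^{-C}$ in the $\mathcal{Z}$ case). Plugging these into the Koszul formula, together with the component bounds from the first step, yields $|\Gamma^k_{ij}|\leq C|\log h_D|^C$ for the Christoffel symbols of $\omega_V+\pi_D^*\omega_D$ in the $\mathcal{X}$-frame and of $\omega_V$ in the $\mathcal{Y}$-frame, and $|\Gamma^k_{ij}|\leq Ch_D^{-C}$ for the Christoffel symbols of $\pi_D^*\omega_D+\sqrt{-1}\partial\bar\partial h_D$ in the $\mathcal{Z}$-frame.

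The three inequalities of the lemma now follow by induction on $k$. Writing $\nabla^{k-1}f=\sum_{|I|\leq k-1}a_I\,\mathcal{X}_{i_1}\!\cdots\mathcal{X}_{i_{|I|}}(f)\,\mathcal{X}^\flat_{j_1}\otimes\cdots$ with $|a_I|\leq C|\log h_D|^C$, a further covariant derivative produces either a new $\mathcal{X}$-directional derivative of $f$ (which appears in the right-hand sum) or a Christoffel term (controlled by the previous step). Taking norms in $\omega_V+\pi_D^*\omega_D$ gives the right direction of the first inequality, while the reverse direction comes from solving linearly for $\mathcal{X}_{i_1}\!\cdots\mathcal{X}_{i_p}(f)$ in terms of the $\mathcal{X}^\flat$-components of $\nabla^pf$ by inverting the bounded Gram matrix. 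The $\mathcal{Y}$- and $\mathcal{Z}$-cases are identical \emph{mutatis mutandis}. For the final clause, when $f=f_D\circ\pi_D$ depends only on $z$ one has $\mathcal{X}_{2n-1}(f)=\mathcal{X}_{2n}(f)=0$ and $\mathcal{X}_j(f)=(\partial_{x_j}f_D)\circ\pi_D$ for $1\leq j\leq 2n-2$; all Christoffel terms in the induction step either act on $f$ via $\mathcal{X}_{2n-1},\mathcal{X}_{2n}$ (and vanish) or produce pure Euclidean derivatives of $f_D$, which match the flat covariant derivatives $\nabla^l_{\omega_D}f_D$. No $|\log h_D|$-factor survives, and the last inequality follows.

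The principal obstacle is the bookkeeping in the second step: one must verify that the Christoffel bounds $|\Gamma^k_{ij}|\leq C|\log h_D|^C$ are uniform, so that the induction closes with the exponent $C$ controlled by $k$ alone rather than accumulating an extra power of $|\log h_D|$ at each stage, and that the final clause avoids losing even a single $|\log h_D|$-factor despite the fact that the ambient frames $\{\mathcal{X}_j\}$ are \emph{not} coordinate frames adapted to $\pi_D$.
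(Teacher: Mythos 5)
The paper offers no proof of this lemma to compare against (it states only that ``the proof is not difficult but is too long to give''), so your outline has to be judged on its own. For the first, second, and fourth displays your strategy is the right one and works: the frames $\{\mathcal{X}_j\}$ and $\{\mathcal{Y}_j\}$ are indeed comparable to orthonormal frames for $\omega_V+\pi_D^*\omega_D$ and $\omega_V$ up to factors $\left|\log h_D\right|^{\pm C}$, the brackets and frame-derivatives of the Gram matrices stay polynomially bounded in $\left|\log h_D\right|$, and the induction closes because $k$ is fixed; your observation that $\mathcal{X}_{2n-1}(f)=\mathcal{X}_{2n}(f)=0$ for $f=f_D\circ\pi_D$, combined with the fact that the base components of $\nabla_{\mathcal{X}_i}\mathcal{X}_j$ are $O(\left|\log h_D\right|^{-1})$, correctly yields the last clause without a logarithmic loss.

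The gap is in the third display, and your own intermediate estimate exposes it. You correctly record that the Christoffel symbols of $\pi_D^*\omega_D+\sqrt{-1}\partial\partialbar h_D$ in the $\mathcal{Z}$-frame are only bounded by $Ch_D^{-C}$ — indeed $\nabla_{\mathcal{Z}_{2n}}\mathcal{Z}_{2n}=-r^{-1}\partial_r+\cdots$ with $r=h_D^{1/2}e^{-|z|^2/2}$, which genuinely blows up as $h_D\to 0$. Feeding that bound into your induction produces
$\sum_{p\le k}\left\Vert\nabla^pf\right\Vert_{\pi_D^*\omega_D+\sqrt{-1}\partial\partialbar h_D}\le C\,h_D^{-C}\sum\left|\mathcal{Z}_{i_1}(\cdots(\mathcal{Z}_{i_p}(f))\cdots)\right|$,
\emph{not} the uniform-constant inequality asserted in the lemma; you cannot simply declare that ``the three inequalities now follow.'' In fact the uniform-constant version is false: for $f=r$ one has $\nabla^2f(\mathcal{Z}_{2n},\mathcal{Z}_{2n})=\mathcal{Z}_{2n}\mathcal{Z}_{2n}(f)+r^{-1}\mathcal{Z}_{2n-1}(f)=r^{-1}$, so $\left\Vert\nabla^2f\right\Vert$ blows up like $h_D^{-1/2}$ while every iterated $\mathcal{Z}$-derivative of $f$ is bounded. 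You should either prove the corrected statement with the $h_D^{-C}$ factor also present in the first inequality of the third display (and then check that this weaker form still suffices where the estimate is invoked, in the proof of Proposition \ref{propballquotient2}, where the relevant region reaches $h_D\sim e^{-m^{10}}$ and such a loss is not harmless), or identify the special structure of the functions to which the estimate is actually applied there — they are sums of $\left|a_{m,q}f_{S_{q,j}}h_D^{-1/2}\right|^2$, smooth across the zero section — for which the radial degeneracy does not occur. As written, the proposal silently passes from an $h_D^{-C}$ Christoffel bound to a uniform conclusion, which is the one step that would fail.
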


The proof of this lemma is not difficult but is too long to give it.

Then we can give an estimate for the $C^k $-norm of the quotient of Bergman kernel functions on $V_{r_m }$ with respect to the K\"ahler metric $\omega_V $.

\begin{prop}
\label{propestimatequotientCkdatarfusong1}
Let $(M,\omega ,\linebundle ,h )$ and $U\cong V_r $ be as in Theorem \ref{thmdatarfusongmanifold}. Assume that $\Gamma_V =0$ and $\linebundle_D^{-1} $ is globally generated. Then there exists a constant $b >0 $, such that for any given integer $l\geq 0$,
\begin{eqnarray}
 \left\Vert \frac{\rho_{M ,\omega ,m} (x)}{\rho_{V, \omega_{V} ,m} \left( x \right) } -1 \right\Vert_{C^k ;\omega_{V} } & \leq & C \left| \log h_D \right|^{b} m^{-l}  ,\;\; \forall m\geq C,\;\; \forall x\in V_{r_m} , \label{inequalitypropestimatequotientCkdatarfusong1}
\end{eqnarray}
where $C >0$ is a constant depending only on $(M,\omega ,\linebundle ,h)$, $V$, $k$ and $l$.
\end{prop}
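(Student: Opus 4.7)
The plan is to decompose
\[
\frac{\rho_{M,\omega,m}}{\rho_{V,\omega_V,m}} - 1 = \frac{\rho_{M,\omega,m} - \rho_{V,\omega_V,m}}{\rho_{V,\omega_V,m}},
\]
and to bound the $C^k$-norm on $V_{r_m}$ with respect to $\omega_V$ by handling the numerator and the denominator separately. The numerator is controlled directly by the localization (\ref{inequalitydatarfusongmanifold2}): since $\rho_{M,\omega,m}(x) - \rho_{V,\omega_V,m}(x)$ is the restriction of $B_{M,\omega,m} - B_{V/\Gamma_V,\omega_V,m}$ to the diagonal, its $k$-th $\omega_V$-covariant derivative is a fixed linear combination of mixed $(x,y)$-derivatives of $B_M - B_V$ evaluated at $(x,x)\in V\times V$, and applying (\ref{inequalitydatarfusongmanifold2}) with arbitrary $(l,\beta)$ yields $\|\nabla^k(\rho_M-\rho_V)(x)\|_{C^0;\omega_V} \leq C m^{-l'} |\log h_D(x)|^{-2b}$ for any prescribed $l'$.

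The second ingredient is a two-sided bound on $\rho_{V,\omega_V,m}$ together with polynomial $C^k$-control on it. The global generation of $\linebundle_D^{-1}$, combined with the standard Tian--Yau--Zelditch expansion on the compact flat Abelian variety $D$, gives a uniform estimate $c_0\, q^{n-1} \leq \rho_{D,\omega_D,q}(\mathfrak{a}) \leq C_0\, q^{n-1}$ for $q\geq q_0$ and $\mathfrak{a}\in D$, where the lower bound relies precisely on the fact that $H^0(D,\linebundle_D^{-q})$ has no common vanishing locus. Substituting into the series expansion of Proposition \ref{propbergmanknv} and performing a saddle-point analysis analogous to Lemma \ref{lmmapproximationsumfm} (the dominant index being $q\approx m/|\log h_D(x)|$) yields a pointwise two-sided bound of the form
\[
c_1|\log h_D(x)|^{-1/2} m^{n+1/2} \leq \rho_{V,\omega_V,m}(x) \leq C_1|\log h_D(x)|^{-1/2} m^{n+1/2}
\]
on $V_{r_m}$ for all $m\geq m_0$.

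For the $C^k$ upper bound on $\rho_{V,\omega_V,m}$, I differentiate the same series term by term using the vector fields $\mathcal{Y}_j$ of Lemma \ref{lmmgradientomegaomega+D}. The vertical fields $\mathcal{Y}_{2n-1},\mathcal{Y}_{2n}$ acting on $|\log h_D|^m h_D^q$ produce multiplicative factors bounded by a polynomial in $m$ and $|\log h_D|^{-1}$ after the Gaussian cancellation at the saddle point, while the horizontal fields $\mathcal{Y}_1,\ldots,\mathcal{Y}_{2n-2}$ act on $\rho_{D,\omega_D,q}(\pi_D(x))$ and are controlled using Lemma \ref{lmmderiholosections} applied to orthonormal bases of $H^0(D,\linebundle_D^{-q})$. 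Summing term by term and invoking Lemma \ref{lmmgradientomegaomega+D} to convert back to the $\omega_V$-norm produces an estimate $\|\nabla^k \rho_{V,\omega_V,m}(x)\|_{\omega_V} \leq C_2\, m^{n+1/2+A k} |\log h_D(x)|^{b'}$ for constants $A, b'$ depending only on $k$ and the data.

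Once these two-sided bounds and derivative estimates for $\rho_V$ are in hand, Fa\`{a} di Bruno's formula for $1/\rho_V$ together with the Leibniz rule produces the $C^k$-norm of the quotient from the bounds on the numerator and on $\rho_V^{\pm 1}$; choosing $l'$ in the first step sufficiently large to absorb the powers of $m$ arising from the derivatives of $\rho_V$ yields (\ref{inequalitypropestimatequotientCkdatarfusong1}). The main obstacle is the saddle-point step: the peak of the series in Proposition \ref{propbergmanknv} occurs at $q\sim m/|\log h_D(x)|$, which depends on both $m$ and the location $x$, so uniform control of derivatives in the cusp direction requires a careful tracking of cancellations at the saddle, with the global generation hypothesis on $\linebundle_D^{-1}$ being essential precisely for the pointwise lower bound on $\rho_V$.
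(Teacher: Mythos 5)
Your overall architecture (Leibniz/Fa\`a di Bruno on the quotient, polynomial $C^k$-control of $\rho_{V,\omega_V,m}$ via the vector fields of Lemma \ref{lmmgradientomegaomega+D}, and the use of global generation of $\linebundle_D^{-1}$) points in the right direction, but there is a fatal gap in the second ingredient. The claimed uniform two-sided bound
\[
c_1 \left| \log h_D (x) \right|^{-1/2} m^{n+1/2} \;\leq\; \rho_{V,\omega_V ,m} (x) \;\leq\; C_1 \left| \log h_D (x) \right|^{-1/2} m^{n+1/2}
\quad\text{on } V_{r_m}
\]
is false: the set $V_{r_m}=\{0<h_D<r_m\}$ extends all the way to the cusp, and once $\left|\log h_D(x)\right|>m$ your saddle point $q\approx m/\left|\log h_D(x)\right|$ lies below the summation range $q\geq 1$ of the series in Proposition \ref{propbergmanknv}. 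The series is then dominated by its $q=1$ term, so $\rho_{V,\omega_V,m}(x)\sim \tfrac{n}{2\pi(m-n-1)!}\left|\log h_D(x)\right|^{m}h_D(x)\,\rho_{D,\omega_D,1}(\pi_D(x))\to 0$ as $h_D(x)\to 0$ for fixed $m$ — far below any polynomial in $m$. Global generation of $\linebundle_D^{-1}$ gives positivity of $\rho_{D,\omega_D,q}$, not a lower bound on $\rho_V$ that is uniform in the cusp direction. Consequently the additive decomposition $(\rho_M-\rho_V)/\rho_V$ cannot be closed: the numerator bound you extract from (\ref{inequalitydatarfusongmanifold2}) is an \emph{absolute} bound $Cm^{-l'}\left|\log h_D\right|^{-2\beta}$, and dividing it by a denominator that decays like $\left|\log h_D\right|^{m}h_D$ blows up deep in the cusp. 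No choice of $l'$ or $\beta$ repairs this, since the required smallness of $\rho_M-\rho_V$ there is of order $h_D(x)$, not of order $\left|\log h_D(x)\right|^{-\beta}$.

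The paper avoids this by never separating numerator and denominator in an absolute sense. It writes $\rho_M/\rho_V=e^{mu}\sum_j\Vert S_{M,m,j}\Vert^2_{h_V^m}\big/\sum_j\Vert S'_{M,m,j}\Vert^2_{h_V^m}$ and shows by induction that each $\mathcal{Y}$-derivative of the quotient is a sum of terms each containing a \emph{difference} $\mathfrak{Y}_{i_1,\dots,i_p}-\mathfrak{Y}'_{i_1,\dots,i_p}$ of derivatives of the two kernel functions, divided by powers of $\rho_V$. That difference is represented through peak sections of the same functionals $T_{x,m,i_1,\dots,i_p}$ on $M$ and on $V$ and bounded by $2\Vert T_1\Vert\cdot\Vert T_2\Vert\cdot\Vert S_{T}-S'_{T}\Vert_{L^2}$, where by Lemma \ref{lmmdatarfusongmanifold2} and global generation one has $\Vert T_i\Vert^2\leq Cm^{C}\left|\log h_D\right|^{C}\rho_{\mathbb{D}_1^*,m}(\sqrt{h_D})\leq C'm^{C}\left|\log h_D\right|^{C}\rho_V$; so each such term is bounded \emph{relative to} $\rho_V$, uniformly down to the cusp, and $\Vert S_T-S'_T\Vert_{L^2}\leq Cm^{-l}$ (Lemma \ref{peaksectionrestriction}, Proposition \ref{propcoraseestimate}, Lemma \ref{lmmdistancedifferencepeaksection}) supplies the $m^{-l}$ gain. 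To repair your argument you would need either this multiplicative peak-section comparison or a weighted refinement of (\ref{inequalitydatarfusongmanifold2}) with weight comparable to $\rho_V(x)\rho_V(y)$ rather than $\left|\log h_D(x)\right|^{-\beta}\left|\log h_D(y)\right|^{-\beta}$, which the paper's Theorem \ref{thmdatarfusongmanifold} does not provide.
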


\begin{proof}
By choosing open subsets $U_\dya $ of $D $ as stated before Lemma \ref{lmmgradientomegaomega+D}, we can find a finite covering $ \left\lbrace \pi_D^{-1} \left( U_\dya \right) \right\rbrace_{\dya \in \mathfrak{A}} $ of $V$. Fix $x \in V_{r_m }$. Then there are vector fields $\mathcal{X}_i $ and $\mathcal{Y}_i $ around $x$ satisfying the estimates in Lemma \ref{lmmgradientomegaomega+D}. Let $ \left\lbrace S_{M,m,j} \right\rbrace_{j\in I_m } $ and $ \left\lbrace S'_{M,m,j} \right\rbrace_{j\in I'_m } $ be $L^2 $ orthonormal bases in the spaces $\hl \left( M, \omega , \linebundle^m ,h^m \right) $ and $\hl \left( V, \omega_V , \linebundle_V^m ,h_V^m \right) $, respectively. Then we have
\begin{eqnarray}
\frac{\rho_{M,\omega ,m} (x) }{\rho_{V ,\omega_V ,m} (x)} = \frac{\sum_{j\in I_m} \left\Vert S_{M,m,j} (x) \right\Vert_{h^m}^2 }{\sum_{j\in I'_m} \left\Vert S'_{M,m,j} (x) \right\Vert_{h_V^m}^2} = e^{mu} \frac{\sum_{j\in I_m} \left\Vert S_{M,m,j} (x) \right\Vert_{h_V^m}^2 }{\sum_{j\in I'_m} \left\Vert S'_{M,m,j} (x) \right\Vert_{h_V^m}^2 } ,\; \forall x\in U\cong V_r .
\end{eqnarray}

By Lemma \ref{lmmgradientomegaomega+D}, we can conclude that for any constant $l>0$, there exists a constant $C_1 >0$ depending only on $\left(M,\omega ,\linebundle ,h \right) $, $V$, $l$ and $k$, such that
$$ \sum_{p=0}^k \sup_{V_{r_m} } \left\Vert \nabla^p \left( e^{mu} -1 \right) (x) \right\Vert_{\omega_V } \leq C_1 m^{-l} ,\;\; \forall m\geq C_1 . $$

For abbreviation, we write $\mathfrak{Y}_{i_1 ,\cdots ,i_p }$ instead of $\mathcal{Y}_{i_{1} } \left( \cdots \left( \mathcal{Y}_{i_{p}} \left( \sum_{j\in I_m} \left\Vert S_{M,m,j} \right\Vert^{2}_{h_V^m} \right) \right) \cdots \right) (x) $, and we write $\mathfrak{Y}'_{i_1 ,\cdots ,i_p }$ instead of $\mathcal{Y}_{i_{1} } \left( \cdots \left( \mathcal{Y}_{i_{p}} \left( \sum_{j\in I'_m} \left\Vert S'_{M,m,j} \right\Vert^{2}_{h_V^m} \right) \right) \cdots \right) (x) $. By induction on $k$, for any integer $k\geq 1$, one can conclude that
\begin{eqnarray}
& & \sum_{1\leq i_1 , \cdots , i_k \leq 2n } \mathcal{Y}_{i_1} \left( \cdots \left( \mathcal{Y}_{i_k} \left( \frac{\sum_{j\in I_m} \left\Vert S_{M,m,j}  \right\Vert_{h_V^m}^2 }{\sum_{j\in I'_m} \left\Vert S'_{M,m,j} \right\Vert_{h_V^m}^2 } \right)  \right) \cdots \right) (x) \\
& = & \sum_{p=0}^k \sum_{ 1\leq i_1 , \cdots , i_p \leq 2n } \left(  \sum_{j\in I'_m} \left\Vert S'_{M,m,j} (x) \right\Vert^{2 }_{h_V^m} \right)^{-k} P_{ k,p } \cdot \left( \mathfrak{Y}_{i_1 ,\cdots ,i_p} - \widetilde{\mathfrak{Y}}_{i_1 ,\cdots ,i_p} \right) \nonumber
\end{eqnarray}
where $p\leq k $ is a nonnegative integer, $P_{ k,p} $ are polynomials in variables $\mathfrak{Y}_{i_1 ,\cdots ,i_p} $, $ \mathfrak{Y}'_{i_1 ,\cdots ,i_p}$ and of degree $k-1$, and $1\leq i_1 ,\cdots , i_p \leq 2n $ are integers. Since $\linebundle_D^{-1} $ is globally generated, we can find a constant $C_2 >0$ depending only on $V$, such that $ \rho_{\mathbb{D}_1^* ,m} \left( \sqrt{h_D (x)} \right) \leq C_2 \rho_{V ,\omega_V ,m} (x) $, $ \forall x \in V $, $\forall m\geq n+1$. By Lemma \ref{lmmdatarfusongmanifold2}, we see that for any integer $k\in\mathbb{N}$ and point $x\in V_{e^{-2}}$,
$$ \left\Vert \nabla^k \left( \frac{\sum_{j\in I_m} \left\Vert S_{M,m,j} \right\Vert_{h_V^m}^2 }{\sum_{j\in I'_m} \left\Vert S'_{M,m,j}  \right\Vert_{h_V^m}^2 } -1 \right) \right\Vert_{\omega_V } \leq C_3 m^{C_3} \left| \log h_D \right|^{C_3 } \sum_{p=0}^k \sum_{ 1\leq i_1 , \cdots , i_p \leq 2n } \frac{ \left| \mathfrak{Y}_{i_1 ,\cdots ,i_p} - \mathfrak{Y}'_{i_1 ,\cdots ,i_p} \right| }{\sum_{j\in I'_m} \left\Vert S'_{M,m,j}  \right\Vert^{2k }_{h_V^m}} , $$
for any $ m\geq C_3 $, where $C_3 >0$ is a constant depending only on $(M,\omega ,\linebundle ,h)$, $V$ and $k$. Now we estimate the function $\left| \mathfrak{Y}_{i_1 ,\cdots ,i_p} - \mathfrak{Y}'_{i_1 ,\cdots ,i_p} \right| $. Since Chern connection is compatible with the Hermitian metric $h^m_V $, we have
\begin{eqnarray}
& & \sum_{p=0}^k \sum_{ 1\leq i_1 , \cdots , i_p \leq 2n } \left| \mathfrak{Y}_{i_1 ,\cdots ,i_p} - \mathfrak{Y}'_{i_1 ,\cdots ,i_p} \right| \nonumber \\
& \leq & \sum_{p=0}^k \sum_{p' =0 }^p \sum_{ 1\leq i_1 , \cdots , i_p \leq 2n } \left|  \sum_{j\in I_m } \left\langle \nabla_{\mathcal{Y}_{i_1} } \cdots \nabla_{\mathcal{Y}_{i_{p'}} } S_{M,m,j} , \nabla_{\mathcal{Y}_{i_{p' +1}} } \cdots \nabla_{\mathcal{Y}_{i_{p }} } S_{M,m,j} \right\rangle_{h_V^m} \right. \\
& &\;\;\;\;\;\;\;\;\;\;\;\;\;\;\;\;\;\;\;\;\;\;\;\;\;\;\;\;\;\;\;\;\;\;\; \left. - \sum_{j\in I'_m } \left\langle \nabla_{\mathcal{Y}_{i_1} } \cdots \nabla_{\mathcal{Y}_{i_{p'}} } S'_{M,m,j} , \nabla_{\mathcal{Y}_{i_{p' +1}} } \cdots \nabla_{\mathcal{Y}_{i_{p }} } S'_{M,m,j} \right\rangle_{h_V^m} \right| . \nonumber
\end{eqnarray}
For any point $x\in U\cong V_r $ and finite sequence of integers $1\leq i_1 ,\cdots ,i_p \leq 2n $, we can define a linear functional $T_{x,m,i_1 ,\cdots ,i_p } $ on $ \hl \left( V_r , \linebundle_V^m \right) $ by 
$$ \nabla_{\mathcal{Y}_{i_1} } \cdots \nabla_{\mathcal{Y}_{i_{p }} } S (x) = T_{x,m,i_1 ,\cdots ,i_p } (S) e^m_x , \;\forall S\in \hl \left( V_r , \linebundle_V^m \right) , $$
where $e_x \in \linebundle_V |_{x} $ is a vector such that $\left\Vert e_x \right\Vert_{h_V} =1 $. Let $S_{x,m,i_1 ,\cdots ,i_p }$ and $ S'_{x,m,i_1 ,\cdots ,i_p } $ be the peak sections of $T_{x,m,i_1 ,\cdots ,i_p } $ in $\hl \left( M, \omega , \linebundle^m ,h^m \right) $ and $\hl \left( V, \omega_V , \linebundle_V^m ,h_V^m \right) $, respectively. Then we have
\begin{eqnarray}
& & \left|  \sum_{j\in I_m } \left\langle \nabla_{\mathcal{Y}_{i_1} } \cdots \nabla_{\mathcal{Y}_{i_{p'}} } S_{M,m,j} , \nabla_{\mathcal{Y}_{i_{p' +1}} } \cdots \nabla_{\mathcal{Y}_{i_{p }} } S_{M,m,j} \right\rangle_{h_V^m} \right. \nonumber \\
& &\;\;\;\;\;\;\;\;\;\;\;\;\;\;\;\;\;\;\;\;\;\;\;\;\;\;\;\;\;\;\;\;\;\;\; \left. - \sum_{j\in I'_m } \left\langle \nabla_{\mathcal{Y}_{i_1} } \cdots \nabla_{\mathcal{Y}_{i_{p'}} } S'_{M,m,j} , \nabla_{\mathcal{Y}_{i_{p' +1}} } \cdots \nabla_{\mathcal{Y}_{i_{p }} } S'_{M,m,j} \right\rangle_{h_V^m} \right| \\
& = & \left| \left\langle T_{x,m,i_1 ,\cdots ,i_{p'} } \left( S_{x,m,i_1 ,\cdots ,i_{p'} } - S'_{x,m,i_1 ,\cdots ,i_{p'} } \right) e_x^m , T_{x,m,i_{p' +1} ,\cdots ,i_{p'} } \left( S_{x,m,i_1 ,\cdots ,i_{p'} } \right) e_x^m \right\rangle_{h_V^m} \right. \nonumber \\
& & \;\;\;\; \left. - \left\langle T_{x,m,i_1 ,\cdots ,i_{p'} } \left( S'_{x,m,i_1 ,\cdots ,i_{p'} } \right) e_x^m , T_{x,m,i_{p' +1} ,\cdots ,i_{p'} } \left( S_{x,m,i_1 ,\cdots ,i_{p'} } -S'_{x,m,i_1 ,\cdots ,i_{p'} } \right) e_x^m \right\rangle_{h_V^m} \right| \nonumber \\
& \leq & 2 \left\Vert T_{x,m,i_1 ,\cdots ,i_{p'} } \right\Vert \cdot \left\Vert T_{x,m,i_{p' +1} ,\cdots ,i_{p'} } \right\Vert \cdot \left\Vert S_{x,m,i_1 ,\cdots ,i_{p'} } - S'_{x,m,i_1 ,\cdots ,i_{p'} } \right\Vert , \nonumber
\end{eqnarray}
where $\left\Vert \cdot \right\Vert$ are norms corresponding to the Hilbert space $ \hl \left( V_{\sqrt{r_m} } ,\omega_V ,\linebundle_V^m ,h^m_V \right) $. Combining Lemma \ref{peaksectionrestriction}, Proposition \ref{propcoraseestimate} and Lemma \ref{peaksecperturbedmanifoldssubsection}, we can assert that for any integer $l>0$, there exists a constant $C_4 >0 $ depending only on $(M,\omega ,\linebundle ,h)$, $V$, $k$ and $l$, such that
\begin{eqnarray}
\int_{V_{\sqrt{r_m }} } \left\Vert S_{x,m,i_1 ,\cdots ,i_p } - S'_{x,m,i_1 ,\cdots ,i_p } \right\Vert_{h_V^m}^2 d\V_{\omega_V} \leq C_4 m^{-l} ,\;\; \forall m\geq C_4 ,\forall x\in V_{r_m } .
\end{eqnarray}
Hence for any integers $k,l\in\mathbb{N}$,
$$ \left\Vert \nabla^k \left( \frac{\sum_{j\in I_m} \left\Vert S_{M,m,j} \right\Vert_{h_V^m}^2 }{\sum_{j\in I'_m} \left\Vert S'_{M,m,j} \right\Vert_{h_V^m}^2 } -1 \right) \right\Vert_{\omega_V } \leq C_6 m^{-l} \left| \log h_D \right|^{C_5 } ,\;\; \forall m\geq C_6 ,\forall x\in V_{r_m } ,$$
where $C_5 >0$ is a constant depending only on $(M,\omega ,\linebundle ,h)$, $V$ and $k$, and $C_6 >0$ is a constant depending only on $(M,\omega ,\linebundle ,h)$, $V$, $k$ and $l$. This is our claim.
\end{proof}

Now we are ready to prove the $C^k$-estimate of the quotients of Bergman kernel functions.

\begin{prop}
\label{propdatarfusongmanifold2}
Let $(M,\omega ,\linebundle ,h )$ and $U\cong V_r $ be as in Theorem \ref{thmdatarfusongmanifold}. Assume that $\Gamma_V =0$ and $\linebundle_D^{-1} $ is globally generated. Then for any given integer $l\geq 0$, there exists a constant $C >0$ depending only on $(M,\omega ,\linebundle ,h)$, $V$, $k$ and $l$, such that
\begin{eqnarray}
 \sup_{V_{r_m } } \left\Vert \frac{\rho_{M ,\omega ,m} (x)}{\rho_{V, \omega_{V} ,m} \left( x \right) } -1 \right\Vert_{C^k ;\omega_{V} + \pi_D^* \omega_D } & \leq & C m^{-l} ,\;\; \forall m\geq C. \label{inequalitydatarfusongmanifold4}
\end{eqnarray}
\end{prop}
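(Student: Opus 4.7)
The plan is to reduce Proposition \ref{propdatarfusongmanifold2} to Proposition \ref{propestimatequotientCkdatarfusong1} by comparing the $\omega_V$-norm to the larger $\omega_V+\pi_D^*\omega_D$-norm. Writing $Q_m = \rho_{M,\omega,m}/\rho_{V,\omega_V,m}$, I would split $V_{r_m}$ into a moderate cusp $A_m = V_{r_m}\cap\{|\log h_D|\leq m\}$ and a deep cusp $B_m = V_{r_m}\cap\{|\log h_D|>m\}$, and treat them by different methods. On $A_m$ the bound will come from Proposition \ref{propestimatequotientCkdatarfusong1} combined with a norm comparison, exploiting the fact that there $|\log h_D|^b$ is polynomial in $m$; on $B_m$ the bound will come from the strong decay of $Q_m-1$ extracted from the localization estimate (\ref{inequalitydatarfusongmanifold2}) and the cusp-model series.

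On the moderate region $A_m$, Lemma \ref{lmmgradientomegaomega+D} combined with a Leibniz expansion expressing $\mathcal{X}$-derivatives in terms of $\mathcal{Y}$-derivatives weighted by powers of $|\log h_D|^{1/2}$ (noting that $\mathcal{X}_j = |\log h_D|^{1/2}\mathcal{Y}_j$ in horizontal directions while vertical $\mathcal{X}$ and $\mathcal{Y}$ coincide, and that tangential derivatives of $|\log h_D|^{1/2}$ lose at most $|\log h_D|^{-1/2}$) yields the pointwise comparison
\[
\|\nabla^k f\|_{\omega_V+\pi_D^*\omega_D}(x) \leq C_1|\log h_D(x)|^{C_1(k)}\sum_{p=0}^k\|\nabla^p f\|_{\omega_V}(x).
\]
Applying this to $f = Q_m-1$ and invoking Proposition \ref{propestimatequotientCkdatarfusong1} with the exponent replaced by $l' = l + C_1(k) + b$, the bound $|\log h_D|^{C_1(k)+b}\leq m^{C_1(k)+b}$ available on $A_m$ yields the uniform estimate $Cm^{-l}$ there.

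On the deep region $B_m$, I would use the $C^k$-localization estimate (\ref{inequalitydatarfusongmanifold2}) on the diagonal, giving $\|B_M(x,x) - B_V(x,x)\|_{C^k;\omega_V}\leq Cm^{-l}|\log h_D(x)|^{-2\beta}$ for any $\beta$, combined with the explicit series for $\rho_V$ from Proposition \ref{propbergmanknv}. When $\linebundle_D^{-1}$ is globally generated, the series is dominated in the very deep cusp by its $q=1$ term, whose $\mathcal{X}$-derivatives can be computed in closed form. A quotient-rule computation together with the norm comparison above then produces $\|\nabla^k(Q_m-1)\|_{\omega_V+\pi_D^*\omega_D}\leq Cm^{-l}|\log h_D|^{-\beta'}$ for any $\beta'$, which on $B_m$ is uniformly $\leq m^{-l}$ once $\beta'$ is chosen large enough.

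The main obstacle is the $C^k$-analysis on $B_m$: although the cusp-model Bergman kernel $\rho_V$ is given by the explicit series of Proposition \ref{propbergmanknv}, controlling its derivatives and those of $\rho_V^{-1}$ in the $\omega_V+\pi_D^*\omega_D$-norm requires a careful saddle-point (or leading-term) analysis of how $\mathcal{X}$-differentiation acts on a sum whose dominant contribution concentrates near $q^*\sim m/|\log h_D|$, transitioning to $q=1$ in the super-polynomial regime $|\log h_D|>m$ where $\rho_V$ itself decays. The bookkeeping is straightforward but tedious, and this is where the bulk of the work lies.
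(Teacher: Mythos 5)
Your treatment of the moderate region is essentially the paper's argument: Proposition \ref{propestimatequotientCkdatarfusong1} together with the norm comparison in Lemma \ref{lmmgradientomegaomega+D} disposes of every $x$ with $\left|\log h_D(x)\right|$ polynomially bounded in $m$ (the paper cuts at $V_{e^{-m^{10}}}$ rather than at $\left|\log h_D\right|=m$, which is immaterial). The gap is in the deep region $B_m$. Your source of smallness there is the additive estimate $\left\Vert B_{M,\omega,m}(x,x)-B_{V,\omega_V,m}(x,x)\right\Vert_{C^k}\le Cm^{-l}\left|\log h_D(x)\right|^{-2\beta}$ from (\ref{inequalitydatarfusongmanifold2}), fed into a quotient rule. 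But the denominator $\rho_{V,\omega_V,m}(x)$ is not bounded below on $B_m$: by Proposition \ref{propbergmanknv}, once $\left|\log h_D(x)\right|\gg m$ the series is dominated by its $q=1$ term and $\rho_{V,\omega_V,m}(x)\approx\frac{n\left|\log h_D(x)\right|^m}{2\pi(m-n-1)!}\,\rho_{D,\omega_D,1}\!\left(\pi_D(x)\right)h_D(x)$, which decays like $h_D(x)$ up to $m$-dependent factors. At $h_D(x)=e^{-m^2}$, say, this is of order $e^{-m^2+m\log m}$, while your numerator bound is only $m^{-l-4\beta}$; the resulting bound on $Q_m-1$ is exponentially large, and no choice of $\beta'$ can repair a loss of size $h_D(x)^{-1}$ by powers of $\left|\log h_D(x)\right|$.

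What is needed deep in the cusp is the multiplicative statement $\left|\rho_M-\rho_V\right|\le Cm^{-l}\rho_V$ together with its derivatives, and this cannot be extracted from the localization estimate alone. The paper obtains it by matching orthonormal bases mode by mode: it splits off the subspace $X_m$ spanned by the modes $f_{S_{q,j}}e^m_{\linebundle_V}$ with $q\ge 2$, uses H\"ormander's estimate with the quasi-psh weight of Lemma \ref{lmmquasipshhyperboliccuspver2} to produce global sections of $\linebundle^m$ on $M$ that are $O(m^{-l})$-close in $L^2$ to the model $q=1$ sections $a_mf_{S_{1,j}}e^m_{\linebundle_V}$, Gram--Schmidt-orthonormalizes them with coefficients $b_{i,j,m}=\delta_{ij}+O(m^{-l})$, and then shows that the $q\ge2$ contributions to both $\rho_M$ and $\rho_V$ are \emph{relatively} negligible because sections of $X_m$ obey a pointwise $C^k$ bound carrying the factor $\rho_{\mathbb{D}^*_1,m}(\sqrt{h_D})^2$ rather than $\rho_{\mathbb{D}^*_1,m}(\sqrt{h_D})$, while global generation of $\linebundle_D^{-1}$ bounds the $q=1$ denominator from below by $\rho_{\mathbb{D}^*_1,m}(\sqrt{h_D})$. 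Your remark that the series is dominated by its $q=1$ term is the right intuition for $\rho_V$, but you supply no mechanism for organizing an orthonormal basis of $\hl\left(M,\linebundle^m\right)$ by Fourier modes; that construction is exactly the missing step, and the saddle-point analysis you defer to would not substitute for it.
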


\begin{proof}
By Proposition \ref{propestimatequotientCkdatarfusong1}, we only need to prove it when $x \in V_{e^{-m^{10}}} $. Without loss of generality, we can assume that $m $ is sufficiently large such that $r_m <r^{2} $. 

Let $X_m =\hl \left( V_{r^4_m } ,\omega_V ,\linebundle_V^m ,h_V^m \right)_{2} $ be the subspace of $\hl \left( V_{r^4_m } ,\omega_V ,\linebundle_V^m ,h_V^m \right) $ generated by the orthogonal set $\left\lbrace  f_{S_{q,j}} e^m_{\linebundle_V} \bigg| \; 1\leq j \leq N_q ,\; q\geq 2 \right\rbrace ,$ where $\left\lbrace S_{q,j} \right\rbrace_{j=1}^{N_q} $ is an orthonormal basis of the Hilbert space $\hl \left( D,\omega_D ,\linebundle^{-q}_D ,h_D^{-q} \right) $, and $f_{S_{q,j}} $ denote the holomorphic function corresponding to $S_{q,j} $ defined in Lemma \ref{holoontotal}. Then we define $Y_m =X_m \cap \hl \left( M,\omega ,\linebundle^m ,h^m \right) $, and $Z_m =X_m \cap \hl \left( V ,\omega_V ,\linebundle_V^m ,h_V^m \right) $. Note that $\left( V_r ,\linebundle_V \right) \cong \left( U ,\linebundle \right) $, and hence $\hl \left( V_r ,\omega_V ,\linebundle_V^m ,h_V^m \right) = \hl \left( U ,\omega ,\linebundle^m ,h^m \right) $ as subspaces of $ H^0 \left( V_r ,\linebundle_V^m \right) \cong H^0 \left( U ,\linebundle^m \right) $, $ \forall m\in\mathbb{N} $. Let $X_m^{ \perp } $, $Y_m^{ \perp } $ and $Z_m^{ \perp } $ be the orthogonal complements of $X_m $, $Y_m $ and $Z_m  $ in $ \hl \left( V_{r_m } ,\omega_V ,\linebundle_V^m ,h_V^m \right) $, $ \hl \left( M,\omega ,\linebundle^m ,h^m \right) $ and $ \hl \left( V ,\omega_V ,\linebundle_V^m ,h_V^m \right) $, respectively. By applying Lemma \ref{lmmorthonormalfver} to $\hl \left( V,\linebundle^m_V \right) $, $\forall m\geq n+1 $, we see that $\left\lbrace  f_{S_{q,j}} e^m_{\linebundle_V} \bigg| \; 1\leq j \leq N_q \right\rbrace $ is an orthogonal basis of $X_m^{ \perp } $ and $Z_m^{ \perp } $ at the same time. Let $a_{m} = \left( \frac{n }{ 2\pi (m-n-1)! } \right)^{\frac{1}{2}} $. It is easy to see that for any integer $l >0 $, we have
\begin{eqnarray}
\left| \int_{V_{r_m^4}} \left\Vert a_{m} f_{S_{1,j}} e^m_{\linebundle_V}  \right\Vert_{h_V^m}^2 d\V_{\omega_V} -1 \right| +  \sup_{V_{r_m } \sq V_{r_m^4} } \left\Vert a_{m} f_{S_{1,j}} e^m_{\linebundle_V}  \right\Vert_{h_V^m}^2 \leq C_1 m^{-l } ,
\end{eqnarray} 
where $C_1 >0 $ is a constant depending only on $V $ and $l$. 

Analysis similar to that in the proof of Lemma \ref{lmmquasipshhyperboliccuspver2} shows that there are quasi-plurisubharmonic functions $\psi_m \in L^1 \left( V,\omega_V \right) $ such that $\psi_m \leq 0 $, $\mathrm{supp} \psi_m \subset V_{r_m^4} $, $\sqrt{-1} \partial\partialbar \psi_m \geq -C_2   \sqrt{m} \omega_V $, and $ \lim_{h_D (x) \to 0} \left( \psi_m (x) - \log \left( h_D (x) \right) \right) = -\infty $, where $C_2 >0$ is a constant dependent only on $V $. See also the remark below Lemma \ref{lmmquasipshhyperboliccuspver2}. Let $ \eta_{m} \in C^\infty \left( V \right) $ such that $ \eta_m =1 $ on $V_{r_m^4 } $, $ \eta_m =0 $ on $V \sq V_{r_m^2 } $, and $ \left\Vert \nabla \eta_m \right\Vert_{\omega_V } \leq C_{3}  $, where $C_{3} >0$ is a constant dependent only on $V $. By H\"ormander's $L^2 $ estimate, we can find a smooth $\linebundle^m $-valued function $ \zeta_{j,m } $ on $M$ such that $ \partialbar \left( a_{m} \eta_{m} f_{S_{1,j}} e^m_{\linebundle } - \zeta_{j,m } \right) =0 $, and for each given integer $l>0 $,
$$ \int_M \left\Vert \zeta_{j,m } \right\Vert_{h^m }^2 e^{-10 \psi_m } d\V_{\omega } \leq \frac{C_{4 }}{m } \int_M \left\Vert \partialbar \left( a_{m} \eta_{m} f_{S_{1,j}} e^m_{\linebundle } \right) \right\Vert_{h^m ,\omega }^2 e^{-10 \psi_m } d\V_{\omega } \leq C_{4 } m^{-l } , $$
for any integer $m\geq C_{4}$, where $C_{4} >0 $ is a constant depending only on $\left( M,\omega ,\linebundle ,h \right) $ and $l$. It follows that $\zeta_{j,m} \big|_{V_{r^4_m} } \in X_m $, and $ \left\Vert \zeta_{j,m } \right\Vert_{L^2 ;h^m ,\omega }^2 \leq C_{4 } m^{-l } $. Similarly, there are holomorphic sections $\zeta'_{j,m} \in Y_m $ such that $ a_{m} \eta_{m} f_{S_{1,j}} e^m_{\linebundle } - \zeta_{j,m } - \zeta'_{j,m} \in Y_m^{ \perp } $, and for any given integer $l$, we have $ \left\Vert \zeta_{j,m } \right\Vert_{L^2 ;h^m ,\omega }^2 \leq C_{5 } m^{-l } $, $ \forall m\geq C_{5} $, where $C_{5} >0 $ is a constant depending only on $\left( M,\omega ,\linebundle ,h \right) $ and $l$. By Schmidt orthonormalization process, we can find constants $b_{i,j,m} $ such that the set $ \left\lbrace \sum_{j=1}^{N_1} b_{i,j,m } \left( a_{m} \eta_{m} f_{S_{1,j}} e^m_{\linebundle } - \zeta_{j,m } - \zeta'_{j,m} \right) \right\rbrace_{i=1}^{N_1} $ becomes a orthonormal basis of $Y_m^{\perp } $, and for each given integer $l$, there exists a constant $C_{6} >0 $ depending only on $\left( M,\omega ,\linebundle ,h \right) $ and $l$, such that $ \left| b_{i,j,m} -\delta_{i,j} \right| \leq C_{6 } m^{-l } $, $ \forall m\geq C_{6} $.

An argument similar to the one used in Lemma \ref{lmmdatarfusongmanifold2} shows that for each integer $k >0 $, there exists a constant $C_{7} >0 $ depending only on $V$ and $k$, such that 
$$ \left\Vert S (x) \right\Vert_{C^k ; h^m_V , \omega_V }^2 \leq C_{7} m^{C_{7}} \left| \log h_D (x) \right|^{C_{7} } \rho_{\mathbb{D}^*_1 ,m} \left( \sqrt{ h_D (x) } \right)^2 \left\Vert S \right\Vert_{L^2 ; h^m_V , \omega_V }^2 , $$
for any $ m\geq C_{7} $, $ S\in X_m $ and $ x\in V_{r^{10}_m } $. As in Proposition \ref{propestimatequotientCkdatarfusong1}, this estimate shows that for any integer $k \geq 0 $, we have
\begin{eqnarray}
& & \left\Vert \nabla^k \left( \frac{\sum_{j=1}^{N_1 } \left\Vert a_{m} f_{S_{1,j}} e^m_{\linebundle_V} \right\Vert_{h_V^m}^2 }{ \rho_{V,\omega_V ,m } } -1 \right) \right\Vert_{\omega_V } + \left\Vert \nabla^k \left( \frac{\sum_{j=1}^{N_1 } \left\Vert \sum_{j=1}^{N_1} b_{i,j,m } a_{m} f_{S_{1,j}} e^m_{\linebundle } \right\Vert_{h^m}^2 }{ \rho_{M,\omega ,m}  } -1 \right) \right\Vert_{\omega_V } \\
& & \leq C_{8} m^{-l} \left| \log h_D \right|^{C_{8} } \rho_{\mathbb{D}^*_1 ,m} \left( \sqrt{ h_D (x) } \right) ,\;\; \forall m\geq C_{8} ,\;\; \forall x\in V_{r^{10}_m } , \nonumber
\end{eqnarray}
where $C_{8} >0 $ is a constant depending only on $(M,\omega ,\linebundle)$ and $k$. Since $\left| b_{i,j,m} -\delta_{i,j} \right| \leq C_{6 } m^{-l } $, we can apply Lemma \ref{lmmgradientomegaomega+D} to show that for any given integers $k,l>0 $, there exists a constant $C_{9} >0 $ depending only on $(M,\omega ,\linebundle)$, $k$ and $l$, such that
$$ \left\Vert \nabla^k \left( \frac{\sum_{j=1}^{N_1 } \left\Vert \sum_{j=1}^{N_1} b_{i,j,m } a_{m} f_{S_{1,j}} e^m_{\linebundle_V } \right\Vert_{h_V^m}^2 }{ \sum_{j=1}^{N_1 } \left\Vert a_{m} f_{S_{1,j}} e^m_{\linebundle_V} \right\Vert_{h_V^m}^2  } -1 \right) \right\Vert_{\omega_V +\pi^*_D } \leq C_{9} m^{-l} ,\;\; \forall m\geq C_{9 } ,\;\; \forall x\in V_{r^{10}_{m}} .$$
Combining these inequalities, one can conclude that there exists a constant $C_{10} >0 $ depending only on $(M,\omega ,\linebundle ,h)$, $k$ and $l$, such that
$$ \left\Vert \frac{\rho_{M,\omega ,m} (x) }{\rho_{V ,\omega_V ,m} (x)} -1 \right\Vert_{C^k ;\omega_V +\pi^* \omega_D } \leq C_{10} m^{-l} ,\;\; \forall m \geq C_{10} ,\; \forall x\in V_{e^{-m^{10}}} , $$
which completes the proof.
\end{proof}

\subsection{Bergman kernels on quotients of complex ball}
\hfill

Let $\left(  {\mathbb{B}}^n ,\omega_{\mathbb{B}} \right)$ be the $n$-dimensional complex hyperbolic ball with constant bisectional curvature $-1$, and let $\Gamma $ be a torsion free lattice in $ \Aut \left( {\mathbb{B}}^n ,\omega_{\mathbb{B}} \right) \cong PU (n,1) $. Then the quotient space $ {\mathbb{B}}^n / \Gamma $ is a manifold, and there exist a finite collection of disjoint open subsets $ U_i \subset \mathbb{B}^n / \Gamma $, $i=1,\cdots , N$, and complex hyperbolic cusps $\left( V_i /\Gamma_{V_i } ,\omega_{V_i }  ,\linebundle_{V_i } /\Gamma_{V_i } ,h_{V_i } \right) $ such that $ \left( U_i ,\omega_{\mathbb{B}^n} \right) \cong \left( V_i /\Gamma_{V_i } ,\omega_{V_i } \right) $, and $ \left( \mathbb{B}^n / \Gamma \right) \sq  \left( \cup_{i=1}^N U_i \right) $ is compact.

Now we consider the Bergman kernels on $U_i$. Since $U_i$ and $ \mathbb{B}^n / \Gamma $ satisfy the conditions of Theorem \ref{thmballquotient}, we see that Theorem \ref{thmballquotient} describes the asymptotic behavior of the Bergman kernels on $U_i$.

\vspace{0.2cm}

\noindent \textbf{Proof of Theorem \ref{thmballquotient}: }
Let $\gamma_m = r^2 $ and $\sigma_m =r$ in Theorem \ref{thmcusplocalizationprinciple}. Note that $U\cong V_{ r} /\Gamma_{V } $. By applying Theorem \ref{thmcusplocalizationprinciple} to $U\subset M$ and $V_{ r} /\Gamma_{V } \subset V  /\Gamma_{V } $, respectively, we can prove the theorem.
\qed

\vspace{0.2cm}

Our next goal is to prove the $C^k$-estimate of the quotients of Bergman kernel functions.

By combining Lemma \ref{lmmgradientomegaomega+D} with Proposition \ref{propestimatequotientCkdatarfusong1}, one can get the following proposition.

\begin{prop}
\label{propestimatequotientCkballquotient1}
Let $(M,\omega ,\linebundle ,h )$ and $U\cong V_r $ be as in Theorem \ref{thmballquotient}. Assume that $\Gamma_V =0$ and $\linebundle_D^{-1} $ is globally generated. Then there exists a constant $b >0 $ depending only on $ (M,\omega ,\linebundle ,h ) $, such that for any given integer $l\geq 0$,
\begin{eqnarray}
 \left\Vert \frac{\rho_{M ,\omega ,m} (x)}{\rho_{V, \omega_{V} ,m} \left( x \right) } -1 \right\Vert_{C^k ;\omega_0 } & \leq & C \left| h_D (x) \right|^{-b} m^{-l}  ,\;\; \forall x\in V_{r^2 } , \;\; \forall m\geq C , \label{inequalitypropestimatequotientCkballquotient1}
\end{eqnarray}
where $ \omega_0 = \pi_D^* \omega_D + \sqrt{-1} \partial\partialbar h_D $ is a K\"ahler metric on the total space of $\linebundle_D $, and $C >0$ is a constant depending only on $(M,\omega ,\linebundle ,h)$, $V$, $k$ and $l$.
\end{prop}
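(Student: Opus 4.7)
The plan is to obtain the desired $C^k$-estimate first with respect to the cusp metric $\omega_V$ via Proposition \ref{propestimatequotientCkdatarfusong1}, and then transfer it to the smooth reference metric $\omega_0 = \pi_D^* \omega_D + \sqrt{-1}\,\partial\partialbar h_D$ using the change-of-frame inequality in Lemma \ref{lmmgradientomegaomega+D}.

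Since the hypothesis $u \equiv 0$ of Theorem \ref{thmballquotient} makes $U \cong V_r$ an asymptotic complex hyperbolic cusp of order $\infty$, and the extra assumptions $\Gamma_V = 0$ and the global generation of $\linebundle_D^{-1}$ are explicitly included in the statement of Proposition \ref{propestimatequotientCkballquotient1}, all hypotheses of Proposition \ref{propestimatequotientCkdatarfusong1} are in force. It yields, for every fixed $l$, a bound
$$ \left\Vert \frac{\rho_{M,\omega,m}(x)}{\rho_{V,\omega_V,m}(x)} - 1 \right\Vert_{C^k;\omega_V} \leq C \left| \log h_D(x) \right|^{b_0} m^{-l}$$
on the deep-cusp part $V_{r_m}$. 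For $x \in V_{r^2} \setminus V_{r_m}$ one has $h_D(x) \geq r_m$, so by Lemma \ref{lmminjectivityradiushyperbpliccusp} the injectivity radius of $\omega$ is at least $c\log m/\sqrt m$ and $\rho_{V,\omega_V,m}$ admits the standard Tian--Yau--Zelditch expansion (Theorem \ref{thmregularpart}); combining this with the $C^k$-localization (\ref{inequalityballquotient1}) restricted to the diagonal produces the same form of bound on that annulus, the loss being easily absorbed because $\left| h_D(x) \right|^{-b} \geq r_m^{-b} = e^{b\sqrt m/\log m}$ dominates any polynomial factor in $m$.

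Next I would apply the third inequality in Lemma \ref{lmmgradientomegaomega+D} pointwise to the smooth function $f := \rho_{M,\omega,m}/\rho_{V,\omega_V,m} - 1$. That inequality converts covariant derivatives in $\omega_V$ into covariant derivatives in $\omega_0$ at the cost of a single factor $\left| h_D(x) \right|^{-C_1}$, so combining with the previous step gives
$$ \left\Vert f \right\Vert_{C^k;\omega_0} \leq C \left| h_D(x) \right|^{-C_1} \left| \log h_D(x) \right|^{b_0} m^{-l} .$$
Because $|\log t|^{b_0}$ is dominated by $|t|^{-\varepsilon}$ as $t \to 0^+$ for every $\varepsilon > 0$, the two singular factors collapse into a single $\left| h_D(x) \right|^{-b}$ with $b$ slightly larger than $C_1$, and the small $\varepsilon$-loss in the exponent is compensated by enlarging $l$ in the first step.

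The main obstacle is the bookkeeping on the annulus $V_{r^2}\setminus V_{r_m}$: the $C^k$-estimate (\ref{inequalityballquotient1}) is stated with respect to the product metric on $V\times V$, so one must carefully restrict to the diagonal and then divide by $\rho_{V,\omega_V,m}$ in the $C^k$-sense, which requires uniform control of all derivatives of the denominator from Tian--Yau--Zelditch. The remaining points -- that the vector fields $\mathcal Z_j$ of Lemma \ref{lmmgradientomegaomega+D} and the two metrics $\omega_V,\omega_0$ behave uniformly across $V_{r^2}$, and that $\left| h_D \right|^{-C_1}$ swallows the logarithmic factor -- are routine.
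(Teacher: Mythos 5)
Your proposal follows the paper's own route exactly: the paper derives this proposition in one line by combining Proposition \ref{propestimatequotientCkdatarfusong1} (the $C^k$-estimate in the $\omega_V$-norm on $V_{r_m}$) with the third inequality of Lemma \ref{lmmgradientomegaomega+D} (conversion to the $\omega_0$-norm at the cost of a factor $\left| h_D \right|^{-C}$), and your treatment of the annulus $V_{r^2}\setminus V_{r_m}$ via the localization theorem and the Tian--Yau--Zelditch expansion supplies the detail the paper leaves implicit. One small slip: on that annulus $h_D(x)\geq r_m$ gives $\left| h_D(x) \right|^{-b}\leq r_m^{-b}$, not $\geq$, so the claimed super-polynomial domination is unavailable near the outer edge; this is harmless because, as you also note, the genuine $m^{-l}$ decay from the localization together with the freedom to enlarge $l$ already absorbs all polynomial losses.
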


When $h_D $ is small, we cannot derive the estimate we need from Proposition \ref{propestimatequotientCkballquotient1}. In this case, we need to make some finer estimates by approximating the $L^2$ orthonormal basis of $ \hl \left( M,\omega ,\linebundle^m ,h^m \right) $ using the $L^2$ orthonormal basis of $ \hl \left( V/\Gamma_V ,\omega_V  ,( \linebundle_V / \Gamma_V )^m ,h_V^m \right) $. Suppose that $\Gamma_V =0 $. The following lemma gives an estimate about the norm of holomorphic sections on $V $.

\begin{lmm}
\label{lmmcutholomorphicsectiononVr}
Let $\left( V ,\omega_V  , \linebundle_V^m ,h_V^m \right) $ be a complex hyperbolic cusp, and $\kappa\in \left( 0,e^{-1} \right) $ be a constant. Then for any given constant $\mathfrak{K} >0 $, there exists a constant $C>0 $ depending only on $\kappa$ and $ \left( V ,\omega_V ,\linebundle_V ,h_V \right) $, such that
\begin{equation}
 \left\Vert a_{m,q} f_{S_q} e_{\linebundle_V}^m \right\Vert_{L^2 ,V\sq V_\kappa ; h_V^m ,\omega_V }^2  \leq Ce^{-4\mathfrak{K} m } \left\Vert S_q \right\Vert_{L^2 ,D ; h_D^{-q} ,\omega_D }^2 ,
\end{equation}
for any positive integers $m\geq n+1$, $q \leq e^{-4\left( \mathfrak{K} + \left| \log \left| \log \kappa \right| \right| +n+4 \right)} m $ and holomorphic sections $S_p \in H^0 \left( D,\linebundle_D^{-p} \right) $, where $a_{m,q} = \sqrt{ \frac{n q^{m-n}}{2\pi (m-n-1)!} } $, and $f_{S_q}$ is the holomorphic function on $V$ defined in Lemma \ref{holoontotal}.
\end{lmm}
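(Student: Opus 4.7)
The plan is to reduce the stated integral bound to an incomplete Gamma estimate by redoing the calculation from Lemma \ref{lmmorthonormalfver} with the domain of integration in the fibre direction cut off at $|w|=\sqrt{\kappa}$. Recall that in the fibre coordinate $w$ (associated to a local unit section $e$ of $\linebundle_D$ with $h_D(e)=1$), the set $V\setminus V_\kappa$ corresponds to $|w|\in [\sqrt{\kappa},1)$. Running the same computation as in Lemma \ref{lmmorthonormalfver}, but over this restricted annulus, I would obtain
\begin{equation*}
\int_{V\setminus V_\kappa} \bigl\| f_{S_q} e_{\linebundle_V}^m \bigr\|_{h_V^m}^{2}\, dV_{\omega_V} = \|S_q\|_{L^2,D}^{2}\cdot \frac{2\pi \cdot 2^{m-n}}{n}\int_{0}^{\frac{1}{2}|\log\kappa|} e^{-2qt} t^{m-n-1}\,dt ,
\end{equation*}
after the change of variables $r = e^{-t}$ (so $|\log|w|^2|=2t$). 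Substituting $s=2qt$ and multiplying by $a_{m,q}^{2} = \frac{n q^{m-n}}{2\pi (m-n-1)!}$ collapses this to
\begin{equation*}
\bigl\| a_{m,q} f_{S_q} e_{\linebundle_V}^m \bigr\|_{L^2,V\setminus V_\kappa}^{2} = \frac{1}{(m-n-1)!}\int_{0}^{q|\log\kappa|} e^{-s} s^{m-n-1}\,ds \cdot \|S_q\|_{L^2,D}^{2} .
\end{equation*}

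The remaining task is to bound the incomplete Gamma factor. Under the hypothesis $q\le e^{-4(\mathfrak{K}+|\log|\log\kappa||+n+4)}m$, and using $|\log\kappa|\ge 1$ (since $\kappa<e^{-1}$), I would compute $q|\log\kappa|\le e^{-4\mathfrak{K}-4(n+4)}\,m$, which in particular is well below $m-n-1$ for $m$ large. Hence the integrand $s^{m-n-1}e^{-s}$ is monotone increasing on the whole integration interval, so
\begin{equation*}
\int_{0}^{q|\log\kappa|} e^{-s} s^{m-n-1}\,ds \;\le\; (q|\log\kappa|)^{m-n}.
\end{equation*}
Combining with Stirling's formula $(m-n-1)!\ge c\,(m/e)^{m-n-1}$ gives
\begin{equation*}
\frac{(q|\log\kappa|)^{m-n}}{(m-n-1)!} \;\le\; C\Bigl(\tfrac{e\,q|\log\kappa|}{m-n-1}\Bigr)^{m-n}\sqrt{m} \;\le\; C'\bigl(e^{-4\mathfrak{K}-4(n+4)+1}\bigr)^{m-n}\sqrt{m} \;\le\; C''e^{-4\mathfrak{K} m},
\end{equation*}
where $C''$ depends only on $n$, $\kappa$ and the cusp data. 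This produces the desired bound.

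The estimate is essentially a bookkeeping exercise once the integral is rewritten; the only conceptual point is that the hypothesis on $q$ is exactly tuned so that after absorbing the $|\log\kappa|$ factor and Stirling's $(m/e)^{m}$, the base of the exponential is $\le e^{-4\mathfrak{K}}$. I expect the main nuisance to be tracking the constants coming from the $|\log|\log\kappa||$ term, i.e.\ verifying that $|\log\kappa|\cdot e^{-4|\log|\log\kappa||}\le 1$ uniformly in $\kappa\in(0,e^{-1})$, together with absorbing the polynomial $\sqrt{m}$ factor into the exponential decay. No $\partialbar$-machinery or $L^{2}$-estimate is needed here: the lemma is a direct fibrewise integration, which is the reason the hypothesis on $q$ is so sharp.
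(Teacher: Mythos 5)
Your proof is correct, and it takes a cleaner route than the paper's. You integrate over the fibres first (exactly the computation of Lemma \ref{lmmorthonormalfver}, truncated at $|w|=\sqrt{\kappa}$), which collapses the left-hand side to the exact identity $\frac{1}{(m-n-1)!}\int_0^{q|\log\kappa|}e^{-s}s^{m-n-1}\,ds\cdot\Vert S_q\Vert_{L^2,D}^2$, and then you estimate a lower incomplete Gamma function. The paper instead proves a \emph{pointwise} bound: it writes $|f_{S_q}(x)|^2=h_D(x)^q\Vert S_q(\pi_D(x))\Vert^2_{h_D^{-q}}\le t^q\rho_{D,\omega_D,q}(\pi_D(x))$ and invokes the Tian--Yau--Zelditch expansion (Theorem \ref{thmregularpart}) on the compact Abelian variety $D$ to get $\rho_{D,\omega_D,q}\le C q^{n-1}$, applies Stirling to the prefactor $a_{m,q}^2$, and only then integrates the resulting sup-norm bound over $V\sq V_\kappa$ (splitting off $V\sq V_{e^{-1}}$, where the weight $|\log h_D|^m$ is needed for integrability). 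Your Fubini-first argument avoids the TYZ input entirely and needs no normalization of $S_q$; the paper's buys a pointwise decay estimate, which is stronger than the $L^2$ statement but not needed for the lemma. Two small points to tidy in your write-up: the monotonicity of $s^{m-n-1}e^{-s}$ on $[0,q|\log\kappa|]$ requires $q|\log\kappa|\le m-n-1$, which can fail for $m$ near $n+1$, but the cruder bound $e^{-s}\le 1$ gives $\int_0^X e^{-s}s^{m-n-1}ds\le X^{m-n}$ unconditionally; and when you convert $\bigl(e^{-4\mathfrak{K}-4(n+4)+1}\bigr)^{m-n}$ into $e^{-4\mathfrak{K}m}$ you pick up a factor $e^{4\mathfrak{K}n}$ that appears to make $C$ depend on $\mathfrak{K}$ --- this is harmless because the hypothesis on $q$ is vacuous unless $m\ge e^{4(\mathfrak{K}+\cdots)}$, so $e^{4\mathfrak{K}n}\le m^n$ is absorbed by the surviving factor $e^{-(4n+15)(m-n)}$ (the paper's proof silently relies on the same observation, as its remark about the lemma being trivial for small $m$ indicates).
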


\begin{rmk}
Since $q\in\mathbb{N}$, we see that Lemma \ref{lmmcutholomorphicsectiononVr} is trivial when $ m \leq e^{ 4\left( \mathfrak{K} + \left| \log \left| \log \kappa \right| \right| +n+4 \right)} $.
\end{rmk}

\begin{proof}
Without loss of generality, we can assume that $ \left\Vert S_q \right\Vert_{L^2 ,D ; h_D^{-q} ,\omega_D } =1 $. Fix $x\in V \sq V_{\kappa } $. Let $t= h_D (x) \in \left( \kappa  ,1 \right) $. By Theorem \ref{thmregularpart}, there exists a constant $C_1 >0$ depending only on $ \left( D ,\omega ,\linebundle_D ,h_D \right) $, such that $\rho_{D,\omega_D ,\linebundle_D^{-1} , h_D^{-1} ,q} \leq C_1 q^{n-1}  $, $\forall q\in \mathbb{N} $. Hence
\begin{eqnarray}
\left| f_{S_q} (x) \right|^2 = h_D (x)^q \left\Vert S_q \left( \pi_D (x) \right) \right\Vert_{h_D^{-q}}^2 \leq t^{q}  \rho_{D,\omega_D ,\linebundle_D^{-1} , h_D^{-1} ,q} \left( \pi_D (x) \right)  \leq C_1 t^{q} q^{n-1} .
\end{eqnarray}
By Stirling's formula, we can conclude that
\begin{eqnarray}
\left\Vert a_{m,q} f_{S_q} (x) e_{\linebundle_V}^m (x) \right\Vert_{h_V^m}^2  & \leq & C_1 \frac{n q^{m-1}}{2\pi (m-n-1)!} t^{q} \left| \log t \right|^m \leq C_1 \frac{n m^{n+1} q^{m-1} e^m}{2\pi \sqrt{2\pi} m^m} t^{q} \left| \log t \right|^m  \nonumber\\
& = & \frac{n C_1 }{2\pi \sqrt{2\pi} } e^{ (n+1) \log m + (m-1) \log q +m -m\log m +q\log t +m\log \left| \log t \right| } \\
& \leq & \frac{n C_1 }{2\pi \sqrt{2\pi} } e^{ n \log m - 4(m-1) \left( \mathfrak{K} + \left| \log \left| \log r \right| \right| +n+4 \right) + m\log \left| \log t \right| } \nonumber \\
& \leq & C_1 e^{ -4\mathfrak{K} m - m\left|\log \left| \log t \right| \right| } , \nonumber
\end{eqnarray}
for any positive integers $m\geq n+1$ and $q \leq e^{-4\left( \mathfrak{K} + \left| \log \left| \log r \right| \right| +n+4 \right)} m $. Now (\ref{calculationvolumeformhyperboliccusp}) implies that
\begin{eqnarray}
\left\Vert a_{m,q} f_{S_q} e_{\linebundle_V}^m \right\Vert_{L^2 ,V\sq V_\kappa ; h_V^m ,\omega_V }^2  & \leq & C_1 e^{ -4\mathfrak{K} m} \left( \V_{\omega_V} \left( V_{e^{-1}}\sq V_\kappa \right) + \int_{V \sq V_{e^{-1} } } \left| \log h_D \right|^{m } d\V_{\omega_V} \right) \nonumber \\
& \leq & C_2 e^{ -4\mathfrak{K} m} \left( 1+ \int_{\mathbb{D} \sq \mathbb{D}_{e^{-1} } } \sqrt{-1} |z|^{-2} \left| \log |z| \right|^{m-n-1 } dz\wedge d\zbar \right) \\
& \leq & C_3 e^{ -4\mathfrak{K} m} , \nonumber
\end{eqnarray}
where $C_2$, $C_3$ are positive constants depending only on $r$ and $ \left( V ,\omega_V ,\linebundle_V ,h_V \right) $. This is our assertion
\end{proof}

The following basic fact of linear algebra will also be used in the proof of the $C^k$-estimate of the quotients of Bergman kernel functions.

\begin{lmm}
\label{lmmlinearalgebramatrix}
Let $N\in\mathbb{N}$, and let $A= \left( a_{ij} \right)_{i,j=1}^N$ be an $N\times N$ Hermitian matrix. Suppose that there exists a positive constant $\delta \leq \frac{1}{100N} $ such that $\max_{1\leq i,j\leq N}\left| a_{ij} -\delta_{i,j} \right| \leq \delta $. Then there exists an $N\times N$ Hermitian matrix $B= \left( b_{ij} \right)_{i,j=1}^N$ such that $A=B \bar{B}^T $, and $\max_{1\leq i,j\leq N}\left| b_{ij} -\delta_{i,j} \right| \leq 2 N \delta $.
\end{lmm}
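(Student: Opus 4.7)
Since $B$ is required to be Hermitian, we have $\bar B^T = B$, and the equation $A = B\bar B^T$ becomes $A = B^2$. The plan is therefore to construct $B$ as the principal Hermitian square root of $A$ and to control its entries via the binomial power series expansion.

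First I would write $A = I + E$ with $E = (e_{ij})$ Hermitian and $\max_{i,j}|e_{ij}| \leq \delta$. The bound $\delta \leq \frac{1}{100N}$ gives the Frobenius estimate $\|E\|_{\mathrm{op}} \leq \|E\|_F \leq N\delta \leq \tfrac{1}{100}$, so $A = I+E$ is positive definite and lies strictly inside the convergence region of the binomial series $(1+x)^{1/2} = 1 + \sum_{k\geq 1} c_k x^k$, where $c_k = \binom{1/2}{k}$. I would then set
\[
B \;=\; I + \sum_{k=1}^{\infty} c_k E^k ,
\]
which converges absolutely in operator norm. Since each $E^k$ is Hermitian and the coefficients $c_k$ are real, $B$ is Hermitian; formal manipulation of the series (justified by absolute convergence) gives $B^2 = I+E = A$, which yields $A = B\bar B^T$.

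The only quantitative step is the entry bound on $F := B-I$. A direct induction on $k$ shows $|(E^k)_{ij}| \leq N^{k-1}\delta^k$, since each matrix multiplication introduces one summation over $N$ indices of entries bounded by $\delta$. Together with the well-known bound $|c_k| \leq 1$ for the binomial coefficients $\binom{1/2}{k}$ (coming from $|c_k| = \frac{1}{4^k(2k-1)}\binom{2k}{k} \leq \tfrac{1}{2}$ for $k\geq 1$), this yields
\[
|b_{ij} - \delta_{ij}| \;=\; |F_{ij}| \;\leq\; \sum_{k=1}^{\infty} |c_k|\, N^{k-1}\delta^k \;\leq\; \frac{1}{N}\sum_{k=1}^{\infty} (N\delta)^k \;=\; \frac{\delta}{1-N\delta} \;\leq\; 2\delta \;\leq\; 2N\delta,
\]
where the penultimate inequality uses $N\delta \leq \tfrac{1}{100}$. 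This is the desired bound.

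This is essentially routine; there is no genuine obstacle. The one mildly delicate point is ensuring the entrywise estimate rather than an operator-norm estimate on $F$, which is why I keep the geometric series expansion explicit and use $|(E^k)_{ij}| \leq N^{k-1}\delta^k$ rather than the coarser $\|E\|_{\mathrm{op}}^k$.
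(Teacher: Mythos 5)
Your proof is correct, and it takes a genuinely different route from the paper's. The paper diagonalizes $A=Q\Lambda \bar{Q}^T$ with $Q$ unitary, bounds the eigenvalues by estimating the coefficients of the characteristic polynomial of $A-I$ (getting $\max_j|\lambda_j-1|\leq 2N\delta$), sets $B=Q\sqrt{\Lambda}\,\bar{Q}^T$, and passes from the eigenvalue bound to the entrywise bound via $|\sqrt{\lambda}-1|\leq|\lambda-1|$ and the fact that entries are dominated by the operator norm. You instead construct the same matrix $B=A^{1/2}$ through the binomial series $ (I+E)^{1/2}=I+\sum_k \binom{1/2}{k}E^k$ and control entries directly by the induction $|(E^k)_{ij}|\leq N^{k-1}\delta^k$ together with $\bigl|\binom{1/2}{k}\bigr|\leq 1$. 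Your argument buys two things: it avoids the root-location step for the characteristic polynomial (which the paper states rather tersely and which a careful reader has to reconstruct), and it yields the sharper conclusion $\max_{i,j}|b_{ij}-\delta_{ij}|\leq \frac{\delta}{1-N\delta}\leq 2\delta$, of which the stated $2N\delta$ is a weakening. The spectral approach, for its part, is shorter to state and makes the positive-definiteness of $B$ immediate, though that is not needed for the lemma. All steps in your argument check out, including the identity $\bar{B}^T=B$ for Hermitian $B$ reducing $A=B\bar{B}^T$ to $A=B^2$, the convergence from $\Vert E\Vert_{\mathrm{op}}\leq\Vert E\Vert_F\leq N\delta\leq\frac{1}{100}$, and the Hermiticity of the series from the reality of the coefficients.
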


\begin{proof}
Recall that any Hermitian matrix can be diagonalized by a unitary matrix, so we can find $Q\in U(N)$ such that $A=Q \Lambda \bar{Q}^T $, where $\Lambda = \mathrm{diag} \left( \lambda_1 ,\cdots ,\lambda_N \right) $ be a diagonal matrix. Since $\max_{1\leq i,j\leq N}\left| a_{ij} -\delta_{i,j} \right| \leq \delta $, the characteristic polynomial of $A-I$, $p_A (\lambda ) = \lambda^n + c_1 \lambda^{n-1} +\cdots +c_n $, satisfying that $ c_i \leq N^i \delta^i $, $i=1,\cdots ,n$. Note that $ p_A (\lambda_1 ) =\cdots =p_A (\lambda_N ) =0 $. Then we can conclude that $\max_{1\leq j\leq N} \left| \lambda_j -1\right| \leq 2N\delta \leq \frac{1}{50} $. Clearly, $\Lambda $ is real matrix. Let $B = Q \sqrt{\Lambda} \bar{Q}^T $, where $ \sqrt{\Lambda} =\mathrm{diag} \left( \sqrt{\lambda_1} ,\cdots , \sqrt{\lambda_N } \right) $ is a diagonal matrix. Then we have $A=B \bar{B}^T $, and $ \left| \sqrt{\lambda_i } -1 \right| \leq \left| \lambda_i -1 \right| $ implies that $\max_{1\leq i,j\leq N}\left| b_{ij} -\delta_{i,j} \right| \leq 2 N \delta $.
\end{proof}

Now we are ready to prove the $C^k$-estimate of the quotients of Bergman kernel functions.

\begin{prop}
\label{propballquotient2}
Let $(M,\omega ,\linebundle ,h )$ and $U\cong V_r $ be as in Theorem \ref{thmballquotient}. Assume that $\Gamma_V =0$ and $\linebundle_D^{-1} $ is globally generated. Then
\begin{eqnarray}
 \sup_{V_{r^2 } } \left\Vert \frac{\rho_{M ,\omega ,m} (x)}{\rho_{V , \omega_{V} ,m} \left( x \right) } -1 \right\Vert_{C^k ;\omega_{0} } & \leq & C m^{-l} ,\;\; \forall m\geq C, \label{inequalityballquotient3}
\end{eqnarray}
where $ \omega_0 = \pi_D^* \omega_D + \sqrt{-1} \partial\partialbar h_D $ is a K\"ahler metric on the total space of $\linebundle_D $, and $C >0$ is a constant depending only on $(M,\omega ,\linebundle ,h)$, $\omega_0 $, $V$, $k$ and $l$.
\end{prop}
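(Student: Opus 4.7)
\textbf{Proof plan for Proposition \ref{propballquotient2}.} The plan is to split the domain according to the magnitude of $h_D(x)$. First, pick a large integer $\mu$ (depending on the desired $l$ and on the constant $b$ of Proposition \ref{propestimatequotientCkballquotient1}) and set the threshold $\tau_m = m^{-\mu}$. On the ``easy'' region $\{x\in V_{r^2} : h_D(x) \geq \tau_m\}$, Proposition \ref{propestimatequotientCkballquotient1} applied with the exponent $l' = l + b\mu$ directly yields the desired estimate $\leq C m^{-l}$, since $|h_D(x)|^{-b} \leq m^{b\mu}$. The whole substance of the proof is therefore the case $h_D(x) \leq \tau_m$, i.e., $x$ lies deep in the cusp.

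For this case I would adapt the orthonormal-basis approximation scheme of Proposition \ref{propdatarfusongmanifold2}, but replace the vector fields $\mathcal{Y}_j$ by the $\omega_0$-adapted fields $\mathcal{Z}_j$ of (\ref{vectorfieldscusp}). Starting from the orthonormal basis $\{a_{m,q} f_{S_{q,j}} e_{\linebundle_V}^m\}$ of $H^0_{L^2}(V,\linebundle_V^m)$ provided by Lemma \ref{lmmorthonormalfver}, I would choose a cutoff level $q_0 = \lfloor c_0 m \rfloor$ with $c_0>0$ small enough that the hypothesis of Lemma \ref{lmmcutholomorphicsectiononVr} holds with a large constant $\mathfrak{K}$ (to be specified by $l$). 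For $q \leq q_0$, Lemma \ref{lmmcutholomorphicsectiononVr} says the sections have $L^2$ mass $O(e^{-4\mathfrak{K}m})$ outside a fixed $V_\kappa$, so after multiplication by a fixed cutoff supported in $V$ and a H\"ormander $L^2$-correction (using a quasi-plurisubharmonic function from the remark after Lemma \ref{lmmquasipshhyperboliccuspver2}), I obtain an approximate family of global sections $\widetilde S_{q,j} \in H^0_{L^2}(M,\linebundle^m)$. Their Gram matrix differs from the identity by $O(N_m e^{-\mathfrak K m})$ where $N_m = O(m^n)$ bounds the number of small-$q$ sections, so Lemma \ref{lmmlinearalgebramatrix} converts them into a true orthonormal set in $H^0_{L^2}(M,\linebundle^m)$ at cost $O(N_m^2 e^{-\mathfrak K m})$. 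Completing to a full orthonormal basis of $H^0_{L^2}(M,\linebundle^m)$ by arbitrary choice in the orthogonal complement yields a basis ready for pointwise comparison with the $V$-basis at $x$.

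The comparison of $\rho_{M,\omega,m}(x)$ with $\rho_{V,\omega_V,m}(x)$ then splits into three error terms: the cutoff/H\"ormander error for small $q$, the Lemma \ref{lmmlinearalgebramatrix} orthonormalization error, and the pointwise large-$q$ tail. The tail is controlled by the estimate $|f_{S_q}(x)|^2 \leq C q^{n-1} h_D(x)^q$ from the proof of Lemma \ref{lmmcutholomorphicsectiononVr}, so for $q > q_0$ and $h_D(x) \leq m^{-\mu}$ the pointwise contribution to $\rho$ decays like $h_D(x)^{q_0/2}$, which is super-polynomially small in $m$. The small-$q$ and orthonormalization errors are pointwise controlled by a Cauchy-integral estimate on a $\omega_0$-ball of fixed (not $m$-dependent) radius around $x$; here one uses that $\omega_0$ is uniformly equivalent to the standard metric in the coordinate $(z,w)$ on $\pi_D^{-1}(U_\dya)$, so the estimate involves only polynomial factors in $m$.

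Finally, for the $C^k$-norm with respect to $\omega_0$, Lemma \ref{lmmgradientomegaomega+D} reduces the bound to estimating the iterated derivatives $\mathcal{Z}_{i_1}\cdots \mathcal{Z}_{i_k}$ of the Bergman kernel functions. Differentiating $f_{S_q} = s_q(z)w^q$ by $\mathcal{Z}_{2n-1}=\partial/\partial r$ and $\mathcal{Z}_{2n}=\partial/\partial\theta$ brings factors of at most $q^k$, which is polynomially bounded (by $(c_0m)^k$) in the small-$q$ regime and remains dominated by the super-polynomial decay $h_D(x)^{q_0/2}$ in the large-$q$ regime. Combining these, all three error contributions are $O(m^{-l})$ uniformly on $\{h_D(x) \leq \tau_m\}$, completing the proof.

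\textbf{Main obstacle.} The hard part is the $\mathcal{Z}_{2n-1}=\partial/\partial r$ direction: unlike $\mathcal{Y}_{2n-1}$ it carries no ``$r|\log r^2|$'' normalization, so each derivative of $w^q$ produces an unmitigated factor of $q$. Consequently one cannot work with a single orthonormal basis uniformly in $q$; the two-scale truncation $q_0 = \lfloor c_0 m\rfloor$ is crucial, and the thresholds $\tau_m = m^{-\mu}$ and $c_0$ must be chosen compatibly so that the polynomial $q_0^k$ loss on the small-$q$ side is defeated by $m^{-l}$ and the super-polynomial $h_D(x)^{q_0/2}$ gain on the large-$q$ side is genuinely super-polynomial once $\mu$ is large enough. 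Verifying this balance of scales, while simultaneously tracking the $\omega_0$-geometry uniformly down to $\{h_D(x)\to 0\}$, is the technical crux.
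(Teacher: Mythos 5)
Your proposal follows essentially the same route as the paper's proof: the outer region $h_D(x)\geq m^{-\mu}$ is handled by Proposition \ref{propestimatequotientCkballquotient1} exactly as you describe, and the deep-cusp region uses the same two-scale scheme — truncation of the Lemma \ref{lmmorthonormalfver} basis at a linear-in-$m$ level $q_0$, Lemma \ref{lmmcutholomorphicsectiononVr} plus a H\"ormander correction and Lemma \ref{lmmlinearalgebramatrix} to produce approximate global orthonormal sections, the pointwise tail bound $|f_{S_q}(x)|^2\leq Cq^{n-1}h_D(x)^q$, and the $\mathcal{Z}_j$ fields with Lemma \ref{lmmgradientomegaomega+D} for the $C^k;\omega_0$ norm. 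The balance of scales you flag as the crux is resolved in the paper by taking $\mathfrak{K}$ and the threshold exponent $\mathcal{C}$ large enough that $q_{m,\mathfrak{K}}^{m+k+n}h_D(x)^{q_{m,\mathfrak{K}}-k-1}\leq e^{-m}$, which is precisely the computation your plan calls for.
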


\begin{proof}
For each integer $ \mathfrak{q} \geq 1 $, we use $X_{m,\mathfrak{q}}$ to denote the linear subspace of $\hl \left( V_r ,\omega_V ,\linebundle_V^m ,h_V^m \right) $ generated by $\left\lbrace f_{S_{q,j}} e_{\linebundle_V}^m \big| q\geq \mathfrak{q} , 1\leq j\leq N_q \right\rbrace $, where $\left\lbrace S_{q,j} \right\rbrace_{j=1}^{N_q} $ is an $L^2 $ orthonormal basis in the Hilbert space $\hl \left( D,\linebundle^{-q}_D \right) $, and $f_{S_{q,j} } $ denote the holomorphic function corresponding to $S_{q,j} $ defined in Lemma \ref{holoontotal}. Write $Y_{m,\mathfrak{q}} =X_{m,\mathfrak{q}} \cap  \hl \left( V ,\omega_V ,\linebundle_V^m ,h_V^m \right) $, and $Z_{m,\mathfrak{q}} =X_{m,\mathfrak{q}} \cap  \hl \left( M ,\omega ,\linebundle^m ,h^m \right) $. Let $\mathfrak{K} \geq 10 $ be a constant that will be determined later. Set $q_{ m,\mathfrak{K} } = \left\lfloor e^{-4\left( \mathfrak{K} + \left| \log \left| \log r \right| \right| +n+6 \right)} m \right\rfloor $, for any integer $ m\geq e^{ 4\left( \mathfrak{K} + \left| \log \left| \log r \right| \right| +n+6 \right)} $. Then Lemma \ref{lmmcutholomorphicsectiononVr} shows that there exists a constant $C_1 \geq 1 $ depending only on $ r $ and $\left( V,\omega_V ,\linebundle_V ,h_V \right) $, such that 
\begin{equation}
 \left\Vert a_{m,q} f_{S_{q,j}} e_{\linebundle_V}^m \right\Vert_{L^2 ,V\sq V_{r^2 } ; h_V^m ,\omega_V }^2  \leq C_1 e^{-4\mathfrak{K} m } ,\;\; \forall q\leq q_{m,\mathfrak{K}} ,\;\; 1\leq j \leq N_q ,
\end{equation}
where $a_{m,q} = \sqrt{ \frac{n q^{m-n}}{2\pi (m-n-1)!} } $. Let $\eta \in C^\infty (V) $ such that $ 0\leq \eta \leq 1 $, $ \left| \nabla \eta \right|_{\omega_V } \leq 10 $, $ \eta =1 $ on $V_{r^2}$ and $\eta =0$ on $V\sq V_r $. Then we can apply H\"ormander's $L^2$ estimate to show that there are $\linebundle^m $-valued smooth functions $u_{m,q,j} $ on $M$, such that $\partialbar u_{m,q,j} = \partialbar \left( a_{m,q} \eta f_{S_{q,j}} e_{\linebundle_V}^m \right) $, and 
\begin{eqnarray}
\int_M \left\Vert u_{m,q,j} \right\Vert_{h^m}^2 d\V_{\omega} \leq \int_M \left\Vert \partialbar \left( a_{m,q} \eta f_{S_{q,j}} e_{\linebundle }^m \right) \right\Vert_{h^m ,\omega }^2 d\V_{\omega} \leq 100C_1 e^{-4\mathfrak{K} m} ,
\end{eqnarray}
for any integers $q\leq q_{m,\mathfrak{K}} $ and $ 1\leq j \leq N_q$. Write $\widetilde{S}'_{m,q,j} = a_{m,q} \eta f_{S_{q,j}} e_{\linebundle }^m - u_{m,q,j} $ for any integers $q\leq q_{m,\mathfrak{K}} $ and $ 1\leq j \leq N_q$. Since $ a_{m,q} f_{S_{q,j}} e_{\linebundle_V }^m \perp X_{m,q_{m,\mathfrak{K}} +1} $, $\forall q \leq q_{m,\mathfrak{K}} $, we see that the image of $\widetilde{S}'_{m,q,j}$ under the orthogonal projection of $\hl \left( M ,\linebundle^m \right) $ onto $Z_{m,q_{m,\mathfrak{K}} +1} $, $ \widetilde{S}'^{Z}_{m,q,j} $, satisfying that
\begin{eqnarray}
\;\;\;\;\;\;\left\Vert \widetilde{S}'^{Z}_{m,q,j} \right\Vert^2_{L^2 ,M; h^m ,\omega } & \leq & \left\Vert u_{m,q,j} \right\Vert_{L^2 ,M; h^m ,\omega } + \left\Vert a_{m,q} f_{S_{q,j}} e_{\linebundle }^m \right\Vert_{L^2 ,M \sq V_{r^2} ; h^m ,\omega } \leq 200 C_1 e^{-2\mathfrak{K} m} .
\end{eqnarray}
Set $\widetilde{S}''_{m,q,j} = \widetilde{S}'_{m,q,j} - \widetilde{S}'^{Z}_{m,q,j} $, and $N_{\mathrm{sum} , m,\mathfrak{K} } = \sum_{q=1}^{q_{m,\mathfrak{K}}} N_q $. Then $\upsilon_{m,q,j,q',j'} = \left\langle \widetilde{S}''_{m,q,j} , \widetilde{S}''_{m,q',j'} \right\rangle $ gives an $N_{\mathrm{sum} , m,\mathfrak{K} } \times N_{\mathrm{sum} , m,\mathfrak{K} } $ Hermitian matrix $\Upsilon_m $ such that $\left| \upsilon_{m,q,j,q',j'} -\delta_{q,q'} \delta_{j,j'} \right| \leq 400C_1 e^{-\mathfrak{K} m} $. Note that $N_{\mathrm{sum} , m,\mathfrak{K} } \leq C_2 q^n $ for some constant $C_2 >0$ depending only on $\left( D,\omega_D ,\linebundle_D ,h_D \right) $. So we can assume that $ e^{ \mathfrak{K} m} \geq e^{ 400 (C_1 + C_2 )} q^{4n}  $ by increasing $m$ if necessary. Lemma \ref{lmmlinearalgebramatrix} now shows that there exists a constant $C_3 \geq 1 $ depending only on $ r $ and $\left( V,\omega_V ,\linebundle_V ,h_V \right) $, such that we can find constants $b_{m,q,j,q',j'} $ satisfying that $ |b_{m,q,j,q',j'} | \leq C_3 q^n e^{-\mathfrak{K} m} $, and
\begin{eqnarray}
\left\lbrace \widetilde{S}_{m,q,j} = \widetilde{S}''_{m,q,j} + \sum_{q' =1}^{q_{m,\mathfrak{K}}} \sum_{j=1}^{N_{q'}} b_{m,q,j,q',j'} \widetilde{S}''_{m,q',j'} \bigg| 1\leq q\leq q_{m,\mathfrak{K}} , 1\leq j \leq N_q \right\rbrace
\end{eqnarray}
forms an orthonormal basis of $X_{m,q_{m,\mathfrak{K}} +1}^{\perp} $. It follows that 
\begin{eqnarray}
\widetilde{S}_{m,q,j} = a_{m,q} f_{S_{q,j}} e_{\linebundle_V }^m + \sum_{q' =1}^{q_{m,\mathfrak{K}}} \sum_{j=1}^{N_{q'}} c_{m,q,j,q',j'} a_{m,q'} f_{S_{q',j'}} e_{\linebundle_V }^m + \widetilde{S}^{Z}_{m,q,j} , \label{estimateholomorphicMSmqjclosetoafe}
\end{eqnarray}
where $c_{m,q,j,q',j'} $ are constants satisfying that $ |c_{m,q,j,q',j'}| \leq 200 C_3 q^{3n} e^{-\mathfrak{K} m} $, and $\widetilde{S}^{Z}_{m,q,j} \in Z_{m,q_{m,\mathfrak{K}} +1} $ satisfying that $\left\Vert \widetilde{S}^{Z}_{m,q,j} \right\Vert^2_{L^2 ,M; h^m ,\omega } \leq 200 C_3 q^{3n} e^{-\mathfrak{K} m} $.

Fix $k\in\mathbb{N}$ and $x\in V_{r^3} $. Then there are vector fields $\mathcal{Z}_i $ around $x$ satisfying the estimates in Lemma \ref{lmmgradientomegaomega+D}. For any finite sequence of integers $1\leq i_1 ,\cdots ,i_p \leq 2n $, we can define a linear functional $T_{x,m,i_1 ,\cdots ,i_p } $ on $ \hl \left( V_r , \linebundle_V^m \right) $ by
\begin{equation}
T_{x,m,i_1 ,\cdots ,i_p } (S) = \mathcal{Z}_{i_1} \left( \cdots \left( \mathcal{Z}_{i_p} \left( h_D^{-\frac{1}{2}} e_{\linebundle_V}^{-m} (S) \right) \right) \cdots \right) , \;\forall S\in \hl \left( V_r , \linebundle_V^m \right) .
\end{equation} 
By a straightforward calculation, one can obtain
\begin{eqnarray}
\left| T_{x,m,i_1 ,\cdots ,i_p } \left( f_{S_{q,j}} e_{\linebundle_V}^m \right) \right|^2 \leq C_4 q^{p+n-1} h_D (x)^{\max \lbrace 0, q-p-1 \rbrace } ,
\end{eqnarray}
for any integers $q\in \mathbb{N}$ and $1\leq j\leq N_q$, where $C_4 >0$ is a constant depending only on $ r $, $p$ and $\left( V,\omega_V ,\linebundle_V ,h_V \right) $. Suppose that $q_{m,\mathfrak{K}} \geq p+1 $. Then 
\begin{eqnarray}
\left\Vert T_{x,m,i_1 ,\cdots ,i_p } \big|_{ Y_{m,q_{m,\mathfrak{K}} +1} } \right\Vert^2 & = & \sum_{q> q_{m,\mathfrak{K}} } \sum_{j=1}^{N_q} \left| T_{x,m,i_1 ,\cdots ,i_p } \left( a_{m,q} f_{S_{q,j}} e_{\linebundle_V }^m  \right) \right|^2 \nonumber \\
& \leq & \sum_{q> q_{m,\mathfrak{K}} } C_4 N_q a_{m,q}^2 q^{p+n+1} h_D (x)^{ q-p-1 } \label{estimatehigerdegreetermonV} \\
& \leq & \sum_{q> q_{m,\mathfrak{K}} } \frac{n C_2 C_4 }{2\pi (m-n-1)! } q^{m+p+ n } h_D (x)^{ q-p-1 } \nonumber .
\end{eqnarray}
When $\log h_D (x) \leq -10 -\frac{m+p+n}{ q_{m,\mathfrak{K}} } $, (\ref{estimatehigerdegreetermonV}) implies that
\begin{eqnarray}
\left\Vert T_{x,m,i_1 ,\cdots ,i_p } \big|_{ Y_{m,q_{m,\mathfrak{K}} +1} } \right\Vert^2 \leq \frac{n C_2 C_4 }{ \pi (m-n-1)! } q_{m,\mathfrak{K}}^{m+p+ n } h_D (x)^{ q_{m,\mathfrak{K}}-p-1 }  .\label{estimatehigerdegreetermonVsmallhD}
\end{eqnarray}

Note that $ \rho_{\omega_D ,1} >0 $ on $D$. Then we can conclude that
\begin{eqnarray}
 \left| T_{x,m,i_1 ,\cdots ,i_p } \left( a_{m,q} f_{S_{q,j}} e_{\linebundle_V}^m \right) \right|^2 \leq C_4 e^{km} q_{m,\mathfrak{K}}^{2k+2n} \sum_{q' =1}^{q_{m,\mathfrak{K} }} \sum_{j'=1}^{N_{q' }} \left| a_{m,q'} f_{S_{q',j'}} h_D^{-\frac{1}{2}} \right|^2 , \label{estimatesmallqTimage}
\end{eqnarray}
for any integers $q\leq q_{m,\mathfrak{K}}$ and $1\leq j\leq N_q $.

By the construction in much the same way as in the proof of Lemma \ref{lmmquasipshhyperboliccuspver2}, we see that there exist a quasi-plurisubharmonic function $\psi \in L^1 \left( V,\omega_V \right) $ and a constant $C_5 $ depending only on $ r $ and $\left( V,\omega_V ,\linebundle_V ,h_V \right) $, such that $\psi \leq 0 $, $\sqrt{-1} \partial\partialbar \psi \geq -C_5 \omega_V $, $\mathrm{supp} \psi \subset V_{r^2} $, and
$$ \lim_{h_D (y) \to 0 } \left( \psi (y) - \log \left( \dist_{\omega_V} (x,y) \right) \right) = \lim_{y\to x} \left( \psi (y) - \log \left( h_D (y) \right) \right) = -\infty .$$
Assume that $m-n-2 \geq C_5 \left( q_{m,\mathfrak{K}} +4n +4p \right) $. Then $ \sqrt{-1} \left( 2n+2k + q_{m,\mathfrak{K}} \right) \partial\partialbar \psi + m\omega + \Ric (\omega ) \geq \omega $. Let $S_{ x,m,Y,i_1 ,\cdots ,i_p } $ be the peak section of $ T_{x,m,i_1 ,\cdots ,i_p } \big|_{ Y_{m,q_{m,\mathfrak{K}} +1} } $. Hence we can use H\"ormander's $L^2$ estimate to show that there exists a smooth $\linebundle^m$-values $u_{ x,m,Y,i_1 ,\cdots ,i_p } $ on $M$ such that $\partialbar u_{ x,m,Y,i_1 ,\cdots ,i_p } = \partialbar \left( \eta S_{ x,m,Y,i_1 ,\cdots ,i_p } \right) $, and
\begin{eqnarray}
\int_M \left\Vert u_{ x,m,Y,i_1 ,\cdots ,i_p } \right\Vert_{h^m}^2 d\V_{\omega} & \leq & \int_M \left\Vert \partialbar \left( \eta S_{ x,m,Y,i_1 ,\cdots ,i_p } \right) \right\Vert_{h^m ,\omega }^2 d\V_{\omega} \\
& = & \int_M \left\Vert \partialbar \eta \otimes S_{ x,m,Y,i_1 ,\cdots ,i_p }  \right\Vert_{h^m ,\omega }^2 d\V_{\omega} \leq 100 . \nonumber
\end{eqnarray}
Since $S_{ x,m,Y,i_1 ,\cdots ,i_p } $ is the peak section of $ T_{x,m,i_1 ,\cdots ,i_p } \big|_{ Y_{m,q_{m,\mathfrak{K}} +1} } $, we can conclude that
\begin{eqnarray}
T_{x,m,i_1 ,\cdots ,i_p } \left( \eta S_{ x,m,Y,i_1 ,\cdots ,i_p } - u_{ x,m,Y,i_1 ,\cdots ,i_p } \right) = \left\Vert T_{x,m,i_1 ,\cdots ,i_p } \big|_{ Y_{m,q_{m,\mathfrak{K}} +1} } \right\Vert .
\end{eqnarray}
It follows that
\begin{eqnarray}
\left\Vert T_{x,m,i_1 ,\cdots ,i_p } \big|_{ Y_{m,q_{m,\mathfrak{K}} +1} } \right\Vert^2 & \leq & 200 \left\Vert T_{x,m,i_1 ,\cdots ,i_p } \big|_{ Z_{m,q_{m,\mathfrak{K}} +1} } \right\Vert^2 . \label{estimatehigerdegreetermonMsmallhDYZ} 
\end{eqnarray}
Similarly, we have
\begin{eqnarray}
\left\Vert T_{x,m,i_1 ,\cdots ,i_p } \big|_{ Z_{m,q_{m,\mathfrak{K}} +1} } \right\Vert^2 & \leq & 200 \left\Vert T_{x,m,i_1 ,\cdots ,i_p } \big|_{ Y_{m,q_{m,\mathfrak{K}} +1} } \right\Vert^2 . \label{estimatehigerdegreetermonMsmallhDZY} 
\end{eqnarray}

Let $\left\lbrace \widetilde{S}_{i} \right\rbrace_{i\in I_{m,\mathfrak{K}}} $ be an $L^2$ orthonormal basis of $Z_{m, \mathfrak{K} }$. Then we have
\begin{eqnarray}
\frac{\rho_{M ,\omega ,m} }{\rho_{V , \omega_{V} ,m} } & = & \left( \sum_{q =1}^{q_{m,\mathfrak{K}}} \sum_{j=1}^{N_{q }} \left\Vert \widetilde{S}_{m,q,j} \right\Vert_{h^m}^2 + \sum_{ i\in I_{m,\mathfrak{K}} } \left\Vert \widetilde{S}_{i} \right\Vert_{h^m}^2 \right) \cdot \left( \sum_{q =1}^{\infty } \sum_{j=1}^{N_{q }} \left\Vert a_{m,q} f_{S_{q,j}} e_{\linebundle_V }^m \right\Vert_{h^m}^2 \right)^{-1} \nonumber \\
& = & \left( \sum_{q =1}^{q_{m,\mathfrak{K}}} \sum_{j=1}^{N_{q }} \left| e_{\linebundle_V}^{-m} \left( \widetilde{S}_{m,q,j} \right) h_D^{-\frac{1}{2}} \right|^2 \right) \cdot \left( \sum_{q =1}^{\infty } \sum_{j=1}^{N_{q }} \left| a_{m,q} f_{S_{q,j}} h_D^{-\frac{1}{2}} \right|^2 \right)^{-1} \\
& & + \left( \sum_{ i\in I_{m,\mathfrak{K}} } \left| e_{\linebundle_V}^{-m} \left( \widetilde{S}_{i} \right) h_D^{-\frac{1}{2}} \right|^2 \right) \cdot \left( \sum_{q =1}^{\infty } \sum_{j=1}^{N_{q }} \left| a_{m,q} f_{S_{q,j}} h_D^{-\frac{1}{2}} \right|^2 \right)^{-1} . \nonumber
\end{eqnarray}

By Lemma \ref{holoontotal}, there exists a constant $C_6 $ depending only on $k $, $ r $ and $\left( V,\omega_V ,\linebundle_V ,h_V \right) $, such that
\begin{eqnarray}
\sum_{p=0}^k \left\Vert \nabla^p f (x) \right\Vert_{ \omega_0 } & \leq & C_6 \sum_{p=0}^k \sum_{1\leq i_1 , \cdots , i_p \leq 2n } \left| \mathcal{Z}_{i_1} \left( \cdots \left( \mathcal{Z}_{i_p} \left( f \right)  \right) \cdots \right) (x) \right| ,
\end{eqnarray}
for any smooth function around $x$. As in the proof of Proposition \ref{propestimatequotientCkdatarfusong1}, we see that the estimates (\ref{estimatehigerdegreetermonMsmallhDZY}), (\ref{estimatehigerdegreetermonVsmallhD}) and (\ref{estimatesmallqTimage}) imply that
\begin{eqnarray}
& & \left\Vert \left( \sum_{q =q_{m,\mathfrak{K}} +1}^{\infty } \sum_{j=1}^{N_{q }} \left| a_{m,q} f_{S_{q,j}} h_D^{-\frac{1}{2}} \right|^2 \right) \cdot \left( \sum_{q =1}^{q_{m,\mathfrak{K}} } \sum_{j=1}^{N_{q }} \left| a_{m,q} f_{S_{q,j}} h_D^{-\frac{1}{2}} \right|^2 \right)^{-1} \right\Vert_{ C^k ; \omega_0 } (x) \nonumber \\
& \leq & C_6 \sum_{p=0}^k \sum_{1\leq i_1 , \cdots , i_p \leq 2n } \left| \mathcal{Z}_{i_1} \left( \cdots \left( \mathcal{Z}_{i_p} \left( \left( \sum_{q =q_{m,\mathfrak{K} +1}}^{\infty } \sum_{j=1}^{N_{q }} \left| a_{m,q} f_{S_{q,j}} h_D^{-\frac{1}{2}} \right|^2 \right) \right. \right. \right. \right.  \nonumber \\
& & \;\;\;\;\;\;\;\;\;\;\;\;\;\;\;\;\;\;\;\;\;\;\;\;\;\;\;\;\;\;\;\;\;\;\;\;\;\;\;\;\;\;\;\;\;\;\;\;\;\;\;\;\;\; \left. \left. \left. \left. \cdot \left( \sum_{q =1}^{q_{m,\mathfrak{K} }} \sum_{j=1}^{N_{q }} \left| a_{m,q} f_{S_{q,j}} h_D^{-\frac{1}{2}} \right|^2 \right)^{-1} \right)  \right) \cdots \right) (x) \right| \\
& \leq & C_7 e^{k(k-1) m } q_{m,\mathfrak{K}}^{2k(k+n)} \left( \sum_{p=0}^k \sum_{1\leq i_1 , \cdots , i_p \leq 2n } \left\Vert T_{x,m,i_1 ,\cdots ,i_p } \big|_{ Y_{m,q_{m,\mathfrak{K}} +1} } \right\Vert^2 \right. \nonumber \\
& & \left. \;\;\;\;\;\;\;\;\;\;\;\;\;\;\;\;\;\;\;\;\;\;\;\;\;\;\;\;\;\;\;\;\;\;\;\;\;\;\;\;\;\;\;\;\;\;\;\; \cdot \left( \sum_{q =1}^{q_{m,\mathfrak{K} }} \sum_{j=1}^{N_{q }} \left| a_{m,q} f_{S_{q,j}} (x) h_D (x)^{-\frac{1}{2}} \right|^2 \right)^{-1} \right) \nonumber \\
& \leq & C^2_7 \left( q_{m,\mathfrak{K}}^{m+k+ n } h_D (x)^{ q_{m,\mathfrak{K}}-k-1 } \right)^k ,\nonumber
\end{eqnarray}
where $C_7 >0$ is a constant depending only on $k $, $ r $ and $\left( V,\omega_V ,\linebundle_V ,h_V \right) $. Similarly, we have
\begin{eqnarray}
& & \left\Vert \left( \sum_{ i\in I_{m,\mathfrak{K}} } \left| e_{\linebundle_V}^{-m} \left( \widetilde{S}_{i} \right) h_D^{-\frac{1}{2}} \right|^2 \right) \cdot \left( \sum_{q =1}^{\infty } \sum_{j=1}^{N_{q }} \left| a_{m,q} f_{S_{q,j}} h_D^{-\frac{1}{2}} \right|^2 \right)^{-1} \right\Vert_{ C^k ; \omega_0 } (x) \label{estimatekeyCkestimatequotientballquotient1} \\
& \leq & e^{200 k} C^2_7 \left( q_{m,\mathfrak{K}}^{m+k+ n } h_D (x)^{ q_{m,\mathfrak{K}}-k-1 } \right)^k . \nonumber 
\end{eqnarray}

Then we can use (\ref{estimateholomorphicMSmqjclosetoafe}) to find a constant $C_8 >0$ depending only on $k $, $ r $ and $\left( V,\omega_V ,\linebundle_V ,h_V \right) $, such that
\begin{eqnarray}
& & \left\Vert \left( \sum_{q =1}^{q_{m,\mathfrak{K}}} \sum_{j=1}^{N_{q }} \left| e_{\linebundle_V}^{-m} \left( \widetilde{S}_{m,q,j} \right) h_D^{-\frac{1}{2}} \right|^2 \right) \cdot \left( \sum_{q =1}^{\infty } \sum_{j=1}^{N_{q }} \left| a_{m,q} f_{S_{q,j}} h_D^{-\frac{1}{2}} \right|^2 \right)^{-1} \right\Vert_{ C^k ; \omega_0 } (x) \label{estimatekeyCkestimatequotientballquotient2} \\
& \leq & C_8 q^{2n} e^{k^2 m -\mathfrak{K} m} + C_8 \left( q_{m,\mathfrak{K}}^{m+k+ n } h_D (x)^{ q_{m,\mathfrak{K}}-k-1 } \right)^k . \nonumber 
\end{eqnarray}

Let $\mathfrak{K} > 100\left( n+k+C_5 \right)^2 $ be a large constant, let $m_1 \geq e^{4\left( \mathfrak{K} +\left| \log | \log r | \right| + | \log r | +2n+2k +6 \right)} $ be an integer, and let $ \mathcal{C} > m_1^2 $ be a constant. Then for any $m\geq m_1$ and $x\in V_{m^{-\mathcal{C}}}$, we have 
\begin{eqnarray}
q_{m,\mathfrak{K}}^{m+k+ n } h_D (x)^{ q_{m,\mathfrak{K}}-k-1 } \leq e^{-m} .
\end{eqnarray}
It follows that
\begin{eqnarray}
 \sup_{V_{m^{-\mathcal{C}}} } \left\Vert \frac{\rho_{M ,\omega ,m} (x)}{\rho_{V , \omega_{V} ,m} \left( x \right) } -1 \right\Vert_{C^k ;\omega_{0} } & \leq & C_9 e^{-m} ,\;\; \forall m\geq m_1 ,
\end{eqnarray}
where $C_9 >0$ is a constant depending only on $k $, $ r $ and $\left( V,\omega_V ,\linebundle_V ,h_V \right) $.

When $x\in V\sq V_{m^{-\mathcal{C}}} $, the estimate (\ref{inequalityballquotient3}) is a consequence of Proposition \ref{propestimatequotientCkballquotient1}. This completes the proof.
\end{proof}

Now we consider Theorem \ref{thmsupballquotient}. In order to consider the case where $\Gamma $ is not torsion free, we need to recall some basic notations about orbifolds and orbifold line bundles. 

An $n$-dimensional complex orbifold is a composite concept formed of a second countable Hausdorff paracompact topological space $X$ and a collection of data $\left\lbrace \widetilde{U}_x , G_x ,U_x ,p_x \right\rbrace_{x\in X} $ with the following property. For each point $x\in X$, there exist an open neighborhood $U_x$ of $x$ in $X$, a connected open subset $\widetilde{U}_x \subset \mathbb{C}^n $ and a continuous map $p_{x} :\widetilde{U}_x \to U_x$ together with a finite group $G$ of biholomorphic automorphisms of $\widetilde{U}_x$ such that $p_x $ induces an homeomorphism between $U_x$ and $\widetilde{U}_x /G $. If $x\in U_y $ for some $y\in X$, then there exist a neighborhood $V_x$ of $x$ and a holomorphic map $\phi : p_{x}^{-1} \left( V_x \right) \to \widetilde{U}_y $ such that $p_y = p_x \circ \phi $. 

Let $G_x =\left\lbrace g\in G: g(x)=x \right\rbrace $. Then $G_x$ is unique up to isomorphism, and the complex manifold $X_{reg} =\left\lbrace x\in X: \left|G_x \right| =1 \right\rbrace $ is an open dense subset of $X$. A K\"ahler metric $\omega $ (resp. line orbibundle $\linebundle$) on $X$ is a K\"ahler metric $\omega $ (resp. line bundle $\linebundle $) on $X_{reg}$ such that for each $x\in X$, $p_x^* \omega $ is a restriction of a K\"ahler metric $\widetilde{\omega} $ (resp. line bundle $\widetilde{\linebundle }$) on $\widetilde{U}_x$ on $ p_x^{-1} \left( U_x \cap X_{reg} \right) $. Similarly, a hermitian metric $h$ on $\linebundle$ is a hermitian metric $h$ on $\linebundle\big|_{X_{reg}}$ such that for each $x\in X$, $p_x^* h $ is a restriction of a hermitian metric $\widetilde{h}_{ \widetilde{U}_x } $ on $\widetilde{\linebundle} \big|_{\widetilde{U}_x } $ on $ p_x^{-1} \left( U_x \cap X_{reg} \right) $. 

A K\"ahler orbifold is defined in terms of a complex orbifold $X$ and a K\"ahler metric $\omega$ on $X$. We say that a line orbibundle $\linebundle$ on a complex orbifold $X$ is positive, if there exists a hermitian $h$ on $\linebundle $ such that $\frac{1}{2\pi} \Ric(h)$ is a K\"ahler metric on $X$, where $\Ric(h) =-\sqrt{-1} \partial\partialbar \left( \log (h) \right) $. This $\omega =\frac{1}{2\pi} \Ric(h)$ is a polarized K\"ahler metric on $X$. A global section $S$ of $\linebundle$ on $X$ is a section $S\in H^0 \left( X_{reg} , \linebundle\big|_{X_{reg}} \right) $ that can be extended on each $\widetilde{U}_x $. Let $ H^0 \left( X , \linebundle \right) $ denote the linear space of all these global sections of $\linebundle$. Then we can define the Bergman kernels on orbifolds just as in the beginning of Section \ref{intro}.

Then we recall some basic properties of Bergman kernels on orbifolds. Let $(X,\omega )$ be a complete K\"ahler orbifold, and $( \linebundle ,h )$ be a Hermitian line orbibundle on $ X $. Fix $x\in X $. Then there exists an open neighborhood $U_x $ of $x$ such that $U_x \cong \widetilde{U}_x / G_x $ for some open neighborhood $ \widetilde{U}_x $ of $0\in \mathbb{C}^n $. Let $p_x $ denote the quotient map $ \widetilde{U}_x \to \widetilde{U}_x / G_x \cong U_x $. Assume that $p_x (0) =x $, $\widetilde{ \omega } $ is an extension of $  p_x^* {\omega \big|_{ U_x \cap X_{reg}} } $ on $\widetilde{U}_x $, and $ \big( \widetilde{\linebundle} , \widetilde{h} \big) $ is an extension of $ \big( p_x^* {\linebundle \big|_{ U_x \cap X_{reg}} } , p_x^* {h \big|_{ U_x \cap X_{reg}} } \big) $ on $\widetilde{U}_x $. Then we see that the action of $G_x $ on $U_x $ can be extended to an action on $ \widetilde{\linebundle} $ that preserves $ \widetilde{h} $, and $ \widetilde{\linebundle} /G_x \cong \linebundle $. Let $\hl \big( \widetilde{ U }_x ,\widetilde{ \omega } ,\widetilde{ \linebundle }^m ,\widetilde{ h }^m \big)_{G_x } $ be the subspace of $\hl \big( \widetilde{ U }_x ,\widetilde{ \omega } ,\widetilde{ \linebundle }^m ,\widetilde{ h }^m \big) $ containing all holomorphic sections that are invariant under the action of $G_x$. Then there exist an orthonormal basis $\left\lbrace S_{m,i} \right\rbrace_{i\in I_m } $ of $\hl \big( \widetilde{ U }_x ,\widetilde{ \omega } ,\widetilde{ \linebundle }^m ,\widetilde{ h }^m \big) $ and a subset $I_{m,1} \subset I_m $, such that $\left\lbrace S_{m,i} \right\rbrace_{i\in I_{m,1} } $ is an orthonormal basis of $\hl \big( \widetilde{ U }_x ,\widetilde{ \omega } ,\widetilde{ \linebundle }^m ,\widetilde{ h }^m \big)_{G_x } $. Then
\begin{eqnarray}
\rho_{U_x ,\omega , \linebundle^m ,h^m } \left( p_x (z) \right) & = & |G_x | \sum_{i\in I_{m,1}} \left\Vert S_{m,i} (z) \right\Vert_{h^m}^2 = \sum_{g\in G_x } \sum_{i\in I_{m }} h^m \left( g \left( S_{m,i} \right) (z) , S_{m,i} (z)  \right) , \label{equationbergmankernelorbifold}
\end{eqnarray}
and similarly,
\begin{eqnarray}
p_x^* B_{U_x ,\omega , \linebundle^m ,h^m } \left( p_x (z) , p_x (w) \right) & = & \sum_{g\in G_x } \sum_{i\in I_{m }} g \left( S_{m,i} \right) (z) \otimes S_{m,i} (w)^* . \label{equationbergmankernelorbifold2}
\end{eqnarray}
Assume that there exists a section $ e_{\widetilde{\linebundle} } \in H^0 \big( \widetilde{U}_x ,\widetilde{\linebundle} \big) $ such that $ e_{\widetilde{\linebundle} } \neq 0 $ on $\widetilde{U}_x $, and $ \big\Vert e_{\widetilde{\linebundle} } \big\Vert_{h } $ is invariant under the action of $G_x $. Then there exists a homomorphism $\Theta_{G_x }^{orb} : G_x \to U(1) \subset \mathbb{C}^* $ such that $g \big( e_{\widetilde{\linebundle} } \big) = \Theta_{G_x }^{orb} (g) e_{\widetilde{\linebundle} }  $, $\forall g \in G_x $. Clearly, $\Theta_{G_x }^{orb} $ is independent of the choice of $ e_{\widetilde{\linebundle} }$. 

Now we can give an estimate of the Bergman kernel function on $\mathbb{B}^n /\Gamma $ far away from the cusp. The following lemma has actually been proved by Dai-Liu-Ma \cite{dailiuma1} using the heat kernel method, but here we give a proof that only relies on the peak section method.

\begin{lmm}
\label{lmmbergmankernelhyperbolicSUnfinitequotient}
We follow the notations and assumptions above. For any constants $n,\Lambda ,k,\epsilon >0$, there are constant $\delta ,C ,m_0 >0 $ with the following property. Let $(X, \omega )$ be a complete K\"ahler orbifold with $\Ric (\omega) \geq -\Lambda \omega $, $ (\linebundle ,h ) $ be a Hermitian line orbibundle on $X$ with $\Ric (h) \geq \epsilon \omega $, $x\in X$ and $m\geq m_0 $. Assume that $G_x \leqslant U(n) $, and there exists a constant $r \geq \frac{\epsilon \log (m)}{\sqrt{m} } $ such that the open set $U_x $ containing $B_r (x)$. Suppose that the bisectional curvature $BK =-1 $ on $U_x $, and $\Ric (h) = (n+1) \omega $ on $U_x $. Then
\begin{eqnarray}
\left\Vert \rho_{ \omega , m} \circ p_x (z) - \frac{(2 \pi)^{-n} (mn-1)!}{ (mn-n-1)!} \sum_{g\in G_x}  \frac{\Theta_{G_x }^{orb} (g)^m \left( 1-|z|^2 \right)^{mn} }{ \left( 1-\left\langle g^{-1} z,z \right\rangle \right)^{mn} } \right\Vert_{C^k ;B_{\frac{r}{2}} (0) }  \leq C e^{-\delta mr^2 } ,
\end{eqnarray}
where $ \rho_{ \omega ,m} $ is the Bergman kernel function on $(X, \omega ,\linebundle ,h )$, and $ \left\langle z , w \right\rangle = \sum_{i=1}^n z_i \bar{w}_i $, $ \forall z,w\in\mathbb{C}^n $.
\end{lmm}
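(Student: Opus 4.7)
The plan is to identify the orbifold chart $\widetilde{U}_x$ around $0$ with an open subset of the complex hyperbolic model $(\mathbb{B}^n,\omega_{\mathbb{B}})$ endowed with its standard polarized data, express $\rho_{X,\omega,m}\circ p_x$ via the orbifold formula (\ref{equationbergmankernelorbifold2}) as a $G_x$-orbit sum, and then match each term with the explicit Bergman kernel on $\mathbb{B}^n$, up to two controllable errors.

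First I would produce the local identification. Since the holomorphic bisectional curvature equals $-1$ throughout $U_x$ and $0$ is a fixed point of $G_x\leqslant U(n)$, the isometric uniformization of complete simply connected K\"ahler manifolds of constant holomorphic curvature applied to a neighborhood of $0\in\widetilde{U}_x$ yields a $G_x$-equivariant holomorphic isometric embedding $\iota:\widetilde{U}_x\hookrightarrow\mathbb{B}^n$ sending $0\mapsto 0$, where $G_x$ acts on $\mathbb{B}^n$ through its inclusion into $U(n)$. The condition $\Ric(\widetilde h)=(n+1)\widetilde\omega$ and the $\partialbar\partial$-lemma on the simply connected chart force $(\widetilde{\linebundle},\widetilde h)$ to coincide, up to a positive constant, with the pullback of the standard Bergman polarization on $\mathbb{B}^n$. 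Choosing a trivializing section $e_{\widetilde{\linebundle}}$ for which $\|e_{\widetilde{\linebundle}}\|_{\widetilde h}^2$ is a radial function of $|z|^2$, the $G_x$-invariance of the Hermitian norm forces $g(e_{\widetilde{\linebundle}})=\Theta_{G_x}^{orb}(g)\,e_{\widetilde{\linebundle}}$, recovering the character that appears in the statement.

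Next I would compute the model Bergman kernel. With respect to the orthonormal basis of $\hl(\mathbb{B}^n,\omega_{\mathbb{B}},\linebundle^m,h^m)$ consisting of suitably normalized monomials $z^P$, $P\in\mathbb{Z}_{\geq 0}^n$, whose squared $L^2$-norms are explicit in terms of Gamma-functions, one obtains the classical closed form
\begin{equation*}
B_{\mathbb{B}^n,\omega_{\mathbb{B}},m}(z,gz)\;=\;\frac{(2\pi)^{-n}(mn-1)!}{(mn-n-1)!}\,\Theta_{G_x}^{orb}(g)^m\,\frac{(1-|z|^2)^{mn}}{(1-\langle g^{-1}z,z\rangle)^{mn}},
\end{equation*}
the factor $(1-|z|^2)^{mn}$ encoding the $m$-th power of the Hermitian norm of $e_{\widetilde{\linebundle}}$ and the denominator being the standard reproducing kernel of the weighted Bergman space. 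Substituting this into the orbifold formula (\ref{equationbergmankernelorbifold2}) and carrying out the $G_x$-sum produces the announced main term. Two comparisons then produce the $Ce^{-\delta mr^2}$ error: (a) the Tian-type localization $\rho_{X,\omega,m}\circ p_x\approx \rho_{\widetilde U_x,\widetilde\omega,m}^{G_x}$ on $B_{r/2}(0)$, which follows from the Agmon-type bound in Corollary \ref{coroagmontypeestimate} together with Lemma \ref{peaksectionrestriction} applied to peak sections at $0$, exactly as in the proof of Theorem \ref{thmcusplocalizationprinciple}; and (b) the globalization $\rho_{\widetilde U_x,\widetilde\omega,m}\approx \rho_{\mathbb{B}^n,\omega_{\mathbb{B}},m}$ on $B_{r/2}(0)$, which follows from explicit monomial estimates: the squared $L^2$-mass of $z^P$ on $B_r(0)$ and on $\mathbb{B}^n$ differ significantly only for $|P|\gtrsim mr^2$, and for such monomials the pointwise values on $B_{r/2}(0)$ are exponentially small in $m$ by Stirling's asymptotic.

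\textbf{Main obstacle.} The delicate point is upgrading the \emph{super-polynomial} $m^{-l}$ off-diagonal decay produced by Corollary \ref{coroagmontypeestimate} to the genuinely \emph{exponential} rate $e^{-\delta mr^2}$ in the statement. This refinement cannot be read off from Tian's peak section method directly; it has to come from the explicit structure of the model, where the monomial-by-monomial $L^2$-computation on $\mathbb{B}^n$ combined with Stirling yields $\sum_{|P|\geq cmr^2}|z^P|^2/\|z^P\|_{L^2}^2\leq Ce^{-\delta mr^2}$ for $|z|\leq r/2$. This sharp tail estimate transfers back to $X$ via the two comparisons above, and the $C^k$-bound follows by differentiating under the finite $G_x$-sum since all quantities in Step 2 are real-analytic in $z$.
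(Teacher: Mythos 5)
Your proposal is correct and follows essentially the same route as the paper: identify $\widetilde U_x$ with a $G_x$-invariant neighborhood of $0\in\mathbb{B}^n$, compute the model kernel explicitly via the monomial orthonormal basis to get the closed form with the character $\Theta_{G_x}^{orb}(g)^m$, and then reduce the error to a peak-section localization whose exponential (rather than merely super-polynomial) rate is extracted from the explicit model. The paper compresses your steps (a) and (b) into a single remark that the explicit kernel formulas force exponential decay of peak-section mass on the annulus $B_{4r/5}(x)\setminus B_{3r/5}(x)$ as $mr^2\to\infty$; your Stirling tail bound $\sum_{|P|\gtrsim mr^2}|z^P|^2/\|z^P\|_{L^2}^2\leq Ce^{-\delta mr^2}$ is exactly the computation that justifies it.
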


\begin{proof}
By assumptions, we can consider $\widetilde{U}_x = p_x^{-1} \big( U_x \big) $ as an open neighborhood of $ 0\in \mathbb{B}^n $ such that $\widetilde{U}_x $ is invariant under the action of $G_x \leqslant U(n) $. Then $ \big( \widetilde{U}_x , \widetilde{\omega} ,\widetilde{\linebundle } , \widetilde{h} \big) \cong \big( \widetilde{U}_x ,  \omega_{\mathbb{B}^n} ,\mathcal{K}_{\mathbb{B}^n} , h_{\mathbb{B}^n} \big) $, where $ \big( \mathcal{K}_{\mathbb{B}^n} , h_{\mathbb{B}^n} \big) $ is the Hermitian line bundle in Theorem \ref{thmsupballquotient}.

Let $e_{\mathcal{K}_{\mathbb{B}^n} } \in H^0 \left( \mathbb{B}^n ,\mathcal{K}_{\mathbb{B}^n} \right) $ such that $ \big\Vert e_{\mathcal{K}_{\mathbb{B}^n}} \big\Vert_{h_{\mathcal{K}_{\mathbb{B}^n}}}^2 = \big( 1 -|z|^2 \big)^n $. Then we can describe the action $g: H^0 \left( \mathbb{B}^n ,\mathcal{K}_{\mathbb{B}^n} \right) \to H^0 \left( \mathbb{B}^n ,\mathcal{K}_{\mathbb{B}^n} \right) $ by $ g \big( fe_{ \mathcal{K}_{\mathbb{B}^n} } \big) = \big( \Theta_{G_x }^{orb} (g) \cdot f\circ g^{-1} \big) e_{ \mathcal{K}_{\mathbb{B}^n} } $, for any holomorphic section $fe_{\mathcal{K}_{\mathbb{B}^n}} \in H^0 \left( \mathbb{B}^n ,\mathcal{K}_{\mathbb{B}^n} \right) $. Write 
$$S_{m,P} = \sqrt{\frac{ ( mn+|P|-1 ) ! }{2^n \pi^n p_1 ! \cdots p_n ! (mn-n-1)! }} z^P e_{\mathcal{K}_{\mathbb{B}^n} }^m = f_{m,P} (z) e_{\mathcal{K}_{\mathbb{B}^n} }^m \in H^0 \left( \mathbb{B}^n ,\mathcal{K}^m_{\mathbb{B}^n} \right) , $$
for each multi-index $P \in \mathbb{Z}_{\geq 0}^n $ and integer $m\geq 2$. Since $\big\lbrace S_{m,P} \big\rbrace_{ P \in \mathbb{Z}_{\geq 0}^n } $ is an orthonormal basis of $ \hl \big( \mathbb{B}^n ,  \omega_{\mathbb{B}^n} ,\mathcal{K}^m_{\mathbb{B}^n} , h^m_{\mathbb{B}^n} \big) $, we can apply (\ref{equationbergmankernelorbifold}) and (\ref{equationbergmankernelorbifold2}) to show that for any integer $m\geq 2$,
\begin{eqnarray}
\rho_{\mathbb{B}^n /G_x ,  \omega_{\mathbb{B}^n} , \mathcal{K}_{\mathbb{B}^n} /G_x , h^m_{\mathbb{B}^n} ,m } \left( p_x (z) \right) & = & \sum_{g\in G_x } \sum_{P \in \mathbb{Z}_{\geq 0}^n} h^m \left( g \left( S_{m,P} \right) (z) , S_{m,P} (z)  \right) \nonumber \\
& = & \sum_{g\in G_x } \Theta_{G_x }^{orb} (g)^m \left( \sum_{P \in \mathbb{Z}_{\geq 0}^n} f_{m,P} \big( g^{-1} (z) \big) \overline{f_{m,P} (z) } \right) \left( 1-|z|^2 \right)^{mn} \label{equationbergmankernelhyperbolicorbifold} \\
& = & \frac{(2 \pi)^{-n} (mn-1)!}{ (mn-n-1)!} \sum_{g\in G_x} \Theta_{G_x }^{orb} (g)^m \frac{\left( 1-|z|^2 \right)^{mn} }{ \left( 1-\left\langle g^{-1} z,z \right\rangle \right)^{mn} } \nonumber ,
\end{eqnarray}
and
\begin{eqnarray}
& & p_x^* B_{\mathbb{B}^n /G_x ,  \omega_{\mathbb{B}^n} , \mathcal{K}_{\mathbb{B}^n} /G_x , h^m_{\mathbb{B}^n} ,m } \left( p_x (z) , p_x (w) \right) \nonumber \\
& & \;\;\;\;\;\;\;\;\;\;\;\;\;\;\;\;\;\;\;\;\;\; = \frac{(2 \pi)^{-n} (mn-1)!}{ (mn-n-1)!} \sum_{g\in G_x} \Theta_{G_x }^{orb} (g)^m \frac{\left( 1-|w|^2 \right)^{mn} }{ \left( 1-\left\langle g^{-1} z,w \right\rangle \right)^{mn} } e_{ \mathcal{K}_{\mathbb{B}^n }}^{m} (z) \otimes e^{-m}_{\mathcal{K}_{\mathbb{B}^n }} (w) , \label{equationbergmankernelhyperbolicorbifold2}
\end{eqnarray}

Now we only need to show that 
\begin{eqnarray}
\left\Vert \rho_{X, \omega ,\linebundle ,m} \circ p_x (z) - \rho_{\mathbb{B}^n /G_x ,  \omega_{\mathbb{B}^n} , \big( \mathcal{K}_{\mathbb{B}^n} /G_x \big)^m , h^m_{\mathbb{B}^n} } \circ p_x (z) \right\Vert_{C^k ,B_{\frac{r}{2}} (0) }  \leq C e^{-\delta mr^2 } . \label{estimatelocalizationbergmankernelhyperbolicorbifold}
\end{eqnarray}
We can see that the proof of Theorem \ref{thmcusplocalizationprinciple} actually leads to the estimate (\ref{estimatelocalizationbergmankernelhyperbolicorbifold}), noting that (\ref{equationbergmankernelhyperbolicorbifold}) and (\ref{equationbergmankernelhyperbolicorbifold2}) implies that the integral of peak sections defined by a distribution that support is a point in $B_{\frac{r}{2}} (x)$ on $ B_{\frac{4r}{5}} (x) \sq B_{\frac{3r}{5}} (x) $ decays exponentially as $mr^2 \to\infty $.
\end{proof}

Now we are in place to prove Theorem \ref{thmsupballquotient}.

\vspace{0.2cm}

\noindent \textbf{Proof of Theorem \ref{thmsupballquotient}: }
Since $\Gamma $ is a lattice, the quotient space $ X= {\mathbb{B}}^n / \Gamma $ is a complete K\"ahler orbifold, and there exist a finite collection of disjoint open subsets $ U_i \subset X $, $i=1,\cdots , N $, and complex hyperbolic cusps $\left( V_i /\Gamma_{V_i } ,\omega_{V_i }  ,\linebundle_{V_i } /\Gamma_{V_i } ,h_{V_i } \right) $ such that $ \left( U_i ,\omega_{\mathbb{B}^n} \right) \cong \left( V_i /\Gamma_{V_i } ,\omega_{V_i } \right) $, and $ X \sq  \left( \cup_{i=1}^N U_i \right) $ is compact. When $\Gamma $ is cocompact, the orbifold $ X $ is compact, and hence $ N =0 $.

Note that the localization principle, Theorem \ref{thmcusplocalizationprinciple}, holds for complete K\"ahler orbifolds with asymptotic complex hyperbolic cusps. Then we can apply Lemma \ref{lmmbergmankernelhyperbolicSUnfinitequotient} to conclude that
\begin{eqnarray}
\sup_{X \sq  \left( \cup_{i=1}^N U_i \right) } \rho_{X , \omega_{\mathbb{B}^n} , \mathcal{K}_X , h_X ,m} & = & c_m m^{n} + O\left( m^{n-1} \right) ,\textrm{ as $m\to\infty $.} \label{inequalitythmsupballquotient1proofversion}
\end{eqnarray}
where $\left\lbrace c_i \right\rbrace_{i=1}^{\infty} $ is a sequence of positive constants depending only on $n$ and $\Gamma$, such that $c_{i} =c_{i+\mathcal{N}_1 } $ for some integer $ \mathcal{N}_1 = \mathcal{N}_1 (n,\Gamma ) >0 $, $ \forall i\in\mathbb{N} $.

Similarly, Corollary \ref{convergencecorobgmkernelsup} now implies that there exist an integer $ \mathcal{N}_2 = \mathcal{N}_2 (n,\Gamma ) >0 $ and a sequence of positive constants $\left\lbrace c'_i \right\rbrace_{i=1}^{\infty} $ depending only on $n$ and $\Gamma$, such that $c'_{i} =c'_{i+\mathcal{N}_2 } $, $ \forall i\in\mathbb{N} $, and
\begin{eqnarray}
\sup_{\cup_{i=1}^N U_i} \rho_{X , \omega_{\mathbb{B}^n} , \mathcal{K}_X , h_X ,m} & = & c'_m m^{n+\frac{1}{2}} + O\left( m^{n } \right) ,\textrm{ as $m\to\infty $.} \label{inequalitythmsupballquotient2proofversion}
\end{eqnarray}

Combining (\ref{inequalitythmsupballquotient1proofversion}) with (\ref{inequalitythmsupballquotient2proofversion}), we can obtain (\ref{inequalitythmsupballquotient1}) and (\ref{inequalitythmsupballquotient2}). When $\Gamma $ is neat, $X$ becomes a manifold, and the all group $\Gamma_i $ are trivial. It follows that $\mathcal{N} =1 $ in this case, which proves the theorem. 
\qed

\vspace{0.2cm}

\section{Conflict of interest statement}

On behalf of all authors, the corresponding author states that there is no conflict of interest.

\section{Data availability statements}

My manuscript has no associated data.


\end{document}